\newtheorem {theorem}{Theorem}[section]
\newtheorem {lemma}[theorem]{{\bf Lemma}}
\newtheorem {proposition}[theorem]{\bf Proposition}
\theoremstyle{remark}
\newtheorem {remark}{{\bf Remark}}[section]
\theoremstyle{problem}
\theoremstyle{definition}
\newtheorem {definition}{{\bf Definition}}[section]
\theoremstyle{plain} \numberwithin {equation}{section}
\begin{document}
\vspace{1cm}

\title[A free boundary problem]{A free boundary problem for semi-linear elliptic equation and its applications$^*$}
\author[Jianfeng Cheng,\\Lili Du]{Jianfeng Cheng$^{\lowercase {1,2}}$,\ \ Lili Du$^{\lowercase {2}}$}

\thanks{$^*$This work is supported in part by NSFC grant 11971331.}
\thanks{ E-Mail: jianfengcheng@126.com (J. Cheng), E-mail: dulili@scu.edu.cn (L. Du).}
 \maketitle
\begin{center}

 $^1$ The Institute of Mathematical Sciences,

  The Chinese University of Hong Kong, Hong Kong.

 $^2$ Department of Mathematics, Sichuan University,

          Chengdu 610064, P. R. China.

\end{center}

\begin{abstract} In this paper, we consider a free boundary problem of a semilinear nonhomogeneous elliptic equation with Bernoulli's type free boundary. The existence and regularity of the solution to the free boundary problem are established by use of the variational approach. In particular, we establish the Lipschitz continuity and non-degeneracy of a minimum, and regularity of the free boundary. As a direct and important application, the well-posedness results on the steady, incompressible inviscid jet and cavitational flow with general vorticity are also obtained in this paper.

\end{abstract}

\

\begin{center}
\begin{minipage}{5.5in}
2010 Mathematics Subject Classification: 76B10; 76B03; 35Q31; 35J25.

\

Key words: Semilinear elliptic equation; Free boundary; Regularity; Incompressible flow; General vorticity.
\end{minipage}
\end{center}

\

\everymath{\displaystyle}
\newcommand {\eqdef }{\ensuremath {\stackrel {\mathrm {\Delta}}{=}}}

\tableofcontents

\def\Xint #1{\mathchoice
{\XXint \displaystyle \textstyle {#1}} %
{\XXint \textstyle \scriptstyle {#1}} %
{\XXint \scriptstyle \scriptscriptstyle {#1}} %
{\XXint \scriptscriptstyle \scriptscriptstyle {#1}} %
\!\int}
\def\XXint #1#2#3{{\setbox 0=\hbox {$#1{#2#3}{\int }$}
\vcenter {\hbox {$#2#3$}}\kern -.5\wd 0}}
\def\ddashint {\Xint =}
\def\dashint {\Xint -}
\def\clockint {\Xint \circlearrowright } 
\def\counterint {\Xint \rotcirclearrowleft } 
\def\rotcirclearrowleft {\mathpalette {\RotLSymbol { -30}}\circlearrowleft }
\def\RotLSymbol #1#2#3{\rotatebox [ origin =c ]{#1}{$#2#3$}}

\def\aint{\dashint}

\def\arraystretch{2}
\def\eps{\varepsilon}

\def\s#1{\mathbb{#1}} 
\def\t#1{\tilde{#1}} 
\def\b#1{\overline{#1}}
\def\N{\mathcal{N}} 
\def\M{\mathcal{M}} 
\def\R{{\mathbb{R}}}
\def\B{{\mathcal{B}}}
\def\BB{\mathfrak{B}}
\def\F{{\mathcal{F}}}
\def\G{{\mathcal{G}}}
\def\ba{\begin{array}}
\def\ea{\end{array}}
\def\be{\begin{equation}}
\def\ee{\end{equation}}

\def\bes{\begin{mysubequations}}
\def\ees{\end{mysubequations}}

\def\cz#1{\|#1\|_{C^{0,\alpha}}}
\def\ca#1{\|#1\|_{C^{1,\alpha}}}
\def\cb#1{\|#1\|_{C^{2,\alpha}}}
\def\psir{\left|\frac{\nabla\psi}{r}\right|^2}
\def\lb#1{\|#1\|_{L^2}}
\def\ha#1{\|#1\|_{H^1}}
\def\hb#1{\|#1\|_{H^2}}
\def\th{\theta}
\def\Th{\Theta}
\def\cin{\subset\subset}
\def\Ld{\Lambda}
\def\ld{\lambda}
\def\ol{{\Omega_L}}
\def\sla{{S_L^-}}
\def\slb{{S_L^+}}
\def\e{\varepsilon}
\def\C{\mathbf{C}} 
\def\cl#1{\overline{#1}}
\def\ra{\rightarrow}
\def\xra{\xrightarrow}
\def\g{\nabla}
\def\a{\alpha}
\def\b{\beta}
\def\d{\delta}
\def\th{\theta}
\def\fai{\varphi}
\def\O{\Omega}
\def\ol{{\Omega_L}}
\def\psirk{\left|\frac{\nabla\psi}{r+k}\right|^2}
\def\tO{\tilde{\Omega}}
\def\tu{\tilde{u}}
\def\tv{\tilde{v}}
\def\trho{\tilde{\rho}}
\def\W{\mathcal{W}}
\def\f{\frac}
\def\p{\partial}
\def\m{\omega}
\def\B{\mathcal{B}}
\def\H{\Theta}
\def\msS{\mathscr{S}}
\def\bq{\mathbf{q}}
\def\msE{\mathcal{E}}
\def\mfa{\mathfrak{a}}
\def\mfb{\mathfrak{b}}
\def\mfc{\mathfrak{c}}
\def\mfd{\mathfrak{d}}
\def\Div{\text{div}}
\def\Rot{\text{rot}}
\def\Curl{\text{curl}}
\def\mcL{\mathcal{L}}
\def\mcR{\mathcal{R}}
\def\f{\frac}
\def\p{\partial}
\def\o{\omega}
\def\h{_2^{\frac{1}{2}}}
\def\hh{_2^2}
\def\hhh{_2^{\frac{2}{3}}}
\def\k{_2^{\frac{3}{2}}}
\def\ii{\int_{0}^{t}\int}
\def\xiao{\leq}
\def\L{\Lambda}
\def\s{\sigma}
\def\q{\sqrt{\tau}}

\section{Introduction}

In this paper, we investigate the free boundary problem of a semilinear nonhomogeneous elliptic equation
\be\label{aa1}\left\{\ba{ll} -\Delta\psi=f(\psi)\ \ \ &\text{in}\ \ \O\cap\{\psi>0\},\\
 |\g\psi|=\ld(X)\ \ \ &\text{on}\ \ \O\cap\p\{\psi>0\},\\
 \psi=\Psi_0\ \ \ &\text{on}\ \ \p\O,\ea\right.\ee where $\O$ is a connected open and bounded domain in $\mathbb{R}^2$, $\p\O$ is a local Lipschitz graph. The given functions $\Psi_0\in C^{0,1}(\bar\O)$ with $\Psi_0\geq0$ and $\ld(X)\in C^{0,\beta}(\bar\O)$ with $0<\ld_1\leq\ld(X)\leq\ld_2<+\infty$, $f(\psi)$ is so-called {\it vorticity strength function} and $f\in C^{1,\beta}(\mathbb{R})$, $0<\beta<1$.

 The semilinear nonhomogeneous equation $-\Delta\psi=f(\psi)$ characterizes the steady, incompressible flow of an ideal fluid in $\mathbb{R}^2$ with non-zero variable vorticity. A wide class of vorticity distributions is considered. We are interesting in the existence, regularity and geometric properties of the solution $\psi$ and the free boundary $\O\cap\p\{\psi>0\}$.

 On the other hand, physical motivation for our study lies in the proof of minimizing the functional
 $$J(\psi)=\int_{\O}|\nabla\psi|^2+F(\psi)+\ld^2(X)I_{\{\psi>0\}}dX,$$ which is related closely to the jet flow problem with variable vorticity. Here, $F(t)=-2\int_0^tf(s)ds$, $I_E$ is the characteristic function of the set $E$. Here and after, denote $dX=dxdy$ for simplicity.

 In the remarkable paper \cite{AC1} by H. Alt and L. Caffarelli, some results on Lipschitz continuity, non-degeneracy lemma of a minimizer $\psi$, and the analyticity of the free boundary were obtained for the special case $f(\psi)=0$. The mathematical results of \cite{AC1} were used immediately in the study of jet flows \cite{ACF1,ACF2,ACF3,CDX1} and impinging jet flows \cite{CD,CDW} of inviscid, irrotational and incompressible fluid. For the nonhomogeneous problem ($f(\psi)\neq 0$), A. Friedman \cite{FA2} established the first result on the regularity of the solution and the free boundary for the linear nonhomogeneous case $\Delta\psi=P(x,y)$. Based on this result in \cite{FA2}, the well-posedness result of cavitational flow \cite{CDZ,FA2} and impinging jet flows \cite{CDW1} of inviscid and incompressible fluid with constant vorticity $(P(x,y)=P_0)$ were obtained. The similar results were extended to the quasilinear homogeneous case $$J(\psi)=\int_{\O} F(|\nabla\psi|^2)+\ld^2(X)I_{\{\psi>0\}} dX$$ in \cite{ACF8} and nonlinear homogeneous case $$J(\psi)=\int_{\O} a_{ij}(\psi)D_i\psi D_j\psi+\ld^2(X)I_{\{\psi>0\}} dX$$ in \cite{OY}.

 The first purpose of this paper is to establish the existence, regularity of the solution to the free boundary problem \eqref{aa1}, and extend the classical results in \cite{AC1} to the semilinear nonhomogeneous elliptic equation. In particular, the Lipschitz continuity, non-degeneracy lemma of the solution and regularity of the free boundary are obtained (please see Theorem \ref{lb5} for Lipschitz continuity and Theorem \ref{lc15} for the regularity of the free boundary).

 On the other hand, there is a large number of literatures on the regularity criteria of the free boundary for linear elliptic problem,
 \be\label{aa0}\left\{\ba{ll} \sum_{i,j=1}^na_{ij}(x)\p^2_{x_ix_j}\psi=f(x)\ \ \ &\text{in}\ \ \O\cap\{\psi>0\},\\
 |\g\psi|=\ld(x)\ \ \ &\text{on}\ \ \O\cap\p\{\psi>0\}.\ea\right.\ee For the Laplace operator and $f\equiv0$, L. Caffarelli showed in his pioneer work \cite{C1} that Lipschitz free boundary is $C^{1,\alpha}$-smooth, furthermore, he also showed in \cite{C2} that "flat" free boundary is Lipschitz. And higher regularity, such as $C^\infty$-smoothness and analyticity of the free boundary follow from the elegant work of D. Kinderlehrer and L. Nirenberg \cite{KN}. In the case of the homogeneous case $f\equiv0$, the regularity criteria results in spirit of works \cite{C1,C2} have been subsequently obtained for more general operators, such as concave fully nonlinear uniform elliptic operators of the form $F(D^2\psi)$ in \cite{W1,W2} and nonconcave fully nonlinear uniform elliptic operators of the form $F(D^2\psi,D\psi)$, and references \cite{CSF,FS1,FS2}. Moreover, Silva showed that the Lipschitz free boundary of the problem \eqref{aa0} in non-homogenous case $(f\neq0)$ is $C^{1,\alpha}$ in \cite{S}. Recently, Weiss and Zhang \cite{WZ} investigated the regularity of the free boundary problem of semilinear elliptic equation $-\Delta\psi=f(\psi)$, and showed the regularity criteria that the $W_{loc}^{1,1}$ free boundary is a graph implies the $C^{2,\alpha}$-regularity of the free boundary. However, we would like to emphasize that the results in this paper are not the conditional regularity of the free boundary and we do not give any apriori assumptions and topological property on the free boundary and the solution.

  An other related interesting problem is the semilinear Dirichlet problem with degenerate gradient on the free boundaries,
   \be\label{aa00}\left\{\ba{ll} \Delta\psi=f(x,\psi)\ \ \ &\text{in}\ \ \O\cap\{\psi>0\},\\
 \psi=|\g\psi|=0\ \ \ &\text{on}\ \ \O\cap\p\{\psi>0\}.\ea\right.\ee
The problem is used for modeling the distribution of a gas with
density, $\psi(x)$, in reaction with a porous catalyst pellet $\O$.
Alt and Phillips \cite{AP} investigated the regularity and the
geometry properties of the solution and the free boundaries with
some special structural conditions on the force term $f(x,\psi)$.
The degenerate gradient condition on the free boundaries implies
that desired optimal regularity of the solution is $C^{1,\alpha}$ in
$\O$. However, in this paper, the gradient of the solution is
non-zero on the free boundaries, thus the optimal regularity desired
here is only Lipschitz in $\O$. This is the main difference between
the free boundary problem \eqref{aa1} and the one \eqref{aa00}.

 The second purpose of this paper is to establish the well-posedness theory of the jet flows of inviscid, incompressible fluid with general vorticity issuing from a semi-infinitely long nozzle for symmetric case (see Figure \ref{f1}). It's a direct and important application of the mathematical theory of the free boundary problem \eqref{aa1}.

\begin{figure}[!h]
\includegraphics[width=100mm]{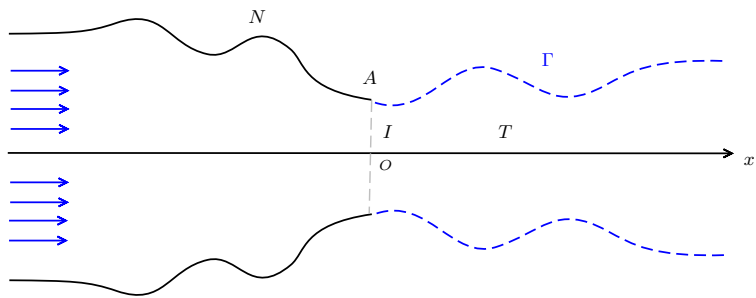}
\caption{Incompressible jet flow}\label{f1}
\end{figure}

Before we proceed with the bulk of this paper, we would like to
mention the previous results on the well-posedness of the free
boundary problem for incompressible inviscid jet flows.

 For the irrotational inviscid incompressible flow without surface tension, the existence of an axisymmetric jet flow \cite{ACF3}, an asymmetric jet flow \cite{ACF1} and impinging jet flow \cite{CD,CDW} has been established based on the fundamental work \cite{AC1}. The main advantage for two-dimensional irrotational flow is that the stream function $\psi$ solves the linear elliptic equation with Bernoulli's type free boundary. And then the conformal mapping and Green function approach for the linear elliptic equation work for the irrotational flow. Furthermore, for the simplified setting of constant vorticity, which corresponds to a constant vorticity strength $f(\psi)\equiv P_0$ in \eqref{aa1}, the well-posedness of axially symmetric cavitational flow in \cite{FA2}, symmetric impinging jet flow in \cite{CDW1} and symmetric cavitational flow in an infinitely long nozzle in \cite{CDZ} were established, based on the regularity of the free boundary problem with Poisson equation $\Delta\psi=P_0$. The assumption of an inviscid incompressible jet with non-zero constant vorticity provides us with the simplest case of a flow that is not irrotational and is attractive for its analytical tractability. However, the simplicity setting is not a mere mathematical convenience, as it is also physical relevant. Indeed, the vorticity remains invariant along the each streamline for inviscid fluid, therefore the fluid possesses constant vorticity as long as we impose the constant vorticity in the upstream.

 Generations of scientists working in fluid dynamics has recognized the importance of the vorticity. It has provided a powerful qualitative description for many of the important phenomena of fluid mechanics. From the mathematical point of view, the strong nonlinearity of the equations of vortex motion has made the analysis difficult.

 In the present paper, to establish the well-posedness of the incompressible jet issuing from a semi-infinitely long nozzle with general vorticity, we will impose the vorticity in the inlet of the nozzle, and then the vorticity strength function $f(\psi)$ is determined uniquely along the streamlines. To realize this idea, one of the key points is to show the well-posedness of the streamlines. A free boundary problem with semilinear nonhomogeneous elliptic equation as \eqref{aa1} is formulated. An associated variational problem is shown to possess a minimizer, which yields a solution of semilinear free boundary problem.

\section{Regularity and non-degeneracy of the solutions}

To investigate the free boundary of the semilinear elliptic equation
\eqref{aa1}, based on the works \cite{AC1,ACF8,FA2} by Alt,
Caffarelli and Friedman, we study the variational problem in this
section, and establish the non-degeneracy lemma and the regularity
of the solution.

\subsection{The variational problem}

 Let $\O\subset\mathbb{R}^2$
be a bounded and connected open domain and $\p\O$
is a locally Lipschitz graph, $I_{E}$ is the characteristic function
of a set $E\subset\mathbb{R}^2$ and $f(t)\in C^{1,\beta}((-\infty,+\infty))$, which
satisfies that \be\label{c0}\text{$0\leq f(t)\leq \L$ for any $t\leq 0$ and $-\L\leq
f'(t)\leq 0$ for any $t\in\mathbb{R}$},\ee where $\L\geq0$ is a constant.  Denote
$$F(t)=-2\int_0^tf(s)ds.$$
It is easy to check that \be\label{c1}F(0)=0,\ -2\L\leq F'(t)\leq 0
\ \text{for any $t\leq0$ and}\ 0\leq F''(t)\leq 2\L \ \text{for any
$t\in\mathbb{R}$}.\ee Define a function $\Psi\in C^0(\bar{\O})\cap
C^{2,\alpha}(\O)$ and $\Psi$ satisfies
\be\label{c001}\Delta\Psi+f(\Psi)\leq0\ \ \text{in $\O$ and $\Psi>0$
in $\O$}.\ee

Consider an energy functional
$$J(\psi)=\int_{\O}\left(|\nabla\psi|^2+F(\psi)+\ld^2(X)I_{\{\psi>0\}}\right)dX,$$  and an admissible
set
$$K=\{\phi\in H^1(\O)\mid~\phi\leq\Psi\ \text{a.e. in $\O$},~~\phi=\Psi_0\ \text{on}~ S\},$$ where $S$ is a given subset of $\p\O$ with $\mathcal{H}^1(S)>0$,
$\Psi_0\in H^1(\O)$ and $0\leq\Psi_0\leq\Psi$ a.e. on $S$,
$\ld(X)\in C^{0,\beta}(\bar\O)$ and satisfies
\be\label{c2}0<\ld_1\leq\ld(X)\leq\ld_2<+\infty\ \ \text{for any
$X\in\bar\O$}.\ee

{\bf The variational problem $(P)$:} Find a $\psi\in K$, such that
$$J(\psi)=\min_{\phi\in K} J(\phi).$$
\begin{remark} For the special case $F(t)\equiv0$, the variational problem have been studied by Alt and Caffarelli in
\cite{AC1}. They established the Lipschitz continuity and
non-degeneracy of the minimizer $\psi$, and obtained the regularity of the free
boundary. Moreover, Friedman in \cite{FA2} investigated the variational problem for $F(t)=-2P(x,y)t$ with $P(x,y)\geq 0$ and $P(x,y)\in C^{0,1}(\bar\O)$, and established the regularity of the free boundary.

\end{remark}

As the first step, the existence of the minimizer $\psi$ to the variational problem
$(P)$ can be obtained by using the similar arguments in Lemma 1.3 in
\cite{AC1}, we omit it here.

\subsection{Lipschitz continuity of the minimizer}
In this subsection, we will obtain the Lipschitz continuity of the minimizer $\psi$.

We first focus on the H\"older continuity of the minimizer $\psi$ in $\O$, and
show that the minimizer $\psi$ satisfies the semilinear elliptic
equation in $\O\cap\{\psi>0\}$.
\begin{lemma}\label{lb3} (1) $\psi\in C^\alpha(\O)$ for some
$\alpha\in(0,1)$ and $\psi\geq 0$ in $\O$.\\
(2) $\psi$ satisfies that
\be\label{c5}\text{$\Delta\psi+f(\psi)\geq0$ in $\O$}\ee  in the weak
sense and \be\label{c6}\text{$\Delta\psi+f(\psi)=0$ in
$\O\cap\{\psi>0\}$.}\ee Furthermore, $\psi\in C^{2,\alpha}(D)$ for
any compact subset $D$ of $\O\cap\{\psi>0\}$.
\end{lemma}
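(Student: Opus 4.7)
Working from the sign hypotheses \eqref{c0}--\eqref{c001}, I first test the minimality of $\psi$ against its positive part $\psi^+$, which lies in $K$ since $0 \leq \psi^+ \leq \max(\psi,0) \leq \Psi$ (using $\Psi > 0$ from \eqref{c001}) and $\psi^+ = \Psi_0$ on $S$. On $\{\psi < 0\}$ one has $|\nabla \psi^+|^2 = 0$, the sets $\{\psi^+ > 0\} = \{\psi > 0\}$ agree so the Bernoulli term is unchanged, and by \eqref{c1} $F \geq 0$ on $(-\infty,0]$ with $F(\psi^+) = F(0) = 0$ there. Hence $J(\psi^+) \leq J(\psi)$, and minimality combined with these pointwise inequalities forces $\nabla \psi = 0$ a.e.\ on $\{\psi < 0\}$; since $\psi - \psi^+$ then has vanishing gradient and trace zero on $S$, connectedness of $\O$ yields $\psi = \psi^+ \geq 0$ a.e.

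\textbf{The two variational inequalities.} For \eqref{c5} I take $0 \leq \varphi \in C_c^\infty(\O)$ and $\epsilon > 0$; the competitor $\psi - \epsilon \varphi$ lies in $K$, and its Bernoulli term obeys $I_{\{\psi - \epsilon \varphi > 0\}} \leq I_{\{\psi > 0\}}$ pointwise. Expanding $J(\psi - \epsilon \varphi) - J(\psi) \geq 0$, dropping the nonpositive Bernoulli contribution, dividing by $\epsilon$ and sending $\epsilon \to 0^+$ produces \eqref{c5}. For \eqref{c6} I localize with $\varphi \in C_c^\infty(\O \cap \{\psi > 0\})$: by continuity, $\psi + \epsilon \varphi > 0$ on $\mathrm{supp}\,\varphi$ for $|\epsilon|$ small, stabilizing the Bernoulli term. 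The upper obstacle $\psi \leq \Psi$ is handled by the strict separation $\psi < \Psi$ inside $\{\psi > 0\}$ discussed below. Two-sided first variation then gives $-\Delta \psi = f(\psi)$ weakly, and the claimed $C^{2,\alpha}$-regularity on $D \Subset \O \cap \{\psi > 0\}$ follows from a Schauder bootstrap: $f(\psi) \in L^\infty$ implies $\psi \in C^{1,\alpha}$, and since $f \in C^{1,\beta}$ we get $f(\psi) \in C^\alpha$ and hence $\psi \in C^{2,\alpha}$.

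\textbf{H\"older continuity.} To establish $\psi \in C^{0,\alpha}_{\mathrm{loc}}(\O)$ without any topological assumption on the free boundary, I adapt the comparison arguments of \cite{AC1,FA2}. In each ball $B_r(x_0) \Subset \O$, I construct a suitable competitor --- for instance, the minimizer $v$ of $\int_{B_r}(|\nabla w|^2 + F(w))\,dX$ among admissible $w$ with $w = \psi$ on $\partial B_r$ and $0 \leq w \leq \Psi$, then spliced back into $\psi$ outside $B_r$ --- and use minimality to obtain an energy inequality of the form $\int_{B_r} |\nabla \psi|^2\,dX \leq \int_{B_r} |\nabla v|^2\,dX + C |B_r|$. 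Combined with interior gradient estimates for $v$ inherited from the obstacle problem and the $L^\infty$ bound from $\psi \leq \Psi$, standard Morrey--Campanato reasoning delivers H\"older continuity with some exponent $\alpha \in (0,1)$.

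\textbf{Main obstacle.} The delicate point is the two-sided perturbation in the proof of \eqref{c6}, which requires the strict inequality $\psi < \Psi$ inside $\{\psi > 0\}$. I would write $\psi - \Psi \leq 0$ as a subsolution of a linear elliptic inequality with bounded zero-order term, obtained by a mean value expansion of $f(\psi) - f(\Psi)$ together with the sign conditions \eqref{c0}, \eqref{c001} and \eqref{c5}. The strong maximum principle then excludes $\psi = \Psi$ in the interior of $\{\psi > 0\}$, opening the way for interior perturbations in either sign and completing the first-variation calculation of Step 2.
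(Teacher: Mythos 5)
Your proof is correct and follows the same variational framework as the paper, but with two genuine departures worth noting. For nonnegativity you compare with $\psi^{+}$ directly, whereas the paper works with the one-parameter interpolation $\psi^{\e}=\psi-\e\min\{\psi,0\}$, $\e\in(0,1)$, and passes to the limit; your route is a mild simplification and both are fine. The more substantive difference is the competitor used for H\"older continuity. The paper takes the \emph{unconstrained} solution $\phi$ of the semilinear Dirichlet problem $\Delta\phi+f(\phi)=0$ in $B_r$ with $\phi=\psi$ on $\p B_r$, recovers $\phi\le\Psi$ a posteriori by the maximum principle (using \eqref{c001} and $f'\le 0$) so that $\phi\in K$, and then, integrating by parts with the equation for $\phi$ and using the convexity of $F$, obtains the tight $L^2$-distance bound $\int_{B_r}|\g(\phi-\psi)|^2\,dX\le\ld_2^2\pi r^2$ in \eqref{cc2}, which is exactly the form that feeds Morrey's theorem. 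Your constrained minimizer $v$ of $\int(|\g w|^2+F(w))$ subject to $0\le w\le\Psi$ would also eventually work, but the inequality you write, $\int_{B_r}|\g\psi|^2\le\int_{B_r}|\g v|^2+C|B_r|$, is strictly weaker than the distance estimate the Morrey iteration wants; to upgrade it you would have to combine it with the first-variation inequality of the convex functional at $v$ (to control $\int_{B_r}\g(\psi-v)\cdot\g v\,dX$), and you then need interior regularity and decay for a double-obstacle problem rather than Schauder theory for a clean semilinear PDE --- more machinery for the same conclusion, and the reason the paper prefers the PDE competitor. Finally, your ``main obstacle'' paragraph is a welcome addition: the paper simply asserts that $\psi-\e\xi\le\Psi$ holds for both signs of small $\e$ without justification, and your strong-maximum-principle argument (from $\Delta(\Psi-\psi)+f'(\theta)(\Psi-\psi)\le 0$, $\Psi-\psi\ge0$, $f'\le 0$) supplies the missing reasoning --- with the minor caveat that you should note the degenerate alternative $\psi\equiv\Psi$ in $\O$, in which case \eqref{c6} follows at once by combining \eqref{c5} with \eqref{c001} so that the two-sided perturbation is not needed there.
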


\begin{proof}

(1) Let $B_r\subset\O$ and define a function $\phi$ as follows
$$\Delta\phi+f(\phi)=0\ \ \ \text{in}~~B_r,\ \ \text{and} \ \ \ \phi=\psi\ \ \text{outside}\ \
B_{r}.$$ The maximum principle gives that $\phi\leq\Psi$ in $\O$,
and thus $\phi\in K$. Then we have
\be\label{cc1}\ba{rl}0\leq&J(\phi)-J(\psi)\\
=&\int_{B_r} -|\nabla(\phi-\psi)|^2+2
\nabla(\phi-\psi)\cdot\nabla\phi +F(\phi)-F(\psi)dX\\
&+\int_{B_r}\ld^2(X)(I_{\{\phi>0\}}-I_{\{\psi>0\}})dX\\
\leq&\int_{B_r} -|\nabla(\phi-\psi)|^2
dX+\ld_2^2\int_{B_r}I_{\{\phi>0\}} dX,\ea\ee which implies that
\be\label{cc2}\ba{rl}\int_{B_r} |\nabla(\phi-
\psi)|^2dX\leq\ld_2^2\int_{B_r}I_{\{\phi>0\}} dX\leq\ld_2^2\pi
r^2.\ea\ee With the aid of gradient $L^2$-estimate in \eqref{cc2},
we can now use the method of Morrey in Theorem 5.3.6 in \cite{MB} to
deduce the H\"{o}lder continuity of the minimizer.

Set $\psi^\e=\psi-\e\min\{\psi,0\}$ for any $\e\in(0, 1)$. It is
clear that $\psi^\e\in K$ and $\psi^\e\geq\psi$ in $\O$. Furthermore,
$\psi>0$ if and only if $\psi^\e>0$ in $\O$, and one has
\be\label{c3}\ba{rl}0\leq&J(\psi^\e)-J(\psi)\\
 \leq&\int_{\O}((1-\e)^2-1)|\nabla\min\{\psi,0\}|^2-\e
 F'((1-\e)\min\{\psi,0\})\min\{\psi,0\}
dX.\ea\ee Since $F'(t)\leq 0$ for any $t\leq0$ in \eqref{c1}, it follows from \eqref{c3} that
$$\int_{\O}|\nabla\min\{\psi,0\}|^2dX\leq 0,$$
which implies that
$$\text{$\psi(x,y)\geq 0$  in $\O$}.$$

(2) For any $\xi\in C_0^{\infty}(\O)$ with $\xi\geq 0$ in $\O$, it
follows from the statement (1) that $\psi-\e\xi\in K$ and
$\{\psi-\e\xi>0\}\subset\{\psi>0\}$ for any $\e>0$, then we have
$$\ba{rl}0\leq&\lim_{\e\rightarrow 0}\left(\f1\e\int_{\O}
|\nabla\psi-\e\g\xi|^2-|\nabla \psi|^2+F(\psi-\e\xi)-F(\psi)
dX\right)\\
=&-\int_{\O}2\nabla\psi\cdot\nabla\xi+F'(\psi)\xi dX,\ea$$ which
gives \eqref{c5}.

By virtue of the continuity of $\psi$, we have that $\O\cap\{\psi>0\}$
is open. For any $\xi\in C_0^{\infty}(\O\cap\{\psi>0\})$, it
is easy to check that $\psi(x,y)-\e\xi(x,y)\leq \Psi(x,y)$ in $\O$
and $\{\psi-\e\xi>0\}\subset\{\psi>0\}$ for small $|\e|>0$. Obviously,
$\psi-\e\xi\in K$ and we have
$$\ba{rl}0\leq-\e\int_{\O}2\nabla\psi\cdot\nabla\xi+F'(\psi-\e\xi)\xi
dX+o(\e),\ea$$ which implies that
\be\label{c06}0=\int_{\O}2\nabla\psi\cdot\nabla\xi+F'(\psi-\e\xi)\xi
dX \ \ \ \text{for any $\e$}.\ee Taking $\e\rightarrow 0$ in \eqref{c06} yields
\eqref{c6}.

It follows from the Schauder interior estimate in \cite{GT} that
$\psi\in C^{2,\alpha}(D)$ for any compact subset $D$ of
$\O\cap\{\psi>0\}$.

\end{proof}

Denote
$$\Gamma=\O\cap\p\{\psi>0\}\ \ \text{ the free boundary of $\psi$}.$$

The dynamic boundary condition on the free boundary $\Gamma$ can be verified in the following.
\begin{proposition}\label{lb2} If $\ld(X)\in C^{1,\beta}(\bar\O)$, for any minimizer $\psi$, we have \be\label{b005}\lim_{\e\downarrow
0}\int_{\O\cap\p\{\psi>\e\}}\left(|\nabla
\psi|^2-\ld^2(X)-F(\psi)\right)\eta\cdot\nu_\e dS=0, \ee for any 2-vector
$\eta\in \left(C_0^1(\O)\right)^2$, where $\nu_\e$ is the outer normal vector to $\O\cap\p\{\psi>\e\}$ with $\e>0$. Similarly, if a segment $l\subset
\p\O\cap\p\{\psi>0\}$ is $C^{1,\alpha}$ and $\psi=0$ on $l$, we have
 \be\label{b006}|\nabla
\psi|\geq\ld(X)\ \ \text{on $l$}. \ee
\end{proposition}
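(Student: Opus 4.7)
The plan is to establish \eqref{b005} via an \emph{inner (domain) variation} of $J$ driven by the minimality of $\psi$, and to obtain \eqref{b006} from a one-sided outer variation that shrinks the positivity set near $l$. For \eqref{b005}, I would fix $\eta\in(C_0^1(\O))^2$ and consider the pushforward $\psi_t=\psi\circ\tau_t^{-1}$ with $\tau_t(X)=X+t\eta(X)$ for $|t|$ small. The trace $\psi_t|_S=\Psi_0$ is preserved because $\eta$ is compactly supported, and the obstacle constraint $\psi_t\le\Psi$ holds since $\Psi>0$ in $\O$ by \eqref{c001} and $\psi<\Psi$ in a neighborhood of the free boundary (reducing via cut-offs to that case). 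Hence $\psi_t\in K$, and the symmetry $t\mapsto -t$ yields $\frac{d}{dt}J(\psi_t)|_{t=0}=0$.

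A standard change of variables and differentiation at $t=0$ then produce the Euler--Lagrange identity
\[
\int_\O\Bigl[\bigl(|\g\psi|^2+F(\psi)+\ld^2 I_{\{\psi>0\}}\bigr)\Div\eta-2\p_i\eta_j\,\p_i\psi\,\p_j\psi\Bigr]dX+\int_{\{\psi>0\}}2\ld\,\g\ld\cdot\eta\,dX=0.
\]
I would next restrict the volume integral to $\{\psi>\e\}$; the error on the strip $\{0<\psi\le\e\}$ is controlled by $\int|\g\psi|^2$ there, which vanishes as $\e\downarrow0$ since $\psi$ is real-analytic on $\{\psi>0\}$ by Lemma \ref{lb3} and has no positive-measure level sets. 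By Sard's theorem, for a.e.\ $\e>0$ the level set $\{\psi=\e\}$ is a smooth curve with outer normal $\nu_\e=-\g\psi/|\g\psi|$. On $\{\psi>\e\}$, the equation $\Delta\psi+f(\psi)=0$ combined with $F'=-2f$ makes the tensor $(|\g\psi|^2+F(\psi))\delta_{ij}-2\p_i\psi\p_j\psi$ divergence-free, so integration by parts and the identity $\p_j\psi(\g\psi\cdot\nu_\e)=-|\g\psi|^2\nu_{\e,j}$ collapse the volume term to $-\int_{\O\cap\p\{\psi>\e\}}(|\g\psi|^2-F(\psi)-\ld^2)\eta\cdot\nu_\e\,dS$ (the $2\ld\g\ld\cdot\eta$ volume terms cancel). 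Passing $\e\downarrow0$ along a good sequence yields \eqref{b005}.

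For \eqref{b006}, I would argue by contradiction: if $|\g\psi(X_0)|<\ld(X_0)$ at some $X_0\in l$, then since $l\in C^{1,\a}$ with $\psi=0$ on $l$ and $\psi\in C^{2,\a}(\O\cap\{\psi>0\})$, boundary Schauder estimates give $\psi\in C^{1,\a}$ up to $l$ near $X_0$, so $|\g\psi|\le\ld-\delta$ on a neighborhood $U$. Construct a competitor $\tilde\psi\in K$ coinciding with $\psi$ outside $U$ via a small inward deformation of $l$ sweeping area $\Delta A$. The expansion
\[
J(\tilde\psi)-J(\psi)=\bigl(|\g\psi(X_0)|^2-\ld^2(X_0)\bigr)\Delta A+o(\Delta A)<0,
\]
modeled on arguments in \cite{AC1,FA2}, contradicts the minimality of $\psi$. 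I expect the main obstacle to be the rigorous passage $\e\downarrow 0$ in paragraph two, because Lipschitz continuity (Theorem \ref{lb5}) is not yet available, so the surface integrals over $\{\psi=\e\}$ must be controlled using only the $L^2$ bound on $\g\psi$ together with the Sard-type avoidance of critical values of $\psi$.
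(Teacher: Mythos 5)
Your approach for \eqref{b005} is precisely the one the paper uses: you perform the inner (domain) variation $\psi_t=\psi\circ\tau_t^{-1}$, extract the first-variation identity, recognize the integrand as a divergence (equivalently, that $(|\g\psi|^2+F(\psi))\delta_{ij}-2\p_i\psi\p_j\psi$ is divergence-free once $\Delta\psi+f(\psi)=0$ and $F'=-2f$ are used), and then apply the divergence theorem on $\{\psi>\e\}$ and send $\e\downarrow0$; this is verbatim what happens in \eqref{b007}--\eqref{b008}. Your worry about the passage $\e\downarrow0$ is, however, overstated: since $\eta$ has compact support in $\O$, the vector field $\vec V=(|\g\psi|^2+\ld^2+F(\psi))\eta-2(\eta\cdot\g\psi)\g\psi$ is $C^1$ on each $\overline{\{\psi>\e\}}\cap\operatorname{supp}\eta$ (compactly inside $\{\psi>0\}$), and for regular values $\e$ the divergence theorem makes the surface integral \emph{equal} to $\int_{\{\psi>\e\}}\Div\vec V\,dX$. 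The latter converges to $\int_{\{\psi>0\}}\Div\vec V\,dX=0$ by dominated convergence using only $\g\psi\in L^2_{\mathrm{loc}}$; no Lipschitz estimate is needed, and the surface integrals never have to be bounded directly.

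For \eqref{b006}, the paper argues directly: take a one-sided admissible vector field with $\eta\cdot\nu\le0$ on $l$, $\eta=0$ elsewhere, and $\delta>0$; minimality gives $0\le\delta\int_l(\ld^2-|\g\psi|^2+F(\psi))\eta\cdot\nu\,dS+o(\delta)$, and since $F(\psi)=F(0)=0$ on $l$ and $\eta\cdot\nu$ is an arbitrary nonpositive function, $|\g\psi|\ge\ld$ on $l$ follows immediately. Your contradiction argument with an explicit competitor $\tilde\psi$ sweeping area $\Delta A$ is the same first variation in disguise, and the asserted expansion $J(\tilde\psi)-J(\psi)=(|\g\psi(X_0)|^2-\ld^2(X_0))\Delta A+o(\Delta A)$ is indeed what the domain-variation computation yields once one writes $\Delta A=-\delta\int_l\eta\cdot\nu\,dS$. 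The one real gap is that you never actually construct $\tilde\psi$ or verify the $o(\Delta A)$ term; you defer to ``arguments modeled on \cite{AC1,FA2}.'' The cleanest way to fill this in is precisely to define $\tilde\psi$ by pushforward along a one-sided deformation and extend by zero on the swept strip, as the paper does — at which point you may as well state the one-sided variational inequality directly rather than pass through a contradiction.
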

\begin{proof}We define a diffeomorphism
$$Y=\xi_\delta(X):\O\rightarrow\O$$ by $\xi_\delta(X)=X+\delta\eta(X)$ for any
$\eta(X)\in \left(C_0^1(\O)\right)^2$, where $\delta$ is a real
number and $|\delta|>0$ is suitable small.

Denote
$$\text{$\psi_\delta(\xi_\delta(X))=\psi(X)$}.$$ It's easy to
verify that $\psi_\delta\in K$ and
$$(D\xi_\delta(X))^{-1}=(E+\delta\nabla\cdot\eta E-\delta D \eta)(\text{det} D\xi_\delta)^{-1}\quad \text{and}\quad \text{det} D\xi_\delta
=1+\delta\nabla\cdot\eta +o(\delta),$$ where $E$ is the identity
matrix. Then we have \be\label{b007}\ba{rl}0\leq&\int_{\O}\left(|\nabla\psi_\delta|^2+F(\psi_\delta)+\ld^2I_{\{\psi_\delta>0\}}\right)dY-\int_{\O}\left(|\nabla\psi|^2+F(\psi)+\ld^2I_{\{\psi>0\}}\right)dX\\
=&\int_{\O}\left(|\nabla\psi(D\xi_\delta)^{-1}|^2+F(\psi)+\ld^2(\xi_\delta(X))I_{\{\psi>0\}}\right)\text{det}
D\xi_\delta
dX\\
&-\int_{\O}\left(|\nabla\psi|^2+F(\psi)+\ld^2I_{\{\psi>0\}}\right)dX\\
=&\delta\int_{\O\cap\{\psi>0\}}\left((|\nabla\psi|^2+F(\psi)+\ld^2)\nabla\cdot\eta-2\nabla\psi\cdot
D \eta\cdot\nabla\psi+\g\ld^2\cdot\eta\right)dX+o(\delta).\ea\ee

Due to the arbitrariness of $\delta$, the linear term of
\eqref{b007} in $\delta$ has to vanish, and this gives that
\be\label{b008}\ba{rl}0=&\int_{\O\cap\{\psi>0\}}\left((|\nabla\psi|^2+F(\psi)+\ld^2)\nabla\cdot\eta-2\nabla\psi\cdot
D \eta\cdot\nabla\psi+\g\ld^2\cdot\eta\right)dX\\
=&
\int_{\O\cap\{\psi>0\}}\nabla\cdot\left((|\nabla\psi|^2+\ld^2+F(\psi))\eta-2(\eta\cdot\nabla\psi)\nabla\psi\right)dX\\
=&\lim_{\e\downarrow
0}\int_{\O\cap\p\{\psi>\e\}}\left((|\nabla\psi|^2+\ld^2+F(\psi))\eta-2(\eta\cdot\nabla\psi)\nabla\psi\right)\cdot\nu_\e dS\\
=&\lim_{\e\downarrow
0}\int_{\O\cap\p\{\psi>\e\}}\left(\ld^2(X)-|\nabla\psi|^2+F(\psi)\right)\eta\cdot\nu_\e
dS.\ea\ee

Hence, we obtain the conclusion \eqref{b005}. To prove \eqref{b006}, define $\xi_\delta(X)=X+\delta\eta(X)$ for
any $\eta(X)\in \left(C^1(\O\cap\p\{\psi>0\})\right)^2$ with
$\eta=0$ on $(\O\cap\p\{\psi>0\})\setminus l$ and $\eta\cdot\nu\leq
0$ on $l$, where $\delta>0$ is suitable small and $\nu$ is the outer
normal vector of $l$.

Along the similar arguments in \eqref{b007} and \eqref{b008}, we
have
\be\label{b009}\ba{rl}0\leq&\int_{\O\cap\{\psi>0\}}\left((|\nabla\psi|^2+F(\psi)+\ld^2)\nabla\cdot\eta-2\nabla\psi\cdot
D \eta\cdot\nabla\psi+\g\ld^2(X)\cdot\eta\right)dX\\
=&
\int_{\O\cap\{\psi>0\}}\nabla\cdot\left((|\nabla\psi|^2+\ld^2+F(\psi))\eta-2(\eta\cdot\nabla\psi)\nabla\psi\right)dX\\
=&\int_{l}\left(\ld^2(X)-|\nabla\psi|^2+F(\psi)\right)\eta\cdot\nu
dS.\ea\ee Since $l\in C^{1,\alpha}$, $\psi$ is also $C^{1}$ up to
$l$. Therefore, we complete the proof of \eqref{b006} by using
\eqref{b009}.

\end{proof}
\begin{remark} Proposition \ref{lb2} implies that if the free boundary
$\Gamma$ is $C^1$ and $\psi$ is $C^1$ in $\{\psi>0\}$
uniformly up to the free boundary, then
$$|\g\psi(X)|=\ld(X)\ \ \text{on the free boundary $\Gamma$}.$$
\end{remark}

Next, we will give the estimate for the Lipschitz norm for $\psi$ near the free boundary, this is the principal part in establishing the Lipschitz continuity of the minimizer $\psi$.
Denote $$\O_+=\O\cap\{\psi>0\}\ \ \text{and}\ \
\O_0=\O\cap\{\psi=0\}.$$
\begin{lemma}\label{lb4} Suppose $X_0=(x_0,y_0)\in\O$ with
$d(X_0)<\f12\min\{\text{dist} (X_0,\p\O),1\}$, where
$d(X_0)=\text{dist}(X_0,\O_0)$. Then there exists a positive constant $C=C(\L)$ depending only on $\L$, such that
$$\psi(X_0)\leq C(\ld_2+\L) d(X_0),$$ where $\L$ is defined in \eqref{c0}.
\end{lemma}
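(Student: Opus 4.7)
This is the classical linear growth estimate from the free boundary for non-negative minimizers, due to Alt--Caffarelli (Lemma 3.2 of \cite{AC1}) in the homogeneous case $f\equiv 0$ and to Friedman \cite{FA2} for constant source, extended here to accommodate the nonlinear source $f(\psi)$. The strategy is to exploit the minimality of $\psi$ against a competitor that flattens $\psi$ on a ball touching the free boundary, tapping the savings hidden in the $\ld^2 I_{\{\psi>0\}}$ term of $J$.

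Set $r=d(X_0)$ and pick $Y_0\in\overline{B_r(X_0)}$ with $\psi(Y_0)=0$; the hypothesis yields $B_{2r}(X_0)\subset\subset\O$, and by Lemma~\ref{lb3} the minimizer solves $-\Delta\psi=f(\psi)$ classically in $B_r(X_0)$. I would first separate the inhomogeneity by writing $\psi=h+g$ on $B_r(X_0)$, with $h$ the harmonic extension of $\psi|_{\partial B_r(X_0)}$ and $g$ solving $-\Delta g=f(\psi)$ with zero boundary data. Since $|f(\psi)|$ is controlled by $\L$ on the range of $\psi\ge 0$ (via \eqref{c0}), standard Poisson estimates give $\|g\|_{L^\infty(B_r)}\leq C\L r^2$, while the mean value property for the nonnegative harmonic function $h$ yields
$$\psi(X_0)\leq \frac{1}{|\partial B_r(X_0)|}\int_{\partial B_r(X_0)}\psi\,dS+C\L r^2.$$

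The remaining and most delicate step is to bound this boundary average by $C(\ld_2+\L)r$. I would invoke minimality against a test function $\phi\in K$ which agrees with $\psi$ outside a ball $B_R$ of radius $R\sim r$ (centered at or near $X_0$) and inside is replaced by a truncation vanishing on a set of Lebesgue measure of order $r^2$ concentrated near $Y_0$; a convenient choice is $\phi=\min(\psi,w)$ where $w$ is a barrier interpolating between $\psi|_{\partial B_R}$ and $0$ near $Y_0$. Expanding $J(\phi)-J(\psi)\geq 0$, the Dirichlet part, after integration by parts using $-\Delta\psi=f(\psi)$, reduces to a quantity linear in the boundary average of $\psi$; the characteristic part provides a definite gain of order $-\ld_1^2 r^2$; and the $F$-cross term contributes an error of order $\L r^2\sup_{B_R}\psi$. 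Rearranging produces the sought linear bound on the boundary average, and combining with the display above completes the proof.

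The main obstacle is the careful design of the competitor together with the control of the cross contribution $\int(F(\phi)-F(\psi))\,dX$, which is absent in \cite{AC1} but here threatens to absorb the beneficial $-\ld_1^2 r^2$ gain. Using \eqref{c1} and the H\"older continuity from Lemma~\ref{lb3}, one restricts attention to scales $r<\frac12$ where $\sup_{B_{2r}}\psi$ is controlled by a constant depending only on $\Psi$, and then absorbs the $F$-contribution into the $O((\ld_2+\L)r)$ part of the final estimate.
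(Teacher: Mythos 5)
Your first step is sound: the hypothesis $d(X_0)<\tfrac12\min\{\mathrm{dist}(X_0,\p\O),1\}$ puts $B_r(X_0)\subset\subset\O\cap\{\psi>0\}$ with $r=d(X_0)$, so $-\Delta\psi=f(\psi)$ holds classically there, and the decomposition $\psi=h+g$ with $\|g\|_{L^\infty}\leq C\L r^2$ indeed reduces the claim to bounding $\fint_{\p B_r(X_0)}\psi$ by $C(\ld_2+\L)r$. (The paper reaches the same reduction implicitly by rescaling $\psi_d(X)=\psi(X_0+d(X-X_0))/d$, rather than via $h+g$.)

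The problem is in the second step. Your competitor $\phi=\min(\psi,w)$ is a \emph{truncation}, which shrinks $\{\psi>0\}$ and yields $\int\ld^2\bigl(I_{\{\phi>0\}}-I_{\{\psi>0\}}\bigr)\leq-\,c\ld_1^2 r^2$. Rearranging $0\leq J(\phi)-J(\psi)$ then forces the Dirichlet and $F$-terms to be at least $c\ld_1^2 r^2$; that is a \emph{lower} bound on $\int|\nabla(\phi-\psi)|^2$, not an upper bound on the boundary average. This is exactly the mechanism of the non-degeneracy lemma (Lemma~\ref{lb7}, the analogue of Lemma~3.4 in \cite{AC1}), which proceeds in the opposite direction: small boundary data plus a truncation competitor gives a contradiction unless $\psi\equiv0$. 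It cannot, as written, yield the upper bound $\fint_{\p B_r}\psi\leq C(\ld_2+\L)r$. Your assertion that ``the Dirichlet part\ldots reduces to a quantity linear in the boundary average of $\psi$'' is not justified for a $\min$-competitor, and the sign structure works against it.

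What the linear-growth estimate actually requires is the Lemma~3.2 machinery from \cite{AC1}, adapted to the source term. The paper's proof proceeds by contradiction from $\psi(X_0)>Md(X_0)$, rescales to $B_1(X_0)$, and applies Harnack's inequality to $\psi_d$ to spread $\inf_{B_{3/4}(X_0)}\psi_d\geq cM-C\L$. It then picks $Y\in\p B_1(X_0)\cap\{\psi_d=0\}$ and forms the \emph{$f$-replacement} $\phi$ of $\psi_d$ in $B_1(Y)$ (solving $\Delta\phi+df(d\phi)=0$ with $\phi=\psi_d$ on $\p B_1(Y)$); minimality yields the Dirichlet inequality $\int_{B_1(Y)}|\nabla(\phi-\psi_d)|^2\leq\ld_2^2\int_{B_1(Y)}I_{\{\psi_d=0\}}$, subharmonicity of $\psi_d$ gives $\phi\geq\psi_d$, Harnack is applied a second time to $\phi$, and the explicit barrier $C_0\bigl(e^{-\beta|X|^2}-e^{-\beta}\bigr)$ shows $\phi(X)\geq(cM-C\L)(1-|X|)$ in $B_1(Y)$. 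Feeding this cone lower bound into the Dirichlet inequality (following Lemma~3.2 in \cite{AC1} and Lemma~2.2 in \cite{ACF8}) yields $(cM-C\L)^2\leq C\ld_2^2$ and hence $M\leq C(\ld_2+\L)$. None of these three ingredients — the replacement competitor, Harnack's inequality, the barrier — appears in your outline, and the argument does not close without them. Revisit the distinction between truncation competitors (which give non-degeneracy, Lemma~\ref{lb7}) and replacement competitors (which give the linear growth bound, Lemma~\ref{lb4}).
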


\begin{proof} We assume that
\be\label{c7}\psi(X_0)>Md(X_0)>0,\ee and derive an upper bound on
$M$. Denote $d=d(X_0)$ for simplicity. By scaling
$\psi_d(X)=\f{\psi(X_0+d(X-X_0))}{d}$ and
$\ld_d(X)=\ld(X_0+d(X-X_0))$ with $X\in B_1(X_0)$, we have
$$\text{$\Delta\psi_d+df(d\psi_d)=0 \ $ in $B_1(X_0)$.}$$

On the other hand, it is easy to check that
$$|df(d\psi_d)|\leq d|f(0)|+d^2\max_{\xi\geq0}|f'(\xi)|\psi_d\leq d\L+d^2\L\psi_d.$$
Thanks to Harnack's inequality in \cite{T} and \eqref{c7}, we have
\be\label{c07}\inf_{B_{\f34}(X_0)}\psi_d\geq cM-Cd\L\geq cM-C\L,\ee
where the constants $c$ and $C$ depend on $\L$.

Let $Y$ be a point in $\p B_1(X_0)\cap \{\psi_d=0\}$. Define a
function $\phi$ as follows
$$\Delta\phi+df(d\phi)=0\ \ \ \text{in}~~B_1(Y),\ \ \text{and} \ \ \ \phi=\psi_d\ \ \text{on}\ \ \p B_{1}(Y).$$

Since $\psi_d$ is a minimizer, by virtue of the convexity of $F(t)$,
one has
$$\ba{rl}0\leq&\int_{B_1(Y)} |\nabla\phi|^2-|\nabla \psi_d|^2+F(d\phi)-F(d\psi_d)
+\ld_d^2(I_{\{\phi>0\}}-I_{\{\psi_d>0\}})dX\\
\leq&\int_{B_1(Y)} -|\nabla(\phi-\psi_d)|^2+2\nabla(\phi- \psi_d)\cdot\g\phi dX\\
&+\int_{B_1(Y)}dF'(d\phi)(\phi-\psi_d)
+\ld_d^2I_{\{\psi_d=0\}}dX\\
\leq&-\int_{B_1(Y)} |\nabla(\phi- \psi_d)|^2
dX+\ld_2^2\int_{B_1(Y)}I_{\{\psi_d=0\}} dX,\ea$$ and thus \be\label{c9}\ba{rl}\int_{B_1(Y)} |\nabla(\phi- \psi_d)|^2
dX\leq\ld_2^2\int_{B_1(Y)}I_{\{\psi_d=0\}} dX.\ea\ee
It follows from the statement (2) in Lemma \ref{lb3} that $\Delta\psi_d+
df(d\psi_d)\geq0$ in $B_1(Y)$, and the maximum principle gives that
\be\label{c8}\phi\geq \psi_d\ \ \text{in}\ \ B_1(Y).\ee

 Then it follows from \eqref{c07} and \eqref{c8} that
$$\phi(X)\geq \psi_d(X)\geq cM-C\L>0\ \ \text{in}\ \ B_{\f34}(X_0)\cap
B_1(Y).$$ Using Harnack's inequality in \cite{T} again, we have
\be\label{c10}\phi(X)\geq cM-C\L=C_0>0\ \ \text{in}\ \
B_{\f12}(Y).\ee

We take $Y=0$ for simplicity and introduce a function
$$\varphi(X)=C_0\left(e^{-\beta|X|^2}-e^{-\beta}\right).$$ Then a direct computation gives that
$$\Delta\varphi+df(d\varphi)\geq C_0e^{-\beta |X|^2}(4\beta(\beta|X|^2-1)-d^2\L)+C_0d^2\L e^{-\beta}+df(0)>0,$$ for $\f12<|X|<1$, and sufficiently large $\beta>0$. This gives that
$$\Delta(\varphi-\phi)+df(d\varphi)-df(d\phi)>0\ \ \text{in}\ \ B_1(0)\setminus
B_{\f12}(0),$$ provided that $\beta$ is sufficiently large. It
follows from the maximum principle that
$$\phi(X)>\varphi(X)=C_0\left(e^{-\beta|X|^2}-e^{-\beta}\right)\geq cC_0(1-|X|)\ \ \text{in}\ \ B_1(0)\setminus
B_{\f12}(0),$$ which implies that
$$\phi(X)> (cM-C\L)(1-|X|)\ \ \text{in}\ \ B_1(0)\setminus
B_{\f12}(0).$$ This together with \eqref{c10} gives that
\be\label{c11}\phi(X)\geq (cM-C\L)(1-|X|)\ \ \text{in}\ \ B_1(0).\ee

With the aid of \eqref{c9} and \eqref{c11}, it follows from the
similar arguments in Lemma 3.2 in \cite{AC1} and Lemma 2.2 in
\cite{ACF8} that
$$(cM-C\L)^2\leq C\max_{X\in \O} \ld^2(X)=C\ld_2^2,$$ that is
$$M\leq C(\ld_2+\L),$$ where the constant $C=C(\L)$ depends only on $\L$.

\end{proof}

With the aid of Lemma \ref{lb4}, we will obtain the Lipschitz
continuity of $\psi$, which plays an essential part for constructing a Radon measure. To obtain the Lipschitz continuity, we should use the method of Harnack's inequality in \cite{T}.
\begin{theorem}\label{lb5} (1). The minimizer $\psi$ is Lipschitz continuous in $\O$, namely, $\psi\in C^{0,1}(\O)$. \\
(2). For any compact
subset $D$ of $\O$ which containing a free boundary point, the
Lipschitz coefficient of $\psi$ in $D$ is estimated by
$C(\ld_2+\L)$, and the constant $C$ depends only on $\L$, $D$ and $\O$.
\end{theorem}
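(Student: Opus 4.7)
The plan is to upgrade the linear growth estimate of Lemma \ref{lb4} to a pointwise bound on $|\nabla \psi|$ via interior elliptic regularity, and then deduce Lipschitz continuity. Since $\psi \in C^{0}(\overline{\O})$ with $\psi = 0$ on $\O_0$, it suffices to prove $|\nabla \psi(X_0)| \leq C(\ld_2 + \L)$ at every $X_0 \in D \cap \O_+$: combined with the fact that $\nabla\psi = 0$ almost everywhere on the interior of $\O_0$, this gives $|\nabla\psi| \in L^\infty$ in the weak sense on $D$, hence Lipschitz regularity with the asserted constant (paths between points being joined through at most a single crossing of $\O_0$, where $\psi$ vanishes).

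For $X_0 \in D \cap \O_+$ with $d := d(X_0)$ small and $B_d(X_0) \subset \O$ (which holds once $d < \delta_0(D,\O)$), every point of $B_{d/2}(X_0)$ lies within distance $3d/2$ of $\O_0$, so Lemma \ref{lb4} applied pointwise gives
\[
\sup_{B_{d/2}(X_0)} \psi \leq C(\ld_2 + \L)\, d.
\]
Next I would rescale to unit size by setting $\t\psi(Y) = \psi(X_0 + (d/2)Y)/d$ on $B_1(0)$; this rescaled function satisfies $-\Delta \t\psi = (d/2) f(d\,\t\psi)$, has $\|\t\psi\|_{L^\infty(B_1)} \leq C(\ld_2 + \L)$, and its right-hand side is bounded by $(d/2)(\L + d\L \|\t\psi\|_\infty)$, which is lower-order as $d \to 0$. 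Standard interior gradient estimates for the Poisson equation (obtained via Harnack's inequality as in Trudinger \cite{T} together with Schauder theory on $\O_+$ from Lemma \ref{lb3}) then furnish $|\nabla \t\psi(0)| \leq C(\ld_2 + \L)$, and unrolling the scaling returns $|\nabla \psi(X_0)| \leq C(\ld_2 + \L)$.

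For $X_0 \in D$ with $d(X_0) \geq \delta_0$, the function $\psi$ solves $-\Delta \psi = f(\psi)$ classically on a ball of fixed size, and its $L^\infty$ norm is controlled on $\O$ via Lemma \ref{lb4} applied along a chain of overlapping balls running from a free boundary point; standard interior Schauder estimates then provide the gradient bound, with a constant depending additionally on $\delta_0$, $D$, $\O$.

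The main technical hurdle is the scaling calibration: one must track the dependence of all constants through the rescaling to ensure the Lipschitz coefficient scales linearly in $\ld_2 + \L$, rather than acquiring an extra factor from the nonlinearity. This hinges on the observation that the rescaled nonlinearity $(d/2)f(d\t\psi)$ is subordinate to $-\Delta$ on the unit scale for small $d$, so that the interior regularity constants remain uniform as $d \to 0$ and the overall dependence on $\ld_2$ and $\L$ enters only through the $L^\infty$ bound on $\t\psi$ supplied by Lemma \ref{lb4}.
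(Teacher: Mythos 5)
Your proposal follows essentially the same line as the paper's proof: for $X_0 \in \O_+$ with small $d(X_0)$, apply Lemma~\ref{lb4} pointwise in $B_{d/2}(X_0)$ to get a linear-in-$d$ supremum bound, rescale to unit size where the nonlinearity becomes subordinate ($-\Delta\t\psi = O(d)\,f(d\t\psi)$), and invoke interior gradient estimates; away from the free boundary, connect to a free boundary point by a chain of balls and use the resulting $L^\infty$ bound together with interior Schauder. One small imprecision: along the chain the paper propagates the $L^\infty$ bound via Harnack's inequality rather than by Lemma~\ref{lb4} (which only applies near $\O_0$); your phrasing attributes the chain step to Lemma~\ref{lb4}, but the intended mechanism is clearly the same and this does not affect the argument.
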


\begin{proof} (1). Suppose that
$d(X_0)<\f13\min\{\text{dist} (X_0,\p\O),1\}$, where
$d(X_0)=\text{dist}(X_0,\O_0)$. Set
$$\t\psi(\t X)=\f{\psi(X_0+d(X_0)\t X)}{d(X_0)}.$$
By virtue of Lemma \ref{lb4}, one has
$$\t\psi(\t X)\leq C(\ld_2+\L)\ \ \text{in}\ \ B_1(0),$$
where the constant $C=C(\L)$ depends only on $\L$.

Since $\Delta\t\psi+d(X_0)f(d(X_0)\t\psi)=0$ in $B_1(0)$,  it follows
from the elliptic estimate for $\t\psi$ that
$$\ba{rl}|\nabla\t\psi(0)|\leq& C(\|\t\psi\|_{C^0(B_1(0))}+\|f(d(X_0)\t\psi)\|_{L^\infty(B_1(0))} d(X_0))\\
\leq& C(1+d^2(X_0)|f(0)|+d^2(X_0)\L)\|\t\psi\|_{C^0(B_1(0))}\\
\leq &C(\ld_2+\L),\ea$$ which implies that
$$|\nabla\psi(X_0)|=|\g\t\psi(0)|\leq C(\ld_2+\L).$$
 Thus, for any compact subset $D$ of $\O$, $|\nabla\psi(X)|$
 is bounded in $D\cap\{\psi>0\}\cap U$, where $U$ is a small
 neighborhood of the free boundary. It follows from
 Lemma \ref{lb3} that $\psi\in C^{2,\alpha}$ in
 $\O\cap\{\psi>0\}$, this together with $\g\psi=0$ a.e. in $\O\cap\{\psi=0\}$ gives that $\psi\in
 C^{0,1}(\O)$.

(2). Consider any connected domains $D\subset\subset G\subset\subset\O$
and $D$ contains at least one free boundary point. Let
$r_0=\f15\min\{\text{dist} (G,\p\O),1\}$ and $X\in G\cap \O_+$. Since
$D$ contains some free boundary points, and we have that $G$ is not
contained in $\O_+$, then we can find some finite points $X_1, X_2,. . . , X_m$ in
$G$ ($m$ depends only on $G$ and $\O$), such that
$$X_0=X,\ \ X_k\in B_{r_0}(X_{k-1}) \ \text{for}\ \ k=1,...,m,$$ and
$$ B_{2r_0}(X_{k})\subset \O_+\ \text{for}\ \ k=1,...,m-1, \text{and $B_{2r_0}(X_m)$ is not contained in $\O_+$}.$$
It follows from Lemma \ref{lb3} and Lemma \ref{lb4} that
$$\psi(X_m)\leq C(\ld_2+\L)r_0\ \ \text{and} \ \ \Delta\psi+f(\psi)=0\ \ \text{in}\ \  B_{2r_0}(X_{k})\ \text{for}\ \ k=1,...,m-1.$$
By virtue of Harnack's inequality in \cite{T}, we have
$$\psi(X_{k-1})\leq C\psi(X_k)+C\L r_0\ \text{for}\ \ k=1,...,m-1,$$and $$\psi(X_{m-1})\leq C\inf_{X\in B_{2r_0}(X_{m-1})}\psi(X)+C\L r_0\leq C\psi(X_m)+C\L r_0.$$
Inductively, we have \be\label{c01}\psi(X)=\psi(X_0)\leq C(\ld_2+\L)
\ \ \text{for all $X\in G$}.\ee

For any $X\in D\cap \O_+$ and $d(X)=\text{dist}(X,\O_0)$. We consider the
following two cases.

{\bf Case 1.} $d(X)\geq\f12\min\{dist(D,\p G),1\}$, by using
elliptic estimate and \eqref{c01}, one has
$$|\g\psi(X)|\leq C\left(\sup_{G}\psi+\sup_{G}|f(\psi)|\right)\leq C(\ld_2+\L),$$
where $C$ is a constant depending only on $\L$, $D, G$ and $\O$.

{\bf Case 2.} $d(X)<\f12\min\{dist(D,\p G),1\}$, it follows from
Lemma \ref{lb4} that
$$|\g\psi(X)|\leq C(\ld_2+\L),$$
where $C$ is a constant depending only on $\L$.

Hence, we complete the proof of Theorem \ref{lb5}.

\end{proof}

\subsection{Non-degeneracy of the minimizer}

 As a consequence of Theorem \ref{lb5}, we will obtain the non-degeneracy lemma of the minimizer $\psi$ in this subsection.
\begin{lemma}\label{lb6}Let $\psi$ be a minimizer, for any compact subset $D$ of $\O$, there exists a positive constant $C^*$ (depending only on $\L, D$ and $\O$), such that for any disc
$B_r(X_0)\subset D$, \be\label{c12}\f{1}{r}\fint_{\p B_r(X_0)}\psi
dS\geq C^*(\lambda_2+\L)\ee implies that
$$\psi(x,y)>0\ \ ~~~~\text{in} \ ~~B_r(X_0).$$
\end{lemma}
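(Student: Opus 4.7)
The plan is to argue by contradiction: suppose that there exists a point $X^*\in B_r(X_0)$ with $\psi(X^*)=0$, and then establish an upper bound of the form $\frac{1}{r}\fint_{\partial B_r(X_0)}\psi\,dS\leq C^*(\ld_2+\L)$. Choosing the constant in \eqref{c12} larger than the $C^*$ produced by this bound then contradicts the hypothesis, forcing $\{\psi=0\}\cap B_r(X_0)=\emptyset$, which together with $\psi\geq0$ (Lemma \ref{lb3}) gives $\psi>0$ in $B_r(X_0)$.

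The main tool is the linear growth estimate of Lemma \ref{lb4}. For every $Y\in\partial B_r(X_0)$, the hypothetical zero $X^*$ yields $d(Y)=\mathrm{dist}(Y,\O_0)\leq |Y-X^*|\leq 2r$. Provided the smallness condition $d(Y)<\tfrac12\min\{\mathrm{dist}(Y,\partial\O),1\}$ of Lemma \ref{lb4} holds at every such $Y$, we obtain the pointwise bound $\psi(Y)\leq C(\L)(\ld_2+\L)\,d(Y)\leq 2C(\L)(\ld_2+\L)\,r$. Averaging over $\partial B_r(X_0)$ then yields the desired inequality, finishing the contradiction.

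To cover all radii, I would split into two cases based on the threshold $r_0:=\tfrac14\min\{\mathrm{dist}(D,\partial\O),1\}$. For $r\leq r_0$, the geometric hypothesis of Lemma \ref{lb4} is satisfied at every $Y\in\partial B_r(X_0)\subset D$ since $2r\leq 2r_0\leq \tfrac12\min\{\mathrm{dist}(Y,\partial\O),1\}$, and the previous paragraph applies directly. For $r\geq r_0$, I would instead invoke the global sup bound $\sup_{\bar G}\psi\leq C(\L,D,\O)(\ld_2+\L)$ established via the Harnack chain \eqref{c01} in the proof of Theorem \ref{lb5} (this is applicable when $D$ contains a free boundary point, which is automatic under the contradiction hypothesis unless $\psi\equiv 0$ on $D$, in which case the conclusion is vacuous), giving $\frac{1}{r}\fint_{\partial B_r(X_0)}\psi\,dS\leq r_0^{-1}C(\L,D,\O)(\ld_2+\L)$. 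Combining the two cases produces a single constant $C^*=C^*(\L,D,\O)$ as required.

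I do not anticipate serious analytic obstacles: this lemma is essentially a repackaging of the linear growth bound Lemma \ref{lb4} into an average-on-spheres form, and requires no new variational comparison, harmonic replacement, or barrier argument. The only real bookkeeping is the geometric case split on $r$ and checking that the smallness conditions of Lemma \ref{lb4} are satisfied uniformly on $\partial B_r(X_0)$; this is where the dependence of $C^*$ on $D$ and $\O$ enters through the threshold $r_0$.
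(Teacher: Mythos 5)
Your proof is correct and takes essentially the same approach as the paper: argue by contradiction from the existence of a zero in $B_r(X_0)$ and show the spherical average must then be $O((\ld_2+\L)r)$. The paper reaches this bound more directly by invoking the Lipschitz estimate of Theorem~\ref{lb5}(2) (writing $\psi(X)=\psi(X)-\psi(X')\leq C(\ld_2+\L)r$ for $X\in\overline{B}_r$), which internalizes the dependence on $D$ and $\O$ and so avoids the case split on $r$ that you need because you go back to Lemma~\ref{lb4} and must verify its smallness hypothesis pointwise on $\partial B_r(X_0)$.
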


\begin{proof} Suppose that there exists a free boundary point $X'\in
B_r(X_0)\cap \O_0$. It follows from Theorem \ref{lb5} that $|\g\psi|\leq
C(\ld_2+\L)$, where $C$ is a constant depending only on $\L, D$
and $\O$.
Then we have
$$\psi(X)=\psi(X)-\psi(X')\leq C(\ld_2+\L)r \ \ \text{in}\ \ \overline{B}_r, $$
which contradicts to \eqref{c12}, provided that $C^*$ is
sufficiently large.
\end{proof}

The non-degeneracy lemma will be stated in the following.

\begin{lemma}\label{lb7} For any $\kappa\in(0,1)$,
there exists a positive constant $c^*_\kappa=c^*(\kappa)$, such that
for any disc
$B_r(X_0)\subset \O$ with $r<\f1\L\min\{c_{\kappa}^*\ld_1,1\}$,
\be\label{c13}\f{1}{r}\fint_{\p B_r(X_0)}\psi dS\leq c_\kappa^*\ld_1
\ee implies that
$$\psi(x,y)\equiv0~~~~\text{in}~~B_{\kappa r}(X_0).$$
\end{lemma}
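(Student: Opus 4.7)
The plan is to adapt the Alt--Caffarelli non-degeneracy argument (cf.\ Lemma 3.4 of \cite{AC1}) to the semilinear setting, with additional care needed to absorb the contribution of $F(\psi)$ into the leading $\lambda_1^2$ term. Since $B_r(X_0)\subset\Omega$, any perturbation of $\psi$ supported inside $B_r(X_0)$ that remains below the upper barrier $\Psi$ lies in $K$, so such test functions are freely available.

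First, I would construct a comparison barrier $v$ that is harmonic in the annulus $B_r(X_0)\setminus\overline{B_{\kappa r}(X_0)}$, equal to $\psi$ on $\partial B_r(X_0)$ and zero on $\partial B_{\kappa r}(X_0)$, extended by $0$ in $B_{\kappa r}(X_0)$ and by $\psi$ outside $B_r(X_0)$. The explicit two-dimensional logarithmic kernel for harmonic extensions on an annulus, combined with the mean value property, yields
\[
\max_{B_r(X_0)} v \leq C(\kappa)\,\fint_{\partial B_r(X_0)}\psi\,dS\quad\text{and}\quad \max_{\partial B_{\kappa r}(X_0)}|\nabla v| \leq \frac{C(\kappa)}{r}\fint_{\partial B_r(X_0)}\psi\,dS.
\]
Then I would take $\phi=\min(\psi,v)$; by construction $\phi\leq\psi\leq\Psi$, $\phi=\psi$ outside $B_r(X_0)$, and $\phi\equiv 0$ on $B_{\kappa r}(X_0)$, so $\phi\in K$.

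Next, I would apply the minimizing property $J(\psi)\leq J(\phi)$. Because $\phi$ annihilates the indicator on $B_{\kappa r}(X_0)$, the characteristic-function contribution is at most $-\lambda_1^2|B_{\kappa r}(X_0)\cap\{\psi>0\}|$, and rearranging gives
\[
\lambda_1^2\,\bigl|\{\psi>0\}\cap B_{\kappa r}(X_0)\bigr| + \int_{B_r(X_0)}\bigl(|\nabla\psi|^2-|\nabla\phi|^2\bigr)\,dX \leq \int_{B_r(X_0)}\bigl(F(\phi)-F(\psi)\bigr)\,dX.
\]
Expanding $|\nabla\psi|^2-|\nabla\phi|^2=|\nabla(\psi-\phi)|^2+2\nabla\phi\cdot\nabla(\psi-\phi)$ and integrating by parts, the harmonicity $\Delta v=0$ kills the bulk cross term on the set $\{v<\psi\}$ in the annulus, while $\nabla\phi=0$ on $B_{\kappa r}(X_0)$ eliminates the inner contribution; the only residual boundary integral is on $\partial B_{\kappa r}(X_0)$, where $\partial_\nu v\geq 0$ pointing into the annulus makes it non-negative. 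Thus the gradient contribution on the left is non-negative and can be discarded.

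Finally, to close the argument, I would bound the right-hand side. Using $\Delta\psi\geq-\Lambda$ (so that $\psi$ is comparable to its harmonic replacement modulo $O(\Lambda r^2)$) and the hypothesis on the boundary mean yields $\psi\leq C\lambda_1 r+C\Lambda r^2$ throughout $B_r(X_0)$. Since $|F'|\leq 2\Lambda$,
\[
\int_{B_r(X_0)}|F(\phi)-F(\psi)|\,dX \leq 2\Lambda\,|B_r(X_0)|\,\|\psi-\phi\|_{L^\infty} \leq C\Lambda\lambda_1 r^3,
\]
which, under $r\Lambda\leq c_\kappa^*\lambda_1$, is bounded by $Cc_\kappa^*\lambda_1^2 r^2$. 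Choosing $c_\kappa^*$ sufficiently small relative to the constant $C(\kappa)$ coming from the comparison step forces $|\{\psi>0\}\cap B_{\kappa r}(X_0)|<|B_{\kappa r}|$; iterating the same estimate on slightly shrunken concentric balls and invoking the continuity of $\psi$ from Theorem \ref{lb5} upgrades this to $\psi\equiv 0$ on $B_{\kappa r}(X_0)$. The principal obstacle is the clean absorption of the semilinear error $F(\phi)-F(\psi)$; the threshold $r<\Lambda^{-1}\min\{c_\kappa^*\lambda_1,1\}$ in the statement is calibrated precisely so that this absorption succeeds, which is the essential departure from the harmonic case treated in \cite{AC1}.
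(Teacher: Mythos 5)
Your construction of the comparison function $\phi=\min(\psi,v)$ with $v$ harmonic on the annulus is a reasonable variant of the Alt--Caffarelli setup, and the estimate you derive from $J(\psi)\le J(\phi)$ is essentially correct up to and including the bound
$$
\lambda_1^2\,\bigl|\{\psi>0\}\cap B_{\kappa r}(X_0)\bigr|\ \le\ C(\kappa)\,(c^*_\kappa)^2\lambda_1^2 r^2.
$$
But this is where the argument breaks down: you have only proved a \emph{density estimate}, i.e.\ that $\{\psi>0\}$ occupies a small fraction of $B_{\kappa r}$, not that $\psi$ vanishes there. The final sentence, ``iterating the same estimate on slightly shrunken concentric balls and invoking the continuity of $\psi$ upgrades this to $\psi\equiv 0$,'' is a hand-wave without content: the hypothesis \eqref{c13} is a statement about the boundary mean on $\partial B_r(X_0)$ and does not transfer to concentric sub-balls, and neither continuity nor any visible iteration scheme can promote ``$\{\psi>0\}$ has small measure'' to ``$\psi\equiv 0$.''

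The missing ingredient is the \emph{absorption} step, and this is exactly what the paper's proof supplies (cf.\ the chain \eqref{c15}--\eqref{c17}). Instead of bounding the boundary integral $\int_{\partial B_{\kappa r}}\psi\,dS$ by a fixed small number via pointwise $L^\infty$ control, one must use the $L^1$-trace inequality $\int_{\partial B_{\kappa r}}\psi\,dS\le C\bigl(\int_{B_{\kappa r}}\psi+\int_{B_{\kappa r}}|\nabla\psi|\bigr)$ together with the fact that the integrands are supported on $\{\psi>0\}$. After Young's inequality, this bounds the same quantity
$$
\int_{B_{\kappa r}}|\nabla\psi|^2+\lambda^2 I_{\{\psi>0\}}\,dX
$$
by a factor $C(\Lambda,\kappa)\bigl(\tfrac{\varepsilon_0^2}{\lambda_1^2}+\tfrac{\varepsilon_0}{\lambda_1}\bigr)$ times \emph{itself}, where $\varepsilon_0\sim c^*_\kappa\lambda_1$. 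Choosing $c^*_\kappa$ small makes this factor $<1$, forcing the whole quantity to vanish, which together with $\psi\ge 0$ and continuity yields $\psi\equiv 0$ in $B_{\kappa r}(X_0)$. Your pointwise Poisson-kernel route avoids the trace theorem and so never closes this loop. (As a minor side remark, the Poisson-kernel bound on $\psi$ degenerates near $\partial B_r$, so your claim to bound $\psi$ ``throughout $B_r(X_0)$'' should be restricted to an interior ball such as $B_{\sqrt\kappa\,r}$, as the paper does.)
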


\begin{proof} Without loss of generality, we assume that $X_0=0$. Set
$\psi_{r}(X)=\f{\psi(rX)}{r}$ and $\ld_r(X)=\ld(rX)$, we have
$$\Delta\psi_r+rf(r\psi_r)=0\ \ \
\text{in}~~\O_r\cap\{\psi_r>0\},$$  where $\O_r=\{X\mid rX\in\O\}$. Denote
$$\e=\sup_{X\in B_{\sqrt{\kappa}}(0)}\psi_r(X).$$

Since $\Delta\psi_r\geq -rf(r\psi_r)\geq -rf(0)\geq -\L r$ in $\O$, it follows from the maximum principle that
$$\psi_r(X)\leq \L r\f{1-|X|^2}{4}+\f{1-|X|^2}{2\pi}\int_{\p
B_1(0)}\f{\psi_r(Y)}{|X-Y|^2} dS_Y$$ for any $X\in B_{\sqrt{\kappa}}(0)$, which implies that
\be\label{c14}\e=\sup_{X\in B_{\sqrt{\kappa}}(0)}\psi_r(X)\leq\L
r+C(\kappa)\fint_{\p B_1(0)}\psi_r(Y) dS_Y.\ee

Define a function $\phi$ solving the following problem
$$\left\{\ba{ll}&\Delta\phi+rf(r\phi)=0~~~~\text{in}\ \ \ B_{\sqrt{\kappa}}(0)\setminus B_{\kappa}(0),\\
&\phi=0~~~~\text{in}\ \ \ B_{\kappa}(0),\ \ \ \ \
\phi=\e~~~~\text{outside}\ \ \ B_{\sqrt{\kappa}}(0).\ea\right.$$

It is easy to check that
$$\ba{rl}0\leq
&\int_{B_{\sqrt{\kappa}}(0)}|\nabla\min\{\psi_r,\phi\}|^2+F(r\min\{\psi_r,\phi\})+\ld_r^2I_{\{\min\{\psi_r,\phi\}>0\}}dX\\
&-\int_{B_{\sqrt{\kappa}}(0)}|\nabla\psi_r|^2+F(r\psi_r)+\ld_r^2I_{\{\psi_r>0\}}dX,\ea$$
which yields that
\be\label{c15}\ba{rl}&\int_{B_{\kappa}(0)}|\nabla\psi_r|^2+F(r\psi_r)+\ld_r^2I_{\{\psi_r>0\}}dX\\
\leq&\int_{B_{\sqrt{\kappa}}(0)\setminus
B_{\kappa}(0)}|\nabla\min\{\psi_r,\phi\}|^2+F(r\min\{\psi_r,\phi\})-|\nabla\psi_r|^2-F(r\psi_r)dX\\
\leq&\int_{B_{\sqrt{\kappa}}(0)\setminus B_{\kappa}(0)}-|\nabla\min\{\phi-\psi_r,0\}|^2+2\nabla\phi\cdot\nabla\min\{\phi-\psi_r,0\}\\
&+rF'(r\phi)\min\{\phi-\psi_r,0\} dX\\
\leq&2\int_{\p B_{\kappa}(0)}\min\{\phi-\psi_r,0\}\f{\p\phi}{\p\nu}
dS,\ea\ee where $\nu$ is the outer normal vector, we have used the fact
$$\int_{B_{\sqrt{\kappa}}(0)\setminus
B_{\kappa}(0)}I_{\{\min\{\psi_r,\phi\}>0\}}-I_{\{\psi_r>0\}}dX\leq
0,$$due to the fact $\{\min\{\psi_r,\phi\}>0\}\subset\{\psi_r>0\}$.

On the other hand, by virtue of the convexity of $F(t)$, we have
that $$F(r\psi_r)\geq F'(0)r\psi_r\geq-2\L r\psi_r,$$ which gives
that
\be\label{c150}\ba{rl}&\int_{B_{\kappa}(0)}|\nabla\psi_r|^2-2\L
r\psi_r+\ld_r^2I_{\{\psi_r>0\}}dX
\leq\int_{B_{\kappa}(0)}|\nabla\psi_r|^2+F(r\psi_r)+\ld_r^2I_{\{\psi_r>0\}}dX.\ea\ee

The standard elliptic estimates give that \be\label{c16}\sup_{X\in\p
B_{\kappa}(0)}|\nabla\phi(X)|\leq C\left(\sup_{X\in
B_{\sqrt{\kappa}}(0)}\phi(X)+r\|f(r\phi)\|_{L^\infty(B_{\sqrt{\kappa}}(0))}
\right)\leq C(\L,\kappa)(\e+\L r).\ee Furthermore, with the aid of
the trace theorem, it follows from \eqref{c15} - \eqref{c16} that
\be\label{c17}\ba{rl}&\int_{B_{\kappa}(0)}|\nabla\psi_r|^2+\ld_r^2I_{\{\psi_r>0\}}dX\\
\leq&2\int_{\p B_{\kappa}(0)}\min\{\phi-\psi_r,0\}\f{\p\phi}{\p\nu}
dS+2\int_{B_{\kappa}(0)}\L r\psi_rdX\\
\leq &C(\L,\kappa)\e_0\left(\int_{\p B_{\kappa}(0)}\psi_r
dS+\int_{B_{\kappa}(0)}\psi_rdX\right)\\
\leq&C(\L,\kappa)\e_0\left(\int_{ B_{\kappa}(0)}\psi_rI_{\{\psi_r>0\}}dX+C\int_{ B_{\kappa}(0)}|\nabla\psi_r|I_{\{\psi_r>0\}}dX\right)\\
\leq&C(\L,\kappa)\e_0\left(\f{\e_0}{\ld_1^2}\int_{
B_{\kappa}(0)}I_{\{\psi_r>0\}}dX+\f{C}{\ld_1}\int_{
B_{\kappa}(0)}|\nabla\psi_r|^2+\ld_r^2I_{\{\psi_r>0\}}dX\right)\\
\leq&C(\L,\kappa)\left(\f{\e^2_0}{\ld_1^2}+\f{\e_0}{\ld_1}\right)\int_{
B_{\kappa}(0)}|\nabla\psi_r|^2+\ld_r^2I_{\{\psi_r>0\}}dX,\ea\ee
where $\e_0=\e+\L r$. By virtue of \eqref{c13} and \eqref{c14}, one
has
$$\f{\e_0}{\ld_1}=\f{\e+\L r}{\ld_1}\leq C(\L,\kappa)\left(\f1{\ld_1}\fint_{\p B_1(0)}\psi_r dS+\f{\L r}{\ld_1}\right)<C(\L,\kappa)c_\kappa^*.$$
This implies that $\f{\e_0}{\ld_1}$ is
small enough, provided that $c_\kappa^*$ is small enough. This together with \eqref{c17} implies that
 $$\int_{
B_{\kappa}(0)}|\nabla\psi_r|^2+\ld_r^2I_{\{\psi_r>0\}}dX=0,\ \
$$ for sufficiently small $c_\kappa^*$. This implies
that $\psi=0$ in $B_\kappa(0)$, if $c_\kappa^*$ is small enough.

\end{proof}

\begin{remark}\label{re1} It should be noted that Lemma \ref{lb6} remains valid, provided that $B_r(X_0)$ is not
contained in $\O$ and $\psi=0$ on $B_r(X_0)\cap\p\O$.

\end{remark}

Finally, we give the density estimate of the free boundary point, which gives that $\mathcal{L}^2(D\cap\p\{\psi>0\})=0$ for any
compact subset $D$ of $\O$.

\begin{lemma}\label{lb8} Let $G$ be a compact subset of $\O$,
there exists a positive constant $c\in(0,1)$, such that for any disc
$B_r=B_r(X_0)\subset G$ with $X_0\in\Gamma$ and small $r$,
\be\label{c18}c<\f{\mathcal{L}^2(B_r\cap\{\psi>0\})}{\mathcal{L}^2(B_r)}<1-c,\ee
where $\mathcal{L}^2$ is the two-dimensional Lebesgue measure.
\end{lemma}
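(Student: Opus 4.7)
The plan is to establish the lower and upper density inequalities separately using the non-degeneracy of the minimizer (Lemma \ref{lb7}) and its Lipschitz bound (Theorem \ref{lb5}). For the lower inequality $\mathcal{L}^2(B_r\cap\{\psi>0\})\geq c\,\mathcal{L}^2(B_r)$, since $X_0\in\Gamma=\Omega\cap\partial\{\psi>0\}$, every neighborhood of $X_0$ meets $\{\psi>0\}$, so by the contrapositive of Lemma \ref{lb7} (with, say, $\kappa=1/2$) one has $\frac{1}{r}\fint_{\partial B_r(X_0)}\psi\,dS > c^*_\kappa\lambda_1$ for all sufficiently small $r$. Thus some point $Y\in\partial B_r(X_0)$ satisfies $\psi(Y)\geq c^*_\kappa\lambda_1 r$, and Theorem \ref{lb5} together with $|\nabla\psi|\leq L$ yields $\psi>0$ on $B_\rho(Y)$ with $\rho=c^*_\kappa\lambda_1 r/(2L)$. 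Since at least half of $B_\rho(Y)$ lies inside $B_r(X_0)$, we conclude $\mathcal{L}^2(B_r(X_0)\cap\{\psi>0\})\geq cr^2$.

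For the upper inequality, I would compare $\psi$ with the semilinear extension. Let $\phi$ solve $-\Delta\phi=f(\phi)$ in $B_r=B_r(X_0)$ with $\phi=\psi$ on $\partial B_r$. Testing the weak inequality $-\Delta\psi\leq f(\psi)$ of Lemma \ref{lb3} against $(\psi-\phi)^+$ and using $f'\leq 0$ shows $\phi\geq\psi$ a.e.\ in $B_r$, and the Poisson representation gives $\phi>0$ in $B_r$ so that $I_{\{\phi>0\}}=1$. The minimizing property of $\psi$ against the competitor that equals $\phi$ inside $B_r$ and $\psi$ outside, followed by integration by parts of the cross term $2\nabla\phi\cdot\nabla(\psi-\phi)$ (which produces $2f(\phi)(\psi-\phi)$ since $-\Delta\phi=f(\phi)$), combined with the convexity bound $F(\psi)-F(\phi)\geq F'(\phi)(\psi-\phi)=-2f(\phi)(\psi-\phi)$, causes the $F$- and $f$-contributions to cancel and yields the key inequality
\begin{equation*}
\int_{B_r}|\nabla(\phi-\psi)|^2\,dX \;\leq\; \lambda_2^2\,\mathcal{L}^2(B_r\cap\{\psi=0\}).
\end{equation*}

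To close the argument, I would produce a matching lower bound on the left-hand side. Using the non-degeneracy estimate $\fint_{\partial B_r}\psi\geq c^*_\kappa\lambda_1 r$ and the Poisson formula for $\phi$ with $|f(\phi)|\leq\Lambda$, one obtains $\phi(X_0)\geq c^*_\kappa\lambda_1 r - C\Lambda r^2\geq c'r$ for $r$ small enough. Since $\phi\in C^{2,\alpha}$ by interior Schauder estimates and $\psi$ is Lipschitz with constant depending only on the data, $w=\phi-\psi$ is Lipschitz with a constant $L'$ independent of $r$; combined with $w(X_0)=\phi(X_0)\geq c'r$ (using $\psi(X_0)=0$), this gives $w\geq c'r/2$ on $B_\rho(X_0)$ with $\rho=c'r/(2L')$, hence $\int_{B_r}w^2\geq cr^4$. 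Since $w=0$ on $\partial B_r$, the Poincaré inequality yields $\int_{B_r}|\nabla w|^2\geq cr^2$, which together with the key inequality gives $\mathcal{L}^2(B_r\cap\{\psi=0\})\geq cr^2$, i.e.\ the upper density bound. The main obstacle is the verification of the key inequality: one must carefully track the $F$- and $f$-terms through integration by parts and exploit the convexity $F''\geq 0$ so that the nonlinear cross-contributions cancel, leaving only the indicator term $\lambda^2 I_{\{\psi=0\}}$ on the right-hand side.
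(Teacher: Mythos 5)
Your proposal is correct and follows essentially the same strategy as the paper: for the lower density bound, locate a point $Y\in\partial B_r$ with $\psi(Y)\gtrsim r$ via the non-degeneracy Lemma~\ref{lb7} and then fatten $\{\psi>0\}$ around $Y$ (you do this with the Lipschitz bound of Theorem~\ref{lb5}, whereas the paper perturbs $\psi$ by $\Lambda|X|^2/4$ to make it subharmonic and invokes Lemma~\ref{lb7} a second time, but the two are interchangeable); for the upper density bound, compare with the semilinear replacement $\phi$ in $B_r$, use convexity of $F$ together with $-\Delta\phi=f(\phi)$ to arrive at $\int_{B_r}|\nabla(\phi-\psi)|^2\leq\lambda_2^2\,\mathcal{L}^2(B_r\cap\{\psi=0\})$, and then show $\int_{B_r}|\phi-\psi|^2\gtrsim r^4$ via Poincar\'e. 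The only point you gloss over is why $\nabla\phi$ is bounded near $X_0$ uniformly in $r$ --- this requires first noting $\phi\leq Cr$ in $B_r$ (from $\phi=\psi\leq Cr$ on $\partial B_r$ and $|f|\leq\Lambda$ via the maximum principle) and then a scaled interior gradient estimate, which is precisely how the paper obtains the pointwise bound $\phi(Y)-\psi(Y)\geq c\lambda_1 r$ on $B_{\kappa r}$ directly from the Poisson representation; your Lipschitz-constant claim for $\phi$ is therefore fine but needs that one line of justification.
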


\begin{proof} {\bf Step 1.} Without loss of generality, we assume that $X_0=0$. It follows from Lemma \ref{lb7} that there exists a point $Y\in \p B_r$, such that
$\psi(Y)> cr$. Set $\varphi(X)=\psi(X)+\f{\L|X|^2}{4}$, one has
$$\Delta\varphi=\Delta\psi+\L\geq f(0)-f(\psi)\geq 0\ \ \ \text{in $B_r$},$$ which implies that
$$\f{\L r}{2\kappa}+\f1{\kappa r}\fint_{\p B_{\kappa r}(Y)}\psi(Z) dS_Z\geq\f1{\kappa r}\fint_{\p B_{\kappa r}(Y)}\varphi(Z) dS_Z\geq\f{\varphi(Y)}{\kappa
r}\geq\f{c}{\kappa},$$ for small $\kappa>0$, that is
$$\f1{\kappa r}\fint_{\p B_{\kappa r}(Y)}\psi(Z) dS_Z\geq\f{c-\L r}{2\kappa}>\f{c}{4\kappa}\ \ \text{if $r$ is small enough}.$$
By virtue of non-degeneracy Lemma \ref{lb7}, one has $$\text{$\psi>0$ in $B_{\kappa r}(Y)$,}$$ this gives
the left-hand side of \eqref{c18}.

{\bf Step 2.} Define a function $\phi$ solving the following problem
$$\Delta\phi+f(\phi)=0~~~~\text{in}~~B_r,\ \ \phi=\psi~~~~~~\text{outside}~~B_r.$$
The maximum
principle gives that $\psi\leq\phi\leq\Psi$ in $B_r$, and thus $\phi\in K$, where $\Psi$ is defined in \eqref{c001}.

Then we have
$$\ba{rl}0\leq J(\phi)-J(\psi)
\leq\int_{B_r} -|\nabla(\phi-\psi)|^2
dX+\ld_2^2\int_{B_r}I_{\{\psi=0\}} dX,\ea$$ which together
with Poincar\'e's inequality and H\"{o}lder inequality implies
that \be\label{c19}\ba{rl}\ld_2^2\int_{B_r}I_{\{\psi=0\}}
dX\geq&\int_{B_r} |\nabla(\phi-\psi)|^2
dX
\geq\f{c}{r^2}\int_{B_r} |\phi-\psi|^2 dX.\ea\ee

For any point $Y\in B_{\kappa r}$, since $\Delta\phi\leq M_0\L-f(0)$ with $M_0=\sup_{X\in\O}\Psi(X)$, one has
\be\label{cc20}\ba{rl}\phi(Y)\geq&\f{r^2-|Y|^2}{2\pi r}\int_{\p B_{r}}
\f{\psi}{|X-Y|^2}dS_X+(M_0\L-f(0))\f{|Y|^2-r^2}{4}\\
\geq&(1-C\kappa)\fint_{\p B_{r}}
\psi dS_X-Cr^2.\ea\ee
It follows from \eqref{cc20}, Lemma \ref{lb4} and
Lemma \ref{lb7} that
\be\label{c20}\ba{rl}\phi(Y)-\psi(Y)\geq& \phi(Y)-C\kappa r \\
\geq&(1-C\kappa)\fint_{\p B_{r}}
\psi dS_X-C\kappa r-Cr^2\\
\geq& c\ld_1 r,\ea\ee provided that $\kappa$ and $r$ are small.

Combining \eqref{c19} and \eqref{c20}, we have
\be\label{c21}\ba{rl}\ld_2^2\int_{B_r}I_{\{\psi=0\}} dX
\geq\f{c}{r^2}\int_{B_r} |\phi-\psi|^2
dX
\geq\f{c}{r^2}\int_{B_{\kappa r}} |\phi-\psi|^2
dX
\geq&c_\kappa\ld_1^2 r^2,\ea\ee for small $r>0$ and small $\kappa>0$. This gives the right-hand side of
\eqref{c18}.

\end{proof}

\section{Regularity of the free boundary}

Based on the Lipschitz continuity and non-degeneracy lemma of the minimizer $\psi$ in Section 2, we will establish the regularity of the free boundary in this section.

\subsection{Measure estimate of the free boundary} In this subsection, the main objective is to show that the set $\O\cap\{\psi>0\}$ is finite perimeter locally in $\O$.

Set $\mu=\Delta\psi+f(\psi)$ and $\mu_0=\Delta\psi+f(\psi)I_{\{\psi>0\}}$. First, we will show that $\mu_0$ is a Radon measure supported on the free boundary
$\Gamma$.

\begin{lemma}\label{lb90} $\mu_0=\Delta\psi+f(\psi)I_{\{\psi>0\}}$ is a positive Radon measure with support in $\O\cap\p\{\psi>0\}$. Moreover, $\mu=\Delta\psi+f(\psi)$ is a Radon measure supported on
$\O\cap\{\psi=0\}$ and its singular point is contained in the free boundary.

\end{lemma}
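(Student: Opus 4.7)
My plan is to first upgrade the distributional inequality $\Delta\psi + f(\psi) \geq 0$ from Lemma \ref{lb3} to the statement that $\mu = \Delta\psi + f(\psi)$ is a positive Radon measure, then to compute $\mu$ separately on $\{\psi > 0\}$ and on the interior of $\{\psi = 0\}$ in order to read off its Lebesgue decomposition, and finally to recognise $\mu_0$ as the singular part of $\mu$.

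For the first step, Lemma \ref{lb3}(2) already gives $\Delta\psi + f(\psi) \geq 0$ in $\Omega$ in the distributional sense. Since $\psi \in C^{0,1}(\Omega)$ by Theorem \ref{lb5} and $f\in C^{1,\beta}$ with $\psi$ locally bounded, $f(\psi)$ lies in $L^\infty_{loc}(\Omega)$. Hence $\Delta\psi$ is distributionally bounded below by a locally bounded function, and after adding a local constant one obtains a non-negative distribution on any compactly contained subdomain. The Riesz--Markov representation theorem then upgrades $\mu$ to a positive Radon measure on $\Omega$. Combining this with the classical equality $\Delta\psi + f(\psi) = 0$ on the open set $\{\psi > 0\}$ from Lemma \ref{lb3}(2), I immediately deduce $\mathrm{supp}\,\mu \subset \Omega \cap \{\psi = 0\}$, which proves the first assertion.

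For the second half, a direct computation using $\psi \geq 0$ from Lemma \ref{lb3}(1) gives
$$\mu - \mu_0 = f(\psi) - f(\psi)I_{\{\psi > 0\}} = f(\psi)I_{\{\psi = 0\}} = f(0)I_{\{\psi = 0\}}.$$
On the interior of $\{\psi = 0\}$ the function $\psi$ vanishes in a neighbourhood, so $\Delta\psi = 0$ classically and $\mu = f(0)\,\mathcal{L}^2$ there; combined with the vanishing of $\mu$ on $\{\psi > 0\}$ and with $\mathcal{L}^2(\Gamma) = 0$ from Lemma \ref{lb8}, this forces the absolutely continuous part of $\mu$ to be exactly $f(0)I_{\{\psi = 0\}}\,\mathcal{L}^2$, so
$$\mu_0 = \mu - f(0)I_{\{\psi = 0\}}\,\mathcal{L}^2 = \mu_s$$
is the singular part of $\mu$. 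As such, $\mu_0$ is automatically a positive Radon measure, and since it vanishes on the two open regions $\{\psi > 0\}$ and $\mathrm{int}\{\psi = 0\}$, it is supported in their common complement $\Gamma = \Omega \cap \partial\{\psi > 0\}$. This delivers the remaining conclusions of the lemma and clarifies that the stated \textit{singular point} of $\mu$ is to be read as its singular part, which is precisely $\mu_0$.

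The main obstacle I anticipate is not conceptual but organisational: one must carefully distinguish the two open regions $\{\psi > 0\}$ and $\mathrm{int}\{\psi = 0\}$, on which $\mu$ has completely different pointwise behaviour, and invoke the density bound of Lemma \ref{lb8} to guarantee that $\Gamma$ is Lebesgue-negligible so that the density of the absolutely continuous part of $\mu$ is unambiguously $f(0)I_{\{\psi = 0\}}$. Once these ingredients are in place, the identification $\mu_0 = \mu_s$ and the support statements drop out of the Lebesgue decomposition theorem.
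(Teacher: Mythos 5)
Your proof is correct but takes a genuinely different route from the paper's. The paper proves $\mu_0\geq 0$ \emph{directly}, by testing the subsolution property against the truncation $\xi\,k(t\psi)$ with $k(s)=\max\{\min\{2-s,1\},0\}$, sending $t\to\infty$, and bounding the error terms near $\{\psi=0\}$; this is the classical Alt--Caffarelli device, and only afterwards is $\mu=\mu_0+f(0)I_{\{\psi=0\}}$ recognised as a measure. You go the other way around: starting from the already established distributional inequality $\Delta\psi+f(\psi)\geq 0$ of Lemma \ref{lb3}(2), you conclude at once that $\mu$ is a positive Radon measure, compute $\mu$ explicitly on the two open sets $\{\psi>0\}$ (where it vanishes) and $\mathrm{int}\{\psi=0\}$ (where it equals $f(0)\,\mathcal{L}^2$), and then invoke $\mathcal{L}^2(\Gamma)=0$ --- the consequence of Lemma \ref{lb8} that the paper itself records just before its statement --- to identify the absolutely continuous part of $\mu$ as $f(0)I_{\{\psi=0\}}\mathcal{L}^2$, hence $\mu_0=\mu-f(0)I_{\{\psi=0\}}\mathcal{L}^2=\mu_s$. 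Positivity of $\mu_0$ and its support in $\Gamma$ then fall out of the Lebesgue decomposition of the positive measure $\mu$. Your route is arguably cleaner, trading the explicit cutoff computation for soft measure theory; the paper's route is more self-contained in that it does not rely on the density estimate of Lemma \ref{lb8}. You also correctly read the slightly garbled phrase ``singular point'' in the statement as ``singular part.'' One minor remark: the sentence about ``adding a local constant'' before invoking Riesz--Markov is an unnecessary detour, since the non-negative distribution $\Delta\psi+f(\psi)$ is already, directly, a positive Radon measure by the Riesz--Schwartz representation theorem.
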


\begin{proof} For any $\xi\in C_0^{\infty}(\O)$ with $\xi\geq0$, denote $k(t)=\max\{\min\{2-t,1\},0\}$ for any $t>0$. Then we have
\be\label{cc23}\ba{rl}
 \int_{\O}\g\psi\cdot\g(\xi k(t\psi)) dX
 =&\int_{\O}\nabla\psi\cdot\nabla\xi dX+\int_{\O\cap\{\psi\geq\f1{t}\}}\nabla\psi\cdot\nabla(\xi k(t\psi)-\xi)dX\\
  =&\int_{\O}\nabla\psi\cdot\nabla\xi dX -\int_{\O\cap\{\psi>0\}}f(\psi)\xi dX\\
  &+\int_{\O\cap\{\f1t\leq\psi\leq\f2{t}\}}(2-\psi t)f(\psi)\xi dX+\int_{\O\cap\{\psi\leq\f2{t}\}}f(\psi)\xi dX\\
  \geq&\int_{\O}\nabla\psi\cdot\nabla\xi dX-\int_{\O\cap\{\psi>0\}}f(\psi)\xi dX,\ea
 \ee for sufficiently large $t>0$, where we have used the fact that $f(\psi)\geq\f{f(0)}2\geq0$ in $\O\cap\left\{\psi\leq\f2t\right\}$ for large $t>0$.

On the other hand, one has
\be\label{cc24}\ba{rl}
 \int_{\O}\g\psi\cdot\g(\xi k(t\psi)) dX
 =&\int_{\O\cap\{\f1t\leq\psi\leq\f2{t}\}}(2-\psi t)\nabla\psi\cdot\nabla\xi-\xi t|\g\psi|^2dX\\
  &+\int_{\O\cap\{0<\psi\leq\f1{t}\}}\nabla\psi\cdot\nabla\xi dX\\
  \leq&\int_{\O\cap\{0<\psi\leq\f2{t}\}}|\g\psi||\g\xi|dX.\ea
 \ee
It follows from \eqref{cc23} and \eqref{cc24} that
\be\label{cc25}\ba{rl}
 \int_{\O}\nabla\psi\cdot\nabla\xi dX-\int_{\O\cap\{\psi>0\}}f(\psi)\xi dX
 \leq\int_{\O\cap\{0<\psi\leq\f2{t}\}}|\g\psi||\g\xi|dX.\ea
 \ee
Taking $t\rightarrow+\infty$ in \eqref{cc25}, we conclude that $\Delta\psi+f(\psi)I_{\{\psi>0\}}\geq0$ in the sense of distributions. Consequently, there exists a positive Radon
measure $\mu_0$ supported on $\O\cap\p\{\psi>0\}$, such that $\mu_0=\Delta\psi+f(\psi)I_{\{\psi>0\}}$.

Recalling $f(0)\geq0$, there exists a positive Radon
measure $\mu$ supported on $\O\cap\{\psi=0\}$, such that $\mu=\mu_0+f(0)I_{\{\psi=0\}}=\Delta\psi+f(\psi)$.

\end{proof}

Next, we will give the estimate of the Radon measure $\mu$.
\begin{lemma}\label{lb9} Let $G$ be a compact subset of $\O$.
There exist some positive constants $r_0, c$ and $C$, such that for any disc
$B_r=B_r(X_0)\subset G$ with $X_0\in\Gamma$ and $r<r_0$,
\be\label{c30}cr\leq\int_{B_r} d\mu\leq Cr.\ee Furthermore,
\be\label{c22}cr\leq\int_{B_r\cap\p\{\psi>0\}} d\mu=\int_{B_r} d\mu_0\leq Cr.\ee

\end{lemma}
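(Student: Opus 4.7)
The upper bound in \eqref{c30} follows by testing $\mu=\Delta\psi+f(\psi)$ against a standard cutoff $\xi\in C_c^\infty(B_{2r}(X_0))$ with $\xi\equiv 1$ on $B_r$, $0\le\xi\le 1$, and $|\nabla\xi|\le C/r$:
$$\mu(B_r)\le\int\xi\,d\mu = -\int\nabla\xi\cdot\nabla\psi\,dX + \int\xi f(\psi)\,dX.$$
By Theorem \ref{lb5}, $|\nabla\psi|\le C(\ld_2+\L)$ on a neighborhood of $X_0$, so the first integral is bounded by $C(\ld_2+\L)\cdot|B_{2r}|/r\le Cr$; the second is $O(r^2)\le Cr$ for $r\le r_0$ since $|f(\psi)|$ is bounded. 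Hence $\mu(B_r)\le Cr$.

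For the lower bound, the engine is non-degeneracy. Since $X_0\in\Gamma$ lies in the closure of $\{\psi>0\}$, the contrapositive of Lemma \ref{lb7} (with a fixed $\kappa$) forces $\fint_{\p B_r(X_0)}\psi\,dS\ge c\ld_1 r$ for every $r\in(0,r_0]$. I would then compare $\psi$ with its harmonic extension $\phi$ to $B_r$, i.e., $\Delta\phi=0$ in $B_r$ with $\phi=\psi$ on $\p B_r$, so $\phi(X_0)=\fint_{\p B_r}\psi\,dS\ge c\ld_1 r$ by the mean-value property. Setting $w=\phi-\psi\ge 0$, we have $w|_{\p B_r}=0$ and $-\Delta w=\mu-f(\psi)$ in $B_r$. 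Green's representation with the explicit 2D Green's function $G_r(X_0,Y)=\frac{1}{2\pi}\log(r/|Y-X_0|)$ yields
$$c\ld_1 r\le w(X_0)=\int_{B_r} G_r\,d\mu - \int_{B_r} G_r f(\psi)\,dY.$$
The last term is $O(r^2)$ since $|f(\psi)|$ is bounded and $\int_{B_r}G_r\,dY=r^2/4$. The layer-cake identity converts the Green's integral into $\frac{1}{2\pi}\int_0^r\mu(B_\rho(X_0))/\rho\,d\rho$, so $\int_0^r\mu(B_\rho)/\rho\,d\rho\ge 2\pi c\ld_1 r - Cr^2$. Splitting this integral at $\rho=r/2$, controlling $[0,r/2]$ via the already-established upper bound $\mu(B_\rho)\le C\rho$ and $[r/2,r]$ via monotonicity of $\rho\mapsto\mu(B_\rho)$, yields the pointwise estimate $\mu(B_r)\ge cr$.

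Finally, \eqref{c22} follows from \eqref{c30} and Lemma \ref{lb90}: $\mu_0$ is supported on $\Gamma$, giving the stated equality, and $\mu-\mu_0=f(0)I_{\{\psi=0\}}\,dX$ has density bounded by $\L$, so $|\mu(B_r)-\mu_0(B_r)|\le\L|B_r|\le Cr^2$ is absorbed into the $O(r)$ estimates. The main obstacle is the pointwise lower bound: the Green's function argument naturally produces only the integrated estimate $\int_0^r\mu(B_\rho)/\rho\,d\rho\ge c\ld_1 r$, and converting it to $\mu(B_r)\ge cr$ demands that the Lipschitz upper-bound constant not overwhelm the non-degeneracy constant in the splitting. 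If the ratio $\ld_1/(\ld_2+\L)$ is unfavorable, the argument may need to be supplemented by the density lemma (Lemma \ref{lb8}) together with the weak free-boundary condition (Proposition \ref{lb2}) to identify $\mu_0$ as a weighted $\mathcal{H}^1$-measure on $\Gamma$ with density $\ld(X)$.
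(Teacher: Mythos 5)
Your outline parallels the paper's in broad strokes: the upper bound by testing $\mu$ against a cutoff and invoking Lipschitz continuity, the lower bound via a Green's potential combined with non-degeneracy, and the passage from $\mu$ to $\mu_0$ using $\mu-\mu_0=f(0)I_{\{\psi=0\}}\,dX$. The crucial divergence is the placement of the pole. The paper chooses $Y\in\p B_{\kappa r}$ with $\psi(Y)\geq c_\kappa^*\ld_1\kappa r$ (non-degeneracy) and observes that Lipschitz continuity then forces $Y$ to lie at distance $\geq \mathfrak{c}(\kappa)r$ from $\{\psi=0\}$. With the pole strictly inside $\{\psi>0\}$, the Green's function is \emph{uniformly bounded on the support of $\mu$}, so $\int G_Y\,d\mu\le C_\kappa\,\mu(B_r)$, and the pointwise lower bound falls out in a single line from the representation formula plus the lower bound $\fint_{\p B_r}\psi\gtrsim r$. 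You instead put the pole at $X_0\in\Gamma$, which yields only the integrated quantity $\frac{1}{2\pi}\int_0^r\mu(B_\rho)/\rho\,d\rho$ and leaves the nontrivial task of converting that to a pointwise bound on $\mu(B_r)$.

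That conversion is in fact achievable, but not the way you sketched. Splitting at $\rho=r/2$ does risk the upper-bound constant $C$ swamping the non-degeneracy constant; split instead at $\rho=\e r$ with $\e$ chosen so small that $C\e$ is below half the non-degeneracy constant — then $\int_{\e r}^{r}\mu(B_\rho)\rho^{-1}\,d\rho\le\mu(B_r)\ln(1/\e)$ gives $\mu(B_r)\gtrsim r/\ln(1/\e)$, at the cost of a degraded (but still positive) constant. What does \emph{not} work is your proposed fallback of identifying $\mu_0=\ld(X)\mathcal{H}^1\lfloor\Gamma$ via Lemmas \ref{lb8} and \ref{lb2}: that representation is precisely Proposition \ref{lc1}, whose proof relies on the present Lemma \ref{lb9}, so the argument would be circular. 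Two further cleanup points: (i) the assertion $w=\phi-\psi\geq0$ on all of $B_r$ is unjustified — $\psi$ is not subharmonic, only $\Delta\psi+f(\psi)\geq0$ — though you only need the value $w(X_0)=\phi(X_0)\geq c\ld_1 r$, which is fine since $\psi(X_0)=0$; (ii) when writing the Green's representation at the pole $X_0\in\mathrm{supp}\,\mu$ you should decompose $w$ into the Green's potential of $\mu$ (superharmonic, hence its representation at $X_0$ is a classical fact, finite by the already-proved upper bound) minus the classical potential of the bounded function $f(\psi)$, rather than invoking Green's identity directly at a point in the singular support.
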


\begin{proof} For any $\e>0$, taking a test function
$d_{\e,B_r}(X)=\min\left\{\f{\text{dist}(X,\mathbb{R}^2\setminus
B_r)}{\e},1\right\}$ for a set $B_r\subset G$, it is easy to check that
$d_{\e,B_r}(X)$ converges to $I_{B_r}$ as $\e\rightarrow 0$ and
\be\label{c23}\ba{rl}
 \int_{ \O} d_{\e,B_r} d\mu=&\int_{\O}\nabla\psi\cdot\nabla d_{\e,B_r}+f(\psi)d_{\e,B_r}dX.\ea
 \ee

Taking $\e\rightarrow0$ in \eqref{c23}, it follows from Lemma
\ref{lb5} that \be\label{c24}\int_{B_r} d\mu=\int_{\p
B_r}\nabla\psi\cdot\nu d\mathcal{H}^1+\int_{B_r}f(\psi) dX\leq
Cr+\L\pi r^2\leq Cr,\ee where $\mathcal{H}^1$ is the one-dimensional Hausdorff measure on $\mathbb{R}^2$ and $\nu$ is the outer normal vector.

Let $Y\in B_r$ and $G_Y(X)>0$ be the Green
function for Laplacian in $B_r$ with pole $Y$. If $\psi(Y)>0$, then
the pole $Y$ is outside the support of the Radon measure $\mu$, and thus
\be\label{c25}\int_{B_r}G_Y(X)
d\mu=-\psi(Y)-\int_{B_r}f(\psi)G_Y(X)dX+\int_{\p
B_r}\psi\p_{-\nu}G_Y(X) d\mathcal{H}^1.\ee

Thanks to the non-degeneracy Lemma \ref{lb7}, there exists a point
$Y\in\p B_{\kappa r}$, such that $\psi(Y)\geq c^*_\kappa\ld r>0$ for
all small $\kappa>0$. Recalling that $\psi$ is Lipschitz continuous, we have
\be\label{c26}\psi(Y)=\psi(Y)-\psi(X_0)\leq C|Y-X_0|=C\kappa r\ \ \text{and}\ \
\psi>0\ \ \text{in}\ \ B_{\mathfrak{c}(\kappa)r}(Y),\ee for some small constant
$\mathfrak{c}(\kappa)$. Then it follows from \eqref{c25} and \eqref{c26} that
\be\label{c27}\int_{B_r}G_Y(X) d\mu\geq -C\kappa r-C\L
r^2+c(1-\kappa)\fint_{\p B_r}\psi d\mathcal{H}^1\geq c_0 r,\ \
(c_0>0),\ee provided that $\kappa$ and $r$ are small enough.

On the other hand, we have \be\label{c28}\int_{B_r}G_Y(X)
d\mu=\int_{B_r\cap\{\psi=0\}}G_Y(X)
d\mu\leq\left(\sup_{B_r\cap\{\psi=0\}}G_Y(X)\right)\int_{B_r}
d\mu\leq C_k\int_{B_r} d\mu,\ee where we have used the fact that
$\text{dist}(Y, B_r\cap\{\psi=0\})\geq \mathfrak{c}(\kappa)r$.
Combining \eqref{c27} and \eqref{c28}, one has
$$\int_{B_r} d\mu\geq cr,$$ which together with \eqref{c24} gives
\eqref{c30}.

It is clear that
$$\int_{B_r\cap\p\{\psi>0\}} d\mu\leq \int_{B_r} d\mu\leq
Cr,$$ this gives the right-hand side of \eqref{c22}. On the other
hand, we have
$$\ba{rl}\int_{B_r} d\mu=&\int_{B_r\cap\p\{\psi>0\}} d\mu+\int_{B_r\cap\{\psi=0\}} d\mu\\
\leq&\int_{B_r\cap\p\{\psi>0\}}
d\mu+\int_{B_r\cap\{\psi=0\}} f(0) dX\\
\leq& \int_{B_r\cap\p\{\psi>0\}} d\mu+\L\pi r^2,\ea$$ which implies
that
$$\int_{B_r\cap\p\{\psi>0\}} d\mu\geq\int_{B_r} d\mu-\L\pi r^2\geq cr-\L \pi r^2\geq cr\ \ \text{for small
$r$},$$ this gives the left-hand side of \eqref{c22}.

\end{proof}

We next introduce the representation theorem as follows, which implies that $\O_+=\O\cap\{\psi>0\}$ has finite perimeter.

\begin{proposition}\label{lc1}(Representation Theorem) Let $\psi$ be a minimizer, there holds that:

(1). $\mathcal{H}^1(D\cap\p\{\psi>0\})<+\infty$  for any compact
subset $D$ of $\O$.

(2). There exists a Borel function $\mathcal{B}_{\psi}$, such that
$$\Delta\psi+f(\psi)I_{\{\psi>0\}}=\mathcal{B}_{\psi}\mathcal{H}^1\lfloor_{\p\{\psi>0\}},$$ that is, for any $\xi\in C_0^\infty(\O)$,
$$-\int_{\O}\nabla\psi\cdot\nabla\xi dX+\int_{\O\cap\{\psi>0\}}f(\psi)\xi dX=
 \int_{\O\cap\p\{\psi>0\}}\mathcal{B}_{\psi}\xi d\mathcal{H}^1.$$

(3). For any compact subset $D$ of $\O$, $B_r(X_0)\subset D$ with
$X_0\in\Gamma$ and small $r>0$, $$0<c\leq \mathcal{B}_{\psi}(X)\leq
C<+\infty\ \ \text{and}\ \
cr\leq\mathcal{H}^1(B_r(X_0)\cap\p\{\psi>0\})\leq Cr,$$ for some
positive constants $c,C$ independent of $r$ and $X_0$.

\end{proposition}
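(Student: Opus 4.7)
The plan is to combine the positivity and support of the Radon measure $\mu_0=\Delta\psi+f(\psi)I_{\{\psi>0\}}$ from Lemma \ref{lb90} with the two-sided linear bounds $cr\leq\mu_0(B_r(X_0))\leq Cr$ from Lemma \ref{lb9}, and then to invoke the standard covering and differentiation machinery of geometric measure theory in order to produce the Hausdorff estimates together with the Borel density $\mathcal{B}_\psi$. Throughout, $D$ denotes a compact subset of $\Omega$ and all constants may depend on $D$.

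For part (1), I would first establish local finiteness of $\mathcal{H}^1(D\cap\partial\{\psi>0\})$ by a Vitali covering argument. Fix $\delta>0$ and cover $D\cap\partial\{\psi>0\}$ by balls $B_{r_i}(X_i)$ with $X_i\in\partial\{\psi>0\}$ and $r_i<\delta$. Extract a pairwise disjoint subcollection whose five-fold enlargement still covers $D\cap\partial\{\psi>0\}$. The lower estimate $\mu_0(B_{r_i}(X_i))\geq c\, r_i$ from Lemma \ref{lb9}, combined with disjointness and the finiteness of $\mu_0$ on a slightly enlarged compact $D'\supset D$, forces $\sum r_i\leq c^{-1}\mu_0(D')<\infty$. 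This uniformly bounds the $\delta$-approximating Hausdorff pre-measure of $D\cap\partial\{\psi>0\}$ by a constant independent of $\delta$; sending $\delta\downarrow 0$ yields $\mathcal{H}^1(D\cap\partial\{\psi>0\})<\infty$. The same Vitali argument applied inside a single ball $B_r(X_0)$ will later produce the upper bound $\mathcal{H}^1(B_r(X_0)\cap\partial\{\psi>0\})\leq Cr$.

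For part (2), I would exploit the upper estimate $\mu_0(B_r(X))\leq Cr$ for every $X\in\partial\{\psi>0\}$ to produce absolute continuity of $\mu_0$ with respect to $\mathcal{H}^1\lfloor_{\partial\{\psi>0\}}$. This is the standard comparison principle for Radon measures with bounded upper $1$-dimensional density, which forces $\mu_0\leq C\,\mathcal{H}^1\lfloor_{\mathrm{spt}\,\mu_0}$. Once the absolute continuity is in hand, the Radon--Nikodym theorem supplies a Borel density $\mathcal{B}_\psi$ with $\mu_0=\mathcal{B}_\psi\,\mathcal{H}^1\lfloor_{\partial\{\psi>0\}}$; testing against $\xi\in C_0^\infty(\Omega)$ and unpacking the distributional definition of $\mu_0$ then recovers the integration-by-parts identity in the statement.

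For part (3), the upper bound $\mathcal{B}_\psi\leq C$ is built directly into the comparison $\mu_0\leq C\,\mathcal{H}^1\lfloor$. Combining this with the lower estimate of Lemma \ref{lb9} gives
$$
cr\leq\mu_0(B_r(X_0))=\int_{B_r(X_0)\cap\partial\{\psi>0\}}\mathcal{B}_\psi\,d\mathcal{H}^1\leq C\,\mathcal{H}^1\!\left(B_r(X_0)\cap\partial\{\psi>0\}\right),
$$
which, together with the upper $\mathcal{H}^1$-bound from part (1), yields the claimed two-sided estimate $cr\leq\mathcal{H}^1(B_r(X_0)\cap\partial\{\psi>0\})\leq Cr$. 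The lower bound $\mathcal{B}_\psi\geq c$ then follows at $\mathcal{H}^1$-a.e. $X_0\in\partial\{\psi>0\}$ from the Besicovitch differentiation theorem applied to the two Radon measures $\mu_0$ and $\mathcal{H}^1\lfloor_{\partial\{\psi>0\}}$, since the ratio is bounded below by $c/C$ along every ball. The principal obstacle is the absolute-continuity step in part (2): the density comparison for Radon measures is intuitively transparent given our one-sided linear estimates, but it is the one place where the GMT infrastructure must be imported cleanly. All remaining items reduce to Vitali covering and differentiation once that comparison is in place.
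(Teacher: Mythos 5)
Your argument is correct and follows essentially the same route as the paper, which simply delegates to Theorem 4.5 of Alt--Caffarelli together with Federer's Radon--Nikodym theorem; you have expanded the Vitali covering argument (to get the $\mathcal{H}^1$ finiteness and upper ball estimate from the lower density bound of Lemma \ref{lb9}), the upper-density comparison principle (to get $\mu_0\leq C\,\mathcal{H}^1\lfloor_{\partial\{\psi>0\}}$ and hence absolute continuity and the Radon--Nikodym density), and Besicovitch differentiation (for the lower bound on $\mathcal{B}_\psi$), which together constitute precisely the content of the cited AC1 theorem. The only cosmetic difference is that the paper records the two-sided comparison $C^{-1}\mu_0(G)\leq\mathcal{H}^1(G)\leq C\mu_0(G)$ first and reads off both density bounds from it, while you derive the lower bound on $\mathcal{B}_\psi$ via Besicovitch differentiation; these are equivalent.
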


\begin{proof}  Thanks to the fact \eqref{c22}, and along the similar arguments in the proof of
Theorem 4.5 in \cite{AC1}, we can obtain that
$$\f1C\mu_0(G)\leq \mathcal{H}^1(G)\leq C\mu_0(G)\ \ \text{for any compact subset $G$ of
$D\cap\p\{\psi>0\}$},$$ Denote $\mu_0=\Delta\psi+f(\psi)I_{\{\psi>0\}}$, this gives the assertion (1).

Therefore, $\mu_0$ is absolutely continuous with respect to
$\mathcal{H}^1\lfloor_{\p\{\psi>0\}}$. It follows from Theorem 2.5.8
in \cite{FH} that there exists a Radon-Nikodym derivative
$\mathcal{B}_\psi=\f{d\mu_0}{d(\mathcal{H}^1\lfloor_{\p\{\psi>0\}})}$,
namely,
$$\mu_0(G)=\int_{G}\mathcal{B}_\psi
d(\mathcal{H}^1\lfloor_{\p\{\psi>0\}})\ \ \text{for any
$G\subset\O$},$$ which together with \eqref{c22} gives the assertion
(3).

 Then we have
$$\int_{\O\cap\{\psi=0\}}\xi d\mu+\int_{\O}\xi d\mu_0=\int_{\O}\xi d\mu=-\int_{\O}\nabla\psi\cdot\g\xi dX+\int_{\O}f(\psi)\xi
dX,$$ for any $\xi\in C_0^\infty(\O)$, which gives that
$$-\int_{\O}\nabla\psi\cdot\nabla\xi dX+\int_{\O\cap\{\psi>0\}}f(\psi)\xi dX=
 \int_{\O\cap\p\{\psi>0\}}B_\psi\xi d\mathcal{H}^1,$$ due to $\int_{\O\cap\{\psi=0\}}\xi d\mu=\int_{\O\cap\{\psi=0\}}f(0)\xi
 dX$. This yields the assertion (2).

\end{proof}

By virtue of (1) in Proposition \ref{lc1}, it follows from Theorem 1 in
$\S5.11$ in \cite{EV2} that the set $\O_+=\O\cap\{\psi>0\}$ has finite
perimeter, that is, $\mathcal{V}_{\psi}=-\nabla I_{\O_+}$ is Borel
measure and the total variation $|\mathcal{V}_{\psi}|$ is a Radon
measure. Denote the reduced boundary of the set
$\O\cap\{\psi>0\}$ as
$$\p_{red}\{\psi>0\}=\{X\in\O\cap\p\{\psi>0\}\mid |\nu_{\psi}|=1\},$$ where $\nu_{\psi}$ is the unique unit vector with
$$\int_{B_r(X)}|I_{\{\psi>0\}}-I_{\{Y\mid (Y-X)\cdot\nu_{\psi}<0\}}|dY=o(r^2)\ \ \text{as $r\rightarrow0$, }$$if such a vector
exists,
 and $\nu_{\psi}=0$ otherwise. Furthermore, it follows from
 Theorem 4.5.6 in \cite{FH} that
$$\mathcal{V}_{\psi}=\nu_{\psi}\mathcal{H}^1\lfloor\ \p_{red}\{\psi>0\}.$$

\subsection{ Blow-up limits} In this subsection, we study some properties of the so-called blow-up limits.

For any $X_0\in\O$, take two sequences $\{X_n\}$ and $\{\rho_n\}$
with $X_n\in\O$ and $\rho_n>0$, such that $\psi(X_n)=0$,
$X_n\rightarrow X_0$ and $\rho_n\rightarrow0$ as
$n\rightarrow0$. We call the sequence of functions defined by
$$\psi_n(X)=\f{\psi(X_n+\rho_nX)}{\rho_n} \ \ \text{and}\ \ \ld_n(X)=\ld(X_n+\rho_n X),$$
 as the blow-up sequence with respect to $B_{\rho_n}(X_n)$, $X\in\mathbb{R}^2$. It follows
 from Lemma \ref{lb5} that $|\g\psi_n(X)|\leq C$ in any compact set of
 $\mathbb{R}^2$, provided that $n$ is large enough. Since $\psi_n(0)=0$, there exists a blow-up limit
$\psi_0:\mathbb{R}^2\rightarrow\mathbb{R}$, such that for a
subsequence $\{\psi_n\}$, \be\label{c020}\psi_n\rightarrow\psi_0\ \
\text{in $C_{loc}^{0,\alpha}(\mathbb{R}^2)$ for any
$\alpha\in(0,1)$},\ee and $$\nabla\psi_n\rightarrow\nabla\psi_0\ \
\text{weakly star in $L_{loc}^{\infty}(\mathbb{R}^2)$.}$$ Recalling
the definition of the {\it Hausdorff distance $d(E,F)$} between two
sets $E$ and $F$ as the infimum of the numbers $\e$, such that
$$F\subset\bigcup_{X\in E}B_\e(X)\ \ \text{and}\ \ E\subset\bigcup_{X\in F}B_\e(X).$$

Next, we will give the several convergences of the blow-up sequence, the idea borrows from the similar arguments for Laplace equation in Lemma 3.6 in Chapter 3 in \cite{FA1}.
\begin{lemma}\label{lc2} The following properties hold:

(1) $\p{\{\psi_n>0\}}\rightarrow\p{\{\psi_0>0\}}\ \ \text{locally in
Hausdorff distance.}$

(2) $I_{\{\psi_n>0\}}\rightarrow I_{\{\psi_0>0\}}\ \ \text{ in
$L_{loc}^1(\mathbb{R}^2)$.}$

(3) If $X_n\in\p\{\psi>0\}$, then $0\in\p\{\psi_0>0\}$.

(4) $\nabla\psi_n\rightarrow\nabla\psi_0\ \ \text{a.e. in
$\mathbb{R}^2$}.$

(5) If $\psi(X_n)=0$ and $X_n\rightarrow X_0\in \O$, then every blow-up limit $\psi_0$ with respect to $B_{\rho_n}(X_n)$ is an absolute
minimum for the functional $J_0(\phi)$ in $B_r=B_r(0)$ for any $r>0$, namely,
\be\label{c42}J_0(\psi_0)=\min J_0(\phi)\ \ \text{for any $\phi\in
H^1(B_r)$ and $\phi=\psi_0$ on $\p B_r$},\ee where
$J_0(\phi)=\int_{B_r}|\g\phi|^2+\ld^2(X_0)I_{\{\phi>0\}}dX$.
\end{lemma}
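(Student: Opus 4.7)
The plan is to leverage the scale-invariant tools from Section 2---the uniform Lipschitz bound of Theorem \ref{lb5}, the non-degeneracy Lemma \ref{lb7}, the density estimate Lemma \ref{lb8}, and the perimeter bound in Proposition \ref{lc1}---all of which transfer cleanly to the rescaled minimizers $\psi_n$. I will prove (1)--(3) first, then use them for (4), and finally combine (4) with a rescaling argument for (5). For (1), both Hausdorff inclusions follow locally: if $X \in \partial\{\psi_0 > 0\}$, the locally uniform convergence $\psi_n \to \psi_0$ together with Lemma \ref{lb7} applied to $\psi_n$ near $X$ produces free boundary points of $\psi_n$ in every neighborhood of $X$; conversely, if $Y_n \in \partial\{\psi_n > 0\}$ with $Y_n \to Y$, then $\psi_0(Y) = 0$ by continuity, and non-degeneracy at $Y_n$ gives $\sup_{B_r(Y_n)} \psi_n \gtrsim r$ for small $r$, which passes to the limit and forces $Y \in \partial\{\psi_0 > 0\}$. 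Statement (3) is the special case $Y_n \equiv 0$, since $X_n \in \partial\{\psi>0\}$ rescales to $0 \in \partial\{\psi_n > 0\}$. For (2), the pointwise convergence of indicators off $\partial\{\psi_0>0\}$ follows from (1) together with Lemma \ref{lb7} (uniform convergence handles the interior of $\{\psi_0>0\}$, while non-degeneracy forces $\psi_n \equiv 0$ locally on $\mathrm{int}\{\psi_0=0\}$); rescaling Proposition \ref{lc1} yields $\mathcal{H}^1(B_r \cap \partial\{\psi_n>0\}) \leq Cr$, and the standard lower semicontinuity of length under Hausdorff convergence of planar sets gives $\mathcal{H}^1(B_r \cap \partial\{\psi_0>0\}) < \infty$, hence $\mathcal{L}^2(\partial\{\psi_0>0\}) = 0$; dominated convergence then yields the $L^1_{loc}$-convergence.

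For (4), I would split $\mathbb{R}^2$ into $\{\psi_0>0\}$, $\mathrm{int}\{\psi_0=0\}$, and $\partial\{\psi_0>0\}$, the last being Lebesgue-null by (2). On any compact $K \subset \{\psi_0>0\}$, (1) forces $K \subset \{\psi_n>0\}$ for large $n$, where $\psi_n$ solves the rescaled equation $\Delta\psi_n + \rho_n f(\rho_n\psi_n) = 0$ with a uniformly bounded source; interior Schauder estimates then upgrade the $C^{0,\alpha}$-convergence to $C^{2,\alpha}(K)$, so $\nabla\psi_n \to \nabla\psi_0$ pointwise there. For $X \in \mathrm{int}\{\psi_0=0\}$, the averages $\fint_{\partial B_r(X)} \psi_n$ tend to zero, so Lemma \ref{lb7} forces $\psi_n \equiv 0$ on a small ball around $X$ for $n$ large, and hence $\nabla\psi_n(X) = 0 = \nabla\psi_0(X)$ eventually.

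For (5), fix $r > 0$ and a competitor $\phi \in H^1(B_r)$ with $\phi = \psi_0$ on $\partial B_r$ (WLOG $\phi \geq 0$). I would construct $\phi_n$ by a standard cut-and-paste in the thin annulus $B_r \setminus B_{r-\delta}$, gluing $\psi_n$ to $\phi$ via a smooth cutoff and letting $\delta = \delta_n \to 0$ along a diagonal, so that $\phi_n = \psi_n$ on $\partial B_r$ and $J_0(\phi_n) \to J_0(\phi)$. Rescale back: $\tilde\phi_n(Y) = \rho_n \phi_n((Y - X_n)/\rho_n)$ on $B_{\rho_n r}(X_n)$ and $\tilde\phi_n = \psi$ elsewhere. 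Since $\|\phi_n\|_{L^\infty}$ is uniformly bounded while $\Psi \geq c > 0$ near the interior point $X_0$, we have $\tilde\phi_n \leq \Psi$ for $n$ large, so $\tilde\phi_n$ is admissible for $(P)$. The minimality $J(\psi) \leq J(\tilde\phi_n)$, after changing variables $Y = X_n + \rho_n X$ and dividing by $\rho_n^2$, becomes
\[
\int_{B_r}\bigl[|\nabla\psi_n|^2 + F(\rho_n\psi_n) + \lambda_n^2 I_{\{\psi_n>0\}}\bigr]\, dX \;\leq\; \int_{B_r}\bigl[|\nabla\phi_n|^2 + F(\rho_n\phi_n) + \lambda_n^2 I_{\{\phi_n>0\}}\bigr]\, dX.
\]
The $F(\rho_n\cdot)$ terms are $O(\rho_n)$ and vanish; the characteristic-function terms on the left converge by (2) and continuity of $\lambda$, and on the right by construction of $\phi_n$; the gradient term on the left passes by (4) and dominated convergence. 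This yields $J_0(\psi_0) \leq J_0(\phi)$. The main obstacle I anticipate is the gradient passage in (5), which demands strong rather than merely weak-$\star$ convergence of $\nabla\psi_n$ and thus dictates the ordering (1)$\to$(2)$\to$(4)$\to$(5); a secondary subtlety is arranging $\tilde\phi_n \leq \Psi$, handled by the smallness factor $\rho_n$ and the positivity of $\Psi$ near the interior point $X_0$.
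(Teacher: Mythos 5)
Your overall strategy (1)$\to$(2)$\to$(3)$\to$(4)$\to$(5) tracks the paper's proof very closely: the Hausdorff convergence in (1) uses the same dichotomy ($\psi_0>0$ vs.\ $\psi_0\equiv 0$) plus non-degeneracy; (3) as a specialisation of the Hausdorff argument with $Y_n\equiv 0$ is a correct shortcut for what the paper redoes directly via the average bounds \eqref{c026}; (4) via interior Schauder on $\{\psi_0>0\}$ and non-degeneracy on $\operatorname{int}\{\psi_0=0\}$ is essentially the paper's argument (the paper works with density-$1$ points of $\{\psi_0=0\}$, which it then shows form an open set, so the two coincide); and (5) is the standard cut-and-paste competitor construction, where your explicit check of $\tilde\phi_n\le\Psi$ using $\Psi\ge c>0$ near the interior point $X_0$ and $\rho_n\to 0$ is a legitimate worry that the paper glosses over.

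The genuine gap is in (2). You appeal to ``the standard lower semicontinuity of length under Hausdorff convergence of planar sets'' to pass from $\mathcal{H}^1(B_r\cap\partial\{\psi_n>0\})\le Cr$ to $\mathcal{H}^1(B_r\cap\partial\{\psi_0>0\})<\infty$. This is false in general: $\mathcal{H}^1$ is \emph{not} lower semicontinuous under Hausdorff convergence of compact sets. For example, $A_n=\{0,\tfrac{1}{n},\tfrac{2}{n},\dots,1\}\subset\mathbb{R}$ converges in Hausdorff distance to $[0,1]$, yet $\mathcal{H}^1(A_n)=0$ for all $n$ while $\mathcal{H}^1([0,1])=1$. (Go\l\k{a}b's theorem gives lower semicontinuity only for \emph{connected} compacta, and the free boundaries here need not be connected.) There is also a circularity risk if one instead tries to invoke lower semicontinuity of De Giorgi perimeter, since that requires the very $L^1_{\rm loc}$ convergence you are trying to prove.

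Two correct routes. (a) The paper's: pass the average bounds of Lemma~\ref{lb6} and Lemma~\ref{lb7} to the limit along free boundary points $Y_n\to Y_0$ to get $c\lambda_1\le\frac{1}{r}\fint_{\partial B_r(Y_0)}\psi_0\,dS\le C(\lambda_2+\Lambda)$ for every $Y_0\in\partial\{\psi_0>0\}$; combined with the Lipschitz continuity and harmonicity of $\psi_0$ in $\{\psi_0>0\}$, this verifies the hypotheses of the representation theorem (Theorem~4.5 in \cite{AC1}) applied directly to $\psi_0$, yielding $\mathcal{H}^1(\partial\{\psi_0>0\}\cap D)<\infty$ intrinsically rather than by limiting. (b) Alternatively, pass the two-sided density estimate of Lemma~\ref{lb8} to the limit: at any $Y_0\in\partial\{\psi_0>0\}$ one gets $c\le\frac{\mathcal{L}^2(B_r(Y_0)\cap\{\psi_0>0\})}{\mathcal{L}^2(B_r)}\le 1-c$, so $\partial\{\psi_0>0\}$ (being contained in $\{\psi_0=0\}$) has density strictly below $1$ at every one of its points, whence $\mathcal{L}^2(\partial\{\psi_0>0\})=0$ by the Lebesgue density theorem. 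Either fix plugs into your dominated-convergence conclusion.
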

\begin{proof} (1).
For any $X_0\in \O$, if $X_0\notin\p\{\psi_0>0\}$, then there exists
a small $r>0$ such that $B_r(X_0)\cap\p\{\psi_0>0\}=\varnothing$
with $B_r(X_0)\subset\O$. We next claim that
\be\label{c021}\text{$B_{\f
r{4}}(X_0)\cap\p\{\psi_n>0\}=\varnothing$ for sufficiently large
$n$.}\ee In fact, it follows from \eqref{c020} that the claim
\eqref{c021} is true, if $\psi_0>0$ in $B_r(X_0)$.

If $\psi_0\equiv 0$ in $B_r(X_0)$, for any fixed small $\e>0$, there exists a $N=N(\e)$, such that $\psi_n<\e$ in
$B_{\f r2}(X_0)$ for any $n>N$, and one has
$$\f2r\fint_{\partial B_{\f r2}(X_0)}\psi_n dS<\f{2\e}r\leq c_{\f12}^*\ld_1 \ \text{for sufficiently large $n$},$$
which together with the non-degeneracy Lemma \ref{lb7} implies that $\psi_n\equiv 0 \ \text{in} \ B_{\f r4}(X_0)$ for $n>N$, this gives the claim \eqref{c021}.

Reversely, for any $X_0\in \O$, if $X_0\notin\p\{\psi_n>0\}$ for a
subsequence $\{\psi_n\}$, then
$B_r(X_0)\cap\p\{\psi_n>0\}=\varnothing$ for small $r>0$. Next, we
claim that \be\label{c022}\text{$B_{\f
r4}(X_0)\cap\p\{\psi_0>0\}=\varnothing$}.\ee If $\psi_n>0$ in
$B_r(X_0)$, we have
$$\Delta\psi_n+\rho_nf(\rho_n\psi_n)=0 \ \ \text{in}\ \ B_r(X_0),$$ which implies that $$\Delta\psi_0= 0 \ \text{in} \
B_{\f r2}(X_0),\ \psi_0\geq 0\ \text{in} \ B_{\f r2}(X_0).$$ The
strong maximum principle yields that
$$\text{either}\ \ \psi_0\equiv 0\ \ \text{or}\ \ \psi_0>0\ \text{ in}\ B_{\f r2}(X_0),$$
which gives the claim \eqref{c022}.

It is easy to check that the claim \eqref{c022} holds, in the case of
$\psi_n\equiv 0$ in $B_r(X_0)$.

 Hence, we obtain the convergence of the free boundary in the Hausdorff distance.

(2). For any $Y_0\in\p\{\psi_0>0\}$, it follows from the results in
Step 1 that there exists a sequence $\{Y_n\}$ with
$Y_n\in\p\{\psi_n>0\}$, such that $Y_n\rightarrow Y_0$. Since
$\rho_nY_n+X_n\in\p\{\psi>0\}$, by using Lemma \ref{lb6} and Lemma
\ref{lb7} for $\psi_n$, we have
\be\label{c023}c_{\f12}^*\ld_1\leq\f1r\fint_{\partial
B_r(Y_n)}\psi_n(X)dS_X=\f1{r\rho_n}\fint_{\partial
B_{r\rho_n}(\rho_nY_n+X_n)}\psi(Z)dS_Z\leq C^*(\ld_2+\L),\ee for any
$r>0$, provided that $n$ is sufficiently large. Then taking
$n\rightarrow+\infty$ in \eqref{c023} gives that
\be\label{c027}c_{\f12}^*\ld_1\leq\f1r\fint_{\partial
B_r(Y_0)}\psi_0dS\leq C^*(\ld_2+\L)\ \ \text{for}\ \
Y_0\in\p\{\psi_0>0\},\ee which together with Theorem 4.5 in
\cite{AC1} imply that
\be\label{c024}\mathcal{H}^1(\p\{\psi_0>0\}\cap D)<+\infty\ \
\text{for any compact subset $D$ of $\mathbb{R}^2$}.\ee Here,
$\mathcal{H}^1$ is the one-dimensional Hausdorff measure on
$\mathbb{R}^2$. Consequently,
$$\mathcal{L}^2(\p\{\psi_0>0\}\cap D)=0,$$ where
$\mathcal{L}^2$ is the two-dimensional Lebesgue measure on
$\mathbb{R}^2$.

Let $O_{\e_n}$ be an $\e_n$-neighborhood of $\p\{\psi_0>0\}$, such
that \be\label{c025}\p\{\psi_n>0\}\subset O_{\e_n}\ \ \text{and}\ \
\mathcal{L}^2(D\cap O_{\e_n})\rightarrow 0\ \quad \text{as}\
\e_n\rightarrow 0.\ee

Hence, it follows from the results in Step 1 that
$$\int_{D}\left|I_{\{\psi_n>0\}}-I_{\{\psi_0>0\}}\right|dX\leq\int_{D\cap
O_{\e_n}}1dX=\mathcal{L}^2(D\cap O_{\e_n}),$$  for sufficiently
large $n$, which together with \eqref{c025} gives that
$$\text{$I_{\{\psi_n>0\}}\rightarrow I_{\{\psi_0>0\}}\ $ in
$\ L^1(D)$.}$$

(3). If $X_n$ is a free boundary point of the minimizer $\psi$, it follows from
Lemma \ref{lb6} and Lemma \ref{lb7} that
\be\label{c026}c_{\f12}^*\ld_1\leq\f1r\fint_{\partial
B_r(0)}\psi_n(X)dS_X=\f1{r\rho_n}\fint_{\partial
B_{r\rho_n}(X_n)}\psi(Z)dS_Z\leq C^*(\ld_2+\L),\ee for any $r>0$,
provided that $n$ is sufficiently large. Then taking
$n\rightarrow+\infty$ in \eqref{c026}, one has
$$c_{\f12}^*\ld_1\leq\f1r\fint_{\partial B_r(0)}\psi_0dS\leq
C^*(\ld_2+\L)\ \ \text{for any $r>0$},$$ which gives that
$0\in\p\{\psi_0>0\}$.

(4).  Let $D$ be any compact subset of $\{\psi_0>0\}$, thanks to the
standard elliptic estimates for $\psi_n$, one has
\be\label{c028}\nabla\psi_n\rightarrow\nabla\psi_0\ \
\text{uniformly in $D$}.\ee Next, we claim that
\be\label{c029}\nabla\psi_n\rightarrow\nabla\psi_0\ \ \text{a.e. in
$\{\psi_0=0\}$}.\ee

Since $\{\psi_0=0\}$ is $\mathcal{L}^2$-measurable, it follows from
Corollary 3 of Section 1.7 in \cite{EV2} that
$$\lim_{r\rightarrow 0}\f{\mathcal{L}^2(B_r(X)\cap\{\psi_0=0\})}{\mathcal{L}^2(B_r(X))}=1\ \ \text{for $\mathcal{L}^2$ a.e. $X\in\{\psi_0=0\}$}.$$
Denote $$\mathcal{S}=\left\{X\in\{\psi_0=0\}\mid \lim_{r\rightarrow
0}\f{\mathcal{L}^2(B_r(X)\cap\{\psi_0=0\})}{\mathcal{L}^2(B_r(X))}=1\right\}.$$
We next show that \be\label{c290}\text{$\psi_0(X_0+X)=o(|X|)\ $ for
any $X_0\in\mathcal{S}$}.\ee In fact, suppose not, we assume that $\psi_0(Y)>kr$ for some
$Y\in B_r(X_0)$ with $r\rightarrow 0$ and $k>0$. With the aid of
\eqref{c027}, it follows from Theorem 4.3 and Remark 4.4 in
\cite{AC1} that $\psi_0$ is Lipschitz continuous, which implies that
$$\psi_0(X)>\f k2 r\ \ \text{in $B_{\e kr}(Y)$ for some small $\e>0$}.$$
This gives that $\{\psi_0>0\}$ has positive density at $X_0$, which
contradicts to $X_0\in\mathcal{S}$. Thus, we obtain the fact \eqref{c290}.

With the aid of \eqref{c020} and \eqref{c290}, for any $\e>0$, we
have
$$\f{\psi_n}{r}<\e \ \ \text{in $B_r(X_0)$ for small $r$},$$
provided that $n$ is sufficiently large, that is $n>N(\e,r)$. It
follows from the non-degeneracy Lemma \ref{lb7} that $\psi_n\equiv
0$ in $B_{\f r2}(X_0)$, which implies that $\psi_0\equiv 0$ in
$B_{\f r{2}}(X_0)$, and thus $\mathcal{S}$ is open.
Furthermore, one has
$$\text{$\psi_n\equiv \psi_0$ in any compact subset of $\mathcal{S}$, provided that $n$ is sufficiently
large.}$$ This completes the proof of the claim \eqref{c029}.

Since $\mathcal{L}^2(\p\{\psi_0>0\})=0$, it follows from
\eqref{c028} and \eqref{c029} that $\nabla\psi_n\rightarrow
\nabla\psi_0$ a.e. in $\mathbb{R}^2$.

(5). For any $\phi\in H^1(B_r)$ and $\phi=\psi_0$ on $\p B_r$ with $B_r=B_r(0)$, it
suffices to show that \be\label{c030}J_0(\psi_0)\leq J_0(\phi).\ee
Taking $\eta_\e(X)=\min\left\{\f1\e dist(X,\mathbb{R}^2\setminus
B_r),1\right\}$, it is easy to see that $\eta_\e\in C_0^{0,1}(B_r)$
and $0\leq\eta_\e\leq1$. Set
$$\phi_n=\phi+(1-\eta_\e)(\psi_n-\psi_0).$$ It is easy to check that
$$\phi_n=\psi_n\ \text{outside}\ \ B_r, \ \ \text{and}\ \ \{\phi_n>0\}\subset\{\phi>0\}\cup\{\eta_\e<1\}.$$
Then we have \be\label{c031}\ba{rl}&\int_{B_r}|\nabla
\psi_n|^2+F(\rho_n\psi_n)+\ld_n^2I_{\{\psi_n>0\}}
dX\\
\leq&\int_{B_r}|\nabla
\phi_n|^2+F(\rho_n\phi_n)+\ld_n^2I_{\{\phi_n>0\}}
dX\\
\leq&\int_{B_r}|\nabla
\phi_n|^2+F(\rho_n\phi_n)+\ld_n^2I_{\{\phi_n>0\}}
dX+\int_{B_r}\ld_n^2I_{\{\eta_\e<1\}} dX.\ea\ee

By virtue of the results in the statements (2) and (4), taking
$n\rightarrow+\infty$ in \eqref{c031} gives that
\be\label{c032}\ba{rl}&\int_{B_r}|\nabla\psi_0|^2+\ld^2(X_0)I_{\{\psi_0>0\}}
dX\\
\leq&\int_{B_r}|\nabla\phi|^2+\ld^2(X_0)I_{\{\phi>0\}}
dX+\ld^2(X_0)\int_{B_r}I_{\{\eta_\e<1\}} dX.\ea\ee

Taking $\e\rightarrow 0$ in \eqref{c032}, we complete the proof of
the claim \eqref{c030}.

\end{proof}

\subsection{Linear growth near the free boundary}
In this subsection, we will obtain the gradient estimate of $\psi$
near the free boundary, and show that $\psi(X)$ should grow
linearly away from the free boundary. Namely,
\be\label{cc0}\psi(X+X_0)=\ld(X_0)\max\{-X\cdot\nu(X_0),0\}+o(|X|),\ee
for $X\rightarrow 0$, where $X_0\in\O\cap\p\{\psi>0\}$ and $\nu(X_0)$ is the
unit vector.

\begin{lemma}\label{lc5} For any
compact subset $D$ of $\O$, there exist some positive constants $C$ and
$0<\gamma<1$ depending only on $\L$, $D$ and $\O$, such that
for any disc 
$B_r(X_0)\subset B_R(X_0)\subset D$ with $X_0\in\O\cap\p\{\psi>0\}$, then
\be\label{c46}\sup_{X\in B_r(X_0)} |\nabla\psi(X)|\leq \sup_{X\in
B_R(X_0)}\ld(X)+C \left(\f rR\right)^\gamma,\ee  Furthermore, there exists a $\alpha_0\in(0,1)$, such that
\be\label{c460} |\nabla\psi(X)|\leq \ld(X)+C r^{\alpha_0},\ \
\alpha_0\in(0,1),\ee for any disc $B_r(X)\subset D$ touching the free
boundary with small $r>0$.
\end{lemma}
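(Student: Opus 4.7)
The plan is to exploit subharmonicity of $|\nabla\psi|^2$ inside $\{\psi>0\}$ together with the weak dynamic boundary condition of Proposition \ref{lb2}, and then to iterate so as to obtain polynomial decay of the excess $|\nabla\psi|^2-\Lambda_R^2$. Since $\psi\in C^{2,\alpha}$ in $\O\cap\{\psi>0\}$ by Lemma \ref{lb3} and satisfies $-\Delta\psi=f(\psi)$ there, the Bochner identity gives
$$\Delta\left(|\nabla\psi|^2\right)=2|D^2\psi|^2+2\nabla\psi\cdot\nabla(\Delta\psi)=2|D^2\psi|^2-2f'(\psi)|\nabla\psi|^2\geq 0,$$
using $f'(\psi)\leq 0$ from \eqref{c0}. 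Hence $w:=|\nabla\psi|^2$ is classically subharmonic in $\O\cap\{\psi>0\}$.

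Set $\Lambda_R:=\sup_{B_R(X_0)}\ld$ and introduce the nonnegative auxiliary function $v:=(w-\Lambda_R^2)_+$. By Theorem \ref{lb5}, $\nabla\psi=0$ almost everywhere on $\{\psi=0\}$, so $v\equiv 0$ there. I would next show that $\Delta v\geq -C$ on $B_R(X_0)$ in the distributional sense. Inside $\{\psi>0\}$ this follows from the Bochner inequality and the convexity of $t\mapsto(t-\Lambda_R^2)_+$; the crossing of $\Gamma$ is handled by approximating $\Gamma$ with the smooth level sets $\p\{\psi>\e\}$ (on which $\psi\in C^{2,\alpha}$) and using the integral identity \eqref{b005} of Proposition \ref{lb2} to show that $|\nabla\psi|^2\leq\ld^2+o(1)\leq\Lambda_R^2+o(1)$ along these level sets in the averaged sense that is needed for the distributional inequality.

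Next, since $\{\psi=0\}$ has positive density at every point of $\Gamma$ by Lemma \ref{lb8} and $v\equiv 0$ on $\{\psi=0\}$, the standard oscillation-decay estimate for nonnegative subsolutions of $\Delta v\geq -C$ vanishing on sets of positive density produces constants $\theta\in(0,1)$ and $\gamma\in(0,1)$ for which
$$\sup_{B_{\theta\rho}(X_0)}v\leq \theta^{2\gamma}\sup_{B_\rho(X_0)}v+C\rho^2$$
whenever $\rho\in(0,R]$. Iterating this from $\rho=R$ down to $\rho=r$, and using Theorem \ref{lb5} to bound $\sup_{B_R(X_0)}v\leq C$, delivers $\sup_{B_r(X_0)}v\leq C(r/R)^{2\gamma}$; taking square roots yields \eqref{c46}. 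For the second statement \eqref{c460}, I would pick a free boundary point $X_0\in\p B_r(X)\cap\Gamma$, apply \eqref{c46} on $B_{2r}(X_0)\subset B_{R}(X_0)$ with $R=r^\tau$ for a suitable $\tau\in(0,1)$, use the H\"older regularity $\ld\in C^{0,\beta}$ to estimate $\sup_{B_R(X_0)}\ld\leq\ld(X)+CR^\beta$, and balance the two error terms $R^\beta$ and $(r/R)^\gamma$ by choosing $\tau=\gamma/(\beta+\gamma)$, which produces $\alpha_0=\beta\gamma/(\beta+\gamma)\in(0,1)$.

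The main obstacle is the second step, namely the extension of subharmonicity across the free boundary. The Bernoulli condition $|\nabla\psi|=\ld$ is only available in the integrated form of Proposition \ref{lb2} and cannot be read pointwise, while at this stage of the paper $\Gamma$ is known only to be $\mathcal{H}^1$-rectifiable through Proposition \ref{lc1}. Consequently, one must justify the global subsolution property of $v$ by a delicate limiting procedure on the smooth level sets $\p\{\psi>\e\}$, controlling the lower-order $f(\psi)$ and $F(\psi)$ contributions (both of order $\e$ near $\Gamma$) in the limit $\e\downarrow 0$.
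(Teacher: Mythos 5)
Your Step 2 (the iteration on the excess $v=(|\nabla\psi|^2-\Lambda_R^2)_+$, the use of the density estimate of Lemma \ref{lb8}, and Theorem 8.26 / Lemma 8.23 of [GT]) and Step 3 (the balancing of $(r/R)^\gamma$ against $\sup_{B_R}\ld-\ld(X)\lesssim R^\beta$) are essentially the paper's Steps 2 and 3. But you have identified the hard point and not resolved it; the paper resolves it by a different mechanism, which is entirely absent from your proposal, and this is a genuine gap.

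The paper's Step 1 proves, by blow-up, that for every free boundary point $X_0$
$$\limsup_{X\to X_0,\ \psi(X)>0}|\nabla\psi(X)|=\ld(X_0).$$
This is obtained by extracting a blow-up sequence along points where $|\nabla\psi|$ nearly attains its limsup $\kappa$, showing the blow-up limit is $-\kappa y$ on $\{y\le 0\}$, then showing (via non-degeneracy and a second blow-up) that the limit vanishes on a strip $\{0<y<\e_0\}$, and finally invoking Theorem 2.5 of \cite{AC1} to force $\kappa=\ld(X_0)$. Once this pointwise identity is in hand, the truncated excess $\mathcal{Q}_\e=\max\{|\nabla\psi|^2-\ld_0^2-\e,0\}$ (with $\ld_0=\sup_{B_R}\ld$) vanishes identically in an open neighborhood of $\Gamma$ for every $\e>0$. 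Extending $\mathcal{Q}_\e$ by zero therefore yields a continuous function that is subharmonic in $B_r$ with no distributional singularity on $\Gamma$ to control; the iteration then goes through cleanly and one may let $\e\downarrow 0$ afterward.

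Your substitute --- trying to prove $\Delta v\ge -C$ distributionally across $\Gamma$ from the integral identity \eqref{b005} --- does not work. First, Proposition \ref{lb2} requires $\ld\in C^{1,\beta}(\bar\O)$, strictly more than the $C^{0,\beta}$ hypothesis under which Lemma \ref{lc5} is stated, so it is not available here. Second, \eqref{b005} only asserts that $\int_{\p\{\psi>\e\}}(|\nabla\psi|^2-\ld^2-F(\psi))\,\eta\cdot\nu_\e\,dS\to 0$ for fixed vector fields $\eta$; this is a signed cancellation statement and does not yield a one-sided averaged bound $|\nabla\psi|^2\le\ld^2+o(1)$ on the level sets, let alone the vanishing of the positive part of the excess in a full neighborhood of $\Gamma$. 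Third, even granting some distributional inequality $\Delta v\ge -C$, the function $v$ as you define it (with no $\e$-truncation) need not vanish in a neighborhood of $\Gamma$, and $|\nabla\psi|$ is a priori discontinuous across $\Gamma$; the jump in $v$ contributes a negative singular measure supported on $\Gamma$ to $\Delta v$, and controlling its sign is precisely what you cannot do without something like the paper's Step 1. You flagged this as "the main obstacle," correctly --- but the obstacle is not a technicality to be smoothed over; it is the content of half the proof, and the route through \eqref{b005} does not supply it. To repair your argument, insert the blow-up argument of Step 1 (reproducing the classification of blow-up limits and the appeal to \cite{AC1} Thm.\,2.5), replace $v$ with the $\e$-truncated excess $\mathcal{Q}_\e$, and then the remainder of your iteration is sound.
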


\begin{proof} Denote $B_r=B_r(X_0)$ for simplicity, and one has
$$\Delta\psi+f(\psi)=0\ \ \text{in}\ \ \O\cap\{\psi>0\}.$$

{\bf Step 1.} In this step, we will show that
\be\label{c47}\limsup_{X\rightarrow X_0,
\psi(X)>0}|\g\psi(X)|=\ld(X_0),\ \ \ X_0\in\O\cap\p\{\psi>0\}.\ee

Denote $\kappa=\limsup_{X\rightarrow X_0, \psi(X)>0}|\g\psi(X)|$, it
suffices to prove that $\kappa=\ld(X_0)$. In view of definition of
$\kappa$, there exists a sequence $\{Z_n\}$ with $\psi(Z_n)>0$ and
$|\g\psi(Z_n)|\rightarrow \kappa$. Let $Y_n\in\Gamma$ be the
nearest point to $Z_n$ and denote $\rho_n=|Y_n-Z_n|$. Let $\psi_0$ be a
blow-up limit of a sequence $\f{\psi(Y_n+\rho_nX)}{\rho_n}$ with respect to $B_{\rho_n}(Y_n)$, without loss of
generality, we assume that $$\f{Z_n-Y_n}{\rho_n}\rightarrow-e_2,\ \ \ e_2=(0,1).$$
By virtue of the statement (5) in Lemma \ref{lc2}, we have that $\psi_0$ is a
minimizer. It follows from the similar arguments in Lemma 2.2 and
Lemma 2.4 in \cite{AC1} that $\psi_0$ is subharmonic in $\mathbb{R}^2$ and
$\Delta\psi_0=0$ in $\{\psi_0>0\}$. Furthermore, $\psi_0(0)=0$ and
\be\label{c48}|\g\psi_0|\leq \kappa\ \ \text{in $\{\psi_0>0\}$,
$|\g\psi_0(-e_2)|=\kappa$ and $B_1(-e_2)\subset\{\psi_0>0\}$},\ee
this gives that $\kappa>0$. Define $\phi_0=\f{\p\psi_0}{\p\nu_0}$, where
$\nu_0=-\f{\g\psi_0(-e_2)}{|\g\psi_0(-e_2)|}$. It is easy to check
that $\phi_0$ is harmonic in $\{\psi_0>0\}$. Moreover, it follows from
\eqref{c48} that
$$\phi_0=-\f{\g\psi_0\cdot\g\psi_0(-e_2)}{|\g\psi_0(-e_2)|}\geq-\kappa\ \text{in $B_1(-e_2)$ and $\phi_0(-e_2)=-\kappa$}.$$
The strong maximum principle gives that
$$\phi_0\equiv-\kappa\ \ \text{in}\ \ B_1(-e_2),$$ which together with $\psi_0(0)=0$ implies that $$\psi_0(X)=-\kappa X\cdot\nu_0\ \ \text{in}\ \ B_1(-e_2).$$
Since $\psi_0>0$ in $B_1(-e_2)$, we have that $\nu_0=e_2$. Due to the uniqueness of the
Cauchy problem for the Laplace equation, one has
$$\psi_0=-\kappa y\ \ \text{in}\ \ \{y\leq 0\}.$$

Next, we claim that \be\label{c49}\psi_0=0\ \ \text{in some strip}\
\ \{0<y<\e_0\}.\ee Suppose that the claim \eqref{c49} is not true.
Define $$l=\limsup_{y\downarrow 0,\psi_0(x,y)>0}\f{\p\psi_0(x,y)}{\p
y}.$$ Since $\psi_0$ is a local minimizer, it follows from Corollary
3.3 in \cite{AC1} that $\psi_0$ is Lipschitz continuous, and thus
$l<\infty$. Suppose $l>0$, and let
$$\f{\p\psi_0(x_n,y_n)}{\p y}\rightarrow l\ \ \text{as}\ \ y_n\downarrow 0.$$

Choose a blow-up sequence $\f{\psi_0(x_n+y_nx,y_ny)}{y_n}$ with respect to $B_{y_n}(x_n,0)$, let $\psi_{00}$ be the blow-up limit. Using above arguments again, we have
$$\psi_{00}(x,y)=ly\ \ \text{in}\ \ \{y>0\},\ \ \text{and}\ \ \psi_{00}(x,y)=-\kappa y\ \ \text{in}\ \ \{y<0\}.$$
Since $\psi_{00}$ is a minimizer for \eqref{c42} and $(x,0)$ is a free
boundary point of $\psi_{00}$, we can show that the set $\{\psi_{00}=0\}$ has density
zero at any point $(x,0)$, which contradicts to Lemma 3.7 in
\cite{AC1}.

Therefore, we obtain that $l=0$ and $\psi_0(x,y)=o(y)$ as
$y\downarrow 0$. For any $\e>0$, one has
$$\f1r\fint_{\p B_r(X_0)}\psi_0dS<\e, \ \  \text{where $X_0=(x_0,y_0)$ and $r=y_0$},$$
provided that $y_0>0$ is small enough. It follows from the
non-degeneracy Lemma 3.4 in \cite{AC1} that $\psi_0=0$ in some strip
$\{0<y<\e_0\}$. Thus, the proof of the claim
\eqref{c49} is done.

Finally, noting that $\psi_0$ is a minimizer to the variational problem
\eqref{c42}, by means of Theorem 2.5 in \cite{AC1}, we can conclude
that $|\g\psi_0|=\ld(X_0)$ on the free boundary of $\psi_0$, and
thus $\ld(X_0)=\kappa$.

{\bf Step 2.} In this step, we will show that there exists a constant $C>0$, such that
\be\label{c460}\sup_{X\in B_r(X_0)} |\nabla\psi(X)|\leq \sup_{X\in
B_R(X_0)}\ld(X)+C \left(\f rR\right)^\gamma,\ee for any disc
$B_r(X_0)\subset B_R(X_0)\subset D$. Denote $B_r=B_r(X_0)$ and
$B_R=B_R(X_0)$ for simplicity.

 Define
$\mathcal{Q}_\e(Y)=\max\{|\nabla\psi(Y)|^2-\ld_0^2-\e,0\}$ for any
$\e>0$, where $\ld_0=\sup_{X\in B_R}\ld(X)$. It is easy to check
that
$$\Delta\mathcal{Q}_\e=2|D^2\psi|^2-2f'(\psi)|\g\psi|^2\geq 0\ \ \text{in $\O\cap\{\psi>0\}$}.$$
And thus $\mathcal{Q}_\e$ is subharmonic function in
$\O\cap\{\psi>0\}$. It follows from \eqref{c47} that
$\mathcal{Q}_\e=0$ in a small neighborhood of the free boundary
$\Gamma$. We extend $\mathcal{Q}_\e$ by $0$ and set
$$P_\e(r)=\sup_{Y\in B_r} \mathcal{Q}_\e(Y)\ \ \text{for any $r>0$}.$$ Then $P_\e(r)-\mathcal{Q}_\e$ is superharmonic in $B_r$.
Furthermore, $P_\e(r)-\mathcal{Q}_\e\geq 0$ in $B_r$ and
$P_\e(r)-\mathcal{Q}_\e=P_\e(r)$ in $B_r\cap\{\psi=0\}$. It follows
from Theorem 8.26 in \cite{GT} that
\be\label{c50}\ba{rl}\inf_{Y\in B_{\f r2}}(P_\e(r)-\mathcal{Q}_\e(Y))\geq& cr^{-2}\|P_\e(r)-\mathcal{Q}_\e\|_{L^1(B_r)}\\
\geq & cr^{-2}\|P_\e(r)-\mathcal{Q}_\e\|_{L^1(B_r\cap\{\psi=0\})}\\
\geq& cP_\e(r),\ea\ee where we have used the fact
$\mathcal{L}^2(B_r\cap\{\psi=0\})\geq cr^2$ in Lemma \ref{lb8}.
Taking $\e\rightarrow 0$ in \eqref{c50}, we have
$$\inf_{Y\in B_{\f r2}}(P_0(r)-\mathcal{Q}_0(Y))
\geq cP_0(r),\ \ \ 0<c<1,$$ which implies that $$\sup_{Y\in B_{\f
r2}}\mathcal{Q}_0(Y) \leq (1-c)P_0(r).$$ Thus we have $$P_0\left(\f
r2\right) \leq (1-c)P_0(r).$$

It follows from Lemma 8.23 in \cite{GT} that
$$P_0(r)
\leq C\left(\f r R\right)^\gamma P_0(R),$$ which gives that
$$\sup_{X\in B_r} |\nabla\psi(X)|\leq\ld_0+ C \left(\f rR\right)^\gamma,\ \ \gamma\in(0,1),$$ for small
$r>0$.

{\bf Step 3.} For any disc $B_r(X)$ touching the free boundary,
there exists a free boundary point $X_0\in B_r(X)$, such that
$B_r(X)\subset B_{2r}(X_0)$. Since $\ld(X)\in C^{0,\beta}(\O)$, the gradient estimate \eqref{c46} gives that
\be\label{c460}|\g\psi(X)|\leq\sup_{Y\in B_{2r}(X_0)}
|\nabla\psi(Y)|\leq \sup_{Y\in
B_{2\sqrt{r}}(X_0)}\ld(Y)+Cr^{\f\beta2}\leq \ld(X)+
Cr^{\alpha_0},\ee where $\alpha_0=\f12\min\{\gamma,\beta\}$,
provided that $r$ is small.

\end{proof}

To obtain the linear growth \eqref{cc0} of $\psi$ near the free
boundary, we next show that
\begin{lemma}\label{lc6} For any compact subset $D$ of $\O$ and disc 
$B_r(X_0)\subset D$ with $X_0\in\O\cap\p\{\psi>0\}$, then
\be\label{c510}\fint_{B_r(X_0)\cap\{\psi>0\}}
\max\{\ld^2(Y)-|\nabla\psi(Y)|^2,0\} dY\rightarrow0\ \ \text{as
$r\rightarrow0$}.\ee
\end{lemma}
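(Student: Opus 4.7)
The strategy is to argue by contradiction through a blow-up procedure. Suppose the conclusion fails, so there exist sequences $r_n\downarrow 0$ and $\delta>0$ with
$$
\fint_{B_{r_n}(X_0)\cap\{\psi>0\}}\max\{\ld^2(Y)-|\nabla\psi(Y)|^2,0\}\,dY\geq\delta.
$$
Rescale by setting $\psi_n(X)=\psi(X_0+r_nX)/r_n$ and $\ld_n(X)=\ld(X_0+r_nX)$. The uniform Lipschitz bound from Theorem \ref{lb5} combined with Lemma \ref{lc2} yields a subsequence converging locally uniformly to a blow-up limit $\psi_0$ which, by Lemma \ref{lc2}(5), minimizes the constant-coefficient Alt--Caffarelli functional $J_0(\phi)=\int|\g\phi|^2+\ld_0^2I_{\{\phi>0\}}\,dX$ (with $\ld_0=\ld(X_0)$) in every ball, and satisfies $0\in\p\{\psi_0>0\}$ by Lemma \ref{lc2}(3).

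Next I pass to the limit in the averaged integral. Combining the a.e.\ convergence $\nabla\psi_n\to\nabla\psi_0$ from Lemma \ref{lc2}(4), the $L^1_{loc}$ convergence $I_{\{\psi_n>0\}}\to I_{\{\psi_0>0\}}$ from Lemma \ref{lc2}(2), the uniform convergence $\ld_n\to\ld_0$, and the uniform gradient bound that justifies dominated convergence, the assumed lower bound persists in the limit and gives
$$
\fint_{B_1\cap\{\psi_0>0\}}\max\{\ld_0^2-|\nabla\psi_0|^2,0\}\,dX\geq\delta>0,
$$
where the denominator remains bounded below by the density estimate of Lemma \ref{lb8}. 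The contradiction thus reduces to proving $|\nabla\psi_0|^2=\ld_0^2$ a.e.\ in $\{\psi_0>0\}$.

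This last reduction is the main obstacle. Because $\psi_0$ minimizes $J_0$, it is harmonic in $\{\psi_0>0\}$, so $|\nabla\psi_0|^2$ is subharmonic there; Proposition \ref{lb2} applied to $\psi_0$ (with $F\equiv 0$ and constant $\ld_0$) provides the Bernoulli condition $|\nabla\psi_0|^2=\ld_0^2$ on $\p\{\psi_0>0\}$ in the integrated sense, and Step 1 of the proof of Lemma \ref{lc5}, applied directly to $\psi_0$, promotes this to $\limsup|\nabla\psi_0|=\ld_0$ at each free boundary point. To obtain pointwise equality inside $\{\psi_0>0\}$, I would exploit the two-dimensional holomorphic structure: on each connected component of $\{\psi_0>0\}$ introduce the harmonic conjugate $\phi_0$ so that $\Phi(z)=\psi_0+i\phi_0$ is holomorphic with $|\Phi'|^2=|\nabla\psi_0|^2$; then $\log|\Phi'|^2$ is harmonic off the isolated critical set of $\psi_0$, is forced to take the value $2\log\ld_0$ at the free boundary, and the maximum/minimum principle, reinforced by Liouville-type control issuing from the Lipschitz bound on unbounded components, forces $|\Phi'|^2\equiv\ld_0^2$. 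The chief technical difficulty lies in controlling unbounded connected components of $\{\psi_0>0\}$ and the possible critical points of $\psi_0$, and this is exactly where the Alt--Caffarelli/Weiss classification of two-dimensional minimizing blow-ups as half-plane solutions becomes the decisive tool.
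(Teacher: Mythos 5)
Your proposal takes a genuinely different route from the paper and contains a gap that I do not think can be closed as outlined. The paper's own proof of Lemma \ref{lc6} is a direct variational comparison: it inserts the competitor $\psi_\e=\max\{\psi-\e\xi,0\}$ into $J(\psi)\leq J(\psi_\e)$, expands, and then chooses $\xi$ to be a logarithmic cutoff; the error terms coming from $F(\psi)$ and the modulus of continuity of $\ld$ are absorbed by taking $\rho=R^2$, $r=R^2(\ld_R+R^\gamma)^{1/4}$, yielding \eqref{c510} with no compactness argument at all. Your proposal instead rescales, passes to a blow-up limit $\psi_0$ via Lemma \ref{lc2}, and tries to show $\fint_{B_1\cap\{\psi_0>0\}}\max\{\ld_0^2-|\nabla\psi_0|^2,0\}\,dX=0$, i.e.\ $|\nabla\psi_0|\geq\ld_0$ a.e.\ in $\{\psi_0>0\}$. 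The passage to the limit in the averaged integral is fine (you have a.e.\ convergence of gradients, $L^1_{loc}$ convergence of the indicators, a uniform Lipschitz bound, and the density estimate for the denominator), so the reduction is legitimate; the problem is the reduced claim itself.

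The concluding step is where the argument breaks. The tools you reach for — harmonic conjugate $\Phi=\psi_0+i\phi_0$, subharmonicity, maximum principle, Liouville — all give the \emph{upper} bound $|\nabla\psi_0|\leq\ld_0$ (a fact the paper already has from Lemma \ref{lc5}): $\log|\Phi'|$ is harmonic on $\{\psi_0>0\}\setminus\{\nabla\psi_0=0\}$, bounded above by the Lipschitz constant, and equals $\log\ld_0$ on the free boundary, so the maximum principle yields $\log|\Phi'|\leq\log\ld_0$. A matching \emph{lower} bound $\log|\Phi'|\geq\log\ld_0$ cannot be obtained this way, since $\log|\Phi'|$ is a priori unbounded below (at critical points of $\psi_0$) and there is no boundary control from below, so the minimum principle does not apply. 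Yet it is precisely the lower bound that contradicts your assumed $\fint\max\{\ld_0^2-|\nabla\psi_0|^2,0\}\geq\delta$. The alternative you suggest, the ``classification of two-dimensional minimizing blow-ups as half-plane solutions,'' is circular if it is the paper's own Lemma \ref{lc7}: that lemma quotes Lemma \ref{lc6}. If instead you intend an independent route (a Weiss-type monotonicity formula giving homogeneity of blow-ups, then classifying 1-homogeneous minimizers of $J_0$ in the plane), that would be a legitimate, non-circular way to close the gap — but it is a substantially different and more sophisticated mechanism than the paper's elementary competitor argument, and you would need to actually carry it out; as written, the decisive step is asserted rather than proved.
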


\begin{proof} Without loss of generality, we assume that $X_0=0$, and denote $B_r=B_r(0)$ for simplicity. For any $\xi\in C_0^\infty(\O)$ with $\xi\geq 0$, define a function
$$\psi_\e=\max\{\psi-\e\xi,0\}\ \ \text{for any $\e>0$}.$$ Since $\psi_\e\in
K$, we have that $J(\psi)\leq J(\psi_\e)$, namely,
$$\ba{rl}0\geq&\int_{\O} |\nabla\psi|^2-|\nabla
\psi_\e|^2+F(\psi)-F(\psi_\e)
dX+\int_{\O}\ld^2(I_{\{\psi>0\}}-I_{\{\psi_\e>0\}})dX\\
\geq&\int_{\O}
-|\nabla\min\{\psi,\e\xi\}|^2+2\nabla\psi\cdot\nabla\min\{\psi,\e\xi\}-
F'(\psi)\min\{\psi,\e\xi\}\\
&-\L|\min\{\psi,\e\xi\}|^2
dX+\int_{\O \cap\{0<\psi\leq \e\xi\}}\ld^2 dX\\
=&-\int_{\O} |\nabla\min\{\psi,\e\xi\}|^2+\L|\min\{\psi,\e\xi\}|^2
dX+\int_{\O \cap\{0<\psi\leq \e\xi\}}\ld^2 dX.\ea$$ This gives
that \be\label{c51}\int_{\O \cap\{0<\psi\leq
\e\xi\}}\ld^2-|\nabla\psi|^2 dX\leq
\e^2\int_{\O\cap\{\psi>\e\xi\}}
|\nabla\xi|^2dX+\L\int_{\O}|\min\{\psi,\e\xi\}|^2 dX.\ee

Taking $B_r\subset B_\rho=B_\rho(0)\subset B_R$, the Lipschitz continuity of
$\psi$ gives that $$\text{ $\psi(X)=\psi(X)-\psi(0)\leq Cr$ in $B_r$.}$$ Taking
$\e=Cr$ and

$$\xi(X)=\left\{\ba{ll}0~~&\text{for}\ \ X\in \O\setminus B_\rho,\\
\f{\ln\rho-\ln|X|}{\ln\rho-\ln r}~~~&\text{for}~~X\in B_\rho\setminus B_r,\\
1~~~&\text{for}~B_r.\ea\right.$$ It follows from \eqref{c51} that
\be\label{c52}\ba{rl}\int_{B_\rho \cap\{0<\psi\leq
\e\xi\}}\ld^2-|\nabla\psi|^2 dX\leq&
\e^2\int_{B_\rho\cap\{\psi>\e\xi\}}
|\nabla\xi|^2dX+\L\int_{B_\rho}|\min\{\psi,\e\xi\}|^2 dX\\
\leq& Cr^2\int_{B_\rho\cap\{\psi>\e\xi\}} |\nabla\xi|^2dX+
C\rho^2r^2\\
\leq& \f{Cr^2}{\ln\rho-\ln r}+ C\rho^2r^2.\ea\ee Since $\psi\leq
Cr=\e\xi$ in $B_r$, it is easy to check that
\be\label{c53}\ba{rl}\int_{B_\rho \cap\{0<\psi\leq
\e\xi\}}\max\{\ld^2-|\nabla\psi|^2,0\} dX\geq&
\int_{B_r\cap\{0<\psi\leq\e\xi\}}\max\{\ld^2-|\nabla\psi|^2,0\}
dX\\
=&\int_{B_r\cap\{\psi>0\}}\max\{\ld^2-|\nabla\psi|^2,0\}
dX.\ea\ee By virtue of \eqref{c46},
\eqref{c52} and \eqref{c53}, we have
$$\ba{rl}&\int_{B_\rho \cap\{0<\psi\leq
\e\xi\}}\max\{\ld^2-|\nabla\psi|^2,0\} dX\\
=& \int_{B_\rho  \cap\{0<\psi\leq
\e\xi\}}\max\{|\nabla\psi|^2-\ld^2,0\} dX+ \int_{B_\rho  \cap\{0<\psi\leq
\e\xi\}}\ld^2-|\nabla\psi|^2 dX\\
\leq& \int_{B_\rho  \cap\{0<\psi\leq
\e\xi\}}\max\{|\nabla\psi|^2-\ld^2,0\} dX+ \f{Cr^2}{\ln\rho-\ln r}+
C\rho^2r^2\\
\leq&\int_{B_\rho\cap\{\psi>0\}}\max\{|\nabla\psi|^2-\ld^2,0\} dX+
\f{Cr^2}{\ln\rho-\ln r}+ C\rho^2r^2\\
\leq&C\rho^2\left(\left(\f{\rho}{R}\right)^\alpha+\ld_R\right)+\f{Cr^2}{\ln\rho-\ln
r}+ C\rho^2r^2,\ea $$ where $\ld_R=\sup_{X,Y\in
B_R(X_0)}|\ld(X)-\ld(Y)|$, which gives that
\be\label{c54}\ba{rl}\fint_{B_r
\cap\{\psi>0\}}\max\{\ld^2-|\nabla\psi|^2,0\} dX\leq
C\left(\f{\rho}{r}\right)^2\left(\left(\f{\rho}{R}\right)^\gamma+\ld_R\right)+\f{C}{\ln\f\rho
r}+ C\rho^2,\ea \ee

Taking $\rho=R^2$ and $r=R^2\left(\ld_R+R^\gamma\right)^{\f14}$, it
follows from \eqref{c54} that we have
$$\ba{rl}\fint_{B_r \cap\{\psi>0\}}\max\{\ld^2-|\nabla\psi|^2,0\} dX\leq C\left(\ld_R+R^\gamma\right)^{\f12}+\f{C}{\ln{\f1{\ld_R+R^\gamma}}}+CR^4,\ea
$$ for sufficiently small $r>0$, which gives \eqref{c510}.

\end{proof}

With the aid of Lemma \ref{lc6}, we have
\begin{lemma}\label{lc7} For any blow-up limit $\psi_0$ of $\psi$ at $X_0\in\Gamma$, $\psi_0$ is a half plane solution with slope $\ld(X_0)$.
Furthermore, the linear growth \eqref{cc0} holds.
\end{lemma}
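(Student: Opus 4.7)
The plan is to combine the blow-up analysis of Lemma \ref{lc2} with the gradient bounds of Lemmas \ref{lc5} and \ref{lc6} to show that $\psi_0$ is harmonic on its positivity set with constant gradient magnitude $\ld(X_0)$; this forces it to be affine on each component, and the half-plane structure follows. By Lemma \ref{lc2}(5), $\psi_0$ is an absolute minimizer on every ball of the Alt--Caffarelli one-phase functional $J_0(\phi)=\int|\g\phi|^2+\ld^2(X_0)I_{\{\phi>0\}}dX$, so it is a global minimizer on $\mathbb{R}^2$. Classical results from \cite{AC1} then give $\psi_0\in C^{0,1}_{loc}(\mathbb{R}^2)$, $\psi_0\ge0$, and $\Delta\psi_0=0$ in $\{\psi_0>0\}$; Lemma \ref{lc2}(3) also ensures $0\in\p\{\psi_0>0\}$.

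Next I would establish $|\g\psi_0|\equiv\ld(X_0)$ in $\{\psi_0>0\}$. The upper bound $|\g\psi_0|\le\ld(X_0)$ follows by rescaling \eqref{c460}: for each $Z\in\{\psi_0>0\}$, applying Lemma \ref{lc5} to a ball of radius comparable to the distance from $X_n+\rho_n Z$ to the free boundary gives $|\g\psi_n(Z)|\le\ld(X_n+\rho_n Z)+o(1)$, which passes to the a.e.\ limit via Lemma \ref{lc2}(4). The lower bound $|\g\psi_0|\ge\ld(X_0)$ comes from rescaling Lemma \ref{lc6} at scale $\rho_n$,
$$\fint_{B_r(0)\cap\{\psi_n>0\}}\max\{\ld^2(X_n+\rho_nZ)-|\g\psi_n(Z)|^2,0\}\,dZ=\fint_{B_{r\rho_n}(X_n)\cap\{\psi>0\}}\max\{\ld^2-|\g\psi|^2,0\}\,dY\to 0,$$
and applying Fatou's lemma together with Lemma \ref{lc2}(2),(4).

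Because $\psi_0$ is harmonic in $\{\psi_0>0\}$ with $|\g\psi_0|^2$ constant, the identity $\Delta|\g\psi_0|^2=2|D^2\psi_0|^2$ forces $D^2\psi_0\equiv0$, so $\psi_0$ is affine on each connected component. On the component $\Omega_1$ with $0\in\p\Omega_1$, write $\psi_0(X)=b\cdot X+c$ with $|b|=\ld(X_0)$; continuity and $\psi_0(0)=0$ give $c=0$, hence $\psi_0(X)=\ld(X_0)\max\{-X\cdot\nu,0\}$ on $\Omega_1=\{-X\cdot\nu>0\}$ with $\nu=-b/\ld(X_0)$. To exclude extra positivity components I would invoke the classification of two-dimensional global minimizers for the Alt--Caffarelli functional from \cite{AC1}, which rules out any configuration other than a single half-plane.

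For the linear growth \eqref{cc0}, the uniform convergence $\psi_n\to\psi_0$ on compacts yields $\psi(X_0+\rho_n X)=\rho_n\ld(X_0)\max\{-X\cdot\nu,0\}+o(\rho_n)$ along any blow-up sequence at $X_0$. The main remaining obstacle is to show that $\nu=\nu(X_0)$ is independent of the sequence; otherwise two sequences would produce distinct half-plane normals, which I would rule out by combining the Hausdorff convergence of the free boundaries from Lemma \ref{lc2}(1) with the uniform density estimates from Lemma \ref{lb8} to derive contradictory density behavior of $\{\psi=0\}$ at $X_0$. Once $\nu$ is uniquely determined, writing $X=|X|\hat X$ with $\hat X=X/|X|$ and applying the uniform convergence at scale $|X|\to 0$ yields \eqref{cc0}.
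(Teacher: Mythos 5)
Your core argument follows the paper's: Lemma \ref{lc2}(5) gives minimality of $\psi_0$ for $J_0$, then the rescaled estimates from Lemmas \ref{lc5} and \ref{lc6}, passed to the limit via Lemma \ref{lc2}(2),(4), give $|\g\psi_0|\equiv\ld(X_0)$ on $\{\psi_0>0\}$; combined with harmonicity this forces $\psi_0$ affine on each component, and Lemma 3.7 in \cite{AC1} then kills the second half-plane. Your route to $D^2\psi_0\equiv 0$ via the Bochner identity $\Delta|\g\psi_0|^2=2|D^2\psi_0|^2$ is a slightly slicker alternative to the paper's $3\times 3$ determinant computation, but the content is the same.

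Where your proposal runs into trouble is the closing paragraph on uniqueness of $\nu(X_0)$. You are right that, as written, \eqref{cc0} presupposes a single normal independent of the blow-up sequence, and that this does not follow merely from knowing every blow-up limit is a half-plane. But the mechanism you sketch---contradicting density behavior via Lemma \ref{lc2}(1) and Lemma \ref{lb8}---does not close that gap. A slowly ``spiraling'' free boundary still satisfies the Hausdorff convergence of Lemma \ref{lc2}(1) (which is only along subsequences) and the uniform two-sided density bounds of Lemma \ref{lb8} (since at every scale the configuration looks like a half-plane, with density near $1/2$), yet different sequences $\rho_n\to 0$ could pick out different normals. Ruling this out genuinely requires the quantitative improvement-of-flatness machinery (Lemmas \ref{lc8}--\ref{lc13} and Theorem \ref{lc14}), which yields $C^{1,\alpha}$ regularity of the free boundary and hence a unique tangent a posteriori. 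The paper itself sidesteps this: in the proof of Theorem \ref{lc15}, Lemma \ref{lc7} is invoked only along a particular blow-up sequence, so the sequence-dependent reading of \eqref{cc0} is all that is actually used. Either state \eqref{cc0} in that weaker, sequence-dependent form, or postpone the uniqueness of $\nu(X_0)$ until after the flatness theorem; the density argument as proposed will not do it.
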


\begin{proof} Let $\psi_n(X)=\f{\psi(X_0+\rho_nX)}{\rho_n}$ be a blow-up sequence with respect to $B_{\rho_nR}(X_0)$ for any $R>0$, and $\ld_n(X)=\ld(X_0+\rho_nX)$ with $X\in B_R(0)$, it follows from Lemma \ref{lc5} and
Lemma \ref{lc6} that
$$\sup_{X\in B_R(0)}|\g\psi_n(X)|=\sup_{Y\in B_{\rho_nR}(X_0)}|\g\psi(Y)|\leq \sup_{Y\in
B_{\sqrt{\rho_n}R}(X_0)}\ld(Y)+C\rho_n^{\f\gamma2}\rightarrow\ld(X_0),$$
and
$$\fint_{B_{R}(0)\cap\{\psi_n>0\}} \max\{\ld_n^2-|\nabla\psi_n|^2,0\}
dX=\fint_{B_{\rho_n R}(X_0)\cap\{\psi>0\}}
\max\{\ld^2-|\nabla\psi|^2,0\} dY\rightarrow0,$$ as
$\rho_n\rightarrow 0$. Those imply that for any blow-up limit
$\psi_0$, we have
$$|\nabla\psi_0|=\ld(X_0)\ \ \text{in}\ \ \{\psi_0>0\}.$$ On the other hand, $\psi_0$
is harmonic in $\{\psi_0>0\}$, thus $\nabla\psi_0$ must be invariant in
each connected component of $\{\phi_0>0\}$. In fact, applying the operator
$\p_x$ and $\p_y$ to $\f12|\g\psi_0|^2=\f{\ld^2(X_0)}2$, respectively, we have
$$\left\{\ba{rl}
\p_x\psi_0\p_{xx}\psi_0+\p_y\psi_0\p_{xy}\psi_0=0, \\
\p_x\psi_0\p_{xy}\psi_0+\p_y\psi_0\p_{yy}\psi_0=0. \ea \right.$$ This together with the fact $\Delta\psi_0=0$ gives that
$$\mathbb{T}\mathbf{X}=(0, 0, 0)^{tr},$$ where the vector
$\mathbf{X}=(\p_{xx}\psi_0, \p_{xy}\psi_0,
\p_{yy}\psi_0)^{tr}$ and matrix $\mathbb{T}$ as $$\mathbb{T}=\left(\begin{matrix} 1 &0 &1\\
\p_x\psi_0 &\p_y\psi_0 &0\\
0 &\p_x\psi_0 &\p_y\psi_0
\end{matrix}\right).$$ Furthermore, one has $$\det\mathbb{T}=|\g\psi_0|^2=\ld^2(X_0)>0,$$ which implies $$\mathbf{X}=(\p_{xx}\psi_0,
\p_{xy}\psi_0, \p_{yy}\psi_0)=(0,0,0).$$ Thus, $\psi_0(X)$ is a
linear function, there exist a unit vector $\nu_0$, and two numbers
$\gamma_1\geq0$ and $\gamma_2\geq0$, such that one has
\be\label{cc1}\psi_0(X)=\gamma_1\max\{-X\cdot\nu_0,0\}+\gamma_2\max\{X\cdot\nu_0,0\}.\ee
Since $\psi_0$ is a local minimizer, it follows from Lemma 3.7 in
\cite{AC1} that the set $\{\psi_0=0\}$ has a positive measure, this
gives that $\gamma_1=\ld(X_0)$ and $\gamma_2=0$. In view of
\eqref{cc1}, one has
$$\psi_0(X)=\ld(X_0)\max\{-X\cdot\nu_0,0\},$$ which implies the
linear growth \eqref{cc0}.
\end{proof}

\subsection{Flatness of the free boundary} In this subsection, we will
study the regularity of the free boundary $\O\cap\p\{\psi>0\}$, and obtain some flatness property of the free boundary point.

 First, we introduce the relevant flatness class of the free boundary
 point (See the definition 5.1 in \cite{AC1}).

\begin{definition}\label{def3}(Flat free boundary points) Let $0<\sigma_{+}, \sigma_{-}\leq1$ and
$\delta>0$. We say that $\psi$ is of {\it the flatness class}
$\mathcal{F}(\sigma_{+},\sigma_{-};\delta)$ in $B_\rho=B_\rho(X_0)$
with a unit vector $\nu$ (see Figure \ref{f2}) if

(i) $\psi$ is a minimizer to the variational problem $(P)$.

(ii) $X_0\in\Gamma$ and
$$\psi(X)=0\ \ \text{for}\ \
(X-X_0)\cdot\nu\geq\sigma_{+}\rho,$$ and
$$\psi(X)\geq-\ld(X_0)((X-X_0)\cdot\nu+\sigma_{-}\rho)\ \ \text{for}\ \ (X-X_0)\cdot\nu\leq-\sigma_{-}\rho.$$

(iii) $|\g\psi|\leq\ld(X_0)(1+\delta)$ in $B_\rho$ and $\sup_{X,Y\in
B_\rho}|\ld(X)-\ld(Y)|\leq\ld(X_0)\delta$.
\end{definition}
\begin{figure}[!h]
\includegraphics[width=100mm]{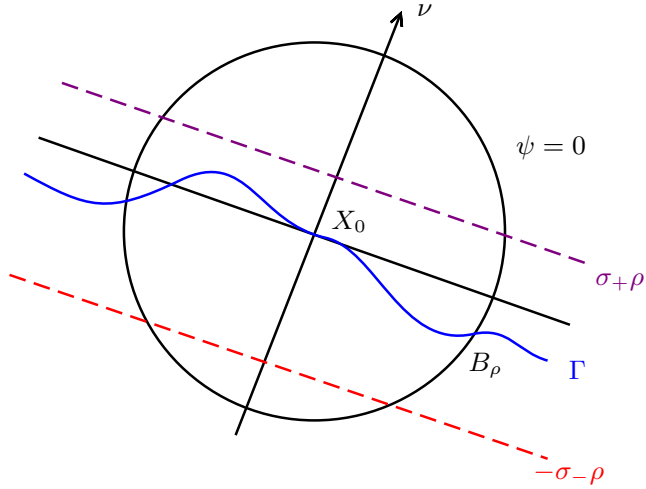}
\caption{Flatness free boundary point}\label{f2}
\end{figure}

We next show that the flatness from above implies the flatness from
below.

\begin{lemma}\label{lc8} There exists a positive constant $C=C(\L,c_0)$, such
that for any small $\sigma>0$, if $\psi\in
\mathcal{F}(\sigma,1;\delta)$ in $B_\rho(X_0)$ and $\delta\leq
c_0\sigma, \rho\leq c_0\sigma$, then $\psi\in
\mathcal{F}(2\sigma,C\sigma;\delta)$ in $B_{\f\rho2}(X_0)$ (see Figure \ref{f3}).
\end{lemma}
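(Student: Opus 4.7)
The plan. Rotate and translate so that $X_0 = 0$, $\nu = e_2$, and write $\ld_0 = \ld(0)$. The hypothesis $\psi \in \mathcal{F}(\sigma,1;\delta)$ gives $\psi\equiv 0$ on $B_\rho\cap\{y\geq\sigma\rho\}$, $|\nabla\psi|\leq\ld_0(1+\delta)$ on $B_\rho$, and $|\ld(X)-\ld_0|\leq\ld_0\delta$ on $B_\rho$. The three properties required in the definition of $\mathcal{F}(2\sigma,C\sigma;\delta)$ on $B_{\rho/2}$ that are inherited from the hypothesis (the top half of (ii), the gradient bound, and the oscillation of $\ld$) need no argument; the only new content is the improved \emph{lower} flatness
$$\psi(x,y) \geq -\ld_0\bigl(y + \tfrac{C\sigma\rho}{2}\bigr) \qquad \text{for }(x,y)\in B_{\rho/2},\ y\leq-\tfrac{C\sigma\rho}{2},$$
with $C$ depending only on $\L$ and $c_0$.

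Step 1: a pointwise positivity estimate at an interior reference point. At $Z = -\tau\sigma\rho\,e_2$ for some fixed $\tau$ of order one, I would prove $\psi(Z) \geq c_1\ld_0\sigma\rho$ by contradiction via the non-degeneracy Lemma \ref{lb7}: if the spherical average $r^{-1}\fint_{\p B_r(Z)}\psi\, dS$ with $r\sim\sigma\rho$ were smaller than $c_\kappa^*\ld_1$, then $\psi$ would vanish identically on $B_{\kappa r}(Z)$. Running a Harnack chain in the spirit of Theorem \ref{lb5}(2) from $Z$ downward toward the bottom of $B_\rho$ then contradicts Lemma \ref{lb6}: near $-\rho e_2$ a disc of radius comparable to $\rho$ is free of the free boundary (which sits in $\{y\leq\sigma\rho\}$), and must therefore contain points of positivity with definite size. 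The hypothesis $\rho\leq c_0\sigma$ keeps the cumulative error from the forcing term $f(\psi)$, of order $\L\rho^2$, negligible next to $\ld_0\sigma\rho$.

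Step 2: barrier comparison. On the subdomain $\Omega' = B_{3\rho/4}\cap\{y<\sigma\rho\}$, let $v$ solve the semilinear Dirichlet problem $-\Delta v = f(v)$ in $\Omega'$ with boundary data chosen to (i) vanish on the top face $\{y=\sigma\rho\}$, (ii) be nonpositive on the portion of the free boundary meeting $\Omega'$, and (iii) lie below $\psi$ on the lateral arc $\p B_{3\rho/4}\cap\{y<\sigma\rho\}$; the data on the lateral arc is built from the affine profile $-\ld_0(y + A\sigma\rho)$, where $A = A(\L,c_0,c_1)$ is taken large enough that Step 1 together with the upper gradient bound and a Harnack propagation guarantee $v\leq\psi$ there. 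Because $f'\leq 0$ by \eqref{c0}, the difference $w=\psi-v$ satisfies the \emph{linear} inequality $-\Delta w - f'(\xi)w \geq 0$ in $\Omega'\cap\{\psi>0\}$ with a nonnegative zeroth-order coefficient, while $w\geq 0$ on the boundary of this region (on the free boundary itself, $\psi = 0 \geq v$ by the choice of $A$). The weak maximum principle then forces $\psi\geq v$ throughout $\Omega'$, and since $v$ differs from its affine reference by only an $O(\L\rho^2)$ correction from the semilinear perturbation, evaluating at $X \in B_{\rho/2}$ with $y\leq-C\sigma\rho/2$ produces the desired inequality with $C = 2A + O(\L c_0/\ld_1)$, the quadratic error absorbed thanks to $\rho\leq c_0\sigma$.

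Main obstacle. The serious difficulty is arranging the lateral comparison $v\leq\psi$ on $\p B_{3\rho/4}\cap\{y<\sigma\rho\}$ with a constant $A$ that depends only on $\L$ and $c_0$, uniformly in the small parameters $\sigma$ and $\delta$. This forces the Harnack chain of Step 1 to be quantitatively controlled both in length and in cumulative forcing-error, and both are bounded in terms of $c_0$ precisely through the standing assumption $\rho\leq c_0\sigma$; tracking these constants cleanly is the main technical nuisance, but the structure is the same as in Alt--Caffarelli \cite{AC1}, with the semilinear term playing a subordinate role.
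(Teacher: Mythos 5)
Your proposal is not the paper's argument, and it has a genuine gap in Step~2. The lateral comparison $v\leq\psi$ on $\p B_{3\rho/4}\cap\{y<\sigma\rho\}$ is essentially the \emph{conclusion} of the lemma restated on a slightly smaller disc, and the ingredients you cite (positivity at the single interior point $Z$ at height $\sim-\sigma\rho$, the upper gradient bound, Harnack chains) cannot produce it. Quantitatively: at the bottom of the lateral arc $y\approx-3\rho/4$, you need $\psi\gtrsim\ld_0\rho$, while Step~1 gives $\psi(Z)\sim\ld_0\sigma\rho$. Harnack along a chain of balls of radius $\sim\rho$ on which $\psi>0$ yields only multiplicative comparability, so at best $\psi\gtrsim\ld_0\sigma\rho$ at the target, too small by a factor $1/\sigma$. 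The upper gradient bound $|\nabla\psi|\leq\ld_0(1+\delta)$ is of no help, since subtracting $\ld_0(1+\delta)|X-Z|\sim\ld_0\rho$ from $\ld_0\sigma\rho$ gives a negative number. Worse, the hypothesis $\psi\in\mathcal{F}(\sigma,1;\delta)$ gives \emph{no a~priori lower bound} on $\psi$ below the strip $\{0<y<\sigma\rho\}$ (the lower flatness parameter is $1$, which is vacuous), so the free boundary could in principle intersect the lateral arc at heights $y\ll-A\sigma\rho$, at which point $\psi=0<v$ there and the comparison principle does not even get started.

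The paper's proof avoids precisely this difficulty by never attempting to control $\psi$ directly on a lateral boundary. Instead, after fitting the smallest cap $D_\sigma$ containing $\{\psi_\rho>0\}\cap B_1$ and locating a contact free-boundary point $Z$ on its graph, they (a) build an \emph{upper} barrier $\phi\geq\psi_\rho$ on $D_\sigma$ with controlled normal derivative at $Z$, and then (b) run a Hopf-lemma contradiction: if $\psi_\rho$ were too small in a small ball around a reference point $\xi$ near the bottom of $D_\sigma$, the corrected barrier $\phi-\gamma\sigma\varphi_\xi$ would dominate $\psi_\rho$ everywhere and force the linear growth rate of $\psi_\rho$ at the contact point $Z$ to drop below $\ld(X_0)$, contradicting the blow-up identity $\limsup_{X\to Z}\psi_\rho/\text{dist}(X,\p D_\sigma)=\ld(X_0)$ (their Step~1, proved by blow-up along the lines of Lemma~\ref{lc7}, not by non-degeneracy). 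This extracts the lower bound $\psi_\rho(X_\xi)\geq\phi(X_\xi)+C\sigma\ld_0 y_\xi$ at some interior point $X_\xi\in B_{1/10}(\xi)$; then the upper gradient bound (used the right way, to show $\psi_\rho>0$ on $B_{1/5}(\xi)$) lets Harnack finish. The contradiction structure and the exact growth rate at the topmost contact point are what make the argument close, and they are absent from your plan. I would recommend reworking Step~2 around an upper barrier plus a Hopf's-lemma argument at the contact point of the fitted cap, rather than a lower barrier with prescribed lateral data.
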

\begin{figure}[!h]
\includegraphics[width=110mm]{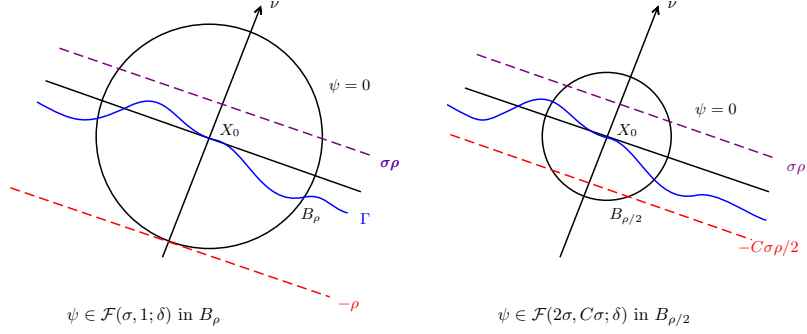}
\caption{Flatness from below}\label{f3}
\end{figure}

\begin{proof}
We divide two steps to show this lemma.

{\bf Step 1.} In this step, we will show that
\be\label{c55}\limsup_{X\rightarrow
X_0}\f{\psi(X)}{\text{dist}(X,B)}=\ld(X_0),\ \text{if $B$ is a disc
in $\{\psi=0\}$ touching $\p\{\psi>0\}$ at $X_0$}.\ee

 Denote $\limsup_{X\rightarrow X_0}\f{\psi(X)}{\text{dist}(X,B)}=\kappa$, it suffices to show that
$\kappa=\ld(X_0)$. Taking a sequence $\{Y_n\}$ with $Y_n\rightarrow
X_0$, such that $\psi(Y_n)>0$, $d_n=\text{dist}(Y_n,B)$ and
$$\f{\psi(Y_n)}{d_n}\rightarrow\kappa.$$ It follows from the non-degeneracy Lemma \ref{lb7} that $\kappa>0$.
Consider a blow-up sequence $\psi_n(X)=\f{\psi(X_n+d_nX)}{d_n}$ with
respect to $B_{d_n}(X_n)$, where $X_n\in\p B$ and $|X_n-Y_n|=d_n$.
Taking a subsequence $\{\psi_n\}$ with a blow-up limit $\psi_0\in
C^{0,1}(\mathbb{R}^2)$, such that
$$\f{X_n-Y_n}{d_n}\rightarrow\nu\ \ \text{and}\ \
\psi_n\rightarrow\psi_0\ \ \text{uniformly in any compact subset of
$\mathbb{R}^2$}.$$ It is easy to check that $\psi_0(-\nu)=\kappa$.
We next claim that
\be\label{c550}\psi_0(X)=\kappa\max\{-X\cdot\nu,0\}.\ee In fact, the
definition of $\psi_0$ gives that
$$\psi_0(X)\leq -\kappa X\cdot\nu\ \ \text{for}\ \ X\cdot\nu\leq 0,\ \ \text{and}\ \  \psi_0(X)=0\ \ \text{for}\ \ X\cdot\nu\geq 0.$$
Since $\psi_0$ is harmonic in $\{\psi_0>0\}$ and
$\psi_0(-\nu)=\kappa$, the strong maximum principle gives the claim
\eqref{c550}.

With the aid of the claim \eqref{c550}, it follows from Lemma \ref{lc7} that $\ld(X_0)=\kappa$.

{\bf Step 2.} Without loss of generality, we assume that $X_0=0$ and
$\nu=e_2=(0,1)$. Set $\psi_\rho(X)=\f{\psi(\rho X)}{\rho}$,
$\ld_\rho(X)=\ld(\rho X)$ and $\ld_0=\ld(0)$, one has
 \be\label{c56}\Delta\psi_\rho+\rho
f(\rho\psi_\rho)=0\ \ \text{in}\ \ \{\psi_\rho>0\},\ \text{and
$\psi_\rho\in F(\sigma,1;\delta)$ in $B_1$.}\ee
Let $$\eta(x)=\left\{\ba{ll}0~~&\text{for}\ \ |x|\geq \f13,\\
e^{-\f{9x^2}{1-9x^2}}~~&\text{for}\ \ |x|<\f13,\ea\right.$$ and
choose $s\geq 0$ be the maximum with the property
$$B_1\cap\{\psi_\rho>0\}\subset D_\sigma=\{X\in B_1\mid
y<\sigma-s\eta(x)\}.$$ Thus there exists a point $Z\in
B_{\f12}\cap\p D_\sigma\cap\p\{\psi_\rho>0\}$. It is easy to check
that $s\leq \sigma$, due to $0\in\Gamma$. Define a function $\phi(x,y)$ solving the problem
$$\left\{\ba{ll}&\Delta\phi+\rho f(0)=0~~~~\text{in}~~D_\sigma,\\
&\phi=0~~~~\text{on}~~\p D_\sigma\cap B_1,\ \ \ \ \
\phi=\ld_0(1+2\sigma)(\sigma-y)~~~~\text{on}~~\p D_\sigma\setminus
B_1.\ea\right.$$¡¡Noticing that $\ld_0(1+2\s)(\sigma-y)+\ld_0(1+\sigma)y\geq
0$ on $\p D_\sigma$, the maximum principle gives that
\be\label{c61}\phi(X)\geq -\ld_0(1+\sigma)y\ \ \text{in}\ \
D_\sigma.\ee

Next, we will show that \be\label{c57}\p_{-\nu}\phi(Z)\leq
\ld_0(1+C\sigma).\ee

To see this, define a function $\phi_1$ as
follows
$$\phi_1(x,y)=\f{\mu_2}{\mu_1}\left(1-e^{-\mu_1g(x,y)}\right) \ \ \text{in}\ \ D_\sigma,$$ where $g(x,y)=-y+\s-s\eta(x)$ and the constants $\mu_1$ and $\mu_2\geq
\ld_0$ depending on $\s$ will be determined later. It is easy to
check that \be\label{c59}1\leq|\g g(x,y)|\leq 1+C\s\ \ \text{and}\ \
|D^2g(x,y)|\leq C\s.\ee Therefore, it
follows from \eqref{c59} and $\rho\leq\s_0\s$ that
$$\Delta\phi_1+\rho f(0)=\mu_2 e^{-\mu_1g}(\Delta g-\mu_1|\g g|^2)+\rho f(0)\leq \mu_2 e^{-\mu_1g}(C\s-\mu_1)+\rho
f(0)<0,$$ provided that
$$\mu_1=C_1\s, \ \ \text{$C_1$ is sufficiently large and $\s$ is small}.$$
On the other hand, take $\mu_2=\ld_0(1+C_2\s)$, $C_2$ is sufficiently
large and $\s$ is small, one has
$$\phi_1(x,y)\geq\mu_2 g(x,y)(1-C\mu_1)\geq \ld_0(1+2\s)g(x,y)\ \ \text{for any $(x,y)\in
\p D_\s$},$$ which together with the maximum principle gives that
$$\phi_1(X)\geq\phi(X)\ \ \text{in $D_\s$}.$$ Recalling that $\phi_1(Z)=\phi(Z)=0$, we have
$$\p_{-\nu}\phi(Z)\leq\p_{-\nu}\phi_1(Z)\leq|\g\phi_1(Z)|=\mu_2|\g
g(Z)|\leq \ld_0(1+C\sigma),\ \ \text{$\s$ is small}.$$

In view of $\psi_\rho\in F(\sigma,1;\delta)$, one has that
$\psi_\rho\leq \phi$ on $\p D_\sigma$. The maximum principle gives
that $\psi_\rho\leq \phi$ in $D_\sigma$.

Consider the points $\xi\in \p B_{\f34}$ with $\xi=(\xi_1,\xi_2)$
and $\xi_2<-\f12$, and define a function $\varphi_\xi$ solving the following problem
$$\left\{\ba{ll}&\Delta\varphi_\xi=0~~~~\text{in}~~D_\sigma\setminus B_{\f1{10}}(\xi),\\
&\varphi_\xi=0~~~~\text{on}~~\p D_\sigma,\ \ \ \ \
\varphi_\xi=-\ld_0y~~~~\text{on}~~\p B_{\f1{10}}(\xi).\ea\right.$$
Hopf's Lemma gives that \be\label{c58}\p_{-\nu}\varphi_\xi(Z)\geq
c\ld_0>0.\ee

Suppose that $$\psi_\rho(X)\leq\phi(X)+\gamma\sigma\ld_0 y\ \
\text{for any $X\in B_{\f1{10}}(\xi)$},$$ for a positive constant
$\gamma>0$. It should be noted that $\Delta(\psi_\rho-
\phi+\gamma\sigma\varphi_\xi)=-\rho f(\rho\psi_\rho)+\rho f(0)\geq 0$
in $D_\sigma\setminus B_{\f1{10}}(\xi)$, and the maximum principle gives
that
$$\psi_\rho\leq \phi-\gamma\sigma\varphi_\xi\ \ \ \ \text{in}\ \ D_\sigma\setminus
B_{\f1{10}}(\xi).$$ Therefore, we have
$$\ld_0=\limsup_{X\rightarrow Z}\f{\psi_\rho(X)}{\text{dist}(X,\p D_\sigma)}\leq\p_{-\nu}\phi(Z)-\gamma\sigma\p_{-\nu}\varphi_\xi(Z)\leq\ld_0(1+C\sigma-c\gamma\sigma),$$
where $\nu$ is the outer normal vector and we have used \eqref{c58}.
This leads a contradiction, provided that $\gamma$ is sufficiently
large.

Hence, there exists some point $X_\xi=(x_\xi,y_\xi)\in
B_{\f1{10}}(\xi)$, such
that\be\label{c60}\psi_\rho(X_\xi)\geq\phi(X_\xi)+C\sigma\ld_0
y_\xi.\ee

For any $X\in B_{\f15}(\xi)$, it follows from \eqref{c61},
\eqref{c60} and (iii) in Definition \ref{def3} that
$$\psi_\rho(X)\geq \psi_\rho(X_\xi)-\f{3\ld_0(1+\delta)}{10}\geq-\ld_0(1+\sigma)y_\xi+C\sigma\ld_0 y_\xi-\f{3\ld_0(1+\delta)}{10}\geq \ld_0\left(\f1{10}-C\sigma\right)>0,
$$ provided that $\sigma$ is small. Therefore, we have
$$\Delta(\phi-\psi_\rho)=-\rho f(0)+\rho f(\rho\psi_\rho)\ \ \text{and}\ \ \phi-\psi_\rho\geq0\ \ \text{in}\ \
B_{\f15}(\xi).$$ Thanks to Harnack's inequality in \cite{T}, one has
$$(\phi-\psi_\rho)(\xi)\leq(\phi-\psi_\rho)(X_\xi)+C\rho\|f(0)-f(\rho\psi_\rho)\|_{L^\infty(B_1)}\leq  C\ld_0\sigma+C\sigma^2\leq C\ld_0\sigma,$$
if $\sigma$ is small, which implies that
\be\label{c62}\psi_\rho(\xi)\geq\phi(\xi)-C\ld_0\s\geq\ld_0(-(1+\sigma)\xi_2-C\sigma\geq
-\xi_2-C\sigma.\ee

It follows from \eqref{c62} and $(iii)$ in Definition \ref{def3}
that
$$\psi_\rho(\xi+te_2)\geq \psi_\rho(\xi)-\ld_0(1+\delta)t\geq \ld_0(-(\xi_2+t)-C\sigma),$$ for $t>0$ and $\xi+te_2\in B_{\f12}$,
which implies that $\psi_\rho$ is of
$\mathcal{F}(2\sigma,C\sigma;\delta)$ in $B_{\f12}$. This completes the proof of this lemma.
\end{proof}

In the following, denote $B_r=B_r(0)$, we will investigate the
blow-up limit. Thanks to Lemma 7.3 and Corollary 7.4 in \cite{AC1},
we have the following non-homogeneous blow-up limit and we omit the
proof here.
\begin{lemma}\label{lc9} (Non-homogenous blow up)Let $\psi_n\in \mathcal{F}(\sigma_n,\sigma_n;\delta_n)$ in $B_{\rho_n}$
with $v=e_2$, $\sigma_n\rightarrow0$,
$\delta_n\sigma^{-2}_n\rightarrow0$ and
$\rho_n\s_n^{-1}\rightarrow0$. For any $x\in(-1,1)$, set
$$b^+_n(x)=\sup\{h\mid
(\rho_nx,\sigma_n\rho_nh)\in\p\{\psi_n>0\}\},$$ and
$$b^-_n(x)=\inf\{h\mid
(\rho_nx,\sigma_n\rho_nh)\in\p\{\psi_n>0\}\},$$ then for a
subsequence,
$$b(x)=\limsup_{z\rightarrow x,n\rightarrow\infty}b_n^+(z)=\liminf_{z\rightarrow x,n\rightarrow\infty}b_n^-(z)\ \ \text{for any $x\in(-1,1)$}.$$
Furthermore, $b_n^+(x)\rightarrow b(x)$ and $b_n^-(x)\rightarrow b(x)$
uniformly in $(-1,1)$, $b(0)=0$ and $b(x)$ is a continuous function.
\end{lemma}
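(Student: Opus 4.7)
The plan is to mimic the proof of Lemma 7.3 and Corollary 7.4 of \cite{AC1}, treating the semilinear term $f(\psi_n)$ as a lower-order perturbation whose contribution is controlled by the smallness condition $\rho_n\sigma_n^{-1}\to 0$, much as the $\delta_n\sigma_n^{-2}\to 0$ condition absorbs the deviation of $|\g\psi_n|$ from $\ld$.

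First I would introduce the anisotropic rescaling $u_n(x,h)=\psi_n(\rho_n x,\sigma_n\rho_n h)/(\sigma_n\rho_n\ld(X_0))$ on the stretched box $(-1,1)\times(-\sigma_n^{-1},\sigma_n^{-1})$. The flatness assumption $\mathcal{F}(\sigma_n,\sigma_n;\delta_n)$ forces the free boundary of $u_n$ into the slab $\{|h|\le 1\}$, so $|b_n^\pm(x)|\le 1$ for all $x\in(-1,1)$ and both families are equibounded. In these coordinates the equation $\Delta\psi_n+f(\psi_n)=0$ becomes $\sigma_n^2\p_{xx}u_n+\p_{hh}u_n=-\sigma_n\rho_n\ld(X_0)^{-1}f(\sigma_n\rho_n\ld(X_0)u_n)$ in $\{u_n>0\}$, whose right-hand side is $O(\rho_n\sigma_n)=o(\sigma_n^2)$, so the rescaled PDE is, up to error, a Laplace-type equation in $(x,h)$ coordinates.

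The core step is a barrier argument. Freezing $x$ and viewing $u_n$ as a function of $h$, the flatness from above gives $u_n=0$ for $h\ge 1$, while the flatness from below together with Lemma~\ref{lc7} guarantees that the slope at the free boundary is $\ld(X_0)+o(1)$. Sliding a family of translated affine profiles $h\mapsto\max\{-(h-s),0\}$ (and their harmonic replacements in $x$) against $u_n$, and using the maximum principle to absorb the $O(\rho_n\sigma_n)$ source, one shows that $b_n^+(x)-b_n^-(x)\to 0$ locally uniformly. Pointwise, if $b_n^+(x_n)\to\beta^+$ and $b_n^-(x'_n)\to\beta^-$ along sequences $x_n,x'_n\to x$ with $\beta^+>\beta^-$, then the non-degeneracy Lemma~\ref{lb7} would force positivity of $u_n$ in a region where the upper envelope requires $u_n$ to vanish, a contradiction.

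With $b_n^+-b_n^-\to 0$ locally uniformly in hand, I would define $b(x):=\limsup_{z\to x,\,n\to\infty}b_n^+(z)$; the barrier argument gives the same value for the corresponding $\liminf$ of $b_n^-$. The resulting $b$ is simultaneously upper and lower semicontinuous and is therefore continuous, and a standard Dini-type argument then upgrades the convergence $b_n^\pm\to b$ to uniform on compact subsets $[-1+\eta,1-\eta]$; the normalization $b(0)=0$ is immediate from $0\in\Gamma$ for each $\psi_n$. The principal obstacle is the barrier step, where the sliding comparison of \cite{AC1} must be executed in the presence of the semilinear source; this is precisely where the hypothesis $\rho_n\sigma_n^{-1}\to 0$ is invoked, since it renders the rescaled source negligible against the second derivatives of the linear barriers used in the comparison.
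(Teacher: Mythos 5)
The paper does not supply a proof of this lemma at all: it simply states that the result follows from Lemma 7.3 and Corollary 7.4 of Alt--Caffarelli and omits the argument, with the (implicit) understanding that the semilinear term is lower order when $\rho_n\sigma_n^{-1}\to 0$. Your plan is therefore aligned with what the paper has in mind, and your anisotropic rescaling computation correctly identifies the rescaled source as $O(\rho_n\sigma_n)=o(\sigma_n^2)$, which is exactly the observation that lets the Alt--Caffarelli argument go through essentially unchanged.

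Two places where the sketch is technically imprecise and should be tightened. First, the core of Lemma 7.3 in Alt--Caffarelli is showing that at each $x$ the $\limsup$ of $b_n^+$ and the $\liminf$ of $b_n^-$ coincide; this does not come from non-degeneracy alone. In \cite{AC1} the mechanism is a comparison with a harmonic replacement combined with the measure estimate $\mathcal{L}^2(B_r\cap\{\psi=0\})\geq cr^2$ (here Lemma \ref{lb8}); the quantitative contradiction arises from comparing the measure of the zero set with the oscillation of the free boundary. Your sliding-barrier step points in the right direction but does not spell out how a positive gap $\beta^+-\beta^->0$ produces a contradiction with those estimates. Second, the uniform convergence is not a Dini-type argument: $b_n^{\pm}$ are not monotone in $n$. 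What actually happens is that $b$ defined via $\limsup$ is automatically upper semicontinuous, the same $b$ via $\liminf$ is lower semicontinuous, hence $b$ is continuous; uniform convergence on compact subsets then follows from a compactness/diagonalization argument (if $|b_{n_k}^+(x_k)-b(x_k)|\geq\e$ with $x_k\to x^*$, continuity of $b$ and the defining $\limsup$/$\liminf$ at $x^*$ give a contradiction). With these two points made precise, your argument is the one the paper expects.
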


Furthermore, we can show that the limit function $b(x)$ is a convex function.
\begin{lemma}\label{lc10} $b(x)$ is convex with respect to $x$.

\end{lemma}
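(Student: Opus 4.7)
To prove the convexity of $b$, I plan to argue by contradiction, using a chord-aligned cut-off competitor to the underlying minimizer $\psi$ and exploiting the linear growth estimate established in Lemma \ref{lc7}. Suppose that $b$ is not convex on $(-1,1)$. Then there exist $x_1 < x_0 < x_2$ in $(-1,1)$ and a constant $\kappa > 0$ such that $b(x_0) > \ell(x_0) + 4\kappa$, where $\ell$ is the affine function through $(x_1, b(x_1))$ and $(x_2, b(x_2))$. By the continuity of $b$ from Lemma \ref{lc9} and the uniform convergence $b_n^\pm \to b$, for all sufficiently large $n$ one has $b_n^-(x) > \ell(x) + 3\kappa$ on a sub-interval $(a',b') \ni x_0$, while $|b_n^\pm(x_i) - \ell(x_i)| < \kappa$ at the endpoints $x_i$, $i = 1, 2$.

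Next I would construct a chord-aligned cut-off competitor. In the original coordinates of $\psi$, define
\[
V_n(X_1, X_2) := \ld(X_0) \max\bigl\{\s_n \rho_n \bigl(\ell(X_1/\rho_n) + \kappa\bigr) - X_2,\ 0\bigr\},
\]
an $O(\s_n)$-tilted half-plane solution whose zero-set $\{V_n = 0\}$ will carve a region of measure at least $c\,\kappa\,\s_n\rho_n^2$ out of $\{\psi > 0\}$, by the persistent gap above. On the rectangle $R_n := X_0 + \bigl(\rho_n(a',b')\bigr) \times \bigl(-M\s_n\rho_n,\,M\s_n\rho_n\bigr)$ with $M$ a large fixed constant, set
\[
\phi_n(X) := \min\{\psi(X),\ V_n(X)\}\ \text{inside }R_n,\qquad \phi_n := \psi\ \text{outside }R_n.
\]
The Lipschitz estimate in Theorem \ref{lb5}, together with the flatness matching $b_n^\pm$ to $\ell$ near the endpoints, will ensure $\phi_n$ is continuous across $\p R_n$, and the ordering $0 \leq \phi_n \leq \psi \leq \Psi$ will guarantee $\phi_n \in K$.

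I would then compare energies. Write $J(\phi_n) - J(\psi) = A_n + B_n + C_n$, with $A_n,\,B_n,\,C_n$ the Dirichlet, potential, and measure contributions respectively. The measure term is strictly negative,
\[
C_n = -\int_{R_n \cap \{V_n = 0\} \cap \{\psi > 0\}} \ld^2(X)\,dX \;\leq\; -c\,\ld_1^2\,\kappa\,\s_n\rho_n^2.
\]
The potential term is harmless: $|B_n| = O(\L\,\s_n^2\rho_n^3)$ because $|F'| \leq 2\L$ and $|\phi_n - \psi| = O(\s_n\rho_n)$ on a set of area $O(\s_n\rho_n^2)$. The Dirichlet exchange $A_n$ is the critical piece: writing $\phi_n - \psi = -(\psi - V_n)^+$ and integrating by parts, the leading-order contribution reduces to the free-boundary flux $|\g V_n|^2 - |\g\psi|^2$, which tends to $\ld^2(X_0) - \ld^2(X_0) = 0$ by Lemma \ref{lc5} and the linear-growth formula \eqref{cc0} of Lemma \ref{lc7}; hence $A_n = o(\s_n\rho_n^2)$. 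Collecting these estimates yields $J(\phi_n) < J(\psi)$ for all $n$ large, contradicting the minimality of $\psi$ in $(P)$.

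The hard part will be controlling the Dirichlet exchange $A_n$ to strictly lower order than the measure gain $|C_n|$. This control rests simultaneously on the sharp gradient estimate $|\g\psi|\to\ld(X_0)$ at $\Gamma$ (Lemmas \ref{lc5} and \ref{lc7}) and on the degenerate scaling $\s_n \to 0$: the chord $\ell$ has slope $\s_n\ell'$ in the original coordinates, which vanishes in the limit, so that $V_n$ is asymptotically a flat half-plane solution for which the Dirichlet-plus-measure functional is stationary under vertical translation, leaving only the strict measure gain $|C_n|$ to drive the contradiction.
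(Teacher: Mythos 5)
Your approach is genuinely different from the paper's. The paper proves Lemma~\ref{lc10} by a \emph{perimeter} comparison: it tests the representation formula of Proposition~\ref{lc1} against the indicator of the super-chord region $\mathcal{D}^+(\sigma_n g)$, uses the bound $|\g\psi_n|\leq\ld_0(1+\delta_n)$ to control $\mathcal{H}^1(\mathcal{D}^+(\sigma_ng)\cap\p_{red}\{\psi_n>0\})$ by the chord length plus $O(\delta_n+\rho_n\sigma_n)$, and then invokes the geometric excess-length inequality $\mathcal{H}^1(\mathcal{D}\cap\p_{red}E_n)\geq\mathcal{H}^1(\mathcal{D}^0(\sigma_ng))+c\sigma_n^2$ for $E_n=\{\psi_n>0\}\cup\mathcal{D}^-(\sigma_ng)$. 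The contradiction comes from $\delta_n\sigma_n^{-2}\to0$ and $\rho_n\sigma_n^{-1}\to0$. You instead propose a direct energy comparison $J(\phi_n)<J(\psi)$ with a truncation competitor $\phi_n=\min\{\psi,V_n\}$, trading a measure gain against a Dirichlet loss.

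There are two concrete gaps in your proposal. \textbf{(i) The competitor is not admissible as written.} In the gap region where $b(x)>\ell(x)+3\kappa$, the linear growth \eqref{cc0} gives $\psi\approx\ld(X_0)\sigma_n\rho_n(b(x)-h)$ while $V_n=\ld(X_0)\sigma_n\rho_n(\ell(x)+\kappa-h)$, so $\psi-V_n\approx\ld(X_0)\sigma_n\rho_n\,(b(x)-\ell(x)-\kappa)>0$ for \emph{all} $h$, i.e.\ the set $\{\psi>V_n\}$ extends unboundedly downward. Hence $\min\{\psi,V_n\}\neq\psi$ on the bottom edge of $R_n$ and gluing $\phi_n:=\psi$ outside $R_n$ produces a jump discontinuity there; $\phi_n\notin H^1(\O)$. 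The Lipschitz bound of Theorem~\ref{lb5} does not repair this — it controls $|\g\psi|$, not the sign of $\psi-V_n$ on $\p R_n$. One would need a genuine vertical cut-off (adding a correction to $V_n$ that grows downward faster than $\psi$), but that correction re-enters the Dirichlet exchange at the very order you are trying to kill.

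\textbf{(ii) The estimate $A_n=o(\sigma_n\rho_n^2)$ is not supplied by the available lemmas.} On the strip $\{0<V_n<\psi\}\cap R_n$, which has area comparable to $\sigma_n\rho_n^2$, you must bound $\int(|\g V_n|^2-|\g\psi|^2)$. The upper bound $|\g\psi|\leq\ld(X)+Cr^{\alpha_0}$ from Lemma~\ref{lc5} is fine (it is polynomial in $r\sim\rho_n$). But the \emph{lower} bound on $|\g\psi|$, needed to keep $\ld^2(X_0)-|\g\psi|^2$ from being positive, is only available in the averaged form of Lemma~\ref{lc6}, namely
\begin{equation}
\fint_{B_r(X_0)\cap\{\psi>0\}}\max\{\ld^2-|\g\psi|^2,0\}\,dY\longrightarrow 0,
\end{equation}
and the proof yields no better than a logarithmic rate (see the bound after \eqref{c54}). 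Therefore the contribution from this strip is $o(\rho_n^2)$, \emph{not} $o(\sigma_n\rho_n^2)$: since $\sigma_n\to0$ independently, there is no way to absorb a rate-free $o(\rho_n^2)$ error into a $\sigma_n\rho_n^2$ gain. This is exactly the quantitative obstruction that the paper's perimeter argument bypasses: by testing the weak free-boundary condition $\mu_0=\mathcal{B}_\psi\mathcal{H}^1\lfloor_{\p\{\psi>0\}}$ rather than comparing energies, the only error terms are the controlled factors $\delta_n$ and $\rho_n\sigma_n^{-1}$ that the blow-up scaling was designed to tame. Your heuristic that the half-plane is stationary under vertical shift is correct in the limit but does not furnish the needed rate; as written, the argument does not close.
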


\begin{proof} Set $\psi_n(X)=\f{\psi(\rho_nX)}{\rho_n}$, $\ld_n(X)=\ld(\rho_nX)$ and $\ld_0=\ld(0)$,
one has
$$\Delta\psi_n+\rho_n f(\rho_n\psi_n)=0\ \ \text{in}\ \
\{\psi_n>0\}\ \ \text{and $\psi_n\in
\mathcal{F}(\sigma_n,\sigma_n;\delta_n)$ in $B_1$}.$$

If the assertion is not true, there exists a $x_0\in(-1,1)$ and a linear function $g(x)$ in
$I_\rho=(-\rho+x_0,\rho+x_0)\subset(-1,1)$, such that
$$g(x_0\pm\rho)>b(x_0\pm\rho) \ \ \text{ and}\ b(x_0)>g(x_0).$$
Denote
$$\mathcal{D}=I_\rho\times\mathbb{R}\ \text{and}\ \mathcal{D}^+(k)=\{(x,y)\in\mathcal{D}\mid
k(x)<y<g_0+2\},$$ and
$$\mathcal{D}^0(k)=\{(x,y)\in\mathcal{D}\mid y=k(x)\}\ \text{and}\ \mathcal{D}^-(k)=\{(x,y)\in\mathcal{D}\mid y<k(x)\},$$ for continuous function $k(x)$, where $g_0=\max\{g(x_0-\rho),g_0(x_0+\rho)\}$.
 Define a
test function
$d_{\e,E}(X)=\min\left\{\f{\text{dist}(X,\mathbb{R}^2\setminus
E)}{\e},1\right\}$ for a set $E$, it is easy to check that
$d_{\e,E}(X)$ converges to the characteristic function $I_{E}(X)$ as
$\e\rightarrow 0$.

It follows from Proposition \ref{lc1} that
\be\label{c63}\ba{rl}&-\int_{\O_n}\nabla\psi_n\cdot\nabla
d_{\e,E}(\mathcal{D}^+(\sigma_ng)) dX
+\int_{\O_n\cap\{\psi_n>0\}}\rho_n f(\rho_n\psi_n)
d_{\e,E}(\mathcal{D}^+(\sigma_ng)) dX\\
=&
 \int_{ \O_n\cap\p_{red}\{\psi_n>0\}} d_{\e,E}(\mathcal{D}^+(\sigma_ng))\ld_n d\mathcal{H}^1,\ea
 \ee where $\O_n=\{X\mid \rho_nX\in\O\}$.

Taking $\e\rightarrow0$ in \eqref{c63}, one has
$$\ba{rl}&-\int_{\p\mathcal{D}^+(\sigma_ng)\cap\{\psi_n>0\}}\nabla\psi_n\cdot\nu d\mathcal{H}^1+\int_{\mathcal{D}^+(\sigma_ng)\cap\{\psi_n>0\}}\rho_n f(\rho_n\psi_n) dX\\
=&
 \int_{\mathcal{D}^+(\sigma_ng)\cap\p_{red}\{\psi_n>0\}} \ld_n d\mathcal{H}^1,\ea
 $$ which gives that
\be\label{c64}\ba{rl}&\mathcal{H}^1(\mathcal{D}^+(\sigma_ng)\cap\p_{red}\{\psi_n>0\})\\
\leq &\f{1}{\ld_0(1-\delta_n)} \int_{\mathcal{D}^+(\sigma_ng)\cap\p_{red}\{\psi_n>0\}} \ld_n d\mathcal{H}^1\\
\leq&\f{1}{\ld_0(1-\delta_n)}
\left(\int_{\p\mathcal{D}^+(\sigma_ng)\cap\{\psi_n>0\}}|\nabla\psi_n|
d\mathcal{H}^1+C\rho_n\int_{\mathcal{D}^+(\sigma_ng)\cap\{\psi_n>0\}}
dX\right)\\
\leq&\f{1+\delta_n}{1-\delta_n}\mathcal{H}^1(\mathcal{D}^0(\sigma_ng)\cap\{\psi_n>0\})+\f{C\rho_n\sigma_n}{\ld_0(1-\delta_n)}.
\ea\ee Denote $$E_n=\{\psi_n>0\}\cup \mathcal{D}^-(\sigma_ng).$$ It
is easy to check that $E_n$ has finite perimeter in $\mathcal{D}$
and
\be\label{c65}\ba{rl}\mathcal{H}^1(\mathcal{D}\cap\p_{red}E_n)\leq
\mathcal{H}^1(\mathcal{D}^+(\sigma_ng)\cap\p_{red}\{\psi_n>0\})+\mathcal{H}^1(\mathcal{D}^0(\sigma_ng)\cap\{\psi_n=0\}).
\ea\ee

By using the similar estimate in Page 136-Page 137 in \cite{AC1},
one has
\be\label{c66}\ba{rl}\mathcal{H}^1(\mathcal{D}\cap\p_{red}E_n)\geq
\mathcal{H}^1(\mathcal{D}^0(\sigma_ng))+c\sigma_n^2. \ea\ee It
follows from \eqref{c64}-\eqref{c66} that
\be\label{c67}\ba{rl}&\mathcal{H}^1(\mathcal{D}^0(\sigma_ng))+c\sigma_n^2\leq\f{1+\delta_n}{1-\delta_n}\mathcal{H}^1(\mathcal{D}^0(\sigma_ng))
+\f{C\rho_n\sigma_n}{\ld_0(1-\delta_n)}, \ea\ee that is
$$0<c\leq \f{2\delta_n\sigma_n^{-2}}{1-\delta_n}\mathcal{H}^1(\mathcal{D}^0(\sigma_ng))+\f{C\rho_n\sigma_n^{-1}}{\ld_0(1-\delta_n)},$$
which contradicts to the facts $\rho_n\sigma_n^{-1}\rightarrow0$ and
$\delta_n\sigma^{-2}_n\rightarrow0$.

\end{proof}







With the aid of Lemma \ref{lc10}, based on Lemma 7.6 in \cite{AC1}
and Lemma 5.6 in \cite{ACF8}, we will show that
\begin{lemma}\label{lc11} There exists a constant $C$, such that for any $x\in\left(-\f 12,\f 12\right)$,
$$\int_{0}^{\f 14}\f{1}{r^2}(\text{Av}_r(b(x))-b(x))dr\leq C,$$
where $\text{Av}_r(b(x))=\f{b(x+r)+b(x-r)}{2}$.

\end{lemma}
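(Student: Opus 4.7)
My plan is to follow the strategy of Lemma 7.6 in \cite{AC1} and Lemma 5.6 in \cite{ACF8}, which proves the convexity bound by testing the minimality of the flat sequence $\psi_n$ from Lemma \ref{lc9} against multi-scale competitors and then passing to the blow-up limit via the uniform convergence $b_n^{\pm}\to b$. Fix $x_0\in(-\f12,\f12)$. For each $r\in(0,\f14)$ and each large $n$, I would construct a competitor $\tilde\psi_n^r$ to $\psi_n$ as follows. In the rescaled coordinates $(\xi,\eta)=(x/\rho_n,y/(\sigma_n\rho_n))$, the free boundary of $\psi_n$ is close to the graph $\eta=b_n^{\pm}(\xi)$. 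Let $\ell_r(\xi)$ denote the affine chord through $(x_0\pm r, b_n^-(x_0\pm r))$. By Lemma \ref{lc10} and the uniform convergence $b_n^{\pm}\to b$, one has $\ell_r\geq b_n^-$ on $(x_0-r,x_0+r)$ up to a quantity that is $o(1)$ as $n\to\infty$. Define $\tilde\psi_n^r$ by replacing $\psi_n$ in the strip $\{|\xi-x_0|<r\}$ between the old free boundary and the chord by the harmonic extension matching boundary data of $\psi_n$, and by setting $\tilde\psi_n^r\equiv 0$ above the chord.

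Applying $J(\psi_n)\le J(\tilde\psi_n^r)$, the three contributions split as: (i) a Dirichlet term bounded by $C\sigma_n^2\rho_n^2 r$ using the near-linear profile $\psi_n\approx -\lambda_0(\eta-b_n)\sigma_n\rho_n$ from the flatness class and the energy-minimality of the harmonic extension; (ii) a volume term contributing exactly
\[
\lambda_0^2\cdot 2\sigma_n\rho_n^2\, r\,(\mathrm{Av}_r b_n(x_0)-b_n(x_0))(1+o(1));
\]
and (iii) a semilinear correction $\int(F(\tilde\psi_n^r)-F(\psi_n))\,dX$. After rearranging and dividing by $\sigma_n\rho_n^2$, I would let $n\to\infty$ using the convergence $b_n^{\pm}\to b$ together with $\delta_n\sigma_n^{-2}\to 0$ and $\rho_n\sigma_n^{-1}\to 0$, obtaining at each scale $r$ an inequality of the form $c(\mathrm{Av}_r b(x_0)-b(x_0))\le Cr^2$ plus a scale-uniform error. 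To upgrade this pointwise bound into the integrated estimate claimed, one runs the competitor construction simultaneously across all dyadic scales $r_k=2^{-k}$ and sums; the telescoping structure (each chord replacing the next finer chord) produces precisely the $1/r^2$ weight, as in \cite[Lemma~7.6]{AC1}, yielding $\int_0^{1/4} r^{-2}(\mathrm{Av}_r b(x_0)-b(x_0))\,dr\le C$.

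The principal technical obstacle is the semilinear correction (iii), which is absent from \cite{AC1} and takes a simple linear form in \cite{FA2,ACF8}. From \eqref{c1} we have $|F(t)|\le 2\Lambda|t|+\Lambda t^2$, and since $|\psi_n|,|\tilde\psi_n^r|\le C\sigma_n\rho_n$ on the support of the modification, the total contribution of (iii) is at most $C\Lambda\sigma_n\rho_n\cdot(\mathrm{area})\le C\Lambda\sigma_n^2\rho_n^3 r$. The scaling assumption $\rho_n\sigma_n^{-1}\to 0$ from Lemma \ref{lc9} then makes this term negligible compared to (ii) after normalizing by $\sigma_n\rho_n^2$. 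Verifying this scale-uniformly, i.e.\ with constants independent of $r$, so that the multi-scale summation in the last step actually converges, is the most delicate part of the adaptation from the homogeneous case of \cite{AC1} to the present semilinear setting.
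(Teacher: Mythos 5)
Your proposal does not follow the paper's argument and, more importantly, it misattributes the strategy: Lemma 7.6 in \cite{AC1} (and Lemma 5.6 in \cite{ACF8}) do \emph{not} prove this estimate by comparing energies of multi-scale chord competitors. The paper's proof, following \cite{AC1}, introduces the normalized second-order quantity $\phi_n=(\psi_n+\ld_0 y)/\sigma_n$, shows that $\phi_n$ is uniformly bounded in $B_1^-$ and converges in $C^2_{loc}(B_1^-)$ to a bounded harmonic function $\phi$ with $\phi(0,y)\le 0$ for $y\le 0$ and boundary trace $\phi(x,0)=\ld_0 b(x)$, and then invokes an abstract real-variable lemma (Lemma 5.5 of \cite{ACF8}) about bounded harmonic functions in a half-disc with convex boundary trace. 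The $1/r^2$ weight in the target estimate is in essence the Poisson-kernel weight, and it drops out of the representation formula for that harmonic limit -- it is not obtained by summing single-scale inequalities.

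The concrete gaps in your argument are the following. First, the single-scale inequality you claim, $c(\mathrm{Av}_r b(x_0)-b(x_0))\le Cr^2$, cannot follow from the energy comparison: such a pointwise bound would say $b\in C^{1,1}$ at $x_0$, which is far stronger than the integrated estimate being proved, and indeed it would make the lemma trivial without any ``telescoping.'' The telescoping sentence does no actual work; you never exhibit the cancellation that is supposed to produce the $1/r^2$ weight, and under your own claimed pointwise bound the integral converges directly, so the two halves of your argument are inconsistent with one another. Second, the decomposition (i)--(iii) does not balance: the volume gain from flattening to a chord scales like $\sigma_n\rho_n^2$ (area of the strip between $b_n$ and $\ell_r$), while the Dirichlet excess of a naive competitor is of the same order $\sigma_n$ (the gradient near the free boundary is $O(\ld_0)$, not $O(\sigma_n)$), not $\sigma_n^2$ as asserted in (i). The leading cancellation between the $\lambda^2|\{\psi>0\}|$ term and the $|\g\psi|^2$ term near the free boundary is precisely what forces the argument to go to second order, i.e. to the harmonic limit $\phi$, and your construction does not perform that cancellation. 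Third, the genuinely delicate step in the paper -- establishing the trace identity $\lim_{y\uparrow 0}\phi(x,y)=\ld_0 b(x)$, which requires re-running the flatness Lemma \ref{lc8} at the sub-scales $R\sigma_n$ around the free boundary points $(x_n,\sigma_n b_n^+(x_n))$ -- is entirely absent from your outline. Without that step one cannot connect the harmonic limit to the function $b(x)$ at all, so even if the competitor route were tightened up, it would not bypass this analysis.

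In short, this is not a different-but-correct proof of the lemma; it is a sketch whose central inequality is overclaimed, whose scaling is wrong at leading order, and whose summation step is not actually carried out.
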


\begin{proof} Set $\psi_n(X)=\f{\psi(\rho_nX)}{\rho_n}$, $\ld_n(X)=\ld(\rho_nX)$ and $\ld_0=\ld(0)$,
one has \be\label{c69}\Delta\psi_n+\rho_n f(\rho_n\psi_n)=0\ \
\text{in}\ \ \{\psi_n>0\},\ \ \text{and $\psi_n\in
\mathcal{F}(\sigma_n,\sigma_n;\delta_n)$ in $B_1$}.\ee It is easy to
check that $\psi_n$ be the class of
$\mathcal{F}(8\sigma_n,8\sigma_n;\delta_n)$ in $B_{\f14}(x,\sigma_nb_n^+(x))$,
provided that $n$ is sufficiently large. Therefore, it suffices to
show the lemma for $x=0$, that is \be\label{c68}\int_{0}^{\f
14}\f{1}{r^2}(\text{Av}_r(b(0)))dr\leq C.\ee

Set
$$\phi_n(x,y)=\f{\psi_n(x,y)+\ld_0y}{\sigma_n}.$$
Recalling the flatness condition for $\psi_n$, the free boundary
of $\psi_n$ lies in the strip $|y|\leq c\sigma_n$. Since
$|\g\psi_n|\leq\ld_0(1+\delta_n)$ and $\delta_n\leq \sigma_n$, we
have $$\text{$\phi_n\leq C$ in $B_1^-=B_1\cap\{y<0\}$.}$$ The flatness condition implies that
$\phi_n\geq -C$ in $B_1^-$. Therefore, one has
\be\label{c70}|\phi_n|\leq C\ \ \text{in}\ \ B_1^-.\ee

It is easy to check that
\be\label{c700}\Delta\phi_n=\f{\rho_n}{\sigma_n}f(\rho_n(\sigma_n\phi_n-\ld_0y))\
\ \text{in}\ \ B_1^-\cap\{y<-\sigma_n\}.\ee

Thanks to the flatness conditions on $\psi_n$ and Lemma \ref{lc7},
there exists a subsequence still labeled by $\{\psi_n\}$, such that
$$\psi_n(x,y)\rightarrow-\ld_0y\ \ \text{in $C^2$ in any compact subset of
$B_1^-$}.$$ In view of \eqref{c70}, we can conclude that
\be\label{c71}\phi_n\rightarrow\phi\ \ \text{in $C^2$ in compact
subsets of $B_1^-$}.\ee

Since $\rho_n\sigma_n^{-1}\rightarrow0$, it follows from
\eqref{c700} and \eqref{c71} that \be\label{c72}\Delta\phi=0\ \
\text{in}\ \ B_1^-,\ \ \text{and}\ \ |\phi|\leq C.\ee

By virtue of $\phi_n(0,0)=0$,  we have that for any $y\leq 0$, there
exists a $\xi\in(y,0)$, such that
\be\label{c73}\phi_n(0,y)\leq\f{\p\phi_n(0,\xi)}{\p
y}y=\f1{\sigma_n}\left(\f{\p\psi_n(0,\xi)}{\p
y}+\ld_0\right)y\leq\f{|\g\psi_n|-\ld_0}{\sigma_n}|y|\leq
\f{\ld_0\delta_n}{\sigma_n}|y|\rightarrow0.\ee Then it gives that
\be\label{c74}\phi(0,y)\leq 0\ \ \text{for any $y\leq0$}.\ee

 Next, we will show that
 \be\label{c75}\phi(x,0)=\ld_0b(x)\ \ \text{in the sense that $\lim_{y\uparrow
 0}\phi(x,y)=\ld_0b(x)$}.\ee

 To obtain the fact \eqref{c75}, we first show that for any small $\eta>0$ and any large constant $M$, we have
  \be\label{c760}\phi_n(x,y\sigma_n)\rightarrow  \ld_0b(x)\ \ \text{uniformly for $x\in I_\eta=(-1+\eta,1-\eta)$ and $-M\leq y\leq
  -1$},\ee as $n\rightarrow\infty$.

On another hand, to see this, thanks to the non-homogeneous blow-up Lemma \ref{lc9}, it suffices to show that
 \be\label{c76}\phi_n(x,h\sigma_n)-\ld_0b_n^+(x)\rightarrow 0.\ee
It follows from \eqref{c73} that for any $x\in I_\eta$, one has
\be\label{c77}\ba{rl} \phi_n(x,y\sigma_n)-\ld_0b_n^+(x)=&\phi_n(x,\sigma_nb_n^+(x))-\ld_0b_n^+(x)+\f{\p\phi_n(0,\xi)}{\p y}\sigma_n(y-b_n^+(x))\\
\leq &\delta_n(b_n^+(x)-y) \leq (1+M)\delta_n\rightarrow 0,\ea\ee as $n\rightarrow\infty$,
where we have used the fact
$$\phi_n(x,\sigma_nb_n^+(x))-\ld_0b_n^+(x)=\f{\psi_n(x,\sigma_nb_n^+(x))+\ld_0\sigma_nb_n^+(x)}{\sigma_n}-\ld_0b_n^+(x)=\f{\psi_n(x,\sigma_nb_n^+(x))}{\sigma_n}=0.$$

Taking any sequence $\{x_n\}$ with $x_n\in I_\eta$ and $-M\leq
h_n\leq -1$, and consider $\psi_n$ in $B_{R\sigma_n}(X_n)$, where
$X_n=(x_n,\sigma_nb_n^+(x_n))$ is the free boundary point and $R$ is
an arbitrary large constant. Denote $\t \sigma_n=\f1R\sup_{x\in
I_{R\sigma_n}}(b_n^+(x)-b_n^+(x_n))$ with
$I_{R\sigma_n}=(x_n-R\sigma_n,x_n+R\sigma_n)$. It follows from Lemma
\ref{lc9} that $\t\sigma_n\rightarrow 0$. Next, we claim that
\be\label{c707}\psi_n\in \mathcal{F}(\t \sigma_n,1;C_1\delta_n)\ \
\text{in}\ \ B_{R\sigma_n}(X_n),\ee where $C_1$ is a constant
depending on $\ld_1$ and $\ld_2$, provided that $n$ is sufficiently
large. In fact, the definition of $b^+_n(x)$ gives that$$\text{
$\psi_n(x,y)=0\ $ for $\ y\geq\sigma_nb_n^+(x_n)+
\t\s_n R\s_n$}.$$ 
Since
$|\g\psi_n(X)|\leq\ld_0(1+\delta_n)$ in $B_{1}(0)$ and $\sup_{X,Y\in
B_{1}(0)}|\ld_n(X)-\ld_n(Y)|\leq\ld_0\delta_n,$ we have that
$$|\g\psi_n(X)|\leq(\ld_n(X_n)+\ld_0\delta_n)(1+\delta_n)\leq \ld_n(X_n)\left(1+C_1\delta_n\right)\ \ \text{in
$B_{R\s_n}(X_n)$,}$$ and
$$\sup_{X,Y\in
B_{R\s_n}(X_n)}|\ld_n(X)-\ld_n(Y)|\leq\ld_0\delta_n\leq(\ld_n(X_n)+\ld_0\delta_n)\delta_n\leq\ld_n(X_n)
C_1\delta_n,$$ where $C_1$ is a constant depending on $\ld_1$ and
$\ld_2$. Therefore, we obtain the claim \eqref{c707}.

In view of the claim \eqref{c707}, by virtue of the flatness Lemma
\ref{lc8}, we have \be\label{c708}\psi_n\in \mathcal{F}(2\bar
\sigma_n,C\bar\sigma_n;C_1\delta_n)\ \ \text{in}\ \
B_{\f{R\sigma_n}2}(X_n),\ee provided that $\bar
\sigma_n=\max\{\t\sigma_n,\delta_n\}$.

Hence, for any $h$ with $|h|<\f R2$, it follows from \eqref{c708}
that
$$\psi_n(X_n+h\sigma_ne_2)\geq -\ld_n(X_n)\left(h\sigma_n+\f{C\bar\sigma_nR\sigma_n}{2}\right),$$
that is
$$\ba{rl}\phi_n(X_n+h\sigma_ne_2)-\ld_0b_n^+(x_n)=&\f{\psi_n(X_n+h\sigma_ne_2)+\ld_0h\sigma_n}{\sigma_n}\\
\geq&(\ld(0)-\ld(\rho_nX_n))h-\f{C\ld(\rho_nX_n)\bar\sigma_nR}{2}\rightarrow
0.\ea$$ This together with \eqref{c77} gives \eqref{c76}.

For any $\e>0$, 
let $\varphi_\e$ be the solution of the Dirichlet problem
$$\left\{\ba{ll}&\Delta\varphi_\e=1~~~~\text{in}~~B_{1-\eta}^-,\\
&\varphi_\e=\ld_0b(x)-\e~~~~\text{on}~~\p B_{1-\eta}^-\cap\{y=0\},\ \ \ \ \
\varphi_\e=\inf_{B_1^-}\phi~~~~\text{on}~~\p
B_{1-\eta}^-\cap\{y<0\}.\ea\right.$$ It follows from \eqref{c760} that \be\label{c780}\phi_n>\varphi_\e\ \ \text{on}\
\ \p( B_{1-\eta}^-\cap\{y<-M\sigma_n\}),\ee for any large constant
$M$ (independent of $\eta$ and $\e$), provided that $n$ is
sufficiently large (depending on $\e$ and $M$). Moreover,
$$\Delta(\varphi_\e-\phi_n)=1+\f{\rho_n}{\sigma_n}f(\rho_n(\sigma_n\phi_n-\ld_0y))>0\
\text{ in $B_{1-\eta}^-\cap\{y<-M\sigma_n\}$},$$ provided that $n$
is sufficiently large. In view of \eqref{c780}, the maximum principe
gives that \be\label{c790}\varphi_\e\leq\phi_n\ \ \text{ in
$B_{1-\eta}^-\cap\{y<-M\sigma_n\}$, if $n$ is large enough}.\ee
Taking $n\rightarrow\infty$ in \eqref{c790}, we have
$$\varphi_\e(x,y)\leq\phi(x,y)\ \
\text{ in $B_{1-\eta}^-$}.$$ Consequently,
\be\label{c79}\ld_0b(x)-2\e\leq\lim_{y\uparrow0}\varphi_\e(x,y)\leq\liminf_{y\uparrow
0}\phi(x,y)\ \ \text{ for $x\in(-1+2\eta,1-2\eta)$ and $\eta>0$}.\ee

Similarly, by working with a solution $\t\varphi_\e(x,y)$ to the Dirichlet problem
$$\left\{\ba{ll}&\Delta\t\varphi_\e=-1~~~~\text{in}~~B_{1-\eta}^-,\\
&\t\varphi_\e=\ld_0b(x)+\e~~~~\text{on}~~\p
B_{1-\eta}^-\cap\{y=0\},\ \ \ \ \
\t\varphi_\e=\sup_{B_1^-}\phi~~~~\text{on}~~\p
B_{1-\eta}^-\cap\{y<0\},\ea\right.$$ for any $\e>0$, we can obtain
that \be\label{c80}\ld_0b(x)+2\e\geq\limsup_{y\uparrow 0}\phi(x,y)\
\ \text{ for $x\in(-1+2\eta,1-2\eta)$}.\ee
 By virtue of the arbitrariness of $\e$, it follows from \eqref{c79} and \eqref{c80} that we obtain the claim
 \eqref{c75}.

In view of Lemma \ref{lc10}, \eqref{c70}, \eqref{c72},\eqref{c74}
and \eqref{c75}, we can apply Lemma 5.5 in \cite{ACF8} to obtain
\eqref{c68}.
\end{proof}

With the aid of Lemma \ref{lc11}, it follows from Lemma 7.7 - Lemma
7.9 in \cite{AC1} that the flatness from below implies the flatness
from above (see Figure \ref{f4}).

\begin{figure}[!h]
\includegraphics[width=130mm]{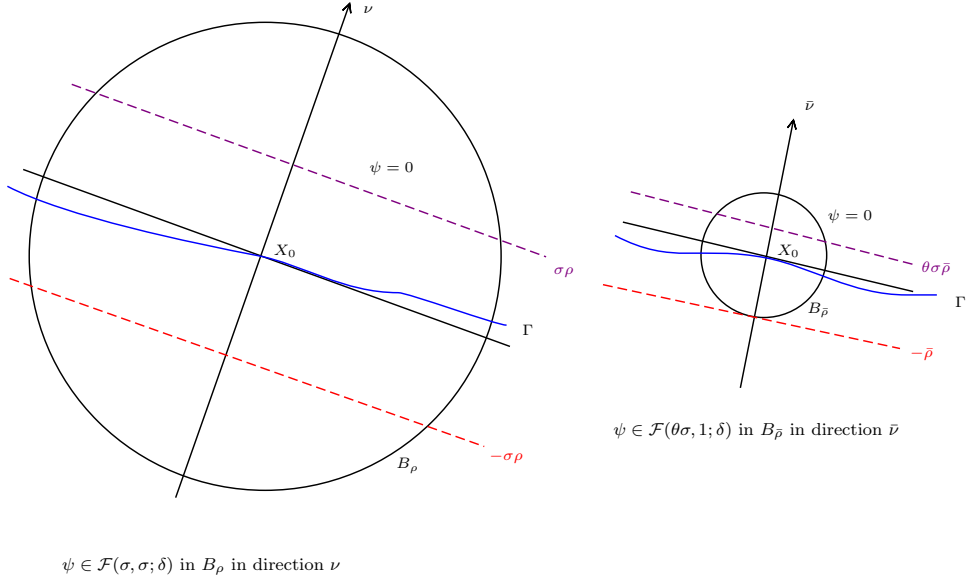}
\caption{Flatness from above}\label{f4}
\end{figure}

\begin{lemma}\label{lc12} For any $\th>0$, there exist a large constant $C$, $c_\th=c(\th)$ and a small $\sigma_\th=\sigma(\th)$, such that if
$$\psi\in \mathcal{F}(\sigma,\sigma;\delta)\ \ \text{in $B_\rho$ in direction $\nu$ with $\sigma\leq\sigma_\th, \delta\leq\sigma_\th\sigma^2, \rho\leq\sigma_\th\sigma$},$$
then
$$\psi\in \mathcal{F}(\th\sigma,1;\delta)\ \ \text{in $B_{\bar\rho}$ in direction $\bar\nu$ for some $\bar\rho$ and $\bar\nu$
 with $c_\th\rho\leq\bar\rho\leq\th\rho$ and $|\nu-\bar\nu|\leq C\sigma$}.$$

\end{lemma}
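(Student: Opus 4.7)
The plan is to follow the contradiction/compactness scheme of Lemmas 7.7--7.9 in Alt--Caffarelli \cite{AC1}, with the semilinear perturbation controlled by the smallness assumption $\rho\le\sigma_\theta\sigma$. Fix $\theta>0$ and suppose, for contradiction, that the conclusion fails: there exist a sequence of minimizers $\psi_n$ with $\psi_n\in\mathcal{F}(\sigma_n,\sigma_n;\delta_n)$ in $B_{\rho_n}$ in direction $e_2$ (after rotation), with $\sigma_n\to 0$, $\delta_n\sigma_n^{-2}\to 0$, $\rho_n\sigma_n^{-1}\to 0$, but such that $\psi_n\notin\mathcal{F}(\theta\sigma_n,1;\delta_n)$ in any $B_{\bar\rho}$ with $c_\theta\rho_n\le\bar\rho\le\theta\rho_n$ and any $\bar\nu$ with $|e_2-\bar\nu|\le C\sigma_n$.

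Rescale by $\psi_n(X)\mapsto \psi_n(\rho_n X)/\rho_n$ and apply the non-homogeneous blow-up of Lemma \ref{lc9}, obtaining upper and lower profile functions $b_n^\pm(x)$ which converge uniformly on $(-1,1)$ to a common continuous limit $b(x)$ with $b(0)=0$. By Lemma \ref{lc10} the limit $b$ is convex, and by Lemma \ref{lc11} it satisfies
\[
\int_0^{1/4}\frac{1}{r^2}\bigl(\text{Av}_r(b(x))-b(x)\bigr)\,dr\le C\qquad\text{for } x\in(-\tfrac12,\tfrac12).
\]
The semilinear term $\rho_n f(\rho_n\psi_n)$ contributes only $O(\rho_n)$ error in the governing PDE for $\phi_n=(\psi_n+\lambda_0 y)/\sigma_n$ (cf.\ \eqref{c700}), and since $\rho_n\sigma_n^{-1}\to 0$ this perturbation disappears in the limit; hence the classical structure of $b$ is preserved.

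Next I extract from the convexity plus the integrability bound a point at which $b$ has a genuine affine tangent. Convexity gives $\mathrm{Av}_r(b(x))\ge b(x)$ everywhere, so the integrand is non-negative; the finiteness of the integral forces $\mathrm{Av}_r(b(x))-b(x)=o(r)$ at a dense set of points (this is precisely the argument of Lemma 7.7 in \cite{AC1}). At any such point $x_0\in(-\tfrac12,\tfrac12)$, convexity together with the vanishing symmetric second difference implies the one-sided derivatives coincide, so
\[
b(x)=b(x_0)+b'(x_0)(x-x_0)+o(|x-x_0|).
\]
This affine approximation identifies the improved direction: $\bar\nu$ is the unit vector orthogonal to the graph of $L(x)=b(x_0)+b'(x_0)(x-x_0)$, scaled so $|\bar\nu-e_2|=O(\sigma_n)$ after undoing the $\sigma_n$-dilation in the $y$-direction. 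The corresponding radius $\bar\rho\in[c_\theta\rho_n,\theta\rho_n]$ is chosen small enough that the $o(\cdot)$ term is bounded by $\theta\sigma_n\bar\rho/2$, giving the flatness from above by $\theta\sigma_n$ in direction $\bar\nu$, and contradicting the hypothesis.

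The main obstacle is the last step: translating the smooth affine approximation of $b$ back into the required flatness statement for $\psi_n$ in $B_{\bar\rho}(X_0)$ with $X_0=(x_0,\sigma_n\rho_n b(x_0))$. One must show simultaneously (a) that the free boundary of $\psi_n$ lies below the tilted hyperplane defined by $\bar\nu$ at distance $\le\theta\sigma_n\bar\rho$, and (b) that the new hypotheses $|\nabla\psi_n|\le\lambda(X_0)(1+\delta_n)$ and the oscillation bound on $\lambda$ continue to hold. For (a) one uses the uniform convergence $b_n^\pm\to b$ together with the chosen scale $\bar\rho$; for (b) the Lipschitz estimate of Theorem \ref{lb5} and the H\"older continuity of $\lambda$ are in hand, and the factor $C$ in $|\nu-\bar\nu|\le C\sigma$ absorbs the error from the tilt. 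The control of the semilinear term enters precisely here: because $\delta_n\le\sigma_\theta\sigma_n^2$, the perturbation to the gradient bound coming from $\rho_n f(\rho_n\psi_n)$ through the Schauder estimate is of order $\rho_n\Lambda\ll\sigma_n\delta_n$, so it is harmless. This completes the contradiction, finishing the proof along the lines of Lemmas 7.7--7.9 of \cite{AC1}.
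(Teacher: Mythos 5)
Your proof follows the Alt--Caffarelli compactness scheme (Lemmas 7.7--7.9 of \cite{AC1}), which is precisely what the paper invokes for this lemma without spelling out the details, and the key inputs you identify (Lemmas \ref{lc9}, \ref{lc10}, \ref{lc11}) are the right ones. Two small remarks. First, the finiteness of $\int_0^{1/4}r^{-2}(\mathrm{Av}_r b-b)\,dr$ combined with the monotonicity of $r\mapsto(\mathrm{Av}_r b(x)-b(x))/r$ for a convex $b$ forces $\mathrm{Av}_r b(x)-b(x)=o(r)$ at \emph{every} $x$ with a universal logarithmic rate $\lesssim C/\log(1/r)$, not merely at a dense set; this uniformity is what makes $c_\theta$ a universal constant, and the point to use is $x_0=0$ since $B_{\bar\rho}$ must share its center $X_0$ with $B_\rho$. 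Second, the estimate $\rho_n\Lambda\ll\sigma_n\delta_n$ in your final sentence is false (with $\rho_n\le\sigma_\theta\sigma_n$ and $\delta_n\le\sigma_\theta\sigma_n^2$ one has $\rho_n\Lambda\lesssim\sigma_\theta\sigma_n\Lambda$ but $\sigma_n\delta_n\lesssim\sigma_\theta\sigma_n^3$, so the claimed comparison is reversed), yet it is not load-bearing: condition (iii) of the flatness class in $B_{\bar\rho}$ is inherited verbatim from the same condition on the larger concentric ball $B_\rho$, so no Schauder estimate or semilinear control enters there, and the semilinear term matters only through $\rho_n/\sigma_n\to 0$ in passing to the blow-up PDE, as you correctly use earlier.
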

By virtue of Lemma \ref{lc8} and Lemma \ref{lc12}, we will show
that
\begin{lemma}\label{lc13} Let $\o(s)$ be a monotone increasing and continuous function with $\o(0)=0$. For any $\th\in
(0,1)$, there exists a small $\sigma_0>0$, such that if
$$\psi\in \mathcal{F}(\sigma,1;\delta)\ \ \text{in $B_\rho$ in direction $\nu$ with
$\sigma\leq\sigma_0,\ \delta\leq\sigma_0\sigma^2,\
\rho\leq\sigma_0\sigma$},$$ and $\sup_{X,Y\in
B_{s\rho}}|\ld(X)-\ld(Y)|\leq \o(s)\ld(0)\delta$ for $s\in[0,1]$,
then there exist a large constant $C$ and $c_\th=c(\th,\o)$, such
that
$$\psi\in \mathcal{F}(\th\sigma,\th\sigma;\th^2\delta)\ \ \text{in $B_{\bar\rho}$ in direction $\bar\nu$}$$ for some $\bar\rho$ and $\bar\nu$,
where $c_\th\rho\leq\bar\rho\leq\f14\rho$ and $|\nu-\bar\nu|\leq C\sigma$.

\end{lemma}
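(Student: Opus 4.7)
The plan is to combine Lemma \ref{lc8} (flatness from above implies flatness from below) with Lemma \ref{lc12} (flatness from both sides implies improved flatness from above) in an iterative scheme, much in the spirit of Lemma 7.10 in \cite{AC1} and Lemma 5.7 in \cite{ACF8}. Starting from $\psi\in\mathcal{F}(\sigma,1;\delta)$ in $B_\rho$, I would first check that the smallness hypotheses of Lemma \ref{lc8} are met (the conditions $\delta\le\sigma_0\sigma^2$ and $\rho\le\sigma_0\sigma$ easily imply $\delta\le c_0\sigma$ and $\rho\le c_0\sigma$ for $\sigma_0$ small), and then apply Lemma \ref{lc8} to upgrade to $\psi\in\mathcal{F}(2\sigma,C\sigma;\delta)\subset\mathcal{F}(C_1\sigma,C_1\sigma;\delta)$ in $B_{\rho/2}$. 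This puts us in the hypothesis of Lemma \ref{lc12}.

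Next, I would apply Lemma \ref{lc12} with a small parameter $\th_1=\th_1(\th)$ to obtain
$\psi\in\mathcal{F}(\th_1 C_1\sigma,1;\delta)$ in $B_{\rho_1}$ in direction $\bar\nu_1$, with $c_{\th_1}\rho/2\le\rho_1\le\th_1\rho/2$ and $|\nu-\bar\nu_1|\le C\sigma$. Applying Lemma \ref{lc8} once more at the scale $\rho_1$ (after verifying the smallness conditions, which hold since $\th_1\sigma$ is still much larger than $\delta$ and $\rho_1$ is much smaller than $\th_1\sigma$) yields $\psi\in\mathcal{F}(C_2\th_1\sigma,C_2\th_1\sigma;\delta)$ in $B_{\rho_1/2}$. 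Choosing $\th_1$ so that $C_2\th_1\le\th$ gives the desired two-sided flatness $\mathcal{F}(\th\sigma,\th\sigma;\delta)$ in a ball of radius $\bar\rho\in[c_\th\rho,\rho/4]$, with $|\nu-\bar\nu|\le C\sigma$. The propagation of the rotation bound through this iteration is straightforward because each application only perturbs the direction by at most $O(\sigma)$.

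The final and most delicate step is to improve the gradient parameter from $\delta$ to $\th^2\delta$. This is where the hypothesis $\sup_{X,Y\in B_{s\rho}}|\ld(X)-\ld(Y)|\le\o(s)\ld(0)\delta$ enters decisively: once we have fixed $\bar\rho$ as above, the oscillation of $\ld$ in $B_{\bar\rho}$ is bounded by $\o(\bar\rho/\rho)\ld(0)\delta$, and on the other hand the gradient estimate from Lemma \ref{lc5} combined with the linear growth in Lemma \ref{lc6} and Lemma \ref{lc7} forces $|\g\psi|$ to approach $\ld(\bar X_0)$ near the new free boundary point $\bar X_0$ at a quantitative rate controlled by powers of $\bar\rho/\rho$. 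Choosing $\bar\rho/\rho$ small enough (depending on $\th$ and $\o$) so that both $\o(\bar\rho/\rho)$ and the rate from Lemma \ref{lc5} are $\le\th^2$ gives the required bound $|\g\psi|\le\ld(\bar X_0)(1+\th^2\delta)$ and the oscillation bound $\sup|\ld(X)-\ld(Y)|\le\ld(\bar X_0)\th^2\delta$ in $B_{\bar\rho}$.

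The main obstacle is bookkeeping: at each application of Lemma \ref{lc8} or Lemma \ref{lc12} the smallness hypotheses on $(\sigma,\delta,\rho)$ must be preserved, and the parameter $\delta$ does not naturally decrease under either lemma alone. Hence the $\th^2\delta$ improvement cannot be obtained by iterating the two lemmas in isolation; it must come from the external ingredients of Lemmas \ref{lc5}--\ref{lc7}, which tell us that as we zoom in toward the free boundary, the gradient automatically becomes closer to $\ld$ and the modulus of continuity of $\ld$ itself damps down. Making sure the constants $\sigma_0$, $c_\th$, and the ratio $\bar\rho/\rho$ can be chosen consistently so that all these effects combine to give exactly the exponent $\th^2$ is the technical crux of the argument.
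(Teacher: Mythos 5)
Your outline matches the paper's proof only for the easy part. The paper also starts by applying Lemma~\ref{lc8} and then Lemma~\ref{lc12}, and then improves the exponent $\delta$ via a subharmonic-function decay of Alt--Caffarelli type. But the way you propose to carry out the $\delta$-improvement, and the overall structure of the argument, both have genuine gaps.

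First, the non-iterative shape of your scheme cannot produce $\th^2\delta$ for arbitrary $\th\in(0,1)$. The mechanism the paper uses (and that you are gesturing at by citing Lemma~\ref{lc5}) is the Alt--Caffarelli p.141 argument: the function $U(X)=\max\{|\g\psi(X)|-\sup_{B_{3\rho_1}}\ld,0\}$ is subharmonic, nonnegative, bounded by $\ld(0)\delta$, and vanishes on a positive-density set, so Theorem~8.26 in \cite{GT} gives $\sup_{B_{\rho_1}}U\le(1-c)\ld(0)\delta$ for a \emph{fixed} $c\in(0,1)$. That is a geometric improvement by a universal factor $1-c$ per scale, not by $\th^2$ in one shot. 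To reach $\th^2\delta$ one must iterate this $N(\th)$ times, and the paper sets $\th_0=\sqrt{1-c/2}$, shows one iteration yields $\mathcal{F}(\th_0\sigma,1;\th_0^2\delta)$, and then repeats Step~1 $N$ times with $N$ chosen so that $C\th_0^N\le\th$ (which simultaneously gives $\th_0^{2N}\le\th^2$). Your proposal applies Lemma~\ref{lc12} once with a small $\th_1$ and then does a ``final delicate step''; for $\th$ small, a single application of the decay mechanism cannot close the gap between $\delta$ and $\th^2\delta$.

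Second, the order in which you propose to apply the two improvements is incompatible. You first obtain $\mathcal{F}(\th\sigma,\th\sigma;\delta)$ in some $B_{\bar\rho}$, and then propose to ``choose $\bar\rho/\rho$ small enough (depending on $\th$ and $\o$)'' to drive both the oscillation of $\ld$ and the gradient excess below $\th^2\delta$. But the flatness parameter $\sigma_+$ in the definition of $\mathcal{F}(\sigma_+,\sigma_-;\delta)$ scales like $1/\text{(radius)}$ when you restrict to a strictly smaller concentric ball: if $\psi\in\mathcal{F}(\sigma_+,\sigma_-;\delta)$ in $B_{r_1}$, then in $B_{r_2}\subset B_{r_1}$ you only retain $\mathcal{F}(\sigma_+ r_1/r_2,\cdot;\delta)$. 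So zooming in to improve $\delta$ destroys exactly the $\th\sigma$-flatness you just won. The paper avoids this by improving $\delta$ \emph{at the same scale} $\rho_1$ that Lemma~\ref{lc12} produced, using the subharmonic $U$ in $B_{2\rho_1}$, so that $\sigma$ and $\delta$ are upgraded together at each step; no further shrinking is needed within a step.

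Third, the tools you cite are not quite the right ones. Lemma~\ref{lc5}, as stated, gives $\sup_{B_r}|\g\psi|\le\sup_{B_R}\ld+C(r/R)^\gamma$ with an \emph{additive} error $C(r/R)^\gamma$ that carries no factor of $\delta$; to make this tiny compared with $\delta$ you would need $\bar\rho/\rho\le(\th^2\delta/C)^{1/\gamma}$, which forces $c_\th$ to depend on $\delta$, an unacceptable dependence. The multiplicative improvement the paper needs comes from constructing the new subharmonic function $U=\max\{|\g\psi|-\sup_{B_{3\rho_1}}\ld,0\}$ whose initial bound is already $\lesssim\ld(0)\delta$, and the hypothesis $\sup_{B_{s\rho}}|\ld(X)-\ld(Y)|\le\o(s)\ld(0)\delta$ is invoked precisely to control $\sup_{B_{3\rho_1}}\ld-\ld(0)$ by $\o(2\th_1)\ld(0)\delta\le\frac{c}{2}\ld(0)\delta$. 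Lemmas~\ref{lc6} and~\ref{lc7} are soft, blow-up-type statements that do not give a quantitative $\delta$-proportional rate. You should replace the appeal to Lemmas~\ref{lc5}--\ref{lc7} with the $U$-construction and the Theorem~8.26 argument, and rebuild the proof around the iterated Step~1/Step~2 structure.
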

\begin{proof} {\bf Step 1}. By virtue of Lemma \ref{lc8}, one has
$$\psi\in \mathcal{F}(C_0\sigma,C_0\sigma;\delta)\ \ \text{in $B_{\f\rho2}$ in direction $\nu$, where $C_0>2$.}$$

Then for some small $\th_1\in\left(0,1/2\right]$ to be determined
later, it follows from Lemma \ref{lc12} that
$$\psi\in \mathcal{F}(C_0\th_1\sigma,1;\delta)\ \ \text{in $B_{\rho_1}$ in direction $\nu_1$ for some $\rho_1$ and $\nu_1$},$$ where $$\text{$c_{\th_1}\f\rho2\leq\rho_1\leq\th_1\f\rho2$ and $|\nu-\nu_1|\leq C\sigma$}.$$
To improve the value of $\delta$, we define
$$U(X)=\max\left\{|\g\psi(X)|-\sup_{X\in B_{3\rho_1}}\ld(X),0\right\}\ \ \text{in}\ \
B_{2\rho_1}\cap\{\psi>0\}.$$ It is easy to check that
$$\Delta U=\f{|D^2\psi||\g\psi|^2-\p_{k}\psi\p_{j}\psi\p_{ij}\psi\p_{ik}\psi}{|\g\psi|^3}-f'(\psi)|\g\psi|\geq 0\ \ \text{in}\ \
B_{2\rho_1}\cap\{\psi>0\}.$$ For subharmonic function $U$, we can use the similar arguments
in P141 in \cite{AC1} and obtain that there exists a $c\in(0,1)$, such that
$$U(X)=\max\left\{|\g\psi(X)|-\sup_{X\in B_{3\rho_1}}\ld(X),0\right\}\leq(1-c)\ld(0)\delta\ \ \text{in}\ \
B_{\rho_1},$$ which implies that
$$|\g\psi(X)|\leq\sup_{X\in B_{3\rho_1}}\ld(X)+\ld(0)(1-c)\delta\leq\ld(0)+\ld(0)(1-c+w(2\th_1))\delta\ \ \text{in}\ \ B_{\rho_1}.$$
Denote $$\th_0=\sqrt{1-\f{c}2}.$$ Taking $\th_1$ be sufficiently small, such that $\f{\th_1}{2\th_0}<1$ and $C_0\th_1<\th_0$, then the continuity of $\o(s)$ gives that
$$\o(2\th_1)\leq \f{c}2\ \ \text{and}\ \ \o\left(\f{\th_1}2\right)<
\th_0^2.$$ Furthermore,
$$|\g\psi(X)|\leq\ld(0)+\ld(0)\left(1-\f{c}2\right)\delta\leq \ld(0)(1+\th_0^2\delta)\ \ \text{in}\ \
B_{\rho_1},$$ and
$$\sup_{X,Y\in
B_{\rho_1}}|\ld(X)-\ld(Y)|\leq\sup_{X,Y\in
B_{\f{\th_1}{2}\rho}}|\ld(X)-\ld(Y)|\leq
\o\left(\f{\th_1}2\right)\ld(0)\delta\leq \th_0^2\ld(0)\delta.
$$  Thus we have
$$\psi\in \mathcal{F}(\th_0\sigma,1;\th_0^2\delta)\ \ \text{in $B_{\rho_1}$ in direction
$\nu_1$}.$$

On the other hand, we have
$$\sup_{X,Y\in
B_{s\rho_1}}|\ld(X)-\ld(Y)|\leq \o_1(s)\th_0^2\ld(0)\delta \ \ \text{and}\ \
\o_1(s)=\th_0^{-2}\o\left(\f{\th_1}2 s\right)\leq\th_0^{-2}\o\left(\f{\th_1}2\right)<1. $$  Notice that
$$\rho_1\leq\th_1\f\rho2\leq \left(\f{\th_1}{2\th_0}\sigma_0\right)\th_0\sigma<\sigma_0(\th_0\sigma)\ \ \text{and}\ \ \th^2_0\delta\leq \sigma_0(\th_0\sigma)^2,$$
due to $\f{\th_1}{2\th_0}<1$ and $\rho\leq\s_0\s$.

{\bf Step 2.}  Repeating the arguments in Step 1, and choosing
sufficiently small $\sigma_0>0$, we obtain
$$\psi\in \mathcal{F}(\th^2_0\sigma,1;\th_0^4\delta)\ \ \text{in $B_{\rho_2}$ in direction
$\nu_2$},$$ for some $\rho_2$ and $\nu_2$, where
$$\text{$c_{\th_1}\f{\rho_1}2\leq\rho_2\leq\th_1\f{\rho_1}2$ and
$|\nu_2-\nu_1|\leq C\th_0\sigma$}.$$

Similarly, we can repeat the arguments in Step 1 for a finite number
$n$, and we choose the constant $\sigma_0$ being small enough in the
statement for each step, thus \be\label{c81}\psi\in
\mathcal{F}(\th^n_0\sigma,1;\th_0^{2n}\delta)\ \ \text{in
$B_{\rho_n}$ in direction $\nu_n$},\ee for some $\rho_n$ and
$\nu_n$, where
\be\label{c82}\text{$c_{\th_1}\f{\rho_{n-1}}2\leq\rho_n\leq\th_1\f{\rho_{n-1}}2$
and $|\nu_n-\nu_{n-1}|\leq C\th^{n-1}_0\sigma$}.\ee Applying Lemma
\ref{lc8} again, we have
$$\psi\in \mathcal{F}(2\th_0^n\sigma,C\th_0^n\sigma;\th_0^{2n}\delta)\ \ \text{in $B_{\f{\rho_n}2}$ in direction
$\nu_n$.}$$

Choosing $N=N(\th)$ as the smallest positive integer such that
$$\max\left\{2\th_0^N, C\th_0^N\right\}\leq \th,$$ where the constant $C$ is
obtained in Lemma \ref{lc8}. It follows from \eqref{c82} that
\be\label{c83}\text{$\f{c^N_{\th_1}}{2^N}\rho\leq\rho_N\leq\f{\th_1^N}{2^N}\rho$
and $|\nu_N-\nu|\leq \f{C\sigma}{1-\th_0}$}.\ee Then we have
$$\psi\in \mathcal{F}(\th\sigma,\th\sigma;\th^2\delta)\ \ \text{in $B_{\bar\rho}$ in direction
$\bar\nu$,}$$ where $\bar\rho=\f{\rho_N}{2}$ and $\bar\nu=\nu_N$.
Denote $c_\th=\f{c^N_{\th_1}}{2^{N(\th)+1}}$, it follows from
\eqref{c83} that
$$\text{$c_\th\rho\leq\bar\rho\leq\f{\th_1^N}{2^{N+1}}\rho\leq\f\rho4$ and $|\nu-\bar\nu|\leq \f{C\sigma}{1-\th_0}\leq C\sigma$}.$$
\end{proof}

\subsection{The regularity of the free boundary}
In Lemma \ref{lc13}, we show that the flatness in $B_\rho$ is
improved in a smaller disc $B_{\bar\rho}$. Based on the results in
the previous subsection, we will investigate the regularity of the
free boundary in this subsection.

By virtue of Theorem 8.1 in \cite{AC1}, the flatness of free boundary implies immediately the $C^{1,\alpha}$-smoothness of the free boundary, which is the main result in this paper. We omit the proof and present the result as follows.

\begin{theorem}\label{lc14} For any compact subset $D$ of $\O$, there exists $\beta>0,\ \sigma_0>0,\ \gamma_0>0$ and $C>0$, such that
if
\be\label{b830}\psi\in \mathcal{F}(\sigma,1;\infty)\ \ \text{in $B_\rho(X_0)$ in direction $\nu$}\ee
where $X_0\in\p\{\psi>0\}$, $B_\rho(X_0)\subset D$ and $$\text{
$\sigma\leq\sigma_0\ \ \text{and}\ \
\rho\leq\gamma_0\sigma^{\f2\beta}$},$$ then
$$B_{\f\rho4}(X_0)\cap\p\{\psi>0\}\ \ \text{is a $C^{1,\alpha}$
curve}\ \ \text{for some}\ \alpha>0.$$ Namely, a graph in direction
$\nu$ of a $C^{1,\alpha}$ function, for any $X_1$ and $X_2$ on this
curve,
$$|\nu(X_1)-\nu(X_2)|\leq C\sigma\left|\f{X_1-X_2}{\rho}\right|^\alpha.$$
In particular, if $\nu=e_2$, then $B_{\f\rho4}(X_0)\cap\p\{\psi>0\}$
can be written in the form $y=g(x)\in C^{1,\alpha}$; Furthermore,
$$|g'(x_1)-g'(x_2)|\leq C\sigma\left|\f{X_1-X_2}{\rho}\right|^\alpha.$$
\end{theorem}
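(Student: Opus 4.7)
The plan is to iterate the flatness improvement Lemma \ref{lc13} at the free boundary point $X_0$, producing a geometrically decaying sequence of flatness parameters on a nested family of balls, and then to transfer this control to nearby free boundary points in order to extract a H\"older modulus for the inward normal $\nu_\infty$. First, starting from the one-sided hypothesis $\psi\in\mathcal{F}(\sigma,1;\infty)$ in $B_\rho(X_0)$, I would upgrade it to two-sided flatness in the sense required by Lemma \ref{lc13}. The Lipschitz-near-the-boundary estimate in Lemma \ref{lc5} gives $|\g\psi|\leq \ld(X)+Cr^{\a_0}$ on any disc $B_r(X)$ touching $\Gamma$, and the H\"older regularity $\ld\in C^{0,\b}(\bar\O)$ provides $\sup_{B_\rho(X_0)}|\ld-\ld(X_0)|\leq C\rho^{\b}$. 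Combining these bounds with the scaling assumption $\rho\leq\gamma_0\sigma^{2/\b}$ produces an effective parameter $\d\leq C\gamma_0^{\b}\sigma^2$, so Lemma \ref{lc8} yields $\psi\in\mathcal{F}(2\sigma,C\sigma;\d)$ in $B_{\rho/2}(X_0)$, which is the two-sided flatness needed to start the improvement scheme.

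Next, I would fix $\th\in(0,1)$ small (to be tuned at the end) and iterate Lemma \ref{lc13}. A single application at $X_0$ with parameters $(\sigma,\rho/2,\d,\nu)$ yields $\psi\in\mathcal{F}(\th\sigma,\th\sigma;\th^2\d)$ on some $B_{\rho_1}(X_0)$ with $c_\th\tfrac{\rho}{2}\leq\rho_1\leq\tfrac{\rho}{8}$ and direction $\nu_1$ satisfying $|\nu_1-\nu|\leq C\sigma$. The $C^{0,\b}$-modulus of $\ld$ supplies the auxiliary function $\o(s)$ required in the hypothesis of Lemma \ref{lc13}, and the scaling condition $\d\leq\sigma_0\sigma^2$ is preserved at each step because $\th^2\d\leq\sigma_0(\th\sigma)^2$. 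Inductively, for every $n\geq 1$,
\begin{equation*}
\psi\in\mathcal{F}(\th^n\sigma,\th^n\sigma;\th^{2n}\d)\ \text{in}\ B_{\rho_n}(X_0)\ \text{in direction}\ \nu_n,
\end{equation*}
with $c_\th^{\,n}\rho\leq\rho_n\leq 4^{-n}\rho$ and $|\nu_{n+1}-\nu_n|\leq C\th^n\sigma$. The sequence $\{\nu_n\}$ is therefore Cauchy, and converges to a unit vector $\nu_\infty(X_0)$ satisfying $|\nu_n-\nu_\infty(X_0)|\leq C\th^n\sigma$.

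To conclude, I would note that the shrinking strip $\{|(X-X_0)\cdot\nu_n|\leq \th^n\sigma\rho_n\}$ captures the free boundary inside $B_{\rho_n}(X_0)$, so combining this with the non-degeneracy Lemma \ref{lb7} shows that $\Gamma\cap B_{\rho/8}(X_0)$ is a Lipschitz graph $y=g(x)$ in the direction $\nu_\infty(X_0)$. Given two free boundary points $X_1,X_2\in \Gamma\cap B_{\rho/8}(X_0)$, I would pick $n$ with $\rho_{n+1}\leq |X_1-X_2|\leq\rho_n$, apply the same iteration at $X_1$ and $X_2$ and compare the resulting limit normals with $\nu_n$ to obtain
\begin{equation*}
|\nu_\infty(X_1)-\nu_\infty(X_2)|\leq C\th^n\sigma\leq C\sigma\left(\f{|X_1-X_2|}{\rho}\right)^{\a},\qquad \a=\f{\log(1/\th)}{\log(1/c_\th)},
\end{equation*}
which is exactly the $C^{1,\a}$ modulus on the normal claimed in the theorem; translated into the graph representation it yields the H\"older bound on $g'$. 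The main technical obstacle is the bookkeeping of the three smallness conditions $\sigma_n\leq\sigma_\th$, $\d_n\leq\sigma_\th\sigma_n^2$ and $\rho_n\leq\sigma_\th\sigma_n$ along the whole iteration; this is the reason the statement insists on the joint scaling $\rho\leq\gamma_0\sigma^{2/\b}$, with $\gamma_0$ chosen small enough depending on $\th$, $\sigma_\th$, $\L$ and the $C^{0,\b}$-norm of $\ld$, so that the induction actually closes.
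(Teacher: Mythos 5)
Your proposal reproduces the classical Alt--Caffarelli iteration of the flatness-improvement lemma, which is precisely the content of Theorem 8.1 in \cite{AC1} that the paper cites and whose proof it explicitly omits; so your approach is essentially the same as the intended one. The only small inaccuracy is the remark that $\th$ is ``to be tuned small at the end'' --- the induction actually closes only for $\th$ bounded \emph{below}, roughly $\th\geq\max\{1/4,2^{-\beta}\}$, so that both $\rho_n\leq\sigma_0\sigma_n$ and the $\o$-modulus hypothesis of Lemma \ref{lc13} are preserved at every step, but since the theorem asserts only that \emph{some} $\alpha>0$ exists, this is harmless.
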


Moreover, the free boundary possesses higher regularity, provided
that the functions $\ld(X)$ and the vorticity strength function
$f(t)$ possess higher regularity. With the aid of Lemma \ref{lc7}
and Theorem \ref{lc14}, by using the similar arguments in Theorem
3.12 and Corollary 3.13 in \cite{FA2}, we have

\begin{theorem}\label{lc15} (1) If $\ld(X)$ is $C^{k,\beta}$ and $f(t)$ is $C^{1,\beta}$, for $k=0,1,2$,
then the free boundary $\O\cap\p\{\psi>0\}$ is $C^{k+1,\alpha}$ locally in
$\O$ with $\alpha\in(0,\beta)$. \\
(2) If $\ld(X)$ is $C^{k,\beta}$ and $f(t)$ is $C^{k-1,\beta}$, for
$k=3,4, \mathellipsis$, then the free boundary $\O\cap\p\{\psi>0\}$ is
$C^{k+1,\alpha}$ locally in $\O$ with $\alpha\in(0,\beta)$.\\
(3) If $\ld(X)$ and $f(t)$ are analytic, then the free boundary
$\O\cap\p\{\psi>0\}$ is locally analytic in $\O$.\end{theorem}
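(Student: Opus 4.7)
The plan is to bootstrap from the $C^{1,\alpha}$-regularity of the free boundary given by Theorem \ref{lc14} via a partial hodograph transform that straightens the free boundary, then to iterate the Schauder theory for the resulting quasilinear elliptic equation with a nonlinear oblique boundary condition. Fix $X_0\in\O\cap\p\{\psi>0\}$. By Theorem \ref{lc14} and the linear growth from Lemma \ref{lc7}, after a rigid motion I may assume that locally the free boundary is a $C^{1,\alpha}$ graph $y=g(x)$, that $\psi$ is $C^{1,\alpha}$ up to the free boundary with $|\g\psi(X_0)|=\ld(X_0)>0$, and that $\psi_y$ is bounded below by a positive constant in a one-sided neighborhood of the free boundary, via Hopf's lemma combined with the blow-up limit $\psi_0=\ld(X_0)\max\{-X\cdot e_2,0\}$.

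Introduce the partial hodograph $(x,y)\mapsto(x,u):=(x,\psi(x,y))$. Since $\psi_y$ is bounded away from $0$, this is a $C^{1,\alpha}$ diffeomorphism onto a one-sided neighborhood of $\{u=0\}$ that straightens the free boundary. Writing the inverse as $y=v(x,u)$, a direct computation gives $\psi_x=-v_x/v_u$, $\psi_y=1/v_u$ and $v_u^3\Delta\psi=-v_u^2 v_{xx}+2 v_x v_u v_{xu}-(1+v_x^2)v_{uu}$, so the semilinear equation $-\Delta\psi=f(\psi)$ becomes the quasilinear uniformly elliptic equation
$$v_u^2 v_{xx}-2 v_x v_u v_{xu}+(1+v_x^2)v_{uu}=v_u^3 f(u)\quad\text{in }\{u>0\},$$
while the Bernoulli-type condition $|\g\psi|^2=\ld^2(X)$ becomes the nonlinear oblique condition
$$1+v_x^2=\ld^2(x,v)\,v_u^2\quad\text{on }\{u=0\}.$$
The principal-part matrix of the interior equation has determinant $v_u^2>0$, so ellipticity is uniform, and the boundary operator has derivative $2\ld^2 v_u>0$ in $v_u$, so it is genuinely oblique and nondegenerate.

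Starting from $v\in C^{1,\alpha}$ with $v_u>0$ up to $\{u=0\}$, the coefficients of the transformed equation and of the boundary operator are $C^{0,\alpha}$ functions of their arguments, and $v_u^3 f(u)\in C^{0,\alpha}$ since $f\in C^{1,\beta}$. The Kinderlehrer--Nirenberg/Schauder theory for nonlinear elliptic boundary value problems, as employed in Theorem 3.12 of \cite{FA2}, yields $v\in C^{2,\alpha}$ locally, whence $g(x)=v(x,0)\in C^{2,\alpha}$; this gives case $k=1$ of (1). Once $v\in C^{k,\alpha}$ is known, the hypotheses $\ld\in C^{k,\beta}$ and $f\in C^{k-1,\beta}$ (with $f\in C^{1,\beta}$ sufficient for $k\le 2$, because $v_u^3 f(u)$ inherits the regularity of $v$) make the coefficients and data of the transformed problem $C^{k-1,\alpha}$, and a further Schauder estimate gives $v\in C^{k+1,\alpha}$. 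Iterating covers (1) and (2). For (3), when $\ld$ and $f$ are real analytic, the coefficients and boundary operator become analytic in their arguments, and the classical analyticity theorem for nonlinear elliptic problems with analytic oblique boundary conditions from \cite{KN} gives $v$ real analytic up to $\{u=0\}$, so $g$ is analytic.

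The only delicate point is the first application of the nonlinear Schauder estimate, which requires verifying that the transformed problem falls within the abstract framework of \cite{FA2,KN}: the interior equation is quasilinear uniformly elliptic with $C^{0,\alpha}$ coefficients depending smoothly on $(v,v_x,v_u)$, and the boundary operator is smooth, oblique and nondegenerate. Both facts are established above. The higher-order statements then follow by the routine Schauder bootstrap in Corollary 3.13 of \cite{FA2}; the only new feature over the linear case treated there is the right-hand side $v_u^3 f(u)$, whose regularity at each step is controlled using the assumed regularity of $f$.
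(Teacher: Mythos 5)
Your proposal is correct and follows essentially the same approach as the paper: both use the partial hodograph transform $(x,y)\mapsto(x,\psi)$ to straighten the free boundary, recast the problem as a quasilinear uniformly elliptic equation for the inverse function with a nonlinear oblique (Bernoulli-type) boundary condition, and then bootstrap with Schauder estimates, invoking analyticity theory for part (3). The only cosmetic differences are that the paper writes the transformed interior equation in divergence form $\mathcal{Q}\phi=0$ while you expand it in non-divergence form, and the paper cites Morrey's Section 6.7 for analyticity where you cite Kinderlehrer--Nirenberg.
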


\begin{proof}
For any $X_0\in\O\cap\p\{\psi>0\}$, let
$\psi_n(X)=\f{\psi(X_0+\rho_nX)}{\rho_n}$ be a blow up sequence with
respect to the discs $B_{\rho_n}(X_0)$, it follows from Lemma
\ref{lc7} that there exists a unit vector $\nu_0$, such that
$$\psi_n(X)\rightarrow\psi_0(X)=\ld(X_0)\max\{-X\cdot\nu_0,0\}\ \
\text{in any compact subset of $\mathbb{R}^2$}.$$

With the aid of Lemma \ref{lc2}, we have
$$\psi(X_0+\rho_nX)=\ld(X_0)\rho_n\max\{-X\cdot\nu_0,0\}+o(\rho_n)\ \ \text{in}\ \
B_1(0),$$ which implies that there exists a sequence $\{\s_n\}$ with
$\s_n\rightarrow0$, such that
$$\psi\in F(\s_n,1,\infty) \ \ \text{in $B_{\rho_n}(X_0)$ in direction $\nu_0$, $\s_n\leq\s_0$ and
$\rho_n\leq\gamma_0\s_n^{\f{2}{\beta}}$},$$ provided that $n$ is
sufficiently large. Applying Theorem \ref{lc14}, we have that
$B_{\f{\rho_n}4}(X_0)\cap\p\{\psi>0\}$ is $C^{1,\alpha}$.

Next, we can take a $C^{1,\alpha}$ transformation to flatten the free
boundary. Then reflect $\psi$ to the full neighborhood of the free
boundary, applying the Schauder estimates for the elliptic equations in
divergence form in Section 9 in \cite{A1}, we can obtain the
$C^{1,\alpha}$ regularity of $\psi$ up to the free boundary.

It follows from Proposition \ref{lb2} that
$$\lim_{\e\rightarrow 0^+}\int_{\O\cap\p\{\psi>\e\}}(|\g\psi|^2-\ld^2(X)-F(\psi))\eta\cdot\nu_\e dS=0.$$
 Since $\psi$ is $C^{1,\alpha}$ up to the free boundary and $F(\psi)=0$ on the free boundary, we
have
$$|\g\psi(X)|=\ld(X)\ \ \text{on the free boundary}.$$

Without loss of generality, we assume that the outward normal direction to
$\p\{\psi>0\}$ is in the direction of the positive $y$-axis. Extend
$\psi$ and $f$ as $C^{1,\alpha}$ functions into a full neighborhood
of $0\in\p\{\psi>0\}$. In view of $|\g\psi(0)|=\ld(0)\geq\ld_1> 0$, we have
that \be\label{c103}\psi_y(0)<0.\ee Define a mapping as follows,
$$S=TX=(s,t)\triangleq (x,\psi(x,y)), \ \ X=(x,y).$$ In view of \eqref{c103}, it is easy to
check that
$$\text{det}\left(\f{\p S}{\p X}\right)=\psi_y(x,y)<0\ \ \text{in a neighborhood of $0$}.$$
And thus the mapping $T$ is a local diffeomorphism near $0$.

Construct a function $\phi(S)$ as follows,
$$\phi(S)=y.$$

Therefore, the free boundary $\Gamma$ is transformed into $t=0$, and
we have
$$\left(\f{\p X}{\p S}\right)=\left(\f{\p S}{\p X}\right)^{-1}=\left(\begin{matrix} 1 &0 \\
-\f{\psi_x}{\psi_y} &\f{1}{\psi_y}
\end{matrix}\right).$$
Consequently, one has \be\label{c104}\phi_s=\f{\p y}{\p
s}=-\f{\psi_x}{\psi_y},\ \ \phi_t=\f{\p y}{\p t}=\f{1}{\psi_y},\ \
\f{\p t}{\p x}=\psi_x=-\f{\phi_s}{\phi_t}\ \ \text{and} \ \ \f{\p
t}{\p y}=\psi_y=\f{1}{\phi_t}.\ee It follows from \eqref{c104} that
$$\mathcal{Q}\phi=\p_s\left(-\f{\phi_s}{\phi_t}\right)+\p_t\left(\f{1+\phi^2_s}{2\phi^2_t}\right)+f(t)=0\ \ \text{in the neighborhood of $0$}.$$

It is easy to check that $\mathcal{Q}\phi=0$ is a quasilinear elliptic
equation in a neighborhood $E$ of $0$. Furthermore, $\phi$ satisfies
the Neumann type boundary condition as follows,
\be\label{b05}\left\{\ba{ll}\mathcal{Q}\phi=0~~~~&\text{in}\ \ \ D,\\
\f{\phi_t}{\sqrt{1+\phi_s^2}}=\f{1}{\ld(s)}\ \ &\text{on}\ \ \bar
D\cap\{t=0\},\ea\right.\ee where $D=E\cap\{t>0\}$.

Noting that $\psi$ is in $C^{1,\alpha}$ near $0$, which implies that the coefficients of
 the operator $\mathcal{Q}$ are in $C^{\alpha}$. By using the elliptic regularity in Section 9 in \cite{A1}, we obtain
that
 $\phi(S)$ is in $C^{2,\alpha}$ near $0$. Furthermore, the free boundary $\Gamma$ can be denoted by
 $y=\phi(s,0)=\phi(x,0)$, and thus the free boundary $\Gamma$ is $C^{2,\alpha}$ near $0$.

Applying the Schauder estimates for elliptic equations in \cite{A1},
we can obtain the $C^{2,\alpha}$ regularity of $\psi$ up to the free
boundary $\Gamma$. Using the above arguments again, we can conclude
that the free boundary $\Gamma$ is $C^{3,\alpha}$ near $0$, provided
that $\ld(X)\in C^{1,\beta}$ and $f\in C^{1,\beta}$.

By bootstrap argument, we can obtain the higher regularity of the free
boundary $\Gamma$.

Finally, if $f(s)$ is analytic and $\ld(X)$ is analytic, by virtue
of the results of Section 6.7 in \cite{MB}, we can conclude that
$\phi$ is analytic. Hence, we obtain the analyticity of the free
boundary $\Gamma$.

\end{proof}

\begin{remark} The results in this paper can be extended for the $n$-dimensional case ($n\geq3$), and Theorem
\ref{lc14} replaced by Corollary 3.11 in \cite{FA2}.
\end{remark}

\section{Applications: steady ideal jet flow and cavitational flow with general vorticity}

As an important application of the mathematical theory of the free boundary problem \eqref{aa1}, we will investigate the well-posedness of the steady incompressible inviscid fluid with free streamline. There are at least two classical hydrodynamical problems of two-dimensional steady flows with non-trivial vorticity can be described mathematically as a free boundary problem \eqref{aa1} for a semilinear elliptic equation, the one is the two-dimensional incompressible inviscid jet flow problem and another one is the two-dimensional incompressible inviscid cavitational flow problem. For a classical example, we will investigate briefly  the existence and uniqueness of the two-dimensional incompressible inviscid jet flow issuing from a given semi-infinitely long nozzle in this section, and state the similar results on the incompressible cavitational flow problem.

In the previous sections, the technique of variational method has provided some solutions to the free boundary problem \eqref{aa1}. The regularity of the free boundary and the regularity of the derivative to the solution $\psi$ up to it, follows from Theorem \ref{lc15}. However, the common topological properties of the free boundary between $\O\cap\{\psi>0\}$ and $\O\cap\{\psi=0\}$ are still unknown. Of course, it is an important and natural question to ask if the free boundary is smooth enough to provide a classical solution of the steady incompressible jet flow problem under consideration. The main aim of this section is to establish the existence and uniqueness of the steady incompressible jet flow issuing from symmetric semi-infinitely long nozzle via the mathematical theory on the free boundary problem established before.

\subsection{Statement of the physical problem and main results}

The problem we address is that the flow of an incompressible,
inviscid fluid issues from a given symmetric semi-infinitely long
nozzle and emerges as a jet with two symmetric free boundaries (see
Figure \ref{f5}). The flow is assumed to be both steady and
irrotational. The free boundary $\Gamma$ initiates smoothly at the
end point of the nozzle wall and extends to infinity in downstream,
where the jet flow tends to some uniform flow.

\begin{figure}[!h]
\includegraphics[width=100mm]{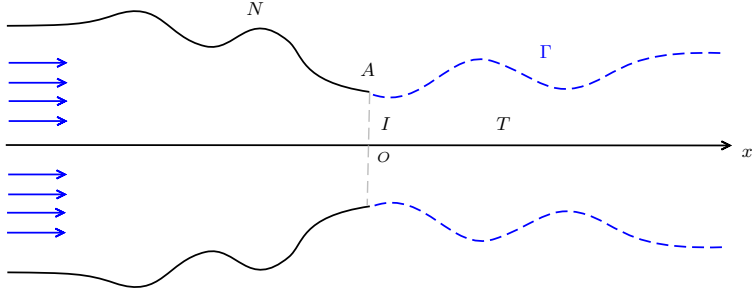}
\caption{Symmetric jet flow problem}\label{f5}
\end{figure}

Denote $N=\{y=g(x),x\in(-\infty,0]\}$ the upper nozzle wall of the semi-infinitely long plane symmetric nozzle, which satisfies that \be\label{g0}\text{ $g(x)\in C^{2,\alpha}((-\infty,0])$, $\lim_{x\rightarrow-\infty}g(x)=H$ and $g(0)=a=\min_{x\leq0}g(x)$.}\ee  Let $T=\{y=0\}$ be the symmetric axis, and $A=(0,a)$ be the end point of the nozzle wall.

The problem of incompressible jet flow with general vorticity can be formulated as the free boundary problem of finding a domain $\O_0$ in $(x,y)$-plane, whose boundary consists of the upper nozzle wall $N$, the symmetric axis $T$, and a priori unknown curve $\Gamma$ expressed by $y=k(x)$
for $x\geq 0$ with \be\label{g1}k(0)=g(0)=a\ \ \ \text{and}\ \ \ k'(0)=g'(0).\ee
The condition \eqref{g1} is the so-called continuous fit condition and smooth fit condition, respectively.

A vector $(u,v,p)$ representing the horizontal velocity, the vertical velocity and the pressure of the flow in $\O_0$, which belongs to $\left(C^{1,\alpha}(\O_0)\cap C^0(\bar\O_0)\right)^3$ and satisfies the steady incompressible Euler equations
\be\label{g2}\left\{\ba{ll} &\p_x u+\p_y v=0,\\
 &u\p_xu+v\p_yu+\p_x p=0,\\
 &u\p_xv+v\p_yv+\p_y p=0,\ea\right.\ee and the slip boundary condition
 \be\label{g3}(u,v)\cdot\vec{n}=0\ \ \ \text{on}\ \ \ N\cup  T,\ee where $\vec{n}$ is the normal direction of the boundary.

 The free boundary $\Gamma$ is assumed to be a material surface of the incompressible fluid, and then the velocity still satisfies the slip boundary condition \eqref{g3} on $\Gamma$. Moreover, the classical assumption (neglecting the effects of surface tension) is constant pressure condition, namely,
 $$p=p_{atm}\ \ \ \text{on}\ \ \ \Gamma.$$ Here, we denote $p_{atm}$ the constant atmosphere pressure.

In the upstream, we assume that
   \be\label{g4}u(x,y)\rightarrow u_0(y)\ \ \text{and} \ \ v(x,y)\rightarrow 0  \ \ \text{as}\ \ x\rightarrow-\infty,\ee and denote $p_{in}$ the constant champer pressure in the inlet of the nozzle. Here, the variation and amplitude of the horizontal velocity $u_0(y)$ are arbitrary, and it implies that the vorticity of the jet flow is arbitrary in the upstream.

    \begin{remark} Once we impose the horizontal velocity $u_0(y)$ in the inlet of the nozzle, the mass flux of incoming flow is determined by
    $$Q=\int_{0}^{H}u_0(y)dy.$$
    \end{remark}

   Before we state the well-posedness results on the incompressible jet problem, it should be noted that there are two invariant quantities for the steady incompressible inviscid fluid, i.e.,
   \be\label{a04}(u,v)\cdot\nabla
\o=0,\ee and \be\label{a05}
(u,v)\cdot\nabla\left(\f{1}{2}(u^2+v^2)+p\right)=0,\ee where $\o=v_x-u_y$ denotes the vorticity of the fluid in two dimensions. In particular, the relation \eqref{a05} gives that quantity $\f{1}{2}(u^2+v^2)+p$ called Bernoulli's function remains invariant along the each streamline, which tells us two facts,\\
(1) the speed remains a constant denoted as $\ld$ along the free boundary $\Gamma$.\\
(2) along the upper nozzle wall and the free boundary $\Gamma$,
$$\ld=\sqrt{u^2_0(H)+2(p_{in}-p_{atm})}$$ as long as the continuous fit condition of the free boundary holds, where $p_{in}$ denotes the uniform pressure in the upstream. Here, the quantity $p_{diff}=p_{in}-p_{atm}$ is nothing but the pressure difference between the upstream and the downstream. It should be noted that the quantity $p_{diff}$ is an undermined parameter here, and we will show that the appropriate choice $p_{diff}$ is guaranteed by the continuous fit condition.

The incompressible jet problem is stated as follows.

{\bf The incompressible jet flow problem.} Given a symmetric semi-infinitely long nozzle $N$, a horizontal velocity $u_0(y)$ in the inlet and a constant atmosphere pressure $p_{atm}$ on the free surface, does there exist a unique symmetric incompressible inviscid jet flow issuing from the nozzle $N$, and the free boundary $\Gamma$ initiates smoothly at the endpoint $A$.

Moreover, we introduce the definition of a solution to the incompressible jet flow problem in the following.

\begin{definition}\label{def2}
{\bf(A solution to the incompressible jet flow problem).}\\ Given an atmosphere pressure $p_{atm}$ and an incoming velocity
$u_0(y)$,  a vector $(u,v,p,\Gamma)$ is called a solution to the incompressible jet
flow problem, provided that \\
(1) The free boundary $\Gamma$ can be expressed by a $C^1$-smooth
 function $y=k(x)>0$ for any $x\in[0,+\infty)$, and there exists an appropriate pressure difference $p_{diff}$, such that $\Gamma$ satisfies the continuous and smooth fit condition \eqref{g1}.\\
(2)~$(u,v,p)\in\left(C^{1,\alpha}(\O_0)\cap C(\bar{\O}_0)\right)^3$
solves the Euler system \eqref{g2}, and satisfies the boundary
condition \eqref{g3}.\\
(3)~There exists a unique positive constant $h\in (0,a]$ such that
$$k(x)\rightarrow h\ \ \text{and}\ \ k'(x)\rightarrow 0\ \ \text{as $x\rightarrow+\infty$}.$$
where $h$ is the asymptotic height of the free boundary in downstream.\\
 (4) $p=p_{atm}$ on $\Gamma$.\\
 (5) $(u,v)\rightarrow(u_0(y),0)$ uniformly for $y\in(0,H)$, as $x\rightarrow-\infty$.
\end{definition}

The main results on the existence and uniqueness of the incompressible jet flow problem read as follows.

\begin{theorem}\label{th1}
Suppose that the nozzle wall $N$ satisfies the assumption condition \eqref{g0}. Assume that the horizontal velocity in upstream $u_0(y)\in
C^{2,\beta}([0,H])$ satisfies that
 \be\label{a08}
u_0(y)>0, \ u_0'(0)=0,\ \ \text{and}\ \ u_0''(y)\geq
0\ \ \text{for any $y\in[0,H]$}.\ee Then, there exist a unique difference pressure $p_{diff}\geq 0$ and a unique
solution
$(u,v,p,\Gamma)$ to the incompressible jet flow problem, such that \\
(1) The jet flow satisfies the following asymptotic behavior in the
far fields, $$(u,v,p)\rightarrow(u_0(y),0,p_{in}),\ \nabla
u\rightarrow(0,u_0'(y)),~~\nabla v\rightarrow 0,~~\nabla
p\rightarrow0,$$ uniformly in any compact subset of $(0,H)$, as
$x\rightarrow-\infty$, and
$$(u,v,p)\rightarrow(u_1(y),0,p_{atm}),\ \nabla u\rightarrow(0,u_1'(y)),~~\nabla v\rightarrow 0,~~\nabla
p\rightarrow0,$$ uniformly in any compact subset of $(0,h)$, as
$x\rightarrow+\infty,$ where $p_{in}=p_{diff}+p_{atm}$, and $u_1(y)$ and $h$ are uniquely
determined by $u_0(y)$, $p_{in}$ and $p_{atm}$.\\
 (2) $u>0$ in $\bar{\O}_0$.\\
 (3) $h\leq k(x)\leq \bar H=\max_{x\leq 0}g(x)$ for any $x\in[0,\infty)$.
\end{theorem}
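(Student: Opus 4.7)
The plan is to recast the jet system as the free boundary problem \eqref{aa1} via a stream function, apply the machinery of Sections 2--3 on a truncated nozzle, and then pass to the infinite-nozzle limit. I would introduce $\psi$ by $\psi_y=-u$, $\psi_x=v$, normalized so that $\psi=0$ on the top streamline $N\cup\Gamma$ and $\psi=Q:=\int_0^H u_0(s)\,ds$ on $T$. The vorticity transport law \eqref{a04} forces $\omega=\Delta\psi$ to be a function of $\psi$, identified from the upstream profile as $-u_0'(y(\psi))$, where $y(\psi)$ inverts $\psi\mapsto\int_y^H u_0(s)\,ds$. Setting $f(\psi):=u_0'(y(\psi))$ on $[0,Q]$ and extending by constants outside, the hypothesis \eqref{a08} yields the structural assumption \eqref{c0} (note $f'(\psi)=u_0''(y)/(-u_0(y))\le 0$), while Bernoulli's law \eqref{a05} along $\{\psi=0\}$ identifies the free-boundary speed as the constant $\ld=\sqrt{u_0^2(H)+2p_{diff}}$. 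The jet problem thus reduces to \eqref{aa1} with $\ld(X)\equiv\ld(p_{diff})$.

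For $L\gg 1$, I would work in a bounded domain $\O_L$ obtained by truncating the nozzle at $x=-L$ and capping downstream at $x=L$, with Dirichlet data $\Psi_0=\int_y^H u_0\,ds$ on the inlet, $\psi=0$ on the extension of $N$, and $\psi=Q$ on $T$. As obstacle I would take $\Psi(x,y):=\int_y^H u_0(s)\,ds$ extended horizontally, which trivially satisfies \eqref{c001}. The results of Sections 2--3 then furnish a non-negative Lipschitz minimizer $\psi_L=\psi_L^\ld$ solving $-\Delta\psi_L=f(\psi_L)$ in $\{\psi_L>0\}$ with $C^{2,\alpha}$ free boundary $\Gamma_L$ on which $|\g\psi_L|=\ld$ holds classically; horizontal-translate comparisons combined with the non-degeneracy Lemma \ref{lb7} force $\Gamma_L$ to be a graph $y=k_L(x)$.

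The main obstacle is to choose $p_{diff}$ so that $\Gamma_L$ emanates exactly from $A=(0,a)$ (continuous fit). I would parameterize the minimizers by $\ld$ and track the detachment point of the free boundary from the upper wall as a function of $\ld$. A comparison argument shows that for $\ld$ close to $u_0(H)$ the jet remains attached to the extended upper wall far past $A$, whereas for $\ld$ sufficiently large the Bernoulli energy penalty forces detachment at some $x<0$. Continuous dependence of the minimizer on $\ld$, together with strict monotonicity of the detachment point in $\ld$ (proved by a standard minimizing-pair comparison between $\psi_L^\ld$ and $\psi_L^{\ld'}$ for $\ld<\ld'$), then produce a unique $\ld_L$ for which the detachment point is exactly $A$. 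With continuous fit in hand, the identity $|\g\psi_L^{\ld_L}|=\ld_L$ holds on both $\Gamma_L$ near $A$ and on $N$ near $A$ (the latter by Bernoulli along the wall streamline together with \eqref{b006}), so the $C^{1,\alpha}$ regularity from Theorem \ref{lc15} forces $\Gamma_L$ and $N$ to share a tangent at $A$, giving the smooth fit $k_L'(0)=g'(0)$. This continuous-fit step is the principal technical difficulty.

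Finally, the uniform Lipschitz bound of Theorem \ref{lb5}(2) and a priori control on $\ld_L$ allow extraction of a subsequence $L\to\infty$ converging locally uniformly to a solution $(\psi,\Gamma,\ld)$ on the infinite nozzle; uniform non-degeneracy preserves continuous and smooth fit in the limit. The upstream asymptotics follow from matching the inlet data and decay of the Dirichlet energy in $\{x<-L\}$. For the downstream, the translates $\psi(\cdot+n,\cdot)$ converge to a parallel flow with stream function $\Psi_1(y)$ of height $h$, where $\Psi_1$ solves the two-point boundary value problem $-\Psi_1''=f(\Psi_1)$ on $(0,h)$ with $\Psi_1(0)=Q$, $\Psi_1(h)=0$ and $|\Psi_1'(h)|=\ld$; these conditions uniquely determine $u_1=-\Psi_1'$ and $h$. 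Positivity $u>0$ follows from the strong maximum principle applied to $-\psi_y$, and $k\le\bar H$ from comparison with the obstacle $\Psi$. Uniqueness of $p_{diff}$ and of $(u,v,p,\Gamma)$ is obtained by a standard horizontal-shift (sliding) argument: for any two solutions, the smallest rightward translate placing one above the other must vanish, since Hopf's lemma and the Bernoulli free-boundary condition preclude both interior contact and non-trivial boundary contact.
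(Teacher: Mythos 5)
Your overall architecture matches the paper's: stream function, truncated variational problem, continuous fit, $L\to\infty$, asymptotics, and sliding/Hopf uniqueness. However, there are two genuine gaps and one inaccuracy.

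First, in the continuous-fit step you claim ``strict monotonicity of the detachment point in $\ld$ (proved by a standard minimizing-pair comparison between $\psi_L^\ld$ and $\psi_L^{\ld'}$).'' This is not justified. The minimizing-pair identity (the one used in Lemma~\ref{ld3}) applies to \emph{two minimizers of the same functional} (or translates of a single functional); once $\ld$ changes the functional changes, and $\min\{\psi^\ld,\psi^{\ld'}\}$, $\max\{\psi^\ld,\psi^{\ld'}\}$ are not automatically competitors that split the combined energy evenly. The paper never proves such monotonicity. Instead it defines $\Sigma_L=\{\ld\ge\ld_0\mid k_{\ld,L}(0)<a\}$, shows it is nonempty for large $\ld$ (via non-degeneracy, Lemma~\ref{ld9}(1)), shows $k_{\ld,L}(0)\ge a$ for $\ld$ near $\ld_0$ by comparison with the downstream parallel-flow profile plus Hopf's lemma (Lemma~\ref{ld9}(2)), takes $\ld_L=\inf\Sigma_L$, and uses continuous dependence (Lemma~\ref{ld8}) to conclude $k_{\ld_L,L}(0)=a$. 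Your intermediate-value picture is correct, but the monotonicity you invoke is an unsupported over-claim; uniqueness of $\ld$ is instead deferred to the infinite-nozzle limit (Proposition~\ref{le1}).

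Second, the downstream asymptotics are glossed over. You assert the translates $\psi(\cdot+n,\cdot)$ converge to a parallel flow, but the existence of $\lim_{x\to\infty}k(x)$ and its identification with $h_\ld$ is the content of Lemma~\ref{ld13}, which requires a careful barrier/comparison argument ruling out oscillation of the free boundary at infinity (if $\limsup k>\liminf k$, one derives $\ld_2\le\ld\le\ld_1$ with $\ld_1<\ld_2$ by constructing sub- and super-barriers on parabolic-width windows). Also, your ``two-point BVP with $\Psi_1(0)=Q$, $\Psi_1(h)=0$, $|\Psi_1'(h)|=\ld$'' is over-determined; in the paper the pair $(u_1,h)$ is constructed explicitly from Bernoulli and mass flux (Subsection~4.3, Remark~\ref{re4}) rather than obtained as the solution of a three-condition ODE problem, and the consistency of $h$ with $\ld$ is built in.

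Finally, a minor inaccuracy: the sliding argument for uniqueness (Proposition~\ref{le1}) uses a \emph{vertical} shift $\psi^{\e}(x,y)=\psi(x,y-\e)$, not a horizontal one. A horizontal shift does not separate the two candidate solutions because they share the same upstream limit; the vertical shift does, after which the strong maximum principle and Hopf's lemma at the first touching (free-boundary) point give the contradiction.
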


\begin{remark} The assumption $u'_0(0)=0$ in \eqref{a08} follows from the symmetry of the incompressible jet flow. However, if we consider an asymmetric incompressible jet flow, the condition $u'_0(0)=0$ can be instead of $u'_0(H)\geq 0$.
\end{remark}
\begin{remark}\label{re2} To obtain the continuous fit condition, we choose the difference pressure $p_{diff}$ as a parameter, and then show that there exists a unique $p_{diff}$, such that the free boundary $\Gamma$ satisfies the continuous fit condition in \eqref{g1}.\end{remark}

Similarly, we can obtain the well-posedness results on the incompressible cavitational flow problem. We will give the statement of the physical problem and the well-posedness results as follows.

 Given a two-dimensional obstacle as
 \be\label{f01}N: y=g(x)\in C^{2,\alpha}((-b,0]), \ g(-b)=0\ \text{and $g(0)=a=\max_{-b\leq x\leq 0}g(x)$}.\ee
 and denote
 $T_0=\{(x,0)\mid-\infty<x<+\infty\}$ the asymmetric axis and $T_1=\{(x,H)\mid-\infty<x<+\infty\}$ (see Figure \ref{f6}).
 \begin{figure}[!h]
\includegraphics[width=100mm]{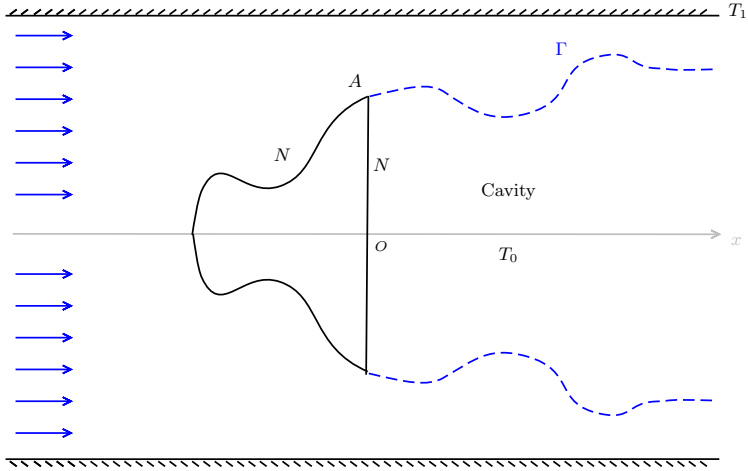}
\caption{Symmetric cavitational flow}\label{f6}
\end{figure}

{\bf The incompressible cavitational flow problem.} Given a
two-dimensional symmetric obstacle $N$, atmosphere pressure
$p_{atm}$ and the horizontal velocity $u_0(y)$ in the upstream, does
there exist a unique incompressible symmetric inviscid cavitational
flow past the given obstacle, and the free boundary $\Gamma$
initiates smoothly at the corner $A$ of the obstacle?

\begin{definition}\label{def5}
{\bf(A solution to the incompressible cavitational flow problem).}\\
A vector $(u,v,p,\Gamma)$ is called a solution to the incompressible cavitational flow
problem, provided that \\
(1) The free boundary $\Gamma$ can be expressed by a $C^1$-smooth
 function $y=k(x)>0$ for any $x\in[0,+\infty)$, such that $N\cup
 \Gamma$ is $C^1$, namely,
\be\label{f07}g(0)=k(0)=a~~\text{and}~~ g'(0)=k'(0).\ee
(2)~$(u,v,p)\in\left(C^{1,\alpha}(\O_0)\cap C(\bar{\O}_0)\right)^3$
solves the Euler system \eqref{g2}, and satisfies the boundary
condition \eqref{a04}, where
$\O_0$ is the flow field bounded by $N,T_0,T_1$ and $\Gamma$.\\
(3)~There exists a positive constant $h\in (a,H)$ such that
$$k(x)\rightarrow h\ \ \text{and}\ \ k'(x)\rightarrow 0\ \ \text{as}\ \ x\rightarrow+\infty.$$
where $h$ is the asymptotic height of the free boundary in downstream.\\
 (4) $p=p_{atm}$ on $\Gamma$.\\
 (5) $(u,v)\rightarrow(u_0(y),0)$ uniformly for $y\in(0,H)$, as $x\rightarrow-\infty$.
\end{definition}

We give the results as follows.

\begin{theorem}\label{th5}
Suppose that the solid wall $N$ satisfies the assumption
\eqref{f01}. Given an atmosphere pressure $p_{atm}$ and $u_0(y)\in
C^{2,\beta}([0,H])$, which satisfies that
 \be\label{f08}
u_0(y)>0,\ \ u_0'(0)=0\ \ \text{and}\ \ u_0''(y)\geq 0\ \ \text{for
any $y\in[0,H]$}.\ee Then, there exist a unique difference pressure
$p_{diff}=p_{up}-p_{atm}$ and a unique solution
$(u,v,p,\Gamma)$ to the cavitational flow problem, where $p_{in}$ denotes the pressure in the inlet of the channel. Furthermore, \\
(1) The cavitational flow satisfies the following asymptotic
behavior in the far fields, $$(u,v,p)\rightarrow(u_0(y),0,p_{in}),\
\nabla u\rightarrow(0,u_0'(y)),~~\nabla v\rightarrow 0,~~\nabla
p\rightarrow0,$$ uniformly in any compact subset of $(0,H)$, as
$x\rightarrow-\infty$, and
$$(u,v,p)\rightarrow(u_1(y),0,p_{atm}),\ \nabla u\rightarrow(0,u_1'(y)),~~\nabla v\rightarrow 0,~~\nabla
p\rightarrow0,$$ uniformly in any compact subset of $(h,H)$, as
$x\rightarrow+\infty,$ where $u_1(y)$ and $h$ are uniquely
determined by $u_0(y)$, $p_{in}$ and $p_{atm}$.\\
 (2) $u>0$ in $\bar{\O}_0$.\\
 (3) $k(x)>0$ for any $x>0$.
\end{theorem}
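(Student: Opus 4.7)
The plan is to reduce Theorem \ref{th5} to the free boundary problem \eqref{aa1} via the stream function formulation and then invoke the existence, Lipschitz regularity and $C^{1,\alpha}$ regularity theory established in Sections 2 and 3. First I would introduce the stream function $\psi$ defined by $u=\p_y\psi$ and $v=-\p_x\psi$; incompressibility makes this well-defined, and after normalizing $\psi=0$ on $T_0\cup N\cup\Gamma$ and $\psi=Q=\int_0^Hu_0(y)dy$ on $T_1$, the Bernoulli invariance \eqref{a05} together with the vorticity transport \eqref{a04} yields $-\Delta\psi=f(\psi)$, where the \emph{vorticity strength function} $f$ is determined uniquely by the upstream data: inverting $\psi_0(y)=\int_0^yu_0(s)ds$ to get $y=y(\psi)$ gives $f(\psi)=u_0'(y(\psi))$. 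The assumption \eqref{f08} ($u_0>0$, $u_0''\geq 0$, $u_0'(0)=0$) translates directly into the structural conditions \eqref{c0} on $f$ that are required by the variational theory. The Bernoulli constant-pressure condition $p=p_{atm}$ on $\Gamma$ becomes the free boundary condition $|\nabla\psi|=\lambda$ with $\lambda=\sqrt{u_0^2(H)+2p_{diff}}$ (a spatially constant Bernoulli slope in this symmetric case).

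Next I would set up a truncated variational problem. For each $L>0$, let $\Omega_L$ be the intersection of the physical domain with the strip $\{-L<x<L\}$, let $\Psi$ be an upper barrier (a suitable harmonic extension of the Dirichlet data bounded by $Q$, so that \eqref{c001} holds), and set
\[
J_L(\phi)=\int_{\Omega_L}\left(|\nabla\phi|^2+F(\phi)+\lambda^2 I_{\{\phi<Q\}}\right)dX
\]
over the admissible class with the prescribed Dirichlet data on $T_0$, $N$, $T_1$ and on the artificial vertical cross-sections (coming from the one-dimensional horizontal flows in the upstream and downstream, whose existence and asymptotic profiles $u_0,u_1,h$ are determined by solving the appropriate ODE problems from $u_0(y)$ and the pressure data). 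Here I work with $Q-\psi$ instead of $\psi$ to fit the template \eqref{aa1}, so that the free boundary is $\partial\{Q-\psi>0\}$. Theorem \ref{lb5} gives Lipschitz continuity of the minimizer $\psi_L$, Lemma \ref{lb7} gives non-degeneracy, and Theorem \ref{lc15} then provides $C^{2,\alpha}$ regularity of the free boundary as soon as one knows it is a graph. Sending $L\to\infty$ and using standard diagonal/compactness arguments gives a global minimizer on the physical domain.

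The key structural step is the \emph{continuous and smooth fit} \eqref{f07}, which requires choosing $p_{diff}$ as a parameter. I would follow the strategy indicated in Remark \ref{re2}: show that for each admissible $p_{diff}>0$ the detachment point of the free boundary depends monotonically and continuously on $p_{diff}$, so that exactly one value of $p_{diff}$ produces detachment precisely at the corner $A=(0,a)$. Monotonicity would come from a comparison argument for minimizers with different Bernoulli constants $\lambda$, and continuity from uniform Lipschitz and non-degeneracy estimates (Theorem \ref{lb5} and Lemmas \ref{lb7}, \ref{lb8}) together with the Hausdorff convergence of free boundaries in Lemma \ref{lc2}. The smooth fit is then automatic once continuous fit holds and $\Gamma$ is $C^{1,\alpha}$ up to $A$, by matching the tangent of $N$ with the tangent of $\Gamma$ through the Bernoulli relation.

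The remaining properties---that $\Gamma$ is the graph $y=k(x)$ with $k>0$, the precise downstream asymptotics $(u,v,p)\to(u_1(y),0,p_{atm})$ with unique limit height $h\in(a,H)$, the positivity $u>0$ in $\bar\Omega_0$, and uniqueness---should follow by the familiar program: monotonicity of $\psi$ in $x$ (proved by comparing $\psi$ with its translate and using the maximum principle for semilinear elliptic equations together with the non-degeneracy lemma near $\Gamma$) forces $\Gamma$ to be a graph and $k$ to be monotone; the far-field limit $\psi(x,\cdot)\to\psi_\infty$ satisfies the one-dimensional ODE $-\psi_\infty''=f(\psi_\infty)$ with prescribed boundary data and Bernoulli slope, which has the unique solution encoding $(u_1,h)$; positivity of $u=\p_y\psi$ follows from Hopf's lemma applied to the horizontal derivative on the symmetric axis; and uniqueness is obtained by a sliding/comparison argument, exploiting the monotonicity of $f'$ in \eqref{c0}. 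The hardest step, as in \cite{CDZ,FA2}, will be the asymptotic analysis in the downstream together with proving that the detachment-point map is strictly monotone in $p_{diff}$; everything else reduces fairly directly to the general theory built in Sections 2--3.
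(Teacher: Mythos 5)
Your overall program is the one the paper intends for Theorem~\ref{th5}: the paper proves Theorem~\ref{th1} (the jet) in detail in Sections~4.2--4.8 and treats the cavitational flow as an analogue, so the correct proof is indeed ``set up the stream function, cast as a truncated variational problem in the framework of Sections~2--3, recover the continuous and smooth fit by tuning $p_{diff}$, pass to the limit $L\to\infty$, and close with sliding comparisons and the far-field one-dimensional ODE analysis.'' At that level your sketch is correct.

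However, several of the ``obvious modifications'' from jet to cavity are done with the jet-flow signs and references, and at least two of them would break the argument rather than change a constant. First, with $u=\psi_y$, $v=-\psi_x$ one has $\omega=-\Delta\psi$ and the upstream vorticity is $-u_0'(y)$, so vorticity transport gives $f_0(\psi)=-u_0'(\kappa(\psi))$ as derived in the paper after \eqref{b01}; your $f(\psi)=+u_0'(y(\psi))$ has $f'\geq0$ under \eqref{f08}, which \emph{violates} the key structural hypothesis \eqref{c0}, so the Section-2 machinery (Lemma~\ref{lb4}, Theorem~\ref{lb5}, Lemma~\ref{lb7}) cannot be applied to it. Second, with your chosen normalization $\psi=0$ on $T_0\cup N\cup\Gamma$ and $\psi=Q$ on $T_1$, the free boundary is already $\partial\{\psi>0\}$ and the problem fits the template \eqref{aa1} directly; passing to $Q-\psi$ and using $I_{\{\phi<Q\}}$ is the transformation that the paper needs for the jet problem (where $\psi=Q$ on the wall), and here it makes $\partial\{Q-\psi>0\}$ the top wall $T_1$, not the cavity boundary. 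Third, the Bernoulli slope must be computed along the $\psi=0$ streamline, which in the upstream is the axis $\{y=0\}$, not the wall $\{y=H\}$, so $\lambda=\sqrt{u_0^2(0)+2p_{diff}}$, not $\sqrt{u_0^2(H)+2p_{diff}}$; this matters because the downstream profile $u_1$ and the unique limit height $h\in(a,H)$ are constructed from this $\lambda$. Finally, what makes $\Gamma$ an $x$-graph $y=k(x)$ is monotonicity of $\psi$ in $y$ (obtained, as in Lemma~\ref{ld3}, by comparing $\psi(\cdot,\cdot-\e)$ with the minimizer and using uniqueness), not ``monotonicity of $\psi$ in $x$'' as you write. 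These are all fixable, but you need to flip the side of the free boundary, the reference streamline, and the sign convention together rather than carrying the jet-flow formulas over unchanged; once that is done, the remaining steps you list do match the paper's treatment of the jet problem and should carry over.
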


\subsection{Reformulation of the free boundary problem}

In the following, we will reformulate the original physical problem
into a free boundary problem of a semilinear elliptic equation. The
similar idea has been adapted in the compressible subsonic flows
with general vorticity in an infinitely long nozzle in
\cite{DX,DXX,XX3}.

First, it follows from the continuity equation in the incompressible
Euler system \eqref{g2} that there exists a stream function $\psi$,
such that \be\label{b01}u=\psi_y\ \ ~~\text{and}~~\ \ v=-\psi_x.\ee

Second, suppose that the streamlines are well-defined in the whole
flow fluid, and thus the incompressible jet flow problem can be
solved along the each streamline as follows. On another hand, the
positivity of the horizontal velocity of the jet will be verified
later, which gives the well-definedness of the streamlines in the
whole flow fluid.

\begin{figure}[!h]
\includegraphics[width=130mm]{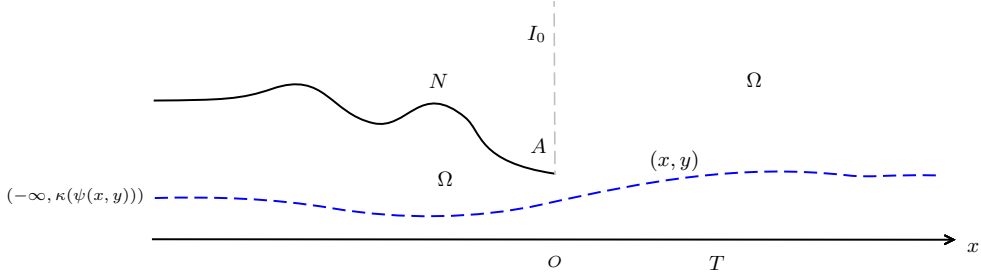}
\caption{The streamline in the fluid field}\label{f7}
\end{figure}

Denote $\O=\{-\infty<x\leq 0,0<y<g(x)\}\cup\{x>0,y>0\}$ the possible flow field (see Figure \ref{f7}) and $\O_0=\O\cap\{0<\psi<Q\}$ the flow field.
 For any point $(x,y)\in\O_0$, it can be pulled back along one
streamline to the point $(-\infty,\kappa(\psi(x,y)))$ in the inlet. Thus
$$\psi(x,y)=\int_0^{\kappa(\psi(x,y))}u_0(s)ds,$$ which implies that
\be\label{b02}\left\{\ba{ll}&1=u_0(\kappa(\psi))\kappa'(\psi),\\
&\kappa(0)=0.\ea\right.\ee It's easy to see that the function
$\kappa(t)$ can be solved uniquely by \eqref{b02} provided that
$u_0(y)$ is $C^{2,\beta}$-smooth. Meanwhile, it's clear that
$\kappa(Q)=H$. Due to the fact that the vorticity $\o$ is invariable
on each streamline, we obtain the governing equation to the stream
function,
$$\ba{rl}-\Delta\psi=\o=-u'_0(\kappa(\psi))\triangleq f_0(\psi)\ \ \ \ \text{in the fluid field $\O_0$}.\ea$$
Furthermore, it is not difficult to check the following facts \be\label{b05}\ba{rl}&\text{(1) $f_0(t)$ is $C^{1,\beta}$,}\\
&\text{(2) $f_0(0)=0,
f_0(Q)=-u_0'(H)\leq 0$},\\
&\text{(3) $f_0'(t)=-\f{u_0''(\kappa(t))}{u_0(\kappa(t))}\leq 0$}, \ea\ee  provided that $u_0(y)$ satisfies the assumption \eqref{a08}.

On another hand, we can impose the
Dirichlet boundary conditions,
$$\psi=Q~~\text{on}~~N\cup I_0,~~\text{and} ~\psi=0 ~\text{on}~
T,$$ where $I_0=\{(0,y)\mid y\geq a\}$. Moreover, the free boundary
can be defined as
\be\label{b04}\Gamma=\O\cap\{x>0\}\cap\p{\{\psi<Q\}}.\ee  The
constant pressure condition on the free boundary together with the
Bernoulli's law gives that the speed remains a constant on $\Gamma$,
denote $\ld$ the constant speed, i.e.,
$$|\nabla\psi|=\ld=\sqrt{u^2_0(H)+2p_{diff}}\ \ \text{on
$\Gamma$}.$$ Clearly, the constant $\ld$ is determined uniquely by
$p_{diff}\geq0$.

Therefore, we formulate the following free boundary problem
of the stream function that
\be\label{b06}\left\{\ba{ll}&-\Delta\psi=f_0(\psi)~~~~\ \ \ \text{in}~~\O\cap\{\psi<Q\},\\
&\psi=Q~~~~\ \ \ \text{on}~~N\cup \Gamma,\ \ \ \ \ \psi=0~~~~\text{on}~~T,\\
&|\g\psi|=\lambda~~~~\ \ \ \text{on}~~\Gamma.\ea\right.\ee

\subsection{Uniqueness of asymptotic behavior in downstream}

Next, we will show that the asymptotic behavior of the jet flow in downstream can be determined by the state $(u_0(y),0)$ of the incoming flow and the difference pressure $p_{diff}$.

For any initiate point $(-\infty,s)$ in the inlet, the
well-definedness of the streamlines implies that there exists a
unique point denoted  $(0,\chi(s;p_{diff}))$ in the downstream for
any $s\in[0,H]$ (see Figure \ref{f8}), which can be pulled back to
the imposed initiate point $(-\infty,s)$ along a streamline.

\begin{figure}[!h]
\includegraphics[width=130mm]{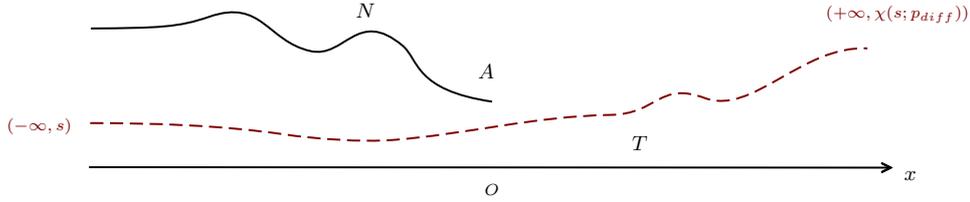}
\caption{The streamline in $\O_0$}\label{f8}
\end{figure}

Denote $u_1(y)$ the horizontal velocity in the downstream, the
conservation of mass and Bernoulli's law gives that
$$\int_{0}^{s}u_0(t)dt=\int_0^{\chi(s;p_{diff})}u_1(t)dt,$$ and
$$\f{u^2_0(s)}{2}+p_{diff}=\f{u^2_1(\chi(s;p_{diff}))}{2}\ \ \text{in the fluid field}.$$ Here, the difference pressure $p_{diff}\geq0$ between the inlet and the downstream is regarded as a parameter. These also give the initial value problem to the function $\chi(s;p_{diff})$ as follows,
\be\label{a09}\left\{\begin{array}{ll}&\f{d\chi(s;p_{diff})}{ds}=\f{u_0(s)}{\sqrt{u_0^2(s)+2p_{diff}}}>0,\\
&\chi(0;p_{diff})=0,\end{array}\right.\ee for any $s\in[0,H]$ and $p_{diff}\geq0$. Thus,
\be\label{a010}\chi(s;p_{diff})=\int_0^s\f{u_0(t)}{\sqrt{u_0^2(t)+2p_{diff}}}dt\leq
s.\ee  It is easy to check that $\chi(s;p_{diff})$ is strictly decreasing with respect to the parameter $p_{diff}$. In particular, the asymptotic height of the jet in downstream is in fact $h=\chi(H;p_{diff})$.

Noting that $\f{d\chi(s;p_{diff})}{d
s}>0$ for any $s\in[0,H]$ and $p_{diff}\geq0$, thus there exists an inverse function $\chi^{-1}(t;p_{diff})$, such that $s=\chi^{-1}(t;p_{diff})$ for any $t\in[0,h]$. Then the horizontal velocity $u_1(y)$ in downstream can be solved uniquely by
\be\label{a012}u_1(t)=\sqrt{u_0^2(\chi^{-1}(t;p_{diff}))+2p_{diff}}\
\ \text{for any $t\in[0,h]$},\ee once $\chi(s;p_{diff})$ is solved by the initiate value problem \eqref{a09} for any $p_{diff}\geq0$.

\begin{remark}\label{re4} For any $p_{diff}\geq 0$, it is easy to check that
$$\text{$\f{d\chi(H;p_{diff})}{d
p_{diff}}<0$ and}\ \ \ld=\sqrt{u^2_0(H)+2p_{diff}}\geq
u_0(H)\triangleq \ld_0\geq 0.$$ Moreover, the asymptotic height
$h=\chi(H;p_{diff})=\chi\left(H;\f{\ld^2-u^2_0(H)}{2}\right)=
h(\ld)\triangleq h_{\ld}$ is strictly increasing with respect to
$\ld$, and $h_{\ld_0}=H$.
\end{remark}

\subsection{The variational approach}

To solve the free boundary problem \eqref{b06}, we introduce a
variational problem with a parameter $\ld\geq\ld_0$. For any $L>\bar
H=\max_{x\leq0}g(x)$, denote $\O_L$ the truncated domain (see Figure
\ref{f10}),
$$\O_{L}=\O\cap\{(x,y)\mid-L<x<L,y<L\},\ \ \ \ D_L=\O_L\cap\{x>0\},\ \ \ N_L=N\cap\{x>-L\},$$ and
 $$\sigma_{-L}=\{(-L,y)|~0\leq y\leq
g(-L)\}\ \ \ \text{and}\ \ \ \ \sigma_L=\{(L,y)|~0\leq y\leq L\},$$
and
$$T_L=T\cap\{-L\leq x\leq L\},\ I_{0,L}=\{(0,y)\mid a\leq y\leq L\} \ \ \text{and} \ \ l_{L}=\p B_{L/2}\left(\left(L/2,L\right)\right)\cap\{y\geq L\}.$$

\begin{figure}[!h]
\includegraphics[width=120mm]{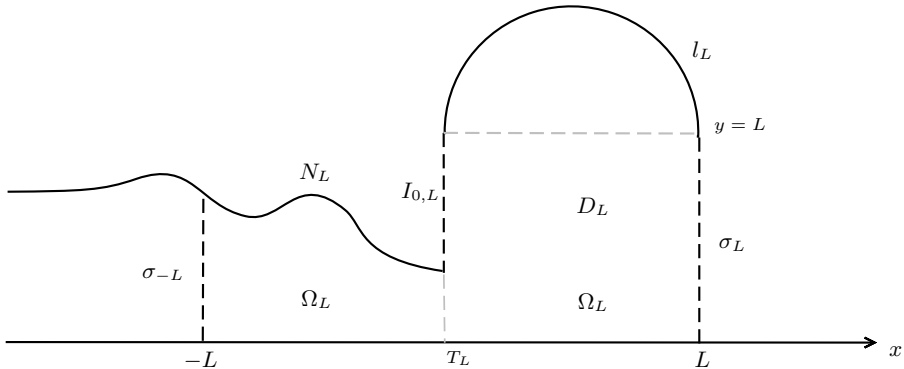}
\caption{The truncated domain $\O_L$}\label{f10}
\end{figure}

Since, a priorily, one does not know whether the stream function
$\psi$ satisfies that $0\leq\psi\leq Q$. Hence, we need extend the
function $f_0(t)$ as follows
\be\label{bb06}\t f_0(t)=\left\{\ba{ll}f_0(Q)+\f{f_0'(Q)}2~~~~&\text{if}~~ t\geq Q+1,\\
f_0(Q)+f_0'(Q)\left(t-Q-\f{(t-Q)^2}2\right)~~~~&\text{if}~~Q\leq t\leq Q+1,\\
f_0(t)~~~~&\text{if}~~0\leq t\leq Q,\\
f_0'(0)\left(t+\f{t^2}2\right)~~~~&\text{if}~~-1\leq t\leq 0,\\
-\f{f_0'(0)}2~~~~&\text{if}~~t\leq -1.\ea\right.\ee

Set \be\label{b07}\text{$F_0(t)=2\int_t^{Q}\t f_0(t)ds$\ \ \ and \ \
\ $f(t)=-\t f_0(Q-t)$}.\ee

For $y\in[0,g(-L)]$, define a function $\Psi_{-L}(y)$, which
satisfies that
$$\Psi_{-L}''(y)=-\t f_0(\Psi_{-L}(y)), \ \Psi_{-L}(0)=0,\ \Psi_{-L}(g(-L))=Q.$$ By virtue of \eqref{b05}, it is easy to check that
$$0<\Psi_{-L}(y)<Q\ \ \text{and}\ \ \Psi'_{-L}(y)\geq 0\ \ \text{for any $y\in(0,g(-L))$.}$$
 Firstly, we give the following functional that
$$J_{\ld,L}(\psi)=\int_{\O_L}\left(|\nabla\psi|^2+F_0(\psi)+\ld^2I_{\{\psi<Q\}\cap D_L}\right)dX,$$
where the admissible set is defined as follows
$$\ba{rl}K_{\ld,L}=\{\psi\in H^1_{loc}(\mathbb{R}^2)\mid &\psi=0~~\text{lies below}~T,
\psi=Q~~\text{lies above}~N\cup I_{0,L}\cup l_{L},\\
&\psi=\Psi_{-L}~~~\text{on}~~\sigma_{-L},~~~\psi=\min\{\Psi_\ld,Q\}~~~\text{on}~~\sigma_L\},\ea$$
and \be\label{b08}
\Psi_\ld(y)=\int_0^yu_1(s)ds.\ee

{\bf Truncated variational problem $(P_{\ld,L})$:} For any $L>\bar H$ and
$\ld\geq\ld_0$, find a $\psi_{\ld,L}\in K_{\ld,L}$ such that
$$J_{\lambda,L}(\psi_{\ld,L})=\min_{\psi\in K_{\ld,L}} J_{\lambda,L}(\psi).$$


\begin{remark} Set $\phi=Q-\psi$, by virtue of \eqref{b05} and
\eqref{b07}, the variational problem $(P_{\ld,L})$ can be
transformed into the variational problem in Section 2, and $f(t)$
satisfies the condition \eqref{c0}. Thus, we can use the results in
Section 2 and Section 3 in the following.
\end{remark}

\begin{proposition}\label{ld0} For any $\ld\geq\ld_0$ and $L>\bar H$, there exists a minimizer $\psi_{\ld,L}$ to the variational problem $(P_{\ld,L})$ with  $0\leq\psi_{\ld,L}(x,y)\leq Q$ in $\O_L$.
\end{proposition}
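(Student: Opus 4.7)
The plan is to invoke the direct method of the calculus of variations and then enforce the bounds $0\le\psi_{\ld,L}\le Q$ by a truncation comparison. Under the change of variables $\phi=Q-\psi$, the functional $J_{\ld,L}$ transforms into precisely the class studied in Section~2 (with $f(t)=-\tilde f_0(Q-t)$ as in \eqref{b07} satisfying \eqref{c0}), so the existence portion runs parallel to Lemma~1.3 in \cite{AC1}. Concretely, $K_{\ld,L}$ is nonempty (any piecewise harmonic extension of the prescribed Dirichlet data will do), and $J_{\ld,L}$ is bounded below because $|\nabla\psi|^2\ge 0$, $\ld^2 I_{\{\psi<Q\}\cap D_L}\ge 0$, and the at-most linear growth of $\tilde f_0$ built into \eqref{bb06} makes $F_0(\psi)$ controllable by the $L^2$ norm on the bounded set $\O_L$.

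Given a minimizing sequence $\{\psi_n\}\subset K_{\ld,L}$, the prescribed Dirichlet data on $T_L$ together with Poincar\'e's inequality yield a uniform $H^1(\O_L)$ bound. Passing to a subsequence, $\psi_n\rightharpoonup\psi_{\ld,L}$ weakly in $H^1(\O_L)$, strongly in $L^2(\O_L)$, and pointwise a.e.; trace continuity preserves the prescribed boundary values, so $\psi_{\ld,L}\in K_{\ld,L}$. For lower semicontinuity, the Dirichlet term is convex hence weakly l.s.c., the $F_0$ contribution converges by dominated convergence on the bounded sequence, and the indicator term is l.s.c.\ because the set $\{t<Q\}$ is open: at a.e.\ point of $\{\psi_{\ld,L}<Q\}$ one has $\psi_n<Q$ for all large $n$, so Fatou's lemma gives $\int I_{\{\psi_{\ld,L}<Q\}\cap D_L}\le\liminf\int I_{\{\psi_n<Q\}\cap D_L}$. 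Hence $\psi_{\ld,L}$ is a minimizer.

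To force $0\le\psi_{\ld,L}\le Q$, I would compare $\psi_{\ld,L}$ with its truncation $\psi_{\ld,L}^\star=\max\{\min\{\psi_{\ld,L},Q\},0\}$. Since every prescribed boundary value already lies in $[0,Q]$, the truncation remains in $K_{\ld,L}$. The Dirichlet energy is reduced pointwise by truncation; the indicator $I_{\{\cdot<Q\}\cap D_L}$ is unchanged, because on $\{\psi_{\ld,L}<0\}$ both indicators equal $I_{D_L}$, on $\{\psi_{\ld,L}>Q\}$ both vanish, and on $\{0\le\psi_{\ld,L}\le Q\}$ the function is untouched. For $F_0$, the sign information in \eqref{bb06}—namely $\tilde f_0\le 0$ on $[Q,\infty)$ and $\tilde f_0\ge 0$ on $(-\infty,0]$, which in turn rests on $f_0(Q)=-u_0'(H)\le 0$ (from $u_0''\ge 0$ and $u_0'(0)=0$ in \eqref{a08}) combined with $f_0'\le 0$ in \eqref{b05}—yields $F_0(\psi_{\ld,L}^\star)\le F_0(\psi_{\ld,L})$ pointwise. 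Thus $J_{\ld,L}(\psi_{\ld,L}^\star)\le J_{\ld,L}(\psi_{\ld,L})$, and by minimality $\psi_{\ld,L}=\psi_{\ld,L}^\star$ a.e., delivering the desired bounds.

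The main obstacle is the truncation bookkeeping for the $F_0$ term: it succeeds only because the extension \eqref{bb06} was tailored so that $\tilde f_0$ is nonnegative past the lower barrier $0$ and nonpositive past the upper barrier $Q$, which is precisely why the paper builds in the explicit sign structure of $f_0$ from the physical hypothesis on $u_0$. Once this monotonicity is verified, the remainder is a transplant of the standard existence argument from Section~2 through the substitution $\phi=Q-\psi$.
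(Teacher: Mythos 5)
Your proposal is correct, but it departs from the paper's competitor argument at the final stage. The paper uses the one-parameter scaling perturbation $\psi^{\e}=\psi-\e\min\{\psi,0\}$, exactly as in Lemma~\ref{lb3}(1), to force
\[
0\le J_{\ld,L}(\psi^{\e})-J_{\ld,L}(\psi)\le\int_{\O_L\cap\{\psi<0\}}\left((1-\e)^2-1\right)|\nabla\psi|^2-\e F_0'((1-\e)\psi)\,\psi\,dX\le 0,
\]
which immediately gives $\nabla\psi=0$ a.e.\ on $\{\psi<0\}$, hence $\psi\ge 0$ after a Poincar\'e-type argument; the upper bound is obtained symmetrically. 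You instead compare against the \emph{single} competitor $\psi^\star=\max\{\min\{\psi,Q\},0\}$, and this is perfectly valid: the Dirichlet and indicator terms are monotone under truncation exactly as you say, and your sign bookkeeping for $\tilde f_0$ (nonnegative on $(-\infty,0]$ and nonpositive on $[Q,\infty)$, using $f_0(Q)=-u_0'(H)\le 0$ from \eqref{a08} and $f_0'\le 0$ from \eqref{b05}) gives $F_0(\psi^\star)\le F_0(\psi)$ pointwise; this is the same structural fact the paper encodes as $F_0'(t)\le 0$ for $t\le 0$, just read off $\tilde f_0$ directly rather than through $F_0'$. The hard-truncation route is arguably cleaner because it handles both barriers with one competitor and avoids differentiating the perturbation in $\e$. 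One small caveat: from $J_{\ld,L}(\psi^\star)\le J_{\ld,L}(\psi)$ and minimality you only conclude $J_{\ld,L}(\psi^\star)=J_{\ld,L}(\psi)$, not $\psi=\psi^\star$ a.e.; but that is harmless, since it already shows $\psi^\star$ itself is a minimizer satisfying $0\le\psi^\star\le Q$, which is all the proposition asserts (and if you do want $\psi=\psi^\star$, equality of the Dirichlet terms forces $\nabla\psi=0$ a.e.\ on $\{\psi<0\}\cup\{\psi>Q\}$, and the prescribed boundary data then kills those sets by Poincar\'e, as in the paper's argument). Your existence step is also sound: the boundedness of $\tilde f_0$ built into \eqref{bb06} gives linear growth of $F_0$, which is what makes the direct method close on the bounded set $\O_L$.
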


\begin{proof} The existence of the minimizer to the variational problem $P_{\ld,L}$ follows from Lemma \ref{lb3}, and denote $\psi_{\ld,L}$ the minimizer to the variational problem $(P_{\ld,L})$.

Denote $\psi=\psi_{\ld,L}$ and $\psi^\e=\psi-\e\min\{\psi,0\}$ for any $\e\in(0, 1)$. Thus, $\psi^\e\in K_{\ld,L}$ and $\psi^\e\geq\psi$. Moreover,
$\psi<Q$ if and only if $\psi^\e<Q$ in $\O_L$, and we have
$$\ba{rl}0\leq&J_{\ld,L}(\psi^\e)-J_{\ld,L}(\psi)\\
 \leq&\int_{\O_L\cap\{\psi<0\}}((1-\e)^2-1)|\nabla\psi|^2-\e
 F'_0((1-\e)\psi)\psi
dX\\
\leq&0, \ea$$ due to $F'(t)\leq 0$ for $t\leq0$, which implies that
$$\text{$\psi(x,y)\geq 0$  in $\O_L$}.$$
Similarly, we can show that
$$\text{$\psi(x,y)\leq Q$  in $\O_L$}.$$

\end{proof}

Since $0\leq\psi_{\ld,L}\leq Q$ in $\O_L$, we can remove the truncation of the function $\t f_0(t)$ in \eqref{bb06}. Denote the free boundary of $\psi_{\ld,L}$ as
$$\Gamma_{\ld,L}=D_L\cap\p\{\psi_{\ld,L}<Q\}.$$ With the aid of Lemma \ref{lb3} and Theorem \ref{lc15}, we have
\begin{proposition}\label{ld1} The minimizer $\psi_{\ld,L}$ satisfies that \\
(1) $\psi_{\ld,L}\in
C^{0,1}(\O_L)$.\\
(2) $\Delta\psi_{\ld,L}+f_0(\psi_{\ld,L})=0$ in
$\O_L\cap\{\psi_{\ld,L}<Q\}$ and $\Delta\psi_{\ld,L}+
f_0(\psi_{\ld,L})\leq0$ in $\O_L$ in the weak sense. Furthermore, $\psi_{\ld,L}>0$ in $\O_L$ and
$\psi_{\ld,L}\in C^{2,\alpha}(G)$ for any compact subset $G$ of
$\O_L\cap\{\psi_{\ld,L}<Q\}$,
$\alpha\in(0,1)$.\\
(3) The free boundary of the minimizer $\psi_{\ld,L}$ is $C^{3,\alpha}$.\\
(4) $|\g\psi_{\ld,L}|=\ld$ on the free boundary $\Gamma_{\ld,L}$.
Moreover, $|\g\psi_{\ld,L}|\geq\ld$ on the segment $l\in
I_{0,L}\cap\p\{\psi_{\ld,L}<Q\}$ and  $|\g\psi_{\ld,L}|\leq\ld$ on
the segment $l\in \{x=0,0<y<a\}\cap\p\{\psi_{\ld,L}<Q\}$.
\end{proposition}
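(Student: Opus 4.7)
The plan is to exploit the substitution $\phi=Q-\psi_{\ld,L}$, under which the truncated variational problem $(P_{\ld,L})$ translates verbatim into the setting of Section 2 on $\O_L$ with $f(t)=-f_0(Q-t)$ and constant Bernoulli coefficient $\ld(X)\equiv\ld$, so that every assertion of the proposition can be read off from a known result. The structural hypothesis \eqref{c0} on the new $f$ is checked from \eqref{a08}--\eqref{b05}: after the truncation $\tilde f_0$, one has $f(t)\geq 0$ for $t\leq 0$ and $f'(t)=f_0'(Q-t)\leq 0$ throughout, with the constant $\L=u_0'(H)$. Hence $\phi$ inherits the full regularity theory of Sections 2 and 3.

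With this dictionary, (1) is immediate from Theorem \ref{lb5}. For (2) the two PDE statements transcribe directly from Lemma \ref{lb3}(2): the distributional inequality $\Delta\phi+f(\phi)\geq 0$ becomes $\Delta\psi_{\ld,L}+f_0(\psi_{\ld,L})\leq 0$ in $\O_L$, and the identity on $\{\phi>0\}$ becomes the identity on $\{\psi_{\ld,L}<Q\}$; the interior $C^{2,\alpha}$-regularity on compact subsets of $\{\psi_{\ld,L}<Q\}$ follows from the Schauder estimates as in the last paragraph of the proof of Lemma \ref{lb3}. For the strict positivity $\psi_{\ld,L}>0$ I would argue separately, since the variational theory alone only produces $\psi_{\ld,L}\geq 0$: on $\{\psi_{\ld,L}<Q\}$ we have $\Delta\psi_{\ld,L}=-f_0(\psi_{\ld,L})\geq 0$ by \eqref{b05}, so $\psi_{\ld,L}$ is subharmonic there. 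The strong minimum principle then forces any connected component of $\{\psi_{\ld,L}<Q\}$ carrying an interior zero to be clopen in $\O_L$; connectedness of $\O_L$ together with the Dirichlet data $\psi_{\ld,L}=Q$ on $N_L\cup I_{0,L}\cup l_L$ yield the contradiction. Part (3) is then an application of Theorem \ref{lc15}(1) with $k=2$: the Bernoulli coefficient is constant and $f_0\in C^{1,\beta}$, so $\Gamma_{\ld,L}$ is $C^{3,\alpha}$ for any $\alpha\in(0,\beta)$.

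Part (4) is where geometry-dependent work is required. On $\Gamma_{\ld,L}$ itself the identity $|\g\psi_{\ld,L}|=\ld$ is immediate from Proposition \ref{lb2} and the remark following it, because $\Gamma_{\ld,L}$ is $C^{3,\alpha}$ by (3) and $\psi_{\ld,L}$ is $C^{2,\alpha}$ up to it. For a segment $l\subset I_{0,L}\cap\p\{\psi_{\ld,L}<Q\}$ one has $l\subset\p\O_L$ and $\phi=0$ on $l$, so the second half of Proposition \ref{lb2} applied to $\phi$ with the outward normal of $\O_L$ yields $|\g\phi|\geq\ld$, equivalently $|\g\psi_{\ld,L}|\geq\ld$ on $l$.

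The main obstacle is the remaining segment $l\subset\{x=0,\,0<y<a\}\cap\p\{\psi_{\ld,L}<Q\}$, which is interior to $\O_L$ and therefore not directly covered by either half of Proposition \ref{lb2}. The plan is to adapt the inner-variation computation \eqref{b007}--\eqref{b008} with a compactly supported vector field $\eta$ whose first component has a definite sign along $l$, so that only the jet half-space $\{x>0\}$ is actually deformed and the perturbation respects the one-sided structure arising from the strict positivity of $\phi$ in the open nozzle established in (2). Using $F(\phi)=0$ on $l$, the resulting one-sided boundary integral $\int_l(\ld^2-|\g\phi|^2)\,\eta\cdot\nu\,dS\geq 0$ for admissible $\eta$ with $\nu=e_1$ yields $|\g\phi|\leq\ld$ on $l$, which is the stated bound $|\g\psi_{\ld,L}|\leq\ld$.
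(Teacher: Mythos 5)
Your overall strategy — substitute $\phi=Q-\psi_{\ld,L}$ and read the proposition off from Sections 2 and 3 — is exactly what the paper intends, since the paper states the proposition as a consequence of Lemma \ref{lb3} and Theorem \ref{lc15} with the remark that $\phi=Q-\psi$ transforms $(P_{\ld,L})$ into the Section 2 framework. Parts (1), (3) and the first two assertions of (4) are correctly obtained, and your recognition that the interior segment $l\subset\{x=0,\,0<y<a\}\cap\p\{\psi_{\ld,L}<Q\}$ is not covered by Proposition \ref{lb2} and needs a one-sided inner variation — exploiting that the penalty $\ld^2 I_{\{\psi<Q\}}$ is charged only on $D_L=\O_L\cap\{x>0\}$ — is the right idea, though it is left as a sketch.

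There is, however, a genuine gap in the argument for strict positivity $\psi_{\ld,L}>0$. You write that $\Delta\psi_{\ld,L}\geq 0$ on $\{\psi_{\ld,L}<Q\}$, ``so $\psi_{\ld,L}$ is subharmonic there. The strong minimum principle then forces\dots''. Subharmonic functions satisfy a strong \emph{maximum} principle, not a minimum principle; a nonnegative subharmonic function can perfectly well have an interior zero (e.g.\ $|X|^2$). Subharmonicity alone therefore gives nothing. The correct route uses the full equation: since $f_0(0)=0$ and $f_0'\leq 0$, write $f_0(\psi_{\ld,L})=c(X)\psi_{\ld,L}$ with $c(X)=\int_0^1 f_0'(t\psi_{\ld,L}(X))\,dt\leq 0$ bounded, so that on $\{\psi_{\ld,L}<Q\}$ one has $\Delta\psi_{\ld,L}+c(X)\psi_{\ld,L}=0$ with $c\leq 0$ and $\psi_{\ld,L}\geq 0$. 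Now the strong maximum principle applied to $-\psi_{\ld,L}$ (or, equivalently, Harnack's inequality for nonnegative supersolutions) shows that if $\psi_{\ld,L}$ vanishes at an interior point it vanishes identically on that connected component of $\{\psi_{\ld,L}<Q\}$; your subsequent contradiction with the Dirichlet data then goes through unchanged. So the conclusion is right, but the tool you invoked is the wrong one and must be replaced.
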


The Statement (1) in Proposition \ref{ld1} gives the Lipschitz continuity of the minimizer in the interior of $\O_L$. Next, we will show that the minimizer $\psi_{\ld,L}(x,y)$
is also Lipschitz continuous near the boundary of $\O_L$.

\begin{lemma}\label{ld2} $\psi_{\ld,L}(x,y)$ is Lipschitz continuous
in every compact subsect of $\bar{\O}_L$ that does not contain the point $A$.
\end{lemma}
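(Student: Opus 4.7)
The plan is to localize: cover a given compact set $D\subset\bar{\O}_L\setminus\{A\}$ by finitely many balls and estimate $|\g\psi_{\ld,L}|$ on each, using that (by the remark following Proposition \ref{ld0}) $Q-\psi_{\ld,L}$ is a minimizer of a problem of the form $(P)$ studied in Section 2. For balls centered at interior points of $\O_L$ with positive distance to $\p\O_L$, Theorem \ref{lb5} applies directly and supplies the Lipschitz bound.

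Next, at points on the smooth pieces of $\p\O_L$ that are bounded away from the free boundary, the Dirichlet data is $C^{2,\a}$-smooth (a constant $0$ or $Q$, the profile $\Psi_{-L}$ on $\sigma_{-L}$, or the outlet profile $\min\{\Psi_\ld,Q\}$ on $\sigma_L$), and in a one-sided neighborhood of such an arc that does not meet $\Gamma_{\ld,L}$, $\psi_{\ld,L}$ solves the uniformly elliptic semilinear equation $-\Delta\psi_{\ld,L}=f_0(\psi_{\ld,L})$. The classical Schauder boundary estimates then give $C^{1,\a}$-regularity up to the boundary, in particular Lipschitz continuity. Right-angled corners other than $A$ (for example where $N$ meets $\sigma_{-L}$, where $T_L$ meets $\sigma_{\pm L}$, or the corners of $l_L$) are handled by even or odd reflection across one of the adjacent sides; compatibility of the Dirichlet data across such corners reduces the corner estimate to an interior one.

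Where the free boundary $\Gamma_{\ld,L}$ approaches a fixed boundary at a point $P\neq A$, I would argue by cases. Touching $T_L$ is impossible since $\psi_{\ld,L}=0<Q$ on $T_L$. Near a point $P=(0,y_P)\in I_{0,L}$ with $y_P>a$, I would extend $\psi_{\ld,L}$ across $I_{0,L}$ by even reflection, $\widetilde\psi(x,y):=\psi_{\ld,L}(|x|,y)$ in a small ball $B$ about $P$: since both the functional and the Dirichlet condition $\psi_{\ld,L}=Q$ on $I_{0,L}$ are invariant under the reflection $x\mapsto -x$, the function $\widetilde\psi$ is a local minimizer of a problem of the same form in $B$, with the free-boundary condition $|\g\widetilde\psi|=\ld$ preserved across $I_{0,L}$. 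Theorem \ref{lb5} (together with Remark \ref{re1}) then yields the Lipschitz bound at $P$. The case $P\in\sigma_L$ near the matching height $y=h_\ld$ is treated by a one-sided estimate combining Proposition \ref{ld1}(4) with the fact that the outlet data $\min\{\Psi_\ld,Q\}$ already has slope $\ld$ at the transition point, so $\psi_{\ld,L}$ meets its boundary values with bounded gradient there.

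The principal obstacle, and the reason $A$ must be excluded, is that three distinct boundary behaviors converge at $A$: the Dirichlet condition $\psi_{\ld,L}=Q$ on both $N$ and $I_{0,L}$, and the free-boundary condition $|\g\psi_{\ld,L}|=\ld$ on $\Gamma_{\ld,L}$. Prior to the selection of $p_{\mathrm{diff}}$ that will enforce the continuous- and smooth-fit conditions \eqref{g1}, the angle at which $\Gamma_{\ld,L}$ leaves any neighborhood of $A$ is not controlled \emph{a priori}, and the Lipschitz constant of $\psi_{\ld,L}$ may degenerate as one approaches $A$; excluding $A$ from $D$ is therefore essential, and the separate analysis of the fit conditions at $A$ is deferred to the subsequent subsections.
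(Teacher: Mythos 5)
Your overall decomposition (interior balls via Theorem \ref{lb5}, Schauder at smooth Dirichlet arcs away from $\Gamma_{\ld,L}$, reflection at ordinary right‑angle corners, separate treatment where $\Gamma_{\ld,L}$ reaches a fixed boundary) is sensible and several pieces match what the paper does, including the preliminary split according to whether the free boundary or $\p D_L$ is closer. However, the central step --- even reflection across $I_{0,L}$ --- does not work, and this is precisely the case the lemma is really about.

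The claim that $\widetilde\psi(x,y):=\psi_{\ld,L}(|x|,y)$ is a local minimizer near a point $P\in I_{0,L}$ that $\Gamma_{\ld,L}$ approaches is false. On any segment $l\subset I_{0,L}\cap\p\{\psi_{\ld,L}<Q\}$, Proposition \ref{ld1}(4) gives $|\g\psi_{\ld,L}|\geq\ld>0$ from the side $x>0$ (not $=\ld$, as you write), and since $\psi_{\ld,L}\equiv Q$ on $I_{0,L}$, this is exactly $|\p_x\psi_{\ld,L}(0^+,\cdot)|\geq\ld$. The even extension therefore has a gradient jump across $\{x=0\}$, and writing $\phi=Q-\psi_{\ld,L}$, $\widetilde\phi(x,y)=\phi(|x|,y)$, the distributional Laplacian $\Delta\widetilde\phi$ carries a positive measure of density $2\p_x\phi(0^+,y)\geq 2\ld$ on $\{x=0\}$. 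This is incompatible with being a minimizer: the representation theorem (Proposition \ref{lc1}) requires the Borel density $\mathcal{B}_{\widetilde\phi}$ on the free boundary to equal $\ld$, not something $\geq 2\ld$. One can also see it directly: the solution $\bar\phi$ of $\Delta\bar\phi+f(\bar\phi)=0$ in the small ball with $\bar\phi=\widetilde\phi$ on its boundary is a competitor with $\{\bar\phi>0\}$ of the same full measure and strictly smaller Dirichlet--plus--$F$ energy (it removes the corner), so $J(\bar\phi)<J(\widetilde\phi)$. Hence $\widetilde\phi$ is not a minimizer, Theorem \ref{lb5} does not apply to it, and Remark \ref{re1} is of no help here either (it extends only the non‑degeneracy Lemma \ref{lb6} to boundary balls, not the interior Lipschitz estimate). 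The more basic conceptual point is that $\psi_{\ld,L}$ only minimizes among competitors constrained to equal $Q$ on $I_{0,L}$, and an even extension does not convert that slit constraint into free minimization on a two‑sided ball.

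The paper handles exactly this configuration by a barrier, not a reflection. Because $\phi=Q-\psi_{\ld,L}$ satisfies $\Delta\phi+f(\phi)\geq 0$ in all of $\O_L$, one compares it in the half--ball $B_{s_0}(X_0)\cap\{x>0\}$ (with $X_0\in I_{0,L}$ and $s_0=\min\{\tfrac12,\,\mathrm{dist}(X_0,A)\}$) with the solution $\varphi$ of $\Delta\varphi+f(\varphi)=0$ that vanishes on $B_{s_0}\cap\{x=0\}$ and equals $Q$ on the spherical cap; the maximum principle gives $\phi\leq\varphi$, rescaling and boundary elliptic estimates give $\varphi\leq Cx/s_0$, and then an interior estimate on $B_r(X)$ with $r=x$ (which lies inside $\{\phi>0\}$ precisely because one is in the case $d(X)>d_1(X)$) yields the gradient bound. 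This uses only the sub/supersolution structure, which is available, rather than any minimality‑preserving reflection, which is not. The exclusion of $A$ is also quantitatively transparent here: the constant is $C/s_0+Cs_0$ with $s_0\to 0$ as $X_0\to A$, so the estimate necessarily degenerates at the corner --- this is a cleaner reason than the heuristic about the undetermined angle of $\Gamma_{\ld,L}$ you offer. I'd suggest replacing the reflection step with this barrier comparison (or a variant of it on $\sigma_L$, where the outlet data is Lipschitz but not $C^1$ at the kink of $\min\{\Psi_\ld,Q\}$), keeping the remainder of your case analysis.
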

\begin{proof}

We first consider the Lipschitz continuity of $\psi_{\ld,L}$ near
the boundary $\p D_L$. Denote $\phi(X)=Q-\psi_{\ld,L}(X)$ and
$d(X)=\text{dist}(X,\Gamma_{\ld,L})$ and $d_1(X)=\text{dist}(X,\p
D_L)$ for any $X\in D_L$. We consider the following two cases.

{\bf Case 1.} $d(X)\leq d_1(X)$, we can obtain the Lipschitz
continuity of $\phi$ at $X$ by using the similar arguments in the proof
Theorem \ref{lb5}.

{\bf Case 2.} $d(X)> d_1(X)$. Without loss of generality, we assume
that $d_1(X)=|X-X_0|$ for $X_0=(0,y)\in I_{0,L}$ and $X=(x,y)\in
D_L$. Set $s_0=\min\left\{\f12,y-a\right\}$ and
$B_{s_0}=B_{s_0}(X_0)$. Let $\varphi$ be a solution to the following
boundary problem,
$$\left\{\begin{array}{ll}&\Delta\varphi+f(\varphi)=0\ \ \text{in}\ B_{s_0}\cap\{x>0\},\\
&\varphi=0\ \text{on}\ B_{s_0}\cap\{x=0\},\ \ \varphi=Q\ \text{on}\
\p B_{s_0}\cap\{x>0\}.\end{array}\right.$$ Since $\phi\leq
\varphi$ on $\p(B_{s_0}\cap\{x>0\})$, the maximum principle gives
that
$$\phi\leq\varphi\ \ \text{in $B_{s_0}\cap\{x>0\}$}.$$

Denote $\t\varphi(\t X)=\varphi(X_0+s_0\t X)$ for $\t X=(\t x,\t
y)$, and $\Delta\t\varphi+s_0^2f(\t\varphi)=0$ in $B_1(0)\cap\{\t
x>0\}$. By using the elliptic estimates in \cite{GT}, we have
$$\t\varphi(\t X)\leq C\t x\ \ \text{in $B_{\f12}(0)\cap\{\t
x>0\}$},$$ which implies that
\be\label{b09}\phi(X)\leq\varphi(X)\leq C\f{x}{s_0}\ \ \text{in
$B_{\f{s_0}2}(X_0)\cap\{ x>0\}$}.\ee

If $r=x<\f{s_0}{4}$, we have
$$\phi>0\ \ \text{and}\ \ \Delta\phi+f(\phi)=0\ \ \text{ in $B_r(X)\subset
B_{\f{s_0}2}\cap\{x>0\}$}.$$ Denote $\t\phi(\t X)=\f{\phi(X+r\t
X)}{r}$ for $\t X=(\t x,\t y)$, it follows from \eqref{b09} that
$$0<\t\phi<\f{C}{s_0}\ \ \text{and}\ \ \Delta\t\phi+rf(r\t\phi)=0\ \ \text{ in
$B_1(0)$}.$$ By using the elliptic estimates in \cite{GT}, we have
$$|\g\phi(X)|=|\g\t\phi(0)|\leq C(\|\t\phi\|_{L^\infty(B_1(0))}+r\|f(r\t\phi)\|_{L^{\infty}(B_1(0))})\leq\f{C}{s_0}+Cs_0.$$

If $r=x\geq\f{s_0}{4}$, the elliptic estimate in \cite{GT} gives
that $|\g\phi(X)|\leq C$.

Since $\Delta\psi_{\ld,L}+f_0(\psi_{\ld,L})=0$ in $\O_L\setminus D_L$,
the Lipschitz continuity of $\psi_{\ld,L}$ near $\p(\O_L\setminus
D_L)$ can be obtained by using elliptic regularity.

\end{proof}

\subsection{The uniqueness, monotonicity and free boundary of the minimizer}
We first give the uniqueness of the minimizer $\psi_{\ld,L}(x,y)$
for any given $\ld\geq\ld_0$ and $L>\bar H$, and show that
$\psi_{\ld,L}(x,y)$ is monotone increasing with respect to $y$.

Once we have the regularity of the free boundary $\Gamma_{\ld,L}$ at hand, we will establish some topological properties of the free boundary, provided that some special geometric conditions on the solid boundaries are assumed. For example, it's desired that the free boundary is $x$-graph, as long as we assume that the nozzle wall $N$ is a $x$-graph and the horizontal velocity of the incoming flow is positive.

To obtain this topological property of the free boundary, we will
establish the monotonicity of the minimizer $\psi_{\ld,L}(x,y)$ with
respect to $y$ first.
\begin{lemma}\label{ld3} The minimizer of the truncated variational problem
$(P_{\ld,L})$ is unique and $\psi_{\ld,L}(x,y)$ is monotone
increasing with respect to $y$.
\end{lemma}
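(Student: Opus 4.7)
The plan is to first establish the monotonicity in $y$ of any minimizer $\psi_{\ld,L}$, and then derive uniqueness as a consequence.

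For monotonicity, I propose the standard max/min--swap comparison familiar from the Alt--Caffarelli variational theory. Fix $h>0$ small and compare $\psi_{\ld,L}$ with its downward shift $\psi^h_{\ld,L}(x,y):=\psi_{\ld,L}(x,y+h)$ on the overlap $\O_L\cap(\O_L-he_2)$. With $M:=\max\{\psi_{\ld,L},\psi^h_{\ld,L}\}$ and $m:=\min\{\psi_{\ld,L},\psi^h_{\ld,L}\}$, the pointwise identities (valid a.e.)
\begin{align*}
|\g M|^2+|\g m|^2 &= |\g\psi_{\ld,L}|^2+|\g\psi^h_{\ld,L}|^2,\\
F_0(M)+F_0(m) &= F_0(\psi_{\ld,L})+F_0(\psi^h_{\ld,L}),\\
I_{\{M<Q\}\cap D_L}+I_{\{m<Q\}\cap D_L} &= I_{\{\psi_{\ld,L}<Q\}\cap D_L}+I_{\{\psi^h_{\ld,L}<Q\}\cap D_L}
\end{align*}
preserve the sum of the energies. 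Extending $M$ and $m$ to admissible competitors in $K_{\ld,L}$ by reverting to the fixed Dirichlet data on the thin $h$-collars of $\p\O_L$---consistent because $\psi_{\ld,L}\equiv 0$ on $T$, $\psi_{\ld,L}\equiv Q$ on $N\cup I_{0,L}\cup l_L$, and the inlet/outlet profiles $\Psi_{-L}$ and $\Psi_\ld$ are themselves monotone increasing in $y$---the minimality of $\psi_{\ld,L}$ forces $J_{\ld,L}(M)=J_{\ld,L}(m)=\min J_{\ld,L}$, so both are themselves minimizers. By Proposition~\ref{ld1} each solves $\Delta u+f_0(u)=0$ in $\{u<Q\}$ with $|\g u|=\ld$ on the free boundary. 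Since $f_0'\le 0$, the non-negative difference $w:=M-m$ satisfies a linear elliptic equation with non-positive zero-order coefficient on $\{m<Q\}\cap\{M<Q\}$, vanishes on the fixed boundary, and the common Bernoulli condition precludes a Hopf-type mismatch along the free-boundary portion. The strong maximum principle then yields $w\equiv 0$, i.e.\ $\psi_{\ld,L}(x,y+h)\ge\psi_{\ld,L}(x,y)$; letting $h\downarrow 0$ gives $\p_y\psi_{\ld,L}\ge 0$.

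For uniqueness, apply the same max/min swap (now with no shift) to any two minimizers $\psi_1,\psi_2\in K_{\ld,L}$: the identity $J_{\ld,L}(M)+J_{\ld,L}(m)=2\min J_{\ld,L}$ makes $M$ and $m$ minimizers with the same Dirichlet data, Step~1 applied to each shows their free boundaries are $y$-graphs, and the same strong-maximum-principle argument for $M-m$ forces $M\equiv m$ and hence $\psi_1\equiv \psi_2$.

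The principal obstacle is the gluing step in the monotonicity argument: the downward shift $\psi^h_{\ld,L}$ itself does not lie in $K_{\ld,L}$, since it takes positive values on $T$ while the class demands $\psi\equiv 0$ there, and its values near $l_L$ need not equal $Q$. One must verify that by redefining $M$ and $m$ on these thin $O(h)$ strips through the fixed boundary profiles---at the cost of a boundary-layer energy contribution that is itself $O(h)$ and cancels in the sum identity as $h\downarrow 0$---the preserved-sum identity survives in the limit, so the maximum-principle step goes through.
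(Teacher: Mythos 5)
Your broad strategy --- the max/min rearrangement plus a translation comparison --- is the right one, and your three pointwise identities are valid on the overlap. But there are two genuine gaps that the paper's proof avoids by design, and which your proposal does not repair. First, the gluing step you flag as ``the principal obstacle'' is not a small correction that can be swept into an $O(h)$ boundary layer: you need the sum identity to be \emph{exact} for \emph{fixed} $h$ to conclude that $M$ and $m$ are minimizers, and only then can one pass $h\downarrow 0$. The paper sidesteps this entirely by comparing $\t\psi$ not against a competitor in $K_{\ld,L}$ but against the shifted minimizer $\psi^\e(x,y)=\psi(x,y-\e)$ of the \emph{shifted} functional $J^\e_{\ld,L}$ over the \emph{shifted} class $K^\e_{\ld,L}$, and extends $\psi^\e\equiv 0$ in $\O_L\setminus\O_L^\e$ and $\t\psi\equiv Q$ in $\O_L^\e\setminus\O_L$. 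With these extensions the crossing set $\{\psi^\e>\t\psi\}$ is automatically contained in $\O_L\cap\O_L^\e$ (the key identity \eqref{bbb}), and one checks directly that $\min\{\psi^\e,\t\psi\}\in K^\e_{\ld,L}$ and $\max\{\psi^\e,\t\psi\}\in K_{\ld,L}$; no boundary-layer modification, and no $O(h)$ error, ever appears.

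Second, the strong-maximum-principle conclusion does not close as written. Your $w:=M-m$ need not vanish on the free-boundary portion (the free boundaries of $M$ and $m$ are \emph{a priori} different sets), so the Hopf argument has no common point on which to act, and moreover $w\equiv 0$ would literally say $\psi(x,y+h)=\psi(x,y)$, not the inequality you want. In the uniqueness step the difficulty is even clearer: there is no strict ordering anywhere to start the maximum principle, since $\psi_1$ and $\psi_2$ share boundary data. The paper's mechanism is different and does both conclusions at once: (i) an \emph{interior dichotomy} at any coincidence point $X_0\in\{\t\psi<Q\}$ where $\psi^\e(X_0)=\t\psi(X_0)$ --- if the sign of $\psi^\e-\t\psi$ were mixed in $B_r(X_0)$, replacing $\phi_2=\max\{\psi^\e,\t\psi\}$ by the solution of the Dirichlet problem on $B_r(X_0)$ would \emph{strictly} lower $J_{\ld,L}$, contradicting minimality of $\phi_2$; (ii) the strict ordering $\psi^\e<\t\psi$ in a strip above $T_L$, which is created \emph{by the shift} (since $\psi^\e\equiv 0$ for $0<y<\e$ while $\t\psi>0$ there); and (iii) connectedness of $\{\t\psi<Q\}$. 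Together these give $\psi^\e\le\t\psi$ for every $\e>0$, and symmetrically $\psi(x,y+\e)\ge\t\psi$; letting $\e\to 0$ gives $\psi=\t\psi$ (uniqueness), and taking $\t\psi=\psi$ gives the monotonicity. Note that this also resolves a circularity in your plan: you cannot first derive monotonicity from ``$m=\psi$'' without already having uniqueness of the minimizer in hand.
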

\begin{proof} Assume that $\psi_{\ld,L}$ and $\t\psi_{\ld,L}$ are two minimizers to the truncated variational problem
$(P_{\ld,L})$. Set $\psi_{\ld,L}^{\e}(x,y)=\psi_{\ld,L}(x,y-\e)$ in
$\O_L^\e=\{(x,y)\mid(x,y-\e)\in\O_L\}$ for
any $\e>0$.

Noticing that $\psi_{\ld,L}^{\e}$ is a minimizer to the functional
$$J_{\ld,L}^\e(\psi)=\int_{\O^\e_L}\left(|\nabla\psi|^2+F_0(\psi)+\ld^2I_{\{\psi<Q\}\cap D^\e_L}\right)dX$$ and the admissible set $K^{\e}_{\ld,L}=\{\psi\mid
\psi(x,y-\e)\in K_{\ld,L}\}$, where $D_L^\e=\O_L^\e\cap\{x>0\}$.
Extend $\psi_{\ld,L}^\e=0$ in $\O_L\setminus\O^\e_L$ and $\t\psi_{\ld,L}=Q$ in
$\O^\e_L\setminus\O_L$.

Denote $\t\psi=\t\psi_{\ld,L}$ and $\psi^\e=\psi_{\ld,L}^\e$ for
simplicity, it follows from Proposition \ref{ld0} that
$$\phi_1=\min\{\psi^{\e},\t\psi\}\in K^\e_{\ld,L}\ \
\text{and}\ \ \phi_2=\max\{\psi^{\e},\t\psi\}\in K_{\ld,L},$$ and
\be\label{bbb}
\O_L\cap\{\psi^{\e}>\t\psi\}=\O_L^\e\cap\{\psi^{\e}>\t\psi\}.\ee

Next, we will show that
\be\label{b010}J_{\ld,L}^\e(\phi_1)=J_{\ld,L}^\e(\psi^\e)\ \
\text{and}\ \ J_{\ld,L}(\phi_2)=J_{\ld,L}(\t\psi).\ee
Since $\t\psi$ and $\psi^\e$ are minimizers, it suffices to show
that
\be\label{bb0}J_{\ld,L}^\e(\phi_1)+J_{\ld,L}(\phi_2)=J_{\ld,L}(\t\psi)+J_{\ld,L}^\e(\psi^\e).\ee

It follows from \eqref{bbb} that
\be\label{bb1}\ba{rl}&\int_{\Omega_{L}^{\e}}|\nabla
\phi_1|^2dX+\int_{\Omega_{L}}|\nabla
\phi_2|^2dX\\
=&\int_{\Omega_{L}^{\e}\cap\{\psi^\e\leq\t\psi\}}|\nabla
\psi^\e|^2dX+\int_{\Omega_{L}^\e\cap\{\psi^\e>\t\psi\}}|\nabla
\t\psi|^2dX\\
&+\int_{\Omega_{L}\cap\{\psi^\e\leq\t\psi\}}|\nabla
\t\psi|^2dX+\int_{\Omega_{L}\cap\{\psi^\e>\t\psi\}}|\nabla
\psi^\e|^2dX\\
=&\int_{\Omega_{L}^{\e}\cap\{\psi^\e\leq\t\psi\}}|\nabla
\psi^\e|^2dX+\int_{\Omega_{L}\cap\{\psi^\e>\t\psi\}}|\nabla
\t\psi|^2dX\\
&+\int_{\Omega_{L}\cap\{\psi^\e\leq\t\psi\}}|\nabla
\t\psi|^2dX+\int_{\Omega_{L}^\e\cap\{\psi^\e>\t\psi\}}|\nabla
\psi^\e|^2dX\\
=&\int_{\Omega_{L}}|\nabla \t\psi|^2dX+\int_{\Omega_{L}^\e}|\nabla
\psi^\e|^2dX. \ea\ee Similarly, we can verify that
$$\int_{\Omega_{L}^{\e}}F_0(
\phi_1)dX+\int_{\Omega_{L}}F_0(
\phi_2)dX=\int_{\Omega_{L}^{\e}}F_0(\psi^\e)dX+\int_{\Omega_{L}}F_0(\t\psi)dX,$$
and
$$\ba{rl}\int_{D_{L}^\e}I_{\{\phi_1<Q\}}dX+\int_{D_{L}}I_{\{\phi_2<Q\}}dX=\int_{D_{L}^\e}I_{\{\psi^\e<Q\}}dX+\int_{D_{L}}I_{\{\t\psi<Q\}}dX,\ea$$
which together with \eqref{bb1} yield \eqref{bb0}.

 Next, we claim that
\be\label{b011}\text{if $\psi^\e(X_0)=\t\psi(X_0)<Q$ for
$X_0\in\O_L$, then either $\psi^\e\geq\t\psi$ or $\psi^\e\leq\t\psi$
in $B_r(X_0)$},\ee for small $r>0$. Suppose that the claim \eqref{b011} is not
true, the continuity of $\t\psi$ and $\psi^\e$ give that
$0<\t\psi<Q$ and $0<\psi^\e<Q$ in $B_r(X_0)$ for small $r>0$, then
we have that $\phi_2$ is not a solution of
$\Delta\phi_2+f_0(\phi_2)=0$ in $B_r(X_0)$. In fact, if not, the
maximum principle gives that $\max\{\t\psi,\psi^\e\}=\phi_2=\t\psi$
in $B_r(X_0)$, due to $\phi_2(X_0)=\t\psi(X_0)$. This contradicts to
our assumptions.

Let $\phi$ be the solution of the following boundary value problem
$$\left\{\begin{array}{ll}\Delta\phi+f_0(\phi)=0\ \ &\text{in}\ B_r(X_0),\\
\phi=\phi_2\ \ &\text{on}\ \p B_r(X_0).\end{array}\right.$$ Thus
$\phi\neq\phi_2$ in $B_r(X_0)$, it is easy to check that
\be\label{b012}\ba{rl}&\int_{B_r(X_0)}|\g\phi|^2+F_0(\phi)dX-\int_{B_r(X_0)}|\g\phi_2|^2+F_0(\phi_2)dX\\
\leq&-\int_{B_r(X_0)}|\g(\phi-\phi_2)|^2dX <0.\ea\ee Extend
$\phi=\phi_2$ in $\O_L\setminus B_r(X_0)$, it follows from
\eqref{b010} and \eqref{b012} that
$$J_{\ld,L}(\phi)<J_{\ld,L}(\phi_2)=J_{\ld,L}(\t\psi),$$ which
contradicts to the minimality of $\t\psi$.

Since $\t\psi>\psi^{\e}$ near $T_L$, it follows from \eqref{b011} that
$\t\psi\geq\psi^\e$ in the connected component $\O_0$ of $\{\t\psi<Q\}$
which contains an $\O_L$-neighborhood of $T_L$. In view of the
boundary value of $\t\psi$, we have that $\{\t\psi<Q\}\cap\p\O_L$ is
a connected arc. The maximum principle gives that any component of
$\{\t\psi<Q\}$ must touch the boundary $\{\t\psi<Q\}\cap\p\O_L$.
Therefore, the domain $\O_L\cap\{\t\psi<Q\}$ is connected and
\be\label{b013}\psi(x,y-\e)=\psi^{\e}(x,y)\leq\t\psi(x,y)\
\text{in}~~\O_L.\ee

Similarly, we can show that
\be\label{b014}\psi(x,y+\e)\geq\t\psi(x,y)~~\text{in}~~\O_L.\ee
Taking $\e\rightarrow0$ in \eqref{b013} and \eqref{b014}, which
yields that
$$\psi(x,y)=\t\psi(x,y)~~\text{in}~~\O_L.$$

In particular, taking $\psi=\t\psi$ in \eqref{b013}, we have that
the minimizer $\psi=\psi_{\ld,L}(x,y)$ is monotone increasing with
respect to $y$.

\end{proof}

The monotonicity of $\psi_{\ld,L}(x,y)$ with respect to $y$ implies that the free boundary $\Gamma_\ld$ is a $x$-graph, and then there exists a
function $y=k_{\ld,L}(x)$ for $x\in(0,L)$, such that
 $$\{\psi_{\ld,L}<Q\}\cap
D_L=\{(x,y)\in D_L\mid 0<y<k_{\ld,L}(x),0<x<L\}.$$

To obtain the continuity of $k_{\ld,L}(x)$, we first give the
following non-oscillation lemma.

\begin{lemma}\label{ld4}(Non-oscillation Lemma) Suppose that there exist some constants $\alpha_1,\alpha_2$ with $\alpha_1<\alpha_2$ and a domain
 $D\subset D_L\cap\{\psi_{\ld,L}<Q\}$ with dist$(A, D)>c_0$ for some $c_0>0$, which is bounded by two disjointed arcs $\gamma_1, \gamma_2$
($\gamma_1,\gamma_2\subset\Gamma_{\ld,L})$, the lines
$\{y=\alpha_1\}$ and $\{y=\alpha_2\}$. Denote $(\beta_i,\alpha_1)$
and $(\eta_i,\alpha_2)$ the endpoints of $\gamma_i$  for $i=1,2$.
Then there exists a constant $C>0$, such that
$$\alpha_2-\alpha_1\leq C\max\{|\beta_1-\beta_2|,|\eta_1-\eta_2|\}.$$

\end{lemma}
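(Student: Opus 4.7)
The plan is to argue by contradiction via an energy-comparison argument of Alt--Caffarelli type. Set $\delta = \alpha_2-\alpha_1$ and $w = \max\{|\beta_1-\beta_2|,|\eta_1-\eta_2|\}$, and suppose for contradiction that $\delta \ge C_0\, w$ for an arbitrarily large constant $C_0$ to be determined. I will construct a competitor that ``fills in'' the interior of $D$ with the value $Q$, producing less energy than $\psi_{\ld,L}$ and contradicting its minimality.

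First, I would gather two structural facts about $D$. The Lipschitz continuity of $\psi_{\ld,L}$ (Proposition \ref{ld1}(1) and Lemma \ref{ld2}), combined with $\psi_{\ld,L}=Q$ on $\gamma_1\cup\gamma_2$, gives
$$Q-\psi_{\ld,L}(X)\ \le\ C\ld\,\mathrm{dist}(X,\gamma_1\cup\gamma_2)\qquad\text{in }D.$$
The $C^{3,\alpha}$-regularity of $\Gamma_{\ld,L}$ (Proposition \ref{ld1}(3)) lets me parametrize each $\gamma_i$ as a smooth graph over its $y$-coordinate, so the horizontal width of $D$ at any height within vertical distance $s$ of $\alpha_1$ or $\alpha_2$ is at most $w+Cs$.

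Next, I would build the competitor. Pick a smooth cutoff $\chi(y)$ with $\chi\equiv 1$ on $[\alpha_1+s,\alpha_2-s]$, $\chi(\alpha_1)=\chi(\alpha_2)=0$, and $|\chi'|\le C/s$, where $s>0$ is a parameter to be chosen of order $w$. Define
$$\tilde\psi(X)=\begin{cases} \psi_{\ld,L}(X)+\chi(y)\bigl(Q-\psi_{\ld,L}(X)\bigr) & \text{if } X\in D,\\ \psi_{\ld,L}(X) & \text{if } X\notin D.\end{cases}$$
Since $\psi_{\ld,L}=Q$ on $\gamma_i$ and $\chi$ vanishes on $\{y=\alpha_1,\alpha_2\}$, $\tilde\psi$ is continuous across $\partial D$, so $\tilde\psi\in K_{\ld,L}$. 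Computing $J_{\ld,L}(\tilde\psi)-J_{\ld,L}(\psi_{\ld,L})$, the core $D_c=D\cap\{\alpha_1+s\le y\le \alpha_2-s\}$ satisfies $\tilde\psi\equiv Q$ and contributes a gain of at least $\ld^2|D_c|$ from the characteristic-function term (the $-|\nabla\psi_{\ld,L}|^2$ and $-F_0(\psi_{\ld,L})$ contributions are nonnegative since $F_0\le 0$ on $[0,Q]$). The transition strip has area $\le Cws$, and using the pointwise decay $Q-\psi_{\ld,L}\le C\ld w$ together with $|\chi'|\le C/s$, the total cost is at most $C\ld^2\bigl(w^3/s+w^2+ws\bigr)$. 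Choosing $s\sim w$ the cost is $O(\ld^2 w^2)$, and minimality of $\psi_{\ld,L}$ forces $|D_c|\le C w^2$.

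To convert this into $\delta\le Cw$, I would pair the upper bound $|D_c|\le Cw^2$ with a lower bound of the form $|D_c|\ge c\,\delta\, w$, obtained from a tubular neighborhood of $\gamma_1$ of radius comparable to $w$ inside $D$ whose density in $\{\psi_{\ld,L}<Q\}$ is controlled by the non-degeneracy estimate of Lemma \ref{lb8}. The main obstacle lies precisely in securing this lower bound when $D$ has highly oscillating width or pinches to nearly zero width at some intermediate height, in which case $|D_c|$ could be much smaller than $\delta w$. I anticipate handling this either by working on maximal sub-slabs of $D$ between successive pinch points and iterating the comparison on each, or by adapting an ACF-type monotonicity formula applied to subharmonic quantities attached to $\gamma_1$ and $\gamma_2$ to bound $\delta$ directly in terms of $w$.
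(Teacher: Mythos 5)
Your energy-comparison route is genuinely different from the paper's argument, which is a one-step flux computation: integrate $\Delta\psi_{\ld,L}+f_0(\psi_{\ld,L})=0$ over $D$, apply the divergence theorem, and observe that $\f{\p\psi_{\ld,L}}{\p\nu}=\ld$ on $\gamma_1\cup\gamma_2$, whose total length is at least $2(\alpha_2-\alpha_1)$, while the flux through the two horizontal pieces of $\p D$ is bounded by $C\max\{|\beta_1-\beta_2|,|\eta_1-\eta_2|\}$ via the Lipschitz estimate of Lemma \ref{ld2} and the volume term $\int_D f_0(\psi_{\ld,L})\,dX$ is easily absorbed. This yields $2\ld(\alpha_2-\alpha_1)\leq C\max\{|\beta_1-\beta_2|,|\eta_1-\eta_2|\}$ directly, with no competitor, no density estimate, and no geometric control on the interior width of $D$.

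Your plan, by contrast, has a gap that you flag yourself and that I do not see how to close within the proposal: the lower bound $|D_c|\geq c\,\delta\,w$. Nothing in the hypotheses prevents $D$ from pinching to near-zero width at intermediate heights, in which case $|D_c|$ is far smaller than $\delta w$ and the upper bound $|D_c|\leq Cw^2$ says nothing about $\delta$. The two suggested remedies are not developed; the slab-iteration looks circular, since controlling the geometry between successive pinch points is itself a non-oscillation statement, and an ACF-type monotonicity formula for this configuration would be a substantial new lemma, not a routine adaptation. There is also a sign slip in the bookkeeping: in $J_{\ld,L}(\tilde\psi)-J_{\ld,L}(\psi_{\ld,L})$, the $D_c$-contribution of the $F_0$-term is $F_0(Q)-F_0(\psi_{\ld,L})=-F_0(\psi_{\ld,L})\geq 0$, which is a \emph{cost}, not a gain; it can be absorbed only after bounding $Q-\psi_{\ld,L}$ in $D_c$ by the distance to $\gamma_1\cup\gamma_2$, which again requires the interior width control your argument lacks. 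The paper's flux argument sidesteps all of this.
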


\begin{proof}Denote $h=\max\left\{|\beta_1-\beta_2|,|\eta_1-\eta_2|\right\}$ and $\psi=\psi_{\ld,L}$. Since $\Delta\psi=-f_0(\psi)$ in $D$, we have
$$\int_{\p D}\f{\p\psi}{\p\nu}dS=-\int_{D}f_0(\psi)dX\leq\int_{D}-f_0(Q)+CQdX\leq C h(\alpha_2-\alpha_1)\leq Ch,$$
which implies that \be\label{b015}\int_{\gamma_1\cup\gamma_2}\ld
dS=\int_{\gamma_1\cup\gamma_2}\f{\p\psi}{\p\nu}dS\leq\int_{\p
D\cap(\{y=\alpha_1\}\cup\{y=\alpha_2\})}\left|\f{\p \psi} {\p
x}\right|dy+Ch\leq Ch,\ee  where we have used the Lipschitz
continuity of $\psi$ due to Lemma \ref{ld2}.

 On the other hand, one gets
$$\int_{\gamma_1\cup\gamma_2}\ld dS\geq 2\ld(\alpha_2-\alpha_1),$$
which together with \eqref{b015} gives that
$$\alpha_2-\alpha_1\leq Ch.$$

\end{proof}

With the aid of Lemma \ref{ld4}, we have

\begin{lemma}\label{ld5}
$k_{\ld,L}(x)$ is a continuous function in $[0,L]$.
\end{lemma}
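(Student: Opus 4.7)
The plan is to prove continuity of $k_{\ld,L}(x)$ at each point $x_0\in[0,L]$ by establishing the upper and lower semicontinuity separately, using the monotonicity of $\psi_{\ld,L}$ in $y$ from Lemma \ref{ld3}, the continuity of $\psi_{\ld,L}$ from Lemma \ref{ld2}, and the non-oscillation Lemma \ref{ld4}.

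For lower semicontinuity, take any sequence $x_n\to x_0$ and any $y$ with $0<y<k_{\ld,L}(x_0)$; by the very definition of $k_{\ld,L}$ we have $\psi_{\ld,L}(x_0,y)<Q$, so the continuity of $\psi_{\ld,L}$ gives $\psi_{\ld,L}(x_n,y)<Q$ for all $n$ sufficiently large, which forces $k_{\ld,L}(x_n)>y$. Sending $y\uparrow k_{\ld,L}(x_0)$ yields $\liminf_{n\to\infty}k_{\ld,L}(x_n)\ge k_{\ld,L}(x_0)$.

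Upper semicontinuity I argue by contradiction: suppose there exists a sequence $x_n\to x_0$ with $k_{\ld,L}(x_n)\to k^{\star}>k_{\ld,L}(x_0)$. First, using the monotonicity of $\psi_{\ld,L}$ in $y$ and the continuity of $\psi_{\ld,L}$, I show that every point of the vertical segment $\{x_0\}\times[k_{\ld,L}(x_0),k^{\star}]$ lies in $\Gamma_{\ld,L}$: any such $(x_0,y)$ satisfies $\psi_{\ld,L}(x_0,y)=Q$ by monotonicity, while the points $(x_n,y)$ with $y<k_{\ld,L}(x_n)$ lie in $\{\psi_{\ld,L}<Q\}$ for large $n$, giving $(x_0,y)\in\partial\{\psi_{\ld,L}<Q\}=\Gamma_{\ld,L}$. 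Fixing $\alpha_1,\alpha_2$ with $k_{\ld,L}(x_0)<\alpha_1<\alpha_2<k^{\star}$, the left arc $\gamma_1=\{x_0\}\times[\alpha_1,\alpha_2]$ is an arc of $\Gamma_{\ld,L}$ whose bottom and top endpoints both have $x$-coordinate $x_0$. On the right, since $k_{\ld,L}(x_n)\to k^{\star}>\alpha_2$, the graph $y=k_{\ld,L}(x)$ lies above $\alpha_2$ for $x$ slightly larger than $x_0$ (along the sequence) and must eventually descend through $y=\alpha_2$ at some $x^{*}$ and through $y=\alpha_1$ at some $x^{**}>x^{*}$, yielding the second arc $\gamma_2$. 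Together with the horizontal segments on $\{y=\alpha_1\}$ and $\{y=\alpha_2\}$, the arcs $\gamma_1$ and $\gamma_2$ enclose a subdomain of $\{\psi_{\ld,L}<Q\}$ to which Lemma \ref{ld4} applies, producing the estimate $\alpha_2-\alpha_1\le C(x^{**}-x_0)$.

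The main obstacle is converting this inequality into a genuine contradiction, since a priori $x^{**}-x_0$ is only constrained to be positive. To close the argument, I plan to exploit the blow-up analysis: by Lemma \ref{lc7}, blow-ups of $\psi_{\ld,L}$ at an interior point of the vertical segment are half-plane solutions whose free boundary is a vertical line. Rescaling appropriately along the sequence $x_n\to x_0$ and combining with the $C^{3,\alpha}$ regularity of $\Gamma_{\ld,L}$ from Theorem \ref{lc15} (which forces a smooth fit between the vertical segment and the adjacent smooth graph with a vertical tangent), I expect to show that the horizontal extent $x^{**}-x_0$ can be taken arbitrarily small on a suitably chosen rescaled domain while the vertical extent $\alpha_2-\alpha_1$ remains bounded below by the fixed positive quantity $k^{\star}-k_{\ld,L}(x_0)-2\epsilon$. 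This contradicts the non-oscillation inequality and concludes the proof.
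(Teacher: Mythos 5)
Your lower-semicontinuity argument is fine, and the reduction of upper semicontinuity to ruling out a vertical segment $\{x_0\}\times[k_{\ld,L}(x_0),k^\star]\subset\Gamma_{\ld,L}$ is the right starting point. But you have correctly diagnosed your own gap: the non-oscillation Lemma \ref{ld4} only bounds $\alpha_2-\alpha_1$ by $C\max\{|\beta_1-\beta_2|,|\eta_1-\eta_2|\}$, and nothing in your setup forces that horizontal separation to zero, so no contradiction emerges. Your proposed fix via blow-up limits and Theorem \ref{lc15} is not an argument but a hope; in particular, a vertical segment is itself a perfectly smooth $C^{1,\alpha}$ curve, so $C^{1,\alpha}$ (or even $C^{3,\alpha}$) regularity of the free boundary does \emph{not} by itself forbid the vertical piece, and the blow-up at an interior point of the segment just returns the vertical half-plane solution, again consistently. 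You never actually derive a contradiction.

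What closes the argument in the paper is a much more direct use of the free-boundary condition on the vertical segment $I_\delta$. Since $\psi_{\ld,L}=Q$ on $I_\delta$ and $\psi_{\ld,L}$ is monotone in $y$, the $y$-derivative $u=\p_y\psi_{\ld,L}$ vanishes on $I_\delta$; since moreover $|\g\psi_{\ld,L}|=\ld$ on this free-boundary piece (Proposition \ref{ld1}(4)), one has $\p_x\psi_{\ld,L}\equiv\ld$ there, hence $\p_y\p_x\psi_{\ld,L}=0$ along $I_\delta$. On the other hand $u\geq0$ satisfies the linear equation $\Delta u+f_0'(\psi_{\ld,L})u=0$ in $E_{\delta,\e}=\{x_0<x<x_0+\e,\ k_{\ld,L}(x_0)+\delta<y<k_{\ld,L}(x_0)+3\delta\}$, so the strong maximum principle gives $u>0$ in $E_{\delta,\e}$, and Hopf's lemma then forces $\p_x u>0$ on $I_\delta$, i.e.\ $\p_x\p_y\psi_{\ld,L}>0$ there, a contradiction. (The paper also uses the non-oscillation lemma, but only to show the one-sided limits $k_{\ld,L}(x_0\pm 0)$ exist; the actual exclusion of a jump is the Hopf argument.) You should replace your speculative blow-up step with this Hopf-lemma contradiction.
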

\begin{proof} By using the non-oscillation Lemma \ref{ld4}, it follows from the similar arguments in Lemma
5.4 in \cite{ACF3} that $k_{\ld,L}(x)$ has at most one limit point
as $x\downarrow 0$ and $x\uparrow L$. Moreover, $k_{\ld,L}(x)$ has most one limit
point as $x\uparrow x_0$ or as $x\downarrow x_0$ for any
$x_0\in(0,L)$. It suffices to show that
$$k_{\ld,L}(x+0)=k_{\ld,L}(x-0)=k_{\ld,L}(x)\ \ \text{for any $x\in(0,L)$}.$$
If not, without loss of generality, we assume that there exists a
point $x_0\in(0,L)$, such that
$k_{\ld,L}(x_0)<k_{\ld,L}(x_0+0)$. Denote
$I_\delta=\{(x_0,y)\mid
k_{\ld,L}(x_0)+\delta<y<k_{\ld,L}(x_0)+3\delta\}$ with
$\delta=\f{k_{\ld,L}(x_0+0)-k_{\ld,L}(x_0)}{4}$. The
monotonicity of $\psi_{\ld,L}(x,y)$ with respect to $y$ and the
Lipschitz continuity of $\psi_{\ld,L}(x,y)$ give that
$$\psi_{\ld,L}=Q\ \ \text{on}\ I_\delta,\ \ \text{$\Delta\psi_{\ld,L}+f_0(\psi_{\ld,L})=0$ and $u=\f{\p\psi_{\ld,L}}{\p y}\geq 0$ in
$E_{\delta,\e}$,}$$ where $E_{\delta,\e}=\{(x,y)\mid x_0<x<x_0+\e,
k_{\ld,L}(x_0)+\delta<y<k_{\ld,L}(x_0)+3\delta\}$
for small $\e>0$. Thus, $I_\delta$ is a part of the free boundary
$\Gamma_{\ld,L}$, it follows from (4) in Proposition \ref{ld1} that
\be\label{b150}\f{\p\psi_{\ld,L}(x_0+0,y)}{\p
x}=|\g\psi_{\ld,L}|=\ld\ \ \text{on}\ \ I_\delta.\ee It is easy to
check that $$\Delta u+b(X)u=0\ \ \text{in}\ \ E_{\delta,\e} \ \
\text{and}\ \ b(X)=f'_0(\psi_{\ld,L}(X))\leq 0.$$ The strong maximum
principle gives that $u>0$ in $E_{\delta,\e}$. In view of $u=0$ on
$I_\delta$, thanks to Hopf's lemma, we have
\be\label{b151}\f{\p^2\psi_{\ld,L}}{\p x\p y}=\f{\p u}{\p x}>0\ \
\text{on}\ \ I_\delta.\ee On the other hand, it follows from
\eqref{b150} that
$$\f{\p^2\psi_{\ld,L}}{\p y\p x}=0\ \ \text{on}\ \
I_\delta,$$ which contradicts to \eqref{b151}.

Hence, we obtain the continuity of the function $k_{\ld,L}(x)$.


\end{proof}

Next, we will introduce the bounded gradient lemma for
$\psi_{\ld,L}$ near the boundary of $D_{L}$.

\begin{lemma}\label{ld6} For any $X_0\in\p D_L$, if $B_R(X_0)\cap\Gamma_{\ld,L}\neq\varnothing$ for any $R>0$, then for small $r>0$, there exists a
constant $C$ independent of $Q$ and $r$, such that
\be\label{b152}|\g\psi_{\ld,L}(X)|\leq C\ \ \text{in
$B_r(X_0)\cap\O_L$}.\ee
\end{lemma}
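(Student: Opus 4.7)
The plan is to adapt the interior Lipschitz estimate of Lemma \ref{lb4} and Theorem \ref{lb5} to handle accumulation points of the free boundary on $\partial D_L$. I would first set $\phi = Q - \psi_{\lambda,L}$, which fits directly into the framework of Section 2: it satisfies $\Delta\phi = f(\phi)$ in $\{\phi>0\}$ and $|\nabla\phi| = \lambda$ on $\Gamma_{\lambda,L}$, while $\phi = 0$ on those portions of $\partial D_L$ adjacent to $\{\psi_{\lambda,L} = Q\}$. Because $X_0 \in \partial D_L$ is an accumulation point of $\Gamma_{\lambda,L}$, continuity together with the prescribed Dirichlet data forces $X_0$ to lie on the flat segment $I_{0,L} \subset \{x=0\}$ (where $\phi = 0$), since $\Gamma_{\lambda,L}$ cannot accumulate on a portion of $\partial D_L$ where $\phi$ is prescribed to be positive.

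Next I would perform an odd reflection across $\{x=0\}$: define $\tilde\phi(x,y) = \phi(x,y)$ for $x \geq 0$ and $\tilde\phi(x,y) = -\phi(-x,y)$ for $x<0$, in a small neighborhood of $X_0$. Since $\phi = 0$ on $I_{0,L}$ near $X_0$, the reflected function $\tilde\phi$ is continuous, and on $\{\tilde\phi \neq 0\}$ it satisfies $\Delta\tilde\phi = \tilde f(\tilde\phi)$ with $\tilde f$ the odd extension of $f$. The free boundary of $\tilde\phi$ is exactly the union of $\Gamma_{\lambda,L}$ with its reflection, and the Bernoulli condition $|\nabla\tilde\phi| = \lambda$ persists across the reflection axis. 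I would then verify that $\tilde\phi$ is a local minimizer of the naturally reflected variational problem on a ball $B_{2r}(X_0)$, after which Theorem \ref{lb5} applied to $\tilde\phi$ yields $|\nabla\tilde\phi| \leq C$ in $B_r(X_0)$, with $C$ depending only on $\Lambda$, $\lambda_1$, $\lambda_2$, and the geometry (in particular independent of $Q$, since the estimates in Section 2 depend on $\Lambda$ but not on the maximum of $\phi$). Restricting to $x>0$ gives the desired bound $|\nabla\psi_{\lambda,L}| = |\nabla\phi| \leq C$ in $B_r(X_0) \cap D_L$; for the remaining part of $B_r(X_0)\cap \Omega_L$ lying in $\Omega_L \setminus D_L$ the standard interior Schauder estimate applied to $\Delta\psi_{\lambda,L} + f_0(\psi_{\lambda,L}) = 0$ together with Lemma \ref{ld2} completes the bound.

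The main obstacle is rigorously justifying the reflection step, namely that $\tilde\phi$ is an admissible local minimizer of the reflected energy functional. One must show that the reflected energy $\int |\nabla\tilde\phi|^2 + \tilde F(\tilde\phi) + \lambda^2 I_{\{\tilde\phi > 0\}\cup\{\tilde\phi < 0\}}$ is invariant under odd reflection, that competitors in the reflected class can be odd-symmetrized without increasing the energy, and that the hypothesis \eqref{c0} on $f$ transfers to the odd extension $\tilde f$ on a suitable truncated range (so that the results of Section 2 apply). Once the reflection is validated, all of the interior machinery — the non-degeneracy Lemma \ref{lb7}, the Lipschitz bound of Theorem \ref{lb5}, and Harnack's inequality applied along a chain of balls staying away from $A$ — carries over verbatim, and the constant $C$ inherits the stated independence from $Q$ and $r$. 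Should the symmetrization be inconvenient to set up, a backup strategy is a direct barrier argument in the spirit of Lemma \ref{ld2}: construct an explicit supersolution $w$ of $\Delta w + f(w) = 0$ in a half-ball at $X_0$ with $w = 0$ on $\{x=0\}$ and $w \geq \phi$ on the curved boundary, then combine the bound $\phi \leq w$ with the interior estimate to bound $|\nabla\phi|$ uniformly up to $X_0$.
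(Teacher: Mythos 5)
The key gap is that your argument does not handle the case $X_0=A$, which is precisely the case the paper's proof is designed for (Lemma~\ref{ld2} already gives the Lipschitz bound on every compact subset of $\bar\Omega_L$ away from $A$, so the whole content of Lemma~\ref{ld6} is the behaviour near $A$ and other boundary accumulation points). You assert that an accumulation point of $\Gamma_{\ld,L}$ on $\p D_L$ must lie on the flat segment $I_{0,L}$, but $A=(0,a)$ is an endpoint of $I_{0,L}$ at which the nozzle wall $N$ also meets the boundary, and by the continuous-fit construction the free boundary is expected to detach exactly at $A$. Near $A$ the boundary of the domain is a corner formed by the curved arc $N$ and the vertical ray $I_{0,L}$, not a single flat segment $\{x=0\}$, so the odd reflection across $\{x=0\}$ simply does not produce a well-defined extension of $\phi$ on a full ball $B_{2r}(A)$. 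The same obstruction prevents you from applying Theorem~\ref{lb5} to a reflected minimizer there. In addition, even at interior points of $I_{0,L}$ the odd extension $\tilde f(s)=-f(-s)$ that you need on the reflected phase $\{\tilde\phi<0\}$ is governed by the values of $f$ on the positive axis, which hypothesis \eqref{c0} does not control; you flag this yourself, but it is not a detail one can wave away, since the entire Section~2 machinery (Lemma~\ref{lb4}, Lemma~\ref{lb7}, Theorem~\ref{lb5}) is built on those sign and monotonicity assumptions.

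The paper's proof takes a different route that avoids both problems. Rather than reflecting, it fixes a small annulus $\{\rho<|X-A|<2\rho\}$ around $A$, rescales by $\rho$ so that the annulus becomes $\{1<|\tilde X|<2\}$ and sets $\phi_\rho=Q/\rho-\psi_\rho$; in the rescaled picture the pieces of $\p\Omega_L$ inside $\{1/2<|\tilde X|<5/2\}$ are $C^{2,\alpha}$ curves on which $\phi_\rho$ vanishes (the corner is excised because the annulus stays away from the origin), so the Harnack inequality holds up to that boundary, and then the interior arguments of Theorem~\ref{lb5} together with the barrier construction of Lemma~\ref{ld2} give $|\nabla\phi_\rho|\le C$ in $\{1<|\tilde X|<2\}$ with $C$ independent of $Q/\rho$. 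Undoing the rescaling and letting $\rho\downarrow 0$ gives the bound near $A$, and the same argument applies verbatim to any other accumulation point on $\p D_L$. Your backup strategy, a direct barrier argument "in the spirit of Lemma~\ref{ld2}," is actually closer to what the paper does and is the part of your proposal worth developing; the reflection idea, as written, has to be abandoned at the corner.
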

\begin{proof}
We first consider the case $X_0=A$. It suffices to show
that
$$|\g\psi_{\ld,L}(X)|\leq C\ \ \text{in
$\O_L\cap\{\rho<|X-A|<2\rho\}$},$$ for any small $\rho>0$.

Denote $\psi_\rho(\t X)=\f1\rho \psi_{\ld,L}(A+\rho \t X)$,
$D_1=\left\{\t X\mid \f12<|\t X|<\f52, A+\rho\t X\in\O_L\right\}$
and $D_2=\left\{\t X\mid 1<|\t X|<2, A+\rho\t X\in\O_L\right\}$.
It follows from Proposition \ref{ld1} that
$$0\leq\psi_\rho\leq\f{Q}{\rho}\ \ \text{in $D_1$, $\psi_\rho=\f{Q}{\rho}$ and $\f{\p\psi_\rho}{\p\nu}=\ld$ on the free boundary}.$$
Denote $\phi_\rho=\f{Q}{\rho}-\psi_\rho$, it is easy to check
that
$$0\leq\phi_\rho\leq\f{Q}{\rho}\ \ \text{and}\ \ \Delta\phi_\rho+\rho f(\rho\phi_\rho)=0\ \ \text{ in
$D_1\cap\{\phi_\rho>0\}$}.$$ Obviously, $D_2\subset D_1$. Since
the boundary $\p D_2\cap\{\t X\mid A+\rho\t X\in\p\O_L\}$ is
$C^{2,\alpha}$, and $\phi_\rho=0$ on $\p D_2\cap\{\t X\mid
A+\rho\t X\in\p\O_L\}$, then the Harnack's inequality is still
valid up to the boundary $\p D_2\cap\{\t X\mid A+\rho\t
X\in\p\O_L\}$. By using the similar arguments in Theorem \ref{lb5}
and Lemma \ref{ld2}, we can obtain that
$$|\g\phi_\rho(\t X)|\leq C\ \ \text{in $D_2$},$$ where $C$ is a constant independent
of $\f{Q}{\rho}$. This implies  the estimate \eqref{b152}.

For the other case $X_0\in\p D_L$ with
$B_R(X_0)\cap\Gamma_{\ld,L}\neq\varnothing$ for any $R>0$, we can
show that the estimate \eqref{b152} is still valid by using the
above arguments.

\end{proof}

For the point $X_0=(x_0,y_0)\in(\p D_L\cap\bar\Gamma_{\ld,L})$,
there is a possible case that
$B_r(X_0)\cap\Gamma_{\ld,L}\cap\{x<x_0\}\neq\varnothing$ and
$B_r(X_0)\cap\Gamma_{\ld,L}\cap\{x>x_0\}\neq\varnothing$ for any
$r>0$, and thus for the $C^1$-regularity of $\psi_{\ld,L}$ and
$\Gamma_{\ld,L}$ at $X_0$, here we can not use the smooth fit
condition in Theorem 6.1 and Lemma 6.4 in \cite{ACF9} directly. (see
Figure \ref{f11})

Therefore, we will estimate the $C^1$-regularity of $\psi_{\ld,L}$
and $\Gamma_{\ld,L}$ near the free boundary $\p
D_L\cap\bar\Gamma_{\ld,L}$ as follows.

\begin{figure}[!h]
\includegraphics[width=120mm]{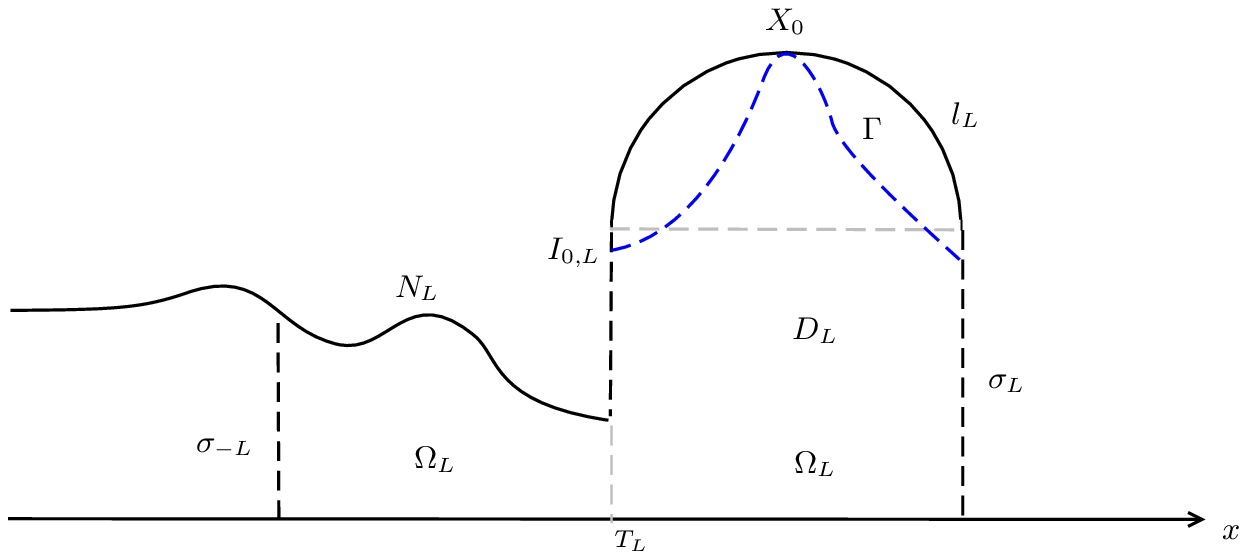}
\caption{The regularity near the free boundary}\label{f11}
\end{figure}

\begin{proposition}\label{lbb3}For any $X_0=(x_0,y_0)\in \p D_L$, if $B_r(X_0)\cap\Gamma_{\ld,L}\neq\varnothing$ for any $r>0$, then we have
$$\g\psi_{\ld,L}(X)\rightarrow\ld\nu_0\ \  \text{as}\ \ \ X\rightarrow X_0, \ \ X\in\O_L\cap\{\psi_{\ld,L}<Q\},$$ where $\nu_0$ is the outer normal vector to $\p D_L\cap\p\{\psi_{\ld,L}<Q\}$ at $X_0$. Moreover, $k_{\ld,L}(x)$ is $C^1$-smooth at $X=X_0$.
\end{proposition}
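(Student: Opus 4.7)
The plan is to carry out a blow-up analysis at $X_0$ and classify the resulting blow-up limit as a half-plane solution. Set $\phi = Q - \psi_{\ld,L}$, so that $\phi$ is a nonnegative minimizer of the functional studied in Section 2 (via the transformation indicated in the preceding remark), with free boundary $\Gamma_{\ld,L}$. The key preparatory inputs are the uniform Lipschitz estimate for $\phi$ in a neighborhood of $X_0$ supplied by Lemma \ref{ld2} and Lemma \ref{ld6}, together with the non-degeneracy and regularity theory of Sections 2--3 adapted up to the smooth part of $\p D_L$ passing through $X_0$.

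First, I would choose a sequence $X_n \in \Gamma_{\ld,L}$ with $\rho_n = |X_n - X_0| \to 0$ and set $\phi_n(X) = \rho_n^{-1}\phi(X_0 + \rho_n X)$. By the boundary Lipschitz bound, $\{\phi_n\}$ is equicontinuous on compact sets, so up to a subsequence $\phi_n \to \phi_0$ locally uniformly and $\g\phi_n \to \g\phi_0$ weakly-$*$ in $L^\infty_{loc}$. Since $X_0 \neq A$ and $\p D_L$ is $C^{2,\alpha}$ at $X_0$ (by Proposition \ref{ld1}(3) in the interior together with the smoothness of $N$, $T_L$, $I_{0,L}$, $\sigma_L$, $l_L$), the rescaled domains exhaust a half-plane $H$ whose boundary is the tangent line to $\p D_L$ at $X_0$. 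Arguing exactly as in Lemma \ref{lc2} and extending those convergences up to $\p H$ by the $C^{2,\alpha}$-regularity of $\p D_L$, $\phi_0$ is a local minimizer on $H$ of the limiting functional $J_0(\phi) = \int_H (|\g\phi|^2 + \ld^2 I_{\{\phi>0\}})\,dX$ with the Dirichlet datum $\phi_0 = 0$ on $\p H$ (inherited from $\phi = 0$ on $I_{0,L}$, on $N$, or on the relevant flat boundary piece).

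The next step is to classify $\phi_0$. Non-degeneracy (Lemmas \ref{lb6}--\ref{lb7}) passes to the blow-up limit, giving $0 \in \p\{\phi_0 > 0\}$ and linear growth $\ld$ away from the free boundary. Invoking Lemma \ref{lc7}, together with its boundary version obtained by odd reflection of $\phi_0$ across $\p H$ (legitimate because $\phi_0 = 0$ on $\p H$ and the harmonic governing equation $\Delta\phi_0 = 0$ in $\{\phi_0>0\}$ is preserved by such reflection), one deduces that $\phi_0(X) = \ld\max\{X\cdot\nu,\, 0\}$ for some unit vector $\nu$. The vanishing of $\phi_0$ on $\p H$ forces $\{\phi_0 > 0\}$ to lie entirely on one side of $\p H$, so $\nu$ must be normal to $\p H$ and coincides with $\nu_0$ under the outer-normal convention of the proposition. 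Because the limit is independent of the sequence $\{X_n\}$, the blow-up is unique, which combined with the $C^{1,\alpha}$ boundary regularity from Theorem \ref{lc14} (applied up to the smooth part of $\p D_L$ via reflection) yields $\g\phi(X)\to -\ld\nu_0$, equivalently $\g\psi_{\ld,L}(X)\to\ld\nu_0$ as $X\to X_0$ in $\O_L \cap\{\psi_{\ld,L}<Q\}$.

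Finally, the $C^1$-smoothness of $k_{\ld,L}$ at $x_0$ is a consequence of this gradient limit together with the defining relation $\psi_{\ld,L}(x, k_{\ld,L}(x)) = Q$ on the free boundary: since the limiting gradient $\ld\nu_0$ is nonzero and transverse to the horizontal direction (by the classification of $\phi_0$), an implicit-function argument in the limiting sense gives existence and equality of the one-sided derivatives $k'_{\ld,L}(x_0\pm 0)$, both equal to the slope determined by $\nu_0$. The principal anticipated difficulty is the boundary classification step, namely extending the half-plane classification of Lemma \ref{lc7} to a half-plane with a straight Dirichlet arc and ruling out asymmetric linear profiles of the form $\alpha\max\{-X\cdot\nu_1,0\} + \beta\max\{X\cdot\nu_1,0\}$ with $\beta\neq 0$; this is handled by combining the Dirichlet boundary condition with the non-oscillation Lemma \ref{ld4} and the density estimate of Lemma \ref{lb8}, which together preclude the free boundary of $\phi_0$ from accumulating on $\p H$ away from the normal direction.
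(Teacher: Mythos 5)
Your overall plan (blow up at $X_0$, classify the blow-up limit as a half-plane profile, then read off the gradient limit and the $C^1$ fit) mirrors the paper's, but the central classification step is not correctly handled, and this is exactly where the real difficulty lies.

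The proposed odd reflection of $\phi_0$ across $\p H$ to apply Lemma \ref{lc7} does not work: odd reflection produces a sign-changing function, which destroys the one-phase variational structure on which Lemma \ref{lc7} rests, and in any case it does not resolve the crucial question of whether the free boundary $\p\{\phi_0>0\}$ can meet $\p H$ at an angle other than $\pi/2$. That possibility is the genuine obstacle, and the paper rules it out through a chain of arguments you do not deploy: (i) a boundary gradient bound $|\g\phi_0|\leq\ld$, proved first via an open-mapping property of $\g\phi_0$ to reduce to the constant-gradient case, and then—when the flat part of $\{\phi_0=0\}$ touches $\p H$—via a delicate barrier comparison with the Hopf lemma (the construction around \eqref{h7}); (ii) the curvature relation \eqref{h9} which forces $\{\phi_0=0\}$ to be convex; (iii) a Phragm\'en--Lindel\"of argument when $\phi_0>0$ in the whole lower half-plane; and (iv) a corner decay estimate \eqref{h10} combined with non-degeneracy \eqref{h11} to contradict any free boundary arc intersecting $\p H$ at an angle $\theta\in[\pi/2,\pi)$. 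Your invocation of Lemmas \ref{ld4} and \ref{lb8} to ``preclude the free boundary of $\phi_0$ from accumulating on $\p H$ away from the normal direction'' is a hope, not an argument: those lemmas give local non-oscillation and positive density, neither of which controls the \emph{angle} at which a free boundary line hits $\p H$.

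Two secondary issues. First, you rescale at free boundary points $X_n\in\Gamma_{\ld,L}$, whereas the paper rescales at $X_n\in\p D_L$ with scale $r_n=|X_n-X_0|$; for $X_n\in\Gamma_{\ld,L}\to X_0$ the ratio $\mathrm{dist}(X_n,\p D_L)/|X_n-X_0|$ is not controlled a priori, so your rescaled domains need not converge to a half-plane. Second, even granting the half-plane classification, uniqueness of the blow-up limit does not by itself yield $\g\psi_{\ld,L}(X)\to\ld\nu_0$ for $X\to X_0$ along \emph{arbitrary} interior approaches; the paper obtains this by splitting into the two regimes $r_n\leq C_0 d_n$ and $r_n/d_n\to\infty$ and applying, respectively, interior $C^{1,\alpha}$ estimates up to the smooth part of $\p D_L$, and the flatness-implies-$C^{1,\alpha}$ theory (Theorem \ref{lc15}) near the limiting flat free boundary. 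You should add this step explicitly.
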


\begin{proof}

 Without loss of generality, we assume $X_0=\left(\f{L}2,\f{3L}2\right)$ and $\nu_0=(0,1)$.

Suppose $X_n\rightarrow X_0\in \p D_L$ with $X_n\in \p D_L$, and $\phi_0$ is a blow-up limit of $\phi_n(X)=\f{Q-\psi_{\ld,L}(X_n+r_nX)}{r_n}$, where $r_n=|X_n-X_0|$ and $X\in B_R(0)$ for any $R>0$.  By virtue of the results of Subsection 3.2, there exists a blow-up limit $\phi_0$, such that
\be\label{h1}\phi_0(x,y)=0\ \ \text{for any $y\geq0$, and }\ \Delta\phi_0=0\ \ \text{in $\{\phi_0>0\}$}.\ee It is easy to check that $B_{2}(0)\cap\p\{\phi_n>0\}\neq\varnothing$, we next claim that
\be\label{h2}\phi_0(X)=-\ld\min\{y-y_1,0\}\ \ \text{in $\mathbb{R}^2$ for some $y_1\leq 0$}.\ee

Consider the complex $z$-plane with $z=x+iy$. Let $l$ be a straight line in $z$-plane with the direction $(0,1)$, and passing through the origin. Set
$$\mathcal{D}_\ld=\{z\mid|z|\leq\ld\}\ \ \text{and}\ \ \mathcal{C}_\ld=\p\mathcal{D}_\ld.$$  Since the blow-up limit $\phi_0$ is still a harmonic function, we can use the similar arguments in the proof of Lemma 6.2 in \cite{ACF9} to show that
\be\label{h3}\limsup_{\phi(X)>0, X\rightarrow X_0}\text{dist}(\g\psi_{\ld,L}(X),\mathcal{D}_\ld\cup l)=0.\ee

Next, we will show that
\be\label{h4}|\g\phi_0(X)|\leq \ld.\ee
Suppose not, it follows from \eqref{h3} that
 \be\label{h5}\text{$\g\phi_0(X_0)\in l$ and $|\g\phi_0(X_0)|>\ld$ for some $X_0\in\{\phi_0>0\}$.}\ee
For the harmonic function $\phi_0$, one has
$$\text{if $
\g\phi_0\neq$const in $E$, then $X\rightarrow\g\phi_0(X)$ is an open mapping in $E$}$$ for any compact subsect $E$ of $\{\phi_0>0\}$. This together with \eqref{h5} implies that
\be\label{h6}\text{$\g\phi_0=$const in $\{\phi_0>0\}$}.\ee

Since $\phi_0(X)=0$ for any $y\geq0$, it follows from \eqref{h6}
that $\phi_0$ is linear in $E$ respect to $y$, and $\p E$ is a
straight line of the form $\{y=y_1\}$ with $y_1\leq0$. Thus, there
exists a constant $\gamma>\ld$, such that
$$\phi_0(X)=-\gamma(y-y_1)\ \ \ \text{in}\ \ \{y<y_1\},\ \ \ y_1\leq 0.$$ We next consider the following two cases for $y_1$.

{\bf Case 1.} $y_1<0$. In view of Lemma \ref{lc2}, $\phi_0$ is a local minimizer in $\{y<0\}$ and $\p\{\phi_0>0\}\subset\{y<0\}$, and Theorem 2.5 in \cite{AC1} gives that $|\g\phi_0|=\ld=\gamma$, which contradicts to $\gamma>\ld$.

{\bf Case 2.} $y_1=0$. Since $\phi_n\rightarrow\phi_0$ uniformly in
any compact subset of $\mathbb{R}^2$, for any $R>0$ and $\e>0$,
there exists a $N=N(\e,R)$, such that
$$\phi_n>0\ \ \text{in}\ \ B_R(0)\cap\{y<-\e\},\ \ \text{for}\ \ n>N.$$

Noticing that there are free boundary points $Z_n=(s_n,t_n)$ of $\phi_n$ in $B_{2}(0)$, the assumption $y_1=0$ implies that $t_n\rightarrow0$. Without loss of generality, we assume that $s_n\rightarrow 0$. Therefore,
$$|s_n|<\f{\e}4\ \ \ \text{and}\ \ \ |t_n|<\f\e4\ \ \ \text{for sufficiently large $n$}.$$
Introduce a function $h_{\delta,n}(x)$ as follows
$$h_{\delta,n}(x)=-2\e+\delta\eta_n(x),\ \ \ \text{for some}\ \ \delta>0,$$
where $$\eta_n(x)=\left\{\begin{array}{ll}
e^{-\f{9|x-\tau_n|^2}{1-9|x-\tau_n|^2}}&\text{for}\
\ |x-\tau_n|<\f13,\\
0 &\text{for}\ \ |x-\tau_n|\geq\f13,
\end{array}\right.$$ for a sequence $\tau_n\rightarrow 0$. Denote the domain
$E_{\delta,n}=B_1(0)\cap\{y<h_{\delta,n}(x)\}$, and choose a largest $\delta=\delta_n$, such that
$\phi_n>0$ in $E_{\delta_n,n}$ and $B_1(0)\cap\p E_{\delta_n,n}$
contains a free boundary point $\t Z_n=(\t s_n,\t t_n)$ of $\phi_n$. The $\tau_n\in\left(-\f\e n,\f\e n\right)$ can be chosen such that $\delta_n<2\e$ and $\t y_n=h_{\delta_n,n}(\t x_n)$.

Let $\o_n$ be the solution of the following Dirichlet problem
$$\left\{\begin{array}{ll}&\Delta\o_n+r_nf(r_n\o_n)=0\ \ \ \ \text{in}\ E_{\delta_n,n},\\
&\o_n=0\ \ \ \text{on}\ \p E_{\delta_n,n}\cap B_{\f 12}(0),\
\o_n=\zeta\phi_n\ \text{on}\ \p E_{\delta_n,n}\cap(
B_1(0)\setminus B_{\f 12}(0)),\\
&\o_n=\phi_n\ \ \ \text{on}\ \p E_{\delta_n,n}\cap \p B_1(0),
\end{array}\right.$$ where $\zeta(X)=\min\left\{\max\left\{2|X|-1,0\right\},1\right\}$.
Obviously, $\phi_n\geq\o_n$ on $\p E_{\delta_n,n}$ and $\Delta\phi_n+r_nf(r_n\phi_n)=0$ in $E_{\delta_n,n}$, the strong maximum principle gives that $\o_n<\phi_n$
in $E_{\delta_n,n}$. Thanks to Hopf's Lemma, one has
 \be\label{h7}\left|\f{\p\phi_n}{\p\nu_n}\right|>
\left|\f{\p\o_n}{\p\nu_n}\right|\ \ \ \text{at}\ \ \t Z_n,\ee where $\nu_n$ is the inner normal
vector to $E_{\delta_n,n}$ at $\t Z_n$.

The definition of $h_{\delta_n,n}$ gives that
$\|h_{\delta_n,n}\|_{C^{2,\alpha}}\leq C\e $, where the constant $C$
is independent of $\e$ and $n$. Applying the regularity theory for
the semilinear elliptic equation yields that
$$|\g\o_n-\g\phi_0|\leq C_1\e+C_2r_n \ \ \ \text{at}\ \ \ \t Z_n,$$
where the constants $C_1,C_2$ are independent of $\e$ and $n$, which together with \eqref{h7} implies that
$$\ld=|\phi_n(\t Z_n)|>|\g\o_n(\t Z_n)|\geq |\g\phi_0(\t Z_n)|-C_1\e-C_2r_n=\gamma-C_1\e-C_2r_n.$$ This leads a contradiction to the assumption $\gamma>\ld$, provided that $\e$ is small enough and $n$ is sufficiently large.

Hence, we complete the proof of \eqref{h4}.

Next, we will show the claim \eqref{h2}. Since $\phi_0$ is harmonic in $\{\phi_0>0\}$, it follows from Lemma 7.2 in \cite{ACF9} that
\be\label{h9}\f{\p q}{\p \nu}+\kappa q=0\ \ \text{on the free boundary of $\phi_0$},\ee where $q=|\g\phi_0|$, $\nu$ is the outer normal vector and the curvature $\kappa>0$ if the streamline is concave to the
fluid. Due to $|\g\phi_0|\leq\ld$ and $|\g\phi_0|=\ld$ on the free boundary $\p\{\phi_0>0\}$,  it follows from \eqref{h9} that $\p\{\phi_0>0\}$ is convex to the domain $\{\phi_0>0\}$, and the set $\{\phi_0=0\}$ is convex.

If $\phi_0>0$ in $\{y<0\}$. Denote $\o=\p_x\phi_0$, it is easy to see that $\o$ is harmonic in $\{\phi_0>0\}$. The fact $\phi_0=0$ on $\{y=0\}$ implies that $\o=0$. In view of $|\g\phi_0|\leq \ld$, one has
$$\lim_{r\rightarrow+\infty}\f{m(r)}{r}=0\ \  \ \text{where $r=|X|$ and $m(r)=\max_{|X|=r}|\o(X)|$}. $$ Therefore, thanks to Phragm\'en-Lindel\"of
Theorem (see also Theorem 1.1 in \cite{G1}), we have
$$\o=\p_x\phi_0=0\ \ \text{in}\ \ \{y<0\},$$ which implies that $\phi_0(X)$ is a function of $y$ and $$\phi_0(y)=-\gamma y\ \ \ \text{in}\ \ \{y<0\}.$$ It follows from Theorem 2.5 in \cite{AC1} that $\gamma=\ld$, and thus the claim \eqref{h2} holds.

If there exists a free boundary point $\t X=(\t x,\t y)$ of $\phi_0$ with $\t y<0$, denote $\Gamma_0$ the maximal free boundary arc containing $\t X$. We next consider the following two cases.

{\bf Case 1.} The free boundary $\Gamma_0$ is a straight line $l$. The fact $\phi_0=0$ in $\{y\geq0\}$ implies that the straight line $l$ must be parallel to the $x$-axis. Moreover, $\phi_0>0$ below $l$, due to that $\phi_0(X)$ is monotone decreasing with respect to $y$. Thus one has
$$l:\{y=y_1\}\ \ \ \text{and}\ \ \ \phi_0(y)=-\ld\min\{y-y_1,0\}.$$

{\bf Case 2.} The free boundary $\Gamma_0$ is not a straight line.
We next consider the following two subcases (see Figure \ref{f9}).

\begin{figure}[!h]
\includegraphics[width=110mm]{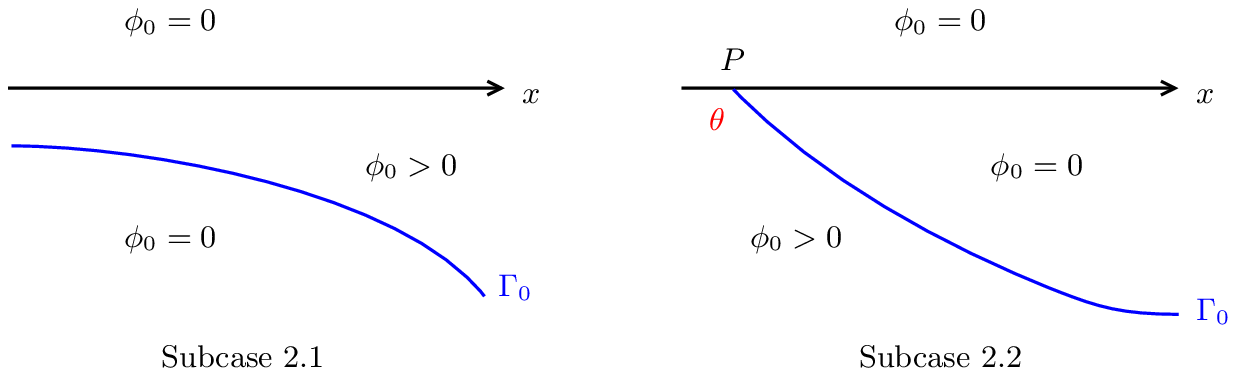}
\caption{Case 2}\label{f9}
\end{figure}

{\bf Subcase 2.1.} $\phi_0>0$ above $\Gamma_0$. Since $\phi_0(X)$ is monotone decreasing with respect to $y$, we can rule out this subcase.

 {\bf Subcase 2.2.} $\phi_0>0$ below $\Gamma_0$. The convexity of the free boundary $\Gamma_0$ implies that $\Gamma_0$ must intersect the $x$-axis, denote $P$ the intersection point. Moreover, the convexity of $\Gamma_0$ and monotonicity of $\phi_0$ with respect to $y$ imply that the angle $\th$ between $\Gamma$ and $x$-axis at point $P$ lies in $\left[\f\pi2,\pi\right)$. Applying the similar arguments in the proof of (2.30) in \cite{DX}, one has
\be\label{h10}\phi_0(X)\leq C|X-P|^{\f{2\pi}{\th+\pi+\e}}\ \  \ \text{near the intersection point $P$},\ee for small $\e\in\left(0,\f{\pi-\th}{4}\right)$. Denote $r=|X-P|$, using the non-degeneracy Lemma 3.4 and Remark 3.5 in \cite{ACF1}, we have
\be\label{h11}\f{1}{r}\fint_{\p B_r(P)}\phi_0dS\geq c_*\ld.\ee
 On the other hand, it follows from \eqref{h10} and \eqref{h11} that
  $$c_*\ld\leq\f{1}{r}\fint_{\p B_r(P)}\phi_0dS\leq C\f{r^{\f{2\pi}{\th+\pi+\e}}}{r}=C r^{\f{\pi-\th-\e}{\pi+\th+\e}},$$ which leads a contradiction, provided that $r$ is small enough.

  Hence, the proof of the claim \eqref{h2} is done.

With the aid of the claim \eqref{h2}, we will show that
\be\label{h12}\g\psi_{\ld,L}(X)\rightarrow\ld(0,1)\ \  \text{as}\ \ \ X\rightarrow X_0, \ \ X\in\O_L\cap\{\psi_{\ld,L}<Q\}.\ee
For any sequence of points $X_n=(x_n,y_n)\in\O_L\cap\{\psi_{\ld,L}<Q\}$ with $X_n\rightarrow X_0$, let $\t X_n=(\t x_n,\t y_n)\in \p D_L$ be the nearest point to $X_n$. Denote
$$r_n=|X_n-\t X_n|\ \ \ \text{and}\ \ \ d_n=\text{dist}(X_n,\Gamma_{\ld,L}).$$ Next, we consider the following two cases.

{\bf Case 1.} There exists a constant $C_0>1$, such that $r_n<C_0d_n$. Define a blow-up sequence $$\phi_n(X)=\f{Q-\psi_{\ld,L}(\t X_n+C_0d_nX)}{C_0d_n}\ \ \ \text{and}\ \ \ Z_n=\f{X_n-\t X_n}{C_0d_n}.$$ It is easy to check that $B_{2}(0)$ contains some free boundary points of $\phi_n$ and $B_{2c_0}(Z_n)\cap\p\{\phi_n>0\}=\varnothing$, which does not intersect the free boundary of $\phi_n$, where $c_0=\f{1}{4C_0}$. Moreover,
\be\label{h13}\Delta\phi_n+C_0d_nf(C_0x_n\phi_n)=0 \ \text{in $B_{2c_0}(Z_n)\cap\{\phi_n>0\}$}.\ee There exist a blow-up limit $\phi_0$ and a point $Z_0$ with $|Z_0|\leq 1$, such that
$$\phi_n\rightarrow\phi_0\ \ \text{in any compact subsets of $\mathbb{R}^2$ and $Z_n\rightarrow Z_0$.}$$
Due to the fact that $B_{2c_0}(Z_0)$ does not intersect the free boundary of $\phi_0$ and $\p D_L$ is $C^2$-smooth near $X_0$, one has
$$\text{$\phi_n\in C^{1,\alpha}$ in $B_{c_0}(Z_n)\cap\{\phi_n>0\}$ uniformly with respect to $n$}.$$ Then we have
\be\label{b22}\text{$-\g\psi_{\ld,L}(X_n)=\g\phi_n(Z_n)\rightarrow\g\phi_0(Z_0)$, as $n\rightarrow\infty$},\ee
which together with \eqref{h2} gives that \eqref{h12} holds.

{\bf Case 2.} $\f{r_n}{d_n}\rightarrow+\infty$. Define a blow-up
sequence
$$\phi_n(X)=\f{Q-\psi_{\ld,L}(\t X_n+r_nX)}{r_n}\ \ \ \text{and}\ \ \ Y_n=\f{X_n-\t X_n}{r_n}.$$ Obviously, $B_{2}(0)$ contains the free boundary points of $\phi_n$ for sufficiently large $n$. Then there exist a blow-up limit $\phi_0$ and a point $Y_0$ with $|Y_0|=1$, such that
$$\phi_n\rightarrow\phi_0=-\ld\min\{y-y_1,0\}\ \ \text{in any compact subsets of $\mathbb{R}^2$ and $Y_n\rightarrow Y_0$,}$$ where $y_1\leq 0$. Since the free boundary $\p\{\phi_n>0\}\rightarrow\p\{\phi_0>0\}$ locally in the Hausdorff distance (see Lemma \ref{lc2}) and $\p\{\phi_0>0\}$ is a straight line, we can conclude that the free boundary of $\phi_n$ satisfies the flatness condition at $(0,y_1)$ in the direction $(0,1)$, for sufficiently large $n$. By virtue of Theorem \ref{lc15}, one has the free boundary of $\phi_n$ is $C^{1,\alpha}$ $(0<\alpha<1)$ uniformly in $n$. The elliptic regularity gives that $\phi_n$ is uniformly $C^{1,\alpha}$ in $B_R(Y_n)\cap\{\phi_n>0\}$ for some $R>0$. Thus we have
$$\text{$-\g\psi_{\ld,L}(X_n)=\g\phi_n(Y_n)\rightarrow\g\phi_0(Y_0)=-\ld(0,1)$, as $n\rightarrow\infty$}.$$

Since the blow-up limit $\phi_0$ is a harmonic function, recalling the fact \eqref{h12}, along the similar arguments on the smooth fit condition (see Section 11 in Chapter 3 in \cite{FA1}), we can show  that
$$k'_{\ld,L}\left(\f{L}{2}\right)=0.$$
Hence, we complete the proof of the proposition.
\end{proof}





\subsection{The continuous fit condition for free boundary }
In this subsection, we will check the continuous fit condition of
the free boundary $\Gamma_{\ld,L}$ at $A$, namely, for any $L>\bar H=\max_{x\leq0}g(x)$,
there exists a $\ld_L\geq\ld_0$, such that
\be\label{b019}k_{\ld_L,L}(0)=g(0)=a.\ee

To obtain the continuous fit condition \eqref{b019}, we first
establish the continuous dependence of $\psi_{\ld,L}$ and
$k_{\ld,L}(0)$ with respect to the parameter $\ld$.
\begin{lemma}\label{ld8} If $\ld_n\rightarrow \ld$ with $\ld_n\geq\ld_0$, then
$$\psi_{\lambda_n, L}\rightarrow\psi_{\lambda, L}\ \ \text{weakly~~ in}~~H^1(\O_L)~~\text{and
uniformly in}~\O_L,$$ and
\be\label{b020}k_{\lambda_n, L}(0)\rightarrow k_{\lambda, L}(0)\ \ \text{for}~~ k_{\ld,L}(0)<L.\ee
\end{lemma}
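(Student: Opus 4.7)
First I would establish compactness. Since $\{\ld_n\}$ is bounded and $0 \leq \psi_{\ld_n, L} \leq Q$ by Proposition \ref{ld0}, testing the minimality inequality $J_{\ld_n, L}(\psi_{\ld_n, L}) \leq J_{\ld_n, L}(\phi)$ against a convenient competitor $\phi$ (for instance a harmonic function with the prescribed Dirichlet values on $\p\O_L$, adjusted near $\sigma_L$ to match $\min\{\Psi_{\ld_n}, Q\}$; the latter varies continuously with $\ld$ through the ODE \eqref{a09}) gives a uniform $H^1(\O_L)$ bound. The uniform Lipschitz estimates of Lemma \ref{ld2} and Lemma \ref{ld6}, whose constants depend only on $\ld_2+\L$ and hence stay bounded along the sequence, then yield equicontinuity on compact subsets avoiding $A$. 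Extracting a subsequence I obtain $\psi_{\ld_n, L} \rightharpoonup \psi^*$ weakly in $H^1(\O_L)$ and uniformly on such compact sets, with $0 \leq \psi^* \leq Q$ and $\psi^* \in K_{\ld, L}$ (the boundary data converge thanks to the continuous dependence of $\Psi_\ld$ on $\ld$ through \eqref{a09}).

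Next I would identify $\psi^* = \psi_{\ld, L}$. For any test function $\phi \in K_{\ld, L}$, build a recovery sequence $\phi_n \in K_{\ld_n, L}$ by correcting $\phi$ in a thin strip near $\sigma_L$ so that $\phi_n \to \phi$ strongly in $H^1$ and $\mathcal{L}^2(\{\phi_n < Q\} \triangle \{\phi < Q\}) \to 0$. Writing $J_{\ld_n, L}(\psi_{\ld_n, L}) \leq J_{\ld_n, L}(\phi_n)$ and passing to the limit, I use weak lower semicontinuity of the Dirichlet energy on the left, strong $L^2$ convergence for the $F_0$ term, Fatou's lemma for the indicator term on the left (combined with the uniform convergence $\psi_{\ld_n, L} \to \psi^*$ to ensure $\liminf I_{\{\psi_{\ld_n, L} < Q\}} \geq I_{\{\psi^* < Q\}}$ a.e.), and dominated convergence for the indicator term on the right. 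This gives $J_{\ld, L}(\psi^*) \leq J_{\ld, L}(\phi)$, and uniqueness (Lemma \ref{ld3}) forces $\psi^* = \psi_{\ld, L}$; since the subsequence was arbitrary the full sequence converges.

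Finally, when $k_{\ld, L}(0) < L$ the point $X_0 = (0, k_{\ld, L}(0))$ is an interior free boundary point, so Lemma \ref{lc2}(1) applied to the sequence $\psi_{\ld_n, L}$ gives $\p\{\psi_{\ld_n, L} < Q\} \to \p\{\psi_{\ld, L} < Q\}$ in Hausdorff distance on a neighborhood of $X_0$. Combined with monotonicity in $y$ (so each $\Gamma_{\ld_n, L}$ is a graph $y = k_{\ld_n, L}(x)$) and the non-degeneracy of Lemma \ref{lb7}, which prevents the height of $\Gamma_{\ld_n, L}$ over $\{x = 0\}$ from settling at a value different from $k_{\ld, L}(0)$, this yields $k_{\ld_n, L}(0) \to k_{\ld, L}(0)$. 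The hardest part will be the recovery-sequence construction for the indicator term, because the functional is not weakly continuous; its success depends crucially on the uniform Lipschitz bounds from Lemma \ref{ld2} and Lemma \ref{ld6}, without which the Fatou-type lower bound $\liminf I_{\{\psi_{\ld_n, L} < Q\}} \geq I_{\{\psi^* < Q\}}$ could fail on sets where $\psi^* = Q$.
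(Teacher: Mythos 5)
Your first half (compactness plus $\Gamma$-convergence style identification of the limit as the unique minimizer $\psi_{\ld,L}$) is essentially sound and follows the same route the paper takes --- the paper's proof simply cites ``similar arguments in the proof of Lemma \ref{lc2}'', whose statement (5) is precisely the recovery-sequence argument you sketch, followed by the uniqueness from Lemma \ref{ld3}. That part is fine.

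The final paragraph, however, has a genuine gap, and it is where all the real work in the paper's proof lives. First, the claim that $X_0=(0,k_{\ld,L}(0))$ is an interior free boundary point is false whenever $k_{\ld,L}(0)\geq a$: then $X_0\in I_{0,L}\subset\p\O_L$, so it is a \emph{boundary} point and Lemma \ref{lc2}(1) --- which is in any case stated for blow-up sequences, not for a sequence of minimizers with varying $\ld$ --- does not apply even in adapted form. Second, and more fundamentally, Hausdorff convergence of the free boundaries on compact subsets of $D_L$ together with monotonicity in $y$ and non-degeneracy does \emph{not} prevent the initial height from drifting: $k_{\ld,L}(0)$ is a one-sided limit $\lim_{x\downarrow 0}k_{\ld,L}(x)$, and the sequence $\Gamma_{\ld_n,L}$ can perfectly well have a steep ``ramp'' confined to an $x$-strip of width $\to 0$, so that $k_{\ld_n,L}(x)\to k_{\ld,L}(x)$ for every fixed $x>0$ while $k_{\ld_n,L}(0)\to k_{\ld,L}(0)+\delta$ with $\delta\neq 0$. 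Interior Hausdorff convergence sees nothing of that ramp, uniform convergence of $\psi_{\ld_n,L}$ comes with no rate that non-degeneracy could exploit on balls of radius $\sim 1/n$, and non-degeneracy by itself only rules out thin fingers, not this boundary-concentrated oscillation. What the paper actually does is split into three cases according to the sign of $\delta$ and whether $k_{\ld,L}(0)$ lies above or below $a$, and then derives a contradiction by completely different means: a Hopf-lemma comparison argument showing $\p_x\psi_{\ld,L}(0\pm 0,y)=\mp\ld$ on a small segment of $\{x=0\}$ (Cases 1 and 2, relying on the $C^1$-convergence of the free boundaries and the boundary inequalities $|\g\psi_{\ld,L}|\gtrless\ld$ from Proposition \ref{ld1}(4)), and the non-oscillation Lemma \ref{ld4} applied to $\psi_{\ld_n,L}$ in a thin region next to $I_{0,L}$ (Case 3). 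Your proposal cites none of these, and without them the conclusion $k_{\ld_n,L}(0)\to k_{\ld,L}(0)$ does not follow.
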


\begin{proof}

It
follows from the similar arguments in the proof of Lemma \ref{lc2},
 there exists a subsequence $\{\psi_{\ld_n,L}\}$
and a  $\psi_0\in C^{0,1}(\O_L)$, such that
$$\psi_{\ld_n,L}(X)\rightarrow\psi_0(X)\ \text{uniformly in $\O_L$}.$$
Furthermore, $\psi_0$ is a minimizer to the variational problem
$(P_{\ld,L})$. The uniqueness of the minimizer to the variational
problem $(P_{\ld,L})$ gives that $\psi_0=\psi_{\ld,L}$.


Suppose that the assertion \eqref{b020} is not true, then there exists a sequence $\{k_{\ld_n,L}(0)\}$, such
that $$k_{\ld_n,L}(0)\rightarrow k_{\ld,L}(0)+\delta\ \
\text{as $n\rightarrow+\infty$},\ \ \delta\neq 0.$$ We consider the
following three cases and derive a contradiction.

{\bf Case 1.} $\delta<0$. The monotonicity of $\psi_{\ld,L}(x,y)$
with respect to $y$ gives that $k_{\ld,L}(0)+\delta\geq a$.

Next, we claim that \be\label{b021}\f{\p\psi_{\ld,L}(0+0,y)}{\p x}=-\ld\ \ \
\text{on}\ \ I_\e,\ee where $I_\e=\{(0,y)\mid
k_{\ld,L}(0)-4\e<y<k_{\ld,L}(0)-2\e\}$ with
$\e=-\f{\delta}6$.

It follows from the statement (4) in Proposition \ref{ld1} that
\be\label{b022}-\f{\p\psi_{\ld,L}(0+0,y)}{\p x}=|\g\psi_{\ld,L}|\geq\ld \ \ \text{on}\ \ I_\e.\ee

To obtain the claim \eqref{b021}, it suffices to show that
 \be\label{b023}\f{\p\psi_{\ld,L}(0+0,y)}{\p x}\geq-\ld\ \ \
\text{on}\ \ I_\e.\ee

Denote $\phi_n=Q-\psi_{\ld_n,L}$, $\phi=Q-\psi_{\ld,L}$ and $$U_{\e,\tau}=\{(x,y)\mid -\tau<x<\tau,
k_{\ld,L}(0)-5\e<y<k_{\ld,L}(0)-\e\}$$ for small $\tau>0$, Proposition \ref{ld1}
gives that the free boundary $U_{\e,\tau}\cap\p\{\phi_n>0\}$ is $C^{3,\alpha}$, then we have
$$\text{$U_\e\cap\p\{\phi_n>0\}\rightarrow U_{\e,\tau}\cap\p\{\phi>0\}$  in
$C^{1,\alpha}$ for some $\alpha\in(0,1)$.}$$

For any fixed $X_1=(0,y_1)\in U_{\e,\tau}\cap\p\{\phi>0\}$, then there exists
a sequence $X_n\in U_\e$ with $\phi_n(X_n)=0$, such that
$X_n\rightarrow X_1$ as $n\rightarrow+\infty$. In fact, suppose that
there exists a small $r>0$, such that $\phi_n>0$ and
$\Delta\phi_n+f(\phi_n)=0$ in $B_r(X_1)$, which gives that
$\Delta\phi+f(\phi)=0$ in $B_r(X_1)$ and $\phi(X_1)=0$. The maximum
principle implies that $\phi\equiv 0$ in $B_r(X_1)$, which
contradicts to $X_1\in U_{\e,\tau}\cap\p\{\phi>0\}$.

Take a small $r>0$ with $E_n\subset B_r(X_1)\subset U_{\e,\tau}$, such
that \be\label{b027}E_n=B_r(X_1)\cap\{x>\e_n\},\ \phi_n>0\
\text{in}\ E_n\ \text{and}\ X_n\notin E_n,\ee where $\e_n\downarrow
0$ as $n\rightarrow+\infty$. Define a function $h_{s,n}(y)$ as
follows
$$h_{s,n}(y)=\e_n-s\eta\left(\f{2(y-y_1)}{r}\right),\ \ s>0,$$
where \be\label{b024}\eta(y)=\left\{\begin{array}{ll}
e^{-\f{y^2}{1-y^2}}&\text{for}\
\ |y|<1\\
0 &\text{for}\ |y|\geq1.
\end{array}\right.\ee Denote the domain
$E_{s,n}=B_r(X_1)\cap\{x>h_{s,n}(y)\}$. It is easy to check that
$E_{0,n}=E_n$. Let $s=s_n\leq\e_n$ to be the largest one, such that
$\phi_n>0$ in $E_{s_n,n}$, and $\t X_n=(\t x_n,\t
y_n)\in(B_r(X_1)\cap\p E_{s_n,n})\cap\Gamma_{\ld_n,L}$. Furthermore,
$$\t x_n=h_{s_n,n}(\t y_n)\ \ \text{and} \ \ s_n\rightarrow0 \ \ \text{as}\ \ n\rightarrow+\infty.$$

Let $\varphi_n$ be the solution of the following Dirichlet problem
$$\left\{\begin{array}{ll}&\Delta\varphi_n+f(\varphi_n)=0\ \ \text{in}\ E_{s_n,n},\\
&\varphi_n=0\ \text{on}\ \p E_{s_n,n}\cap B_{\f r2}(X_1),\
\varphi_n=\zeta\phi_n\ \text{on}\ \p E_{s_n,n}\cap(
B_r(X_1)\setminus B_{\f r2}(X_1)),\\
&\varphi_n=\phi_n\ \text{on}\ \p E_{s_n,n}\cap \p B_r(X_1),
\end{array}\right.$$ where $\zeta(X)=\min\left\{\max\left\{\f{2|X-X_1|-r}{r},0\right\},1\right\}$.
It is clear that $\varphi_n\leq\phi_n$ on $\p E_{s_n,n}$, which
gives that \be\label{b028}\ld_n=\f{\p\phi_n(\t X_n)}{\p\nu_n}\geq
\f{\p\varphi_n(\t X_n)}{\p\nu_n},\ee where $\nu_n$ is the inner
normal vector to $\p E_{s_n,n}$ at $\t X_n$.

Taking $\e_n\rightarrow0$ implies that
$$h_{s_n,n}(y)\rightarrow 0\ \ \text{in}\ \
C^{1,\beta}\ \ \text{as}\ \ n\rightarrow+\infty.$$ Thanks to the
standard estimates of the solutions of the semilinear elliptic equation, we conclude that $\varphi_n$ in $E_{s_n,n}\cap B_{\f
r2}(X_1)$ converges to $\phi$ in $\{\phi>0\}\cap B_{\f r2}(X_1)$ in
$C^{1,\beta}$-sense. Denote $\t X_n\rightarrow \t
X\in\p\{\phi>0\}\cap B_{\f r2}(X_1).$ It follows from \eqref{b028}
that \be\label{b029}\f{\p\varphi_n(\t
X_n)}{\p\nu}\rightarrow\f{\p\phi(\t X)}{\p\nu} \ \ \text{and}\ \
\f{\p\phi(\t X)}{\p\nu}\leq \ld.\ee  Taking $r\rightarrow 0$ yields that $\t X\rightarrow X_1$ and $$\ld\geq\f{\p\phi(\t
X)}{\p\nu}\rightarrow\f{\p\phi( X_1)}{\p\nu}=-\f{\p\psi_{\ld,L}(X_1)}{\p
x},\ \ X_1\in U_{\e,\tau}\cap\p\{\phi>0\},$$ which together with \eqref{b023} gives that the claim \eqref{b021} holds.

With the aid of the fact \eqref{b021}, we can
obtain a contradiction by using the similar arguments in the proof
of Lemma \ref{ld5}.

{\bf Case 2.} $\delta>0$ and $k_{\ld,L}(0)<a$. Similar to the
proof of the claim \eqref{b021} in Case 1, one has
 \be\label{bb021}\f{\p\psi_{\ld,L}(0-0,y)}{\p x}=\ld\ \ \
\text{on}\ \ I_\e,\ee where $I_\e=\{(0,y)\mid
k_{\ld,L}(0)+\e<y<k_{\ld,L}(0)+2\e\}$ with
$\e=\f{\min\{\delta,a-k_{\ld,L}(0)\}}3$, and we can obtain a
contradiction.

{\bf Case 3.} $\delta>0$ and $k_{\ld,L}(0)\geq a$.  Since
$\psi_{\ld,L}=Q$ on $I_{0,L}$, it follows from the statement (4) in Proposition \ref{ld1}
that
$$\f{\p\psi_{\ld_n,L}(0+0,y)}{\p x}\geq\ld_n\ \ \text{on}\ \
I_\delta,$$ for sufficiently large $n$, where
$I_\delta=\left\{(0,y)\mid
k_{\ld,L}(0)+\f{\delta}4<y<k_{\ld,L}(0)+\f{3\delta}4\right\}$.

Let $E_n$ be a domain bounded by $x=0$, $y=k_{\ld_n,L}(x)$,
$y=k_{\ld,L}(0)+\f{\delta}4$ and
$y=k_{\ld,L}(0)+\f{3\delta}4$. Moreover,
$$x_n=\max\left\{x\mid
k_{\ld_n,L}(x)=k_{\ld,L}(0)+\f{\delta}4\right\}\rightarrow
0\ \ \text{as}\ \ n\rightarrow+\infty.$$ Thanks to the
non-oscillation Lemma \ref{ld4} for $\psi_{\ld_n,L}$ in $E_n$, there
exists a constant $C$ independent of $n$, such that
$$0<\f\delta2\leq Cx_n,$$
which gives a contradiction for sufficiently large $n$.

\end{proof}

Next, we will study the relation between the initial point $(0,k_{\ld,L}(0))$ and $\ld$ for
any $L>\bar H$.

\begin{lemma}\label{ld9}For any $L>\bar H$, the initial point $(0,k_{\ld,L}(0))$ satisfies that \\
(1) there exists a constant $C_0>0$ (independent of $L$), such that $k_{\ld,L}(0)<a$
for any $\ld>C_0$.\\
(2) there exists a $c_0\geq\ld_0$ (independent of $L$), such
that $k_{\ld,L}(0)\geq a$ for any $\ld\leq c_0$.

\end{lemma}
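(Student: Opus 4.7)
The proof of both parts rests on comparing $\psi_{\ld,L}$ with the one-dimensional asymptotic downstream profile
\[
\tilde\Psi_\ld(y) := \begin{cases} \Psi_\ld(y), & 0\le y\le h_\ld,\\ Q, & h_\ld\le y\le L, \end{cases}
\]
where $h_\ld=\chi(H;(\ld^2-\ld_0^2)/2)$ is the asymptotic jet height from Remark~\ref{re4}. By construction $\tilde\Psi_\ld$ and $\psi_{\ld,L}$ share the same Dirichlet data on $\sigma_L$ (the right lateral boundary of $D_L$), as well as on $T_L$ and $l_L$; and $\tilde\Psi_\ld$ solves the semilinear ODE $-\tilde\Psi_\ld''(y)=f_0(\tilde\Psi_\ld(y))$ with slope $\tilde\Psi_\ld'(h_\ld^-)=\ld$. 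Crucially, Remark~\ref{re4} gives $h_{\ld_0}=H>a$ and $h_\ld\to 0$ as $\ld\to\infty$, with the thresholds in $\ld$ depending only on $u_0$ and the channel data $H,a$, hence independent of $L$.

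For Part~(2), I would set $c_0:=\sup\{\ld\ge\ld_0 : h_\ld\ge (a+H)/2\}$, which exists and strictly exceeds $\ld_0$ by continuity of $\ld\mapsto h_\ld$ and $h_{\ld_0}=H$. For $\ld\in[\ld_0,c_0]$, the goal is to show that $\psi_{\ld,L}(x,y)<Q$ throughout $R_\ld:=D_L\cap\{y<h_\ld\}$; by the $y$-monotonicity of $\psi_{\ld,L}$ from Lemma~\ref{ld3}, this forces $k_{\ld,L}(x)\ge h_\ld>a$ and in particular $k_{\ld,L}(0)\ge a$. To prove the claim, I would apply the maximum principle to $w:=\tilde\Psi_\ld-\psi_{\ld,L}$ on the open set $E:=R_\ld\cap\{\psi_{\ld,L}<Q\}$: here $w$ satisfies $-\Delta w+c(X)w=0$ with $c(X)=-\int_0^1 f_0'(s\tilde\Psi_\ld+(1-s)\psi_{\ld,L})\,ds\ge 0$ by the monotonicity of $f_0$ in \eqref{b05}. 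The boundary values of $w$ are $\le 0$ on $\sigma_L$ (equal data), $T_L$, and the interior free boundary portions of $E$ (where $\psi_{\ld,L}=Q$ while $\tilde\Psi_\ld<Q$); on $I_{0,L}\cap\{y<h_\ld\}$ one has $\tilde\Psi_\ld\le Q=\psi_{\ld,L}$, and on the segment $\{0\}\times(0,a)$ the required sign is extracted from the $y$-monotonicity of $\psi_{\ld,L}$ together with $\psi_{\ld,L}(0,a)=Q$. The maximum principle then rules out interior positive maxima of $w$, so $E=R_\ld$.

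For Part~(1), I would choose $C_0$ so large that $h_\ld<a/3$ for $\ld>C_0$, which is possible by Remark~\ref{re4}. Running the same scheme on the complementary region $F:=D_L\cap\{y>h_\ld\}\cap\{\psi_{\ld,L}<Q\}$ and applying the maximum principle to $\psi_{\ld,L}-\tilde\Psi_\ld$ (where now $\tilde\Psi_\ld\equiv Q$), one analyzes the boundary of $F$---which touches $\{y=h_\ld\}$ (where $\tilde\Psi_\ld=Q\ge\psi_{\ld,L}$), $I_{0,L}$ and $\sigma_L$ (where $\psi_{\ld,L}=Q$), and the free boundary of $\psi_{\ld,L}$ (where $\psi_{\ld,L}=Q=\tilde\Psi_\ld$). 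The matching Bernoulli condition $|\g\psi_{\ld,L}|=\ld=\tilde\Psi_\ld'(h_\ld^-)$ on the free boundary, combined with Hopf's lemma, excludes any genuine component of $F$ above $\{y=h_\ld\}$. Hence $\{\psi_{\ld,L}<Q\}\cap D_L\subset\{y<h_\ld\}$ and $k_{\ld,L}(0)\le h_\ld<a/3<a$.

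The main obstacle is controlling $w$ on the vertical segment $\{0\}\times(0,a)$, which lies in the interior of $\O$ and carries no explicit Dirichlet data, yet separates the nozzle dynamics from the jet dynamics. I would handle this by combining the $y$-monotonicity of $\psi_{\ld,L}$ (Lemma~\ref{ld3}) with the explicit one-dimensional structure of $\tilde\Psi_\ld$, and, if necessary, localizing the argument via the non-degeneracy estimate (Lemma~\ref{lb7}) in a fixed neighborhood of the corner $A$ to rule out pathological accumulation of the free boundary there. Since this neighborhood, like the thresholds on $h_\ld$, depends only on the data $(u_0,g,H,a)$ and not on $L$, the resulting constants $c_0$ and $C_0$ are indeed uniform in $L$.
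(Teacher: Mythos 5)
Your plan—replace the paper's arguments by direct comparison with the one-dimensional downstream profile $\tilde\Psi_\ld$—is a genuinely different route from the paper, which for part~(1) uses only the non-degeneracy estimate (Lemma~\ref{lb7}, together with Remark~\ref{re1}) and for part~(2) a \emph{sliding} comparison with the family $\t\Psi_\e(y)=\Psi_{\t\ld}(y+\e)$. Unfortunately, both halves of your comparison scheme have real gaps.

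For part~(2) the sign goes the wrong way. You want to conclude $\psi_{\ld,L}\le\tilde\Psi_\ld$ in $R_\ld=D_L\cap\{y<h_\ld\}$, i.e.\ $w:=\tilde\Psi_\ld-\psi_{\ld,L}\ge 0$. But on the portion of $\p E$ lying on the free boundary inside $R_\ld$ (which is exactly what you cannot a priori exclude when $k_{\ld,L}(0)<a$), one has $\psi_{\ld,L}=Q>\tilde\Psi_\ld$, hence $w<0$ there. Your own list of boundary values records $w\le 0$, so the maximum principle applied to $(-\Delta+c)w=0$, $c\ge 0$, can only give $w\le 0$ in $E$, the \emph{opposite} of what is needed, and it gives no information at all on $R_\ld\setminus E$. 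There is no way to extract $E=R_\ld$ from this. This is precisely why the paper does not compare with $\Psi_{\t\ld}$ directly but with the translated family $\t\Psi_\e$: for $\e$ large the comparison is trivial, one decreases $\e$ to the smallest admissible $\e_0$, shows $\e_0>0$ forces an interior-free-boundary touching point, and then Hopf's lemma on the slopes ($\t\ld=\p_\nu\t\Psi_{\e_0}<\p_\nu\psi_{\t\ld,L}=\t\ld$) gives the contradiction. The sliding step is the missing idea in your write-up; without it the maximum principle is powerless against the free-boundary boundary values.

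For part~(1) the comparison is vacuous above $h_\ld$. On $F=D_L\cap\{y>h_\ld\}\cap\{\psi_{\ld,L}<Q\}$ your barrier $\tilde\Psi_\ld$ is identically $Q$, so $\psi_{\ld,L}-\tilde\Psi_\ld=\psi_{\ld,L}-Q\le 0$ holds for free, and Hopf's lemma at a free-boundary point $X_0$ of $\p F$ only yields $\p_\nu(\psi_{\ld,L}-Q)(X_0)=\ld>0$, which is not a contradiction: there is no ``matching Bernoulli condition'' to violate because $\nabla\tilde\Psi_\ld\equiv 0$ in $\{y>h_\ld\}$, not $\ld$. (The value $\tilde\Psi_\ld'(h_\ld^-)=\ld$ is a one-sided derivative at the interface; it does not control the free boundary strictly above $h_\ld$.) The paper's argument for part~(1) is much more elementary and does not use $h_\ld$ at all: if $k_{\ld,L}(0)\ge a$ then a fixed-radius disc $B_{r_0}(X_0)$ (with $r_0$ depending only on the geometry, not on $L$) meets the free boundary, and the non-degeneracy Lemma~\ref{lb7} (resp.\ Lemma~\ref{lb6} in the degenerate case) forces
\[
\f{Q}{r_0}\ge\f1{r_0}\fint_{\p B_{r_0}(X_0)}(Q-\psi_{\ld,L})\,dS\ge c^*_{1/2}\,\ld,
\]
so $\ld\le Q/(c^*_{1/2}\,r_0)=:C_0$, independent of $L$. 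You should adopt this in place of the barrier argument. Finally, your observations that $h_{\ld_0}=H>a$, that $h_\ld$ depends continuously and monotonically on $\ld$, and that all thresholds are $L$-independent, are correct and in line with the paper; the issue is solely the mechanism by which one turns these facts into a comparison.
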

\begin{proof} (1) Suppose that there exist a free boundary point and a sufficiently large
$\ld$, such that $k_{\ld,L}(0)\geq a$. Then there exist a
$X_0\in D_L$ and a disc $B_{r_0}(X_0)$ with $r_0>0$ (independent of $L$), such
that $\Gamma_{\ld,L}\cap B_{\f {r_0}2}(X_0)\neq\varnothing$. It
follows from the non-degeneracy Lemma \ref{lb7} and Remark \ref{re1}
that
$$\f{Q}{r_0}\geq\f{1}{r_0}\fint_{\p{B_{r_0}(X_0)}}Q-\psi_{\ld,L}dS\geq
c_{\f12}^*\ld,$$ which leads to a contradiction for sufficiently large $\ld$.

Similarly, we can obtain a contradiction if there is no free
boundary point, taking $X_0=(0,y_0)$ with $y_0\in(a,L)$ and using
the non-degeneracy Lemma \ref{lb6} for the boundary point (see
Remark \ref{re1}).

(2) Suppose not, then for any $c_0\geq\ld_0$, there exists a $\t\ld\in[\ld_0,c_0]$, such that $k_{\t\ld,L}(0)<a$. In view of Remark \ref{re4}, we have
\be\label{h15}\text{the asymptotic height $\t h=h_{\t\ld}$ in downstream lies in $[a,H]$, }\ee provided that $c_0-\ld_0$ is small.
Let $\t\Psi_\e(y)=\Psi_{\t\ld}(y+\e)$ for $\e\geq 0$, choosing $\e_0\geq0$ to be the smallest one, such that
$$\psi_{\t\ld,L}(X)\leq\t\Psi_{\e_0}(y)\ \text{in $\O_L\cap\{\t\Psi_{\e_0}<Q\}$ and $\psi_{\t\ld,L}(X_0)=\t\Psi_{\e_0}(y_0)$}$$  for $X_0=(x_0,y_0)\in\overline{\O_L\cap\{\t\Psi_{\e_0}<Q\}}$. It follows from \eqref{h15} that $\e_0>0$. We first show that
\be\label{b017}X_0\notin\O_L\cap\{\t\Psi_{\e_0}<Q\}.\ee Suppose not,
there exists a point $X_0\in\O_L\cap\{\t\Psi_{\e_0}<Q\}$, such that
$0<\t\Psi_{\e_0}(X_0)=\psi_{\t\ld,L}(X_0)<Q$. Then there exists a small $r>0$
such that
$$0<\psi_{\t\ld,L}<Q\ \text{and}\ \ 0<\t\Psi_{\e_0}<Q\ \text{in}\
B_r(X_0).$$ On the other hand,
$$\Delta(\psi_{\t\ld,L}-\t\Psi_{\e_0})+f_0(\psi_{\t\ld,L})-f_0(\t\Psi_{\e_0})=0\ \ \ \text{in
$B_r(X_0)$}.$$ The
strong maximum principle gives that
$\psi_{\t\ld,L}(X_0)\equiv\t\Psi_{\e_0}(X_0)$ in $B_r(X_0)$, due to $\psi_{\t\ld,L}(X_0)=\t\Psi_{\e_0}(X_0)$.
Applying the strong maximum principle again, then
$\psi_{\t\ld,L}(X_0)\equiv\t\Psi_{\e_0}(X_0)$ in $\O_L$, which leads a contradiction.

The boundary value of $\psi_{\t\ld,L}$ gives that $X_0\notin\p\O_L$. Next, we claim that \be\label{h16}X_0\neq(0,k_{\t\ld,L}(0)).\ee In fact, it follows from Proposition \ref{lbb3} that the free boundary $\Gamma_{\t\ld,L}$ is $C^1$ at $X_0$ and its tangent is in the direction of the positive $y$-axis, the definition of $\e_0$ implies the claim \eqref{h16}.

 Thus, let $X_0$ be a free boundary point of $\psi_{\t\ld,L}$. Since the free boundary
$\Gamma_{\t\ld,L}$ is $C^{3,\alpha}$ at $X_0$, it follows from Hopf's
lemma that
$$\t\ld=\f{\p\t\Psi_{\e_0}}{\p\nu}<\f{\p\psi_{\t\ld,L}}{\p\nu}=\t\ld\ \ \text{at}\ \
X_0,$$ where $\nu=(0,1)$ is the outer normal vector of
$\Gamma_{\t\ld,L}$ at $X_0$, which leads a contradiction.

\end{proof}

With the aid of Lemma \ref{ld9}, we can define $$\Sigma_L=\{\ld\geq\ld_0\mid
k_{\ld,L}(0)<a\}.$$
and the set $\Sigma_L$ is non-empty for sufficiently large $\ld$.

Moreover, set \be\label{b030}\ld_L=\inf_{\ld\in\Sigma_L}\ld.\ee

Furthermore, it follows from Lemma \ref{ld9} that
\be\label{b031}\ld_0\leq\ld_L\leq C_0<\infty,\ee where $C_0$ is a constant
independent of $L$.

 By virtue of Lemma \ref{ld8} and Lemma
\ref{ld9}, we have
\begin{proposition}\label{ld10} For any $L>\bar H=\max_{x\leq0}g(x)$, there exists a $\ld_L\geq\ld_0$ as in \eqref{b030},
 such that the free boundary $\Gamma_{\ld_L,L}$ satisfies the continuous fit condition and the smooth fit condition
 \be\label{h20}k_{\ld_L,L}(0)=g(0)=a\ \ \ \text{and}\ \ \ k'_{\ld_L,L}(0)=g'(0).\ee
 Moreover,
 \be\label{h21}k_{\ld_L,L}(x)\leq \bar H \ \ \ \text{for any $x\in[0,L]$}, \ee and  \be\label{h19}\psi_{\ld_L,L}(x,y)\leq \min\{\Psi_{\ld_L}(y),Q\} \ \ \text{in $\O_L$}. \ee
\end{proposition}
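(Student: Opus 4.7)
The proof establishes the four claims in order: the continuous fit in \eqref{h20}, the comparison \eqref{h19}, the height bound \eqref{h21}, and the smooth fit $k'_{\lambda_L,L}(0)=g'(0)$. The continuous fit follows by combining the endpoint information of Lemma \ref{ld9} with the continuous dependence of Lemma \ref{ld8}. Lemma \ref{ld9}(1) ensures $\Sigma_L$ is nonempty (containing all sufficiently large $\lambda$), and Lemma \ref{ld9}(2) excludes the interval $[\lambda_0,c_0]$ from $\Sigma_L$ with $c_0>\lambda_0$, so $\lambda_L=\inf\Sigma_L$ satisfies $\lambda_L>\lambda_0$. Choosing $\lambda_n\in\Sigma_L$ with $\lambda_n\downarrow\lambda_L$, Lemma \ref{ld8} yields $k_{\lambda_L,L}(0)\leq a$; strict inequality would, by the same continuity, place some $\lambda<\lambda_L$ into $\Sigma_L$, contradicting the infimum.

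For the comparison \eqref{h19}, I would rerun the shift-and-touch scheme of Lemma \ref{ld9}(2) with $\tilde\Psi_\epsilon(y):=\min\{\Psi_{\lambda_L}(y+\epsilon),Q\}$ and $\epsilon_0$ defined as the smallest $\epsilon\geq 0$ with $\psi_{\lambda_L,L}\leq\tilde\Psi_\epsilon$ on $\Omega_L\cap\{\tilde\Psi_\epsilon<Q\}$. Assuming $\epsilon_0>0$, three cases for a touching point $X_0=(x_0,y_0)$ must be ruled out: an interior touching with $0<\psi_{\lambda_L,L}(X_0)<Q$ and $\tilde\Psi_{\epsilon_0}(y_0)<Q$, handled by the strong maximum principle for the common equation $\Delta u+f_0(u)=0$; a touching at $X_0=A$ or elsewhere on $\partial\Omega_L$, ruled out by the boundary data together with Proposition \ref{lbb3}; and a touching at a free boundary point with $y_0+\epsilon_0=h_{\lambda_L}$, where the identity $u_1(h_{\lambda_L})=\lambda_L$ from Remark \ref{re4} makes the left slope of $\tilde\Psi_{\epsilon_0}$ at $y_0$ equal to $\lambda_L$, and Hopf's lemma produces the contradiction $\lambda_L=(\tilde\Psi_{\epsilon_0})_\nu<(\psi_{\lambda_L,L})_\nu=\lambda_L$.

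The height bound \eqref{h21} uses the boundary data of $K_{\lambda_L,L}$ on the top slice $R_L:=\Omega_L\cap\{y>\bar H\}$. Because $h_{\lambda_L}\leq H\leq\bar H$, we already have $\psi_{\lambda_L,L}=Q$ on each boundary component of $R_L$ except the internal horizontal segment $\{y=\bar H\}\cap\Omega_L$. If a connected component $U$ of $\{\psi_{\lambda_L,L}<Q\}$ met $R_L$, then replacing $\psi_{\lambda_L,L}$ by $Q$ on $U\cap R_L$ yields an admissible competitor whose obstacle-term saving $\lambda_L^2\mathcal{L}^2(U\cap R_L)$ dominates the cost in the Dirichlet and $F_0$ terms (the needed lower bound on $|\nabla\psi_{\lambda_L,L}|$ along the free boundary inside $R_L$ follows from the non-degeneracy Lemma \ref{lb7}), contradicting minimality. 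Hence the free boundary remains at or below $y=\bar H$.

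Finally, the smooth fit at $A$ comes from a blow-up analysis. By Theorem \ref{lc15} the curve $\Gamma_{\lambda_L,L}$ is $C^{1,\alpha}$ up to $A$, and by assumption $N$ is $C^{2,\alpha}$ up to $A$, so both one-sided tangents at $A$ exist. Blowing up $\psi_{\lambda_L,L}$ at $A$ and applying Lemmas \ref{lc7} and \ref{lc2}(5) produces a half-plane linear minimizer with slope $\lambda_L$; the requirement that the wedge formed by the Dirichlet side (from $N$) and the free-boundary side (from $\Gamma_{\lambda_L,L}$) be realized as a single half-plane minimizer of the homogeneous functional forces these two one-sided tangents to coincide, giving $k'_{\lambda_L,L}(0)=g'(0)$. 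The main obstacle will be step (iii) of the comparison argument: the touching must occur precisely at the kink $y_0+\epsilon_0=h_{\lambda_L}$ of $\tilde\Psi_{\epsilon_0}$ and the free boundary at the touching point must have outer normal aligned with $(0,1)$, which requires combining the sharp matching $u_1(h_{\lambda_L})=\lambda_L$ with the $C^{1,\alpha}$-regularity of $\Gamma_{\lambda_L,L}$ provided by Theorem \ref{lc15}.
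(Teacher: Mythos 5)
Your arguments for the continuous fit and for \eqref{h19} are essentially the paper's: the continuous fit comes from the definition of $\ld_L=\inf\Sigma_L$ together with Lemma~\ref{ld8}, and \eqref{h19} is exactly the shift-and-touch comparison with $\o_\e(y)=\min\{\Psi_{\ld_L}(y+\e),Q\}$, where the only possible touching for $\e_0>0$ is at a free boundary point on the kink $y_0=h_{\ld_L}-\e_0$, killed by Hopf's lemma using $u_1(h_{\ld_L})=\ld_L$. That part is fine.

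The argument you give for \eqref{h21} does not work. You propose to modify $\psi_{\ld_L,L}$ to equal $Q$ on $U\cap R_L$ where $R_L=\O_L\cap\{y>\bar H\}$. But $\psi_{\ld_L,L}<Q$ on the interior boundary $\{y=\bar H\}\cap U$, so the modified function has a jump across that segment and is not in $H^1(\O_L)$; it is not an admissible competitor at all, and the ``obstacle-term saving dominates Dirichlet cost'' estimate cannot even be set up. Moreover, the non-degeneracy Lemma~\ref{lb7} gives a \emph{lower} bound on $Q-\psi$ near the free boundary (so a lower bound on the size of $\{\psi<Q\}$), which goes in the wrong direction for the kind of domination you want. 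The paper instead proves \eqref{h21} by a second shift-and-touch comparison: choose $\mu<\ld_0$ close to $\ld_0$ (so the one-dimensional profile $\o_\mu$ has asymptotic height $h_\mu>H$, using $h_\ld\downarrow$ in $\ld$ with $h_{\ld_0}=H$), set $\o_\mu^\e(y)=\min\{\o_\mu(y-\e),Q\}$, and take the smallest $\e_0$ with $\o_\mu^{\e_0}\leq\psi_{\ld_L,L}$; if $k_{\ld_L,L}$ exceeds $\bar H$ one shows $\e_0>0$ and derives a contradiction at the touching point via Hopf's lemma (or, when the touching is at $\p D_L$, via Proposition~\ref{ld1}(4) or Proposition~\ref{lbb3}), since $\p_\nu\o_\mu^{\e_0}\leq\mu<\ld_0\leq\ld_L$.

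For the smooth fit, the one-line derivation in the paper is simply an appeal to Proposition~\ref{lbb3} at $X_0=A$: that proposition already says $\g\psi_{\ld_L,L}\to\ld_L\nu_0$ and that $k_{\ld_L,L}$ is $C^1$ at the boundary contact point, and this, matched with the $C^1$-regularity of $\psi$ from the nozzle side, yields $k'_{\ld_L,L}(0)=g'(0)$. Your blow-up sketch (using Lemmas~\ref{lc7} and~\ref{lc2}(5)) invokes interior blow-up results at a point that sits on $\p D_L$ and is attached to the Dirichlet piece $N$; the nontrivial content — showing the blow-up limit is a single half-plane and not a wedge with two distinct tangent directions — is precisely what Proposition~\ref{lbb3} is for, and your sentence asserting that the ``wedge must be realized as a single half-plane minimizer'' is a restatement of the conclusion, not an argument for it. As written, that step is a gap; citing Proposition~\ref{lbb3} (or reproducing its proof at $A$) is what is actually needed.
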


\begin{proof} {\bf Step 1.} The definition of $\ld_L$ and the continuous dependence of $k_{\ld,L}(0)$ with respect to $\ld$ imply that
 $$k_{\ld_L,L}(0)=g(0)=a,$$ which together with Proposition \ref{lbb3} gives that $k'_{\ld_L,L}(0)=g'(0)$. Thus, the continuous fit condition and the smooth fit condition \eqref{h20} hold.

{\bf Step 2.} Suppose that the assertion \eqref{h21} is not true, then there exists an $\t x\in(0,L]$, such that
\be\label{h22}k_{\ld_L,L}(\t x)>\bar H=\max_{x\leq0}g(x).\ee On the other hand, the fact $\ld_L\geq\ld_0$ implies that the asymptotic height $h_{\ld_L}\leq H\leq \bar H$. Thus we have
\be\label{h23}k_{\ld_L,L}(\t x)>\bar H\geq H\geq h_{\ld_L}.\ee

For $\mu<\ld_0$ and $\ld_0-\mu$ is small, define $$\o_\mu(y)=\int_0^yu_{1,
\mu}(t)dt,\ \ u_{1,\mu}=\sqrt{u_0^2(\chi^{-1}(t;p_{diff}))+2p_{diff}} \ \ \text{with}\ \ p_{diff}=\f{\mu^2-\ld^2_0}2,$$ where $\chi^{-1}(t;p_{diff})$ is defined as in Subsection 4.3.
Denote $\o_\mu^\e(y)=\min\{\o_{\mu}(y-\e),Q\}$ for any $\e\geq0$. Let
$\e_0\geq 0$ be the smallest one, such that
$$\o_\mu^{\e_0}(y)\leq\psi_{\ld_L,L}(X)\ \ \text{in $\O_L$, and $\o_\mu^{\e_0}(X_0)=\psi_{\ld_L,L}(X_0)$}$$ for some $X_0=(x_0,y_0)\in\overline{\O_L\cap\{\psi_{\ld_L,L}<Q\}}$.
It follows from \eqref{h23} that $\e_0>0$. The strong maximum principle implies that $X_0\notin\O_L\cap\{\psi_{\ld_L,L}<Q\}$, and thus $\psi_{\ld_L,L}(X_0)=Q$. Similar to the proof of Lemma \ref{ld9}, we can show that $x_0\neq L$. We next consider the following three cases.

{\bf Case 1.} $X_0\in\Gamma_{\ld_L,L}$. Since the free boundary
$\Gamma_{\ld_L,L}$ is $C^{3,\alpha}$ at $X_0$, the Hopf's
lemma gives that
$$\mu=\f{\p\o_\mu^{\e_0}}{\p\nu}>\f{\p\psi_{\ld_L,L}}{\p\nu}=\ld_L\ \ \text{at}\ \
X_0,$$ where $\nu=(0,1)$ is the outer normal vector of
$\Gamma_{\ld_L,L}$ at $X_0$, which contradicts to the assumption $\mu<\ld_0\leq\ld_L$.

{\bf Case 2.} $X_0\in\p D_L$ and $\Gamma_{\ld_L,L}\cap B_r(X_0)=\varnothing$ for small $r>0$. In view of that the boundary
$\p D_L$ is $C^{2}$ at $X_0$, it follows from Hopf's
lemma that
\be\label{h25}\ld_L\geq\ld_0>\mu=\f{\p\o_\mu^{\e_0}}{\p\nu}>\f{\p\psi_{\ld_L,L}}{\p\nu}\ \ \text{at}\ \
X_0.\ee On the other hand, the statement (4) in Proposition \ref{ld1} gives that
$$\f{\p\psi_{\ld_L,L}}{\p\nu}\geq \ld_L\ \ \text{at}\ \
X_0, $$ which contradicts to \eqref{h25}.

{\bf Case 3.} $X_0\in\p D_L$ and $\Gamma_{\ld_L,L}\cap B_r(X_0)\neq\varnothing$ for any $r>0$. In view of Proposition \ref{lbb3} and $\o_\mu^{\e_0}(y)\leq\psi_{\ld_L,L}(X)$ in $\O_L$, one has
$$\ld_0>\mu=\f{\p\o_\mu^{\e_0}}{\p\nu}\geq\f{\p\psi_{\ld_L,L}}{\p\nu}=\ld_0\ \ \text{at}\ \
X_0,$$  which leads a contradiction.

{\bf Step 3.} In this step, we will show the assertion \eqref{h19}. Denote $\o_\e(y)=\min\{\Psi_{\ld_L}(y+\e),Q\}$ for any $\e\geq0$, and choosing the smallest $\e_0\geq0$, such that
$$\o_{\e_0}(y)\geq\psi_{\ld_L,L}(X)\ \ \text{in $\O_L\cap\{\psi_{\ld_L,L}<Q\}$, and $\o_{\e_0}(y_0)=\psi_{\ld_L,L}(X_0)$}$$ for some $X_0=(x_0,y_0)\in\overline{\O_L\cap\{\o_{\e_0}<Q\}}$. It suffices to show that
$$\e_0=0.$$ If not, suppose that $\e_0>0$. By virtue of the strong maximum principle and the boundary value of $\psi_{\ld_L,L}$, we can choose $X_0$ to be the free boundary point of $\psi_{\ld_L,L}$. Thus, we can derive a contradiction by using Hopf's lemma.

\end{proof}








\subsection{The existence of the incompressible jet flow}
In previous subsections, we show that there exist a $\ld_L$ and a
minimizer $\psi_{\ld_L,L}$ for any $L>\bar H$, such that the free
boundary $\Gamma_{\ld_L,L}$ satisfies the continuous fit condition
\eqref{h20}. Taking $L\rightarrow+\infty$, we will obtain the
existence of the solution to the jet flow problem in this
subsection.

First, we consider the following variational problem.

{\bf{The variational problem $(P_\ld)$:}}
 For any  $L_0>\bar H=\max_{x\leq 0}g(x)$, find a $\psi_\ld\in K$ such that
$$J_{\ld,L_0}(\psi_\ld)\leq J_{\ld,L_0}(\psi),$$
for any $\psi\in K$ with $\psi=\psi_\ld$ on $\p \O_{L_0}$, the
admissible set
$$K=\{\psi\in H^1_{loc}(\mathbb{R}^2)\mid  \psi=0~~\text{lies below}~T,
\psi=Q~~\text{lies above}~N\}.$$

By using the similar arguments in Lemma \ref{ld8},  taking a
sequence $\{L_n\}$ with $L_n\rightarrow\infty$, such that
$$
\ld_{L_n}\rightarrow\ld~~\text{and}~~\psi_{\lambda_{L_n},L_n}\rightarrow
\psi_\ld\ \ \text{weakly~~ in}~~H_{loc}^1(\mathbb{R}^2) ~\text{and
uniformly in any compact subset of $\mathbb{R}^2$},$$ as
$n\rightarrow\infty$, and $\psi_\ld$ is a minimizer to the
variational problem $(P_\ld)$. It follows from \eqref{b031} that
$$\ld_0\leq \ld<+\infty.$$
Lemma \ref{ld3} gives that $\psi_\ld(x,y)$ is monotone increasing
with respect to $y$, and the free boundary $\Gamma_\ld$ of
$\psi_\ld$ is $x$-graph. Moreover, $\Gamma_\ld$ can be described by
a continuous function $y=k_{\ld}(x)$ for $x\in(0,+\infty)$, and
$$\Gamma_\ld=\O\cap\p\{\psi_\ld<Q\}=\{(x,y)\in \O\mid x>0,
y=k_\ld(x)\}.$$

By virtue of Proposition \ref{ld10}, one
has
\be\label{h31}k_\ld(0)=g(0)=a,\ \ k'_\ld(0)=g'(0)\ \ \text{and $k_{\ld}(x)\leq \bar H$ for any $x\in(0,\infty)$},\ee and
\be\label{h32}\psi_{\ld}(x,y)\leq \min\{\Psi_{\ld}(y),Q\} \ \ \text{in $\O$}. \ee

 Theorem \ref{lc15} and Proposition \ref{lb2} give that the free
boundary $\Gamma_\ld$ is $C^{3,\alpha}$ and
$|\g\psi_\ld|=\f{\p\psi_\ld}{\p\nu}=\ld$ on $\Gamma_\ld$, where
$\nu$ is the outer normal vector.

Next, we will obtain the positivity of horizontal velocity.

\begin{lemma}\label{ld11}
$u=\f{\p\psi_\ld}{\p y}>0$ in $\overline{\O\cap\{\psi_\ld<Q\}}$.
\end{lemma}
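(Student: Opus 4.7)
The plan is to set $u=\partial_y\psi_\ld$ and show strict positivity by combining the strong maximum principle in the interior with Hopf's lemma on each smooth piece of the boundary. Differentiating the equation $-\Delta\psi_\ld=f_0(\psi_\ld)$ from Proposition \ref{ld1} in the $y$-direction gives the linear elliptic equation
$$\Delta u+f_0'(\psi_\ld)\,u=0\quad\text{in }\O\cap\{\psi_\ld<Q\},$$
with zeroth-order coefficient $f_0'(\psi_\ld)\leq 0$ by \eqref{b05}. By Lemma \ref{ld3}, $\psi_\ld$ is nondecreasing in $y$, so $u\geq 0$ throughout the flow domain.

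First I would rule out $u\equiv 0$ on any connected component of $\O\cap\{\psi_\ld<Q\}$: such vanishing would force $\psi_\ld$ to be independent of $y$, contradicting $\psi_\ld=0$ on $T$ together with $\psi_\ld=Q$ on $N\cup\Gamma_\ld$. Hence the strong maximum principle applied to the above linear equation (valid for nonnegative solutions when the zeroth-order coefficient is nonpositive) yields $u>0$ throughout the open set $\O\cap\{\psi_\ld<Q\}$.

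Next I would handle the smooth boundary pieces separately. On the symmetry axis $T=\{y=0\}$, the minimum $\psi_\ld=0$ is attained, and Hopf's lemma applied in interior discs tangent to $T$ gives $\partial_\nu\psi_\ld>0$ for $\nu=-e_2$, i.e.\ $u>0$ on $T$. On the nozzle wall $N$, which is a $C^{2,\alpha}$ graph $y=g(x)$ by \eqref{g0}, the maximum $\psi_\ld=Q$ is attained; Hopf's lemma gives $\partial_\nu\psi_\ld>0$, and since the outward normal to the flow domain at $N$ has strictly positive $y$-component, $u>0$ on $N$. On the free boundary $\Gamma_\ld$, which is the $C^{3,\alpha}$ graph $y=k_\ld(x)$ by Theorem \ref{lc15}, we have $\psi_\ld=Q$ and $|\g\psi_\ld|=\ld$, with $\nabla\psi_\ld$ parallel to the outward unit normal $\nu=\frac{(-k_\ld',1)}{\sqrt{1+(k_\ld')^2}}$; consequently $u=\ld\,\nu_y=\frac{\ld}{\sqrt{1+(k_\ld')^2}}>0$ on $\Gamma_\ld$.

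The main obstacle is the junction point $A=(0,a)$ where $N$ meets $\Gamma_\ld$, since Hopf's lemma is not available directly at a corner. Here the smooth fit condition $k_\ld'(0)=g'(0)$ from \eqref{h31} ensures that $N\cup\Gamma_\ld$ is a $C^1$ curve through $A$, and the boundary regularity leading to Theorem \ref{lc15} combined with local Schauder estimates up to the free boundary gives that $\nabla\psi_\ld$ extends continuously to $A$. Since $|\g\psi_\ld(A)|=\ld>0$ with $\nabla\psi_\ld(A)$ parallel to the common outward normal (which has positive $y$-component), we obtain $u(A)>0$. Finally, positivity in the far-field limits $x\to\pm\infty$ follows from the asymptotic behavior $u(x,y)\to u_0(y)>0$ upstream and $u(x,y)\to u_1(y)>0$ downstream (guaranteed by \eqref{a08} and \eqref{a012}), yielding $u>0$ on the entire closure $\overline{\O\cap\{\psi_\ld<Q\}}$.
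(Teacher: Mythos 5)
Your argument is correct for the interior (strong maximum principle for $\Delta u + f_0'(\psi_\ld)u = 0$ with $f_0' \le 0$) and for $T$ and $N$ (Hopf's lemma), which matches the paper. However, the step on the free boundary $\Gamma_\ld$ has a circularity that you should be careful about. You write that the outward unit normal on $\Gamma_\ld$ is $\nu = \tfrac{(-k_\ld',1)}{\sqrt{1+(k_\ld')^2}}$ and conclude $u = \ld\nu_y > 0$; but writing $\nu$ in this form presupposes that the graph function $k_\ld$ is differentiable (equivalently, that $\Gamma_\ld$ has no vertical tangent). At this stage of the argument all that is known is that $k_\ld$ is continuous (Lemma \ref{ld5}) and that $\Gamma_\ld$ is a $C^{3,\alpha}$ \emph{curve} (Theorem \ref{lc15}); a $C^{3,\alpha}$ curve that is the graph of a merely continuous function can still have a vertical tangent, and at such a point one would have $\nabla\psi_\ld = \pm\ld\,e_1$, i.e.\ $u = 0$ there. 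Indeed the paper explicitly notes \emph{after} this lemma that ``the positivity of horizontal velocity $u$ implies that the function $y=k_\ld(x)$ is $C^1$,'' so the $C^1$-graph property is a consequence, not an input.

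The paper closes this gap by a contradiction argument on $\Gamma_\ld$ that you need to supply. Suppose $u(x_0,k_\ld(x_0)) = 0$. Since $|\nabla\psi_\ld| = \ld > 0$ on $\Gamma_\ld$, the normal at this point must be horizontal, say $\nu = (1,0)$, and the tangent direction is $e_2$. Applying Hopf's lemma to the nonnegative solution $u$ of $\Delta u + f_0'(\psi_\ld) u = 0$ (which vanishes at this boundary point and is positive in the interior) gives $u_x = \partial u/\partial\nu \ne 0$ there. On the other hand, differentiating the Bernoulli condition $u^2 + v^2 = \ld^2$ along the tangential direction $e_2$ yields $u u_y + v v_y = 0$; since $u = 0$ and $v = -\psi_x = \mp\ld \ne 0$, this forces $v_y = -\psi_{xy} = -u_x = 0$, a contradiction. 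This is the step that actually rules out the vertical-tangent scenario, and without it the claim $u>0$ on $\Gamma_\ld$ is unproved. The remainder of your proposal (the corner $A$, the far-field remark) is essentially in line with the paper, though the asymptotics for $x\to\pm\infty$ are not needed since every point of $\overline{\O\cap\{\psi_\ld<Q\}}$ is finite.
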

\begin{proof}

Denote
$u=\f{\p\psi_\ld}{\p y}$, one has
$$\Delta u+b(X) u=0\  \ \text{in}~\O\cap\{\psi_\ld<Q\},$$ where $b(X)=f_0'(\psi(X))\leq0$. The monotonicity of $\psi_\ld(x,y)$ with respect to $y$ implies that $u\geq 0$ in $\O\cap\{\psi_\ld<Q\}$, the strong maximum principle gives
that
$$\text{$u>0$ in $\O\cap\{\psi_\ld<Q\}$}.$$

Since $\psi_\ld<Q$ in $\O\cap\{x<0\}$ and $\psi_\ld=Q$ on $N$, it
follows from $f'_0(Q)\leq 0$ and Hopf's lemma that
$$u=\f{1}{\sqrt{1+(g'(x))^2}}\f{\p\psi_{\ld}}{\p \nu}>0\ \ \text{on $N\setminus A$}.$$
Similarly, we have
$$u>0\ \ \text{on $T$.}$$

Next, we will show that $u>0$ on $\Gamma_\ld$. Suppose that there
exists $x_0>0$, such that $u=\p_y\psi_\ld=0$ on $(x_0,k_\ld(x_0))$.
Since the free boundary $\Gamma_\ld$ is $C^{3,\alpha}$-smooth at
$(x_0,k_\ld(x_0))$, the normal outer vector is $(1,0)$ at
$(x_0,k_\ld(x_0))$ and $|\p_x\psi_\ld|=\ld$ on $\Gamma_\ld$. Thanks to Hopf's lemma,
we have \be\label{b320}\left|u_x\right|=\left|\f{\p u}{\p
\nu}\right|> 0\ \ \text{at}\ \ (x_0,k_\ld(x_0)).\ee

Since $u^2+v^2=\ld^2$ on $\Gamma_\ld$, we have
 $$\ba{rl}0=\f{\p(u^2+v^2)}{\p s}
 =2uu_y+2vv_y=2vv_y=-2\ld u_x\ \ \text{at $(x_0,k_\ld(x_0))$,} \ea $$ which contradicts
to \eqref{b320}.

By virtue of the boundedness of $g'(0)$, we have $u>0$ at
$A$.

The positivity of horizontal velocity $u$ implies that the function $y=k_{\ld}(x)$ is $C^1$ for any $x\geq 0$.
\end{proof}

Next, we will obtain the asymptotic height of the free boundary $\Gamma_\ld$.

\begin{lemma}\label{ld13} $\lim_{x\rightarrow\infty}k_\ld(x)=h_\ld$, where $h_\ld$ is the asymptotic height of the free boundary $\Gamma_\ld$ and $h_\ld\leq a$.
\end{lemma}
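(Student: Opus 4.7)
The plan is to study the asymptotic behavior of $\psi_\ld$ far downstream by a compactness argument on horizontal translates, and to deduce $h_\ld\leq a$ directly from the bound \eqref{h19}. Fix an arbitrary sequence $x_n\to+\infty$ and set $\psi_n(x,y)=\psi_\ld(x+x_n,y)$ for $(x,y)\in\O_n=\O-(x_n,0)$; for $n$ large, $\O_n$ exhausts the half-plane $\{y>0\}$. By the interior Lipschitz estimate in Theorem \ref{lb5} together with Lemma \ref{ld2}, the family $\{\psi_n\}$ is uniformly Lipschitz on compact subsets of $\{y>0\}$, so by Arzel\`a--Ascoli, after passing to a subsequence, $\psi_n\to\psi_\infty$ locally uniformly, and, by Lemma \ref{lc2}, the free boundaries $\Gamma_\ld-(x_n,0)$ converge in Hausdorff distance to $\p\{\psi_\infty<Q\}$. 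Writing $\Gamma_\ld$ as the graph $y=k_\ld(x)$ and using the uniform Lipschitz bound for $\psi_n$ and its non-degeneracy (Lemma \ref{lb7}), we obtain that $k_\ld(\cdot+x_n)$ converges locally uniformly to a continuous function $k_\infty(x)$ whose graph is the limit free boundary.

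Next, we upgrade $\psi_\infty$ to a function depending only on $y$. For any shift $s\in\mathbb{R}$, the sequence $\psi_\ld(x+x_n+s,y)$ equals both $\psi_n(x+s,y)$ (hence converges to $\psi_\infty(x+s,y)$) and a translation sequence with base points $x_n+s\to\infty$ (hence, along a further subsequence, converges to another translation limit); by diagonal extraction and uniqueness of locally uniform limits, $\psi_\infty(x+s,y)=\psi_\infty(x,y)$ for every $s$, so $\psi_\infty=\psi_\infty(y)$ and correspondingly $k_\infty\equiv h^*$ is a constant. Passing to the limit in $-\Delta\psi_n=f_0(\psi_n)$ inside $\{\psi_n<Q\}$ (valid by the statement (2) of Proposition \ref{ld1} and standard elliptic stability) gives $-\psi_\infty''(y)=f_0(\psi_\infty(y))$ on $(0,h^*)$, together with $\psi_\infty(0)=0$, $\psi_\infty(h^*)=Q$, $\psi_\infty'(y)\geq 0$, and $|\psi_\infty'(h^*)|=\ld$ (the last from Proposition \ref{lb2} and the $C^{1,\alpha}$-regularity of the free boundary from Theorem \ref{lc15}, which survives the blow-down). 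This Cauchy problem for the second-order ODE with given terminal data $(\psi_\infty(h^*),\psi_\infty'(h^*))=(Q,\ld)$ has a unique solution that coincides with $\Psi_\ld$ on $[0,h_\ld]$, and the height $h^*$ is uniquely characterized by the matching condition $\Psi_\ld(h^*)=Q$ with $\Psi_\ld'(h^*)=\ld$; by Remark \ref{re4}, this forces $h^*=h_\ld$. Since $h^*$ is independent of the extracted subsequence, $\lim_{x\to\infty}k_\ld(x)=h_\ld$.

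Finally, the bound $h_\ld\leq a$ is an immediate consequence of \eqref{h19}. Indeed, at the nozzle tip $A=(0,a)$ we have $\psi_\ld(0,a)=Q$ by the continuous fit condition \eqref{h31}, while \eqref{h19} gives $\psi_\ld(0,a)\leq \min\{\Psi_\ld(a),Q\}$, so $\Psi_\ld(a)\geq Q=\Psi_\ld(h_\ld)$; the strict monotonicity of $\Psi_\ld$ (which follows from $u_1>0$) then yields $h_\ld\leq a$.

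The main technical obstacle is the passage to the limit in the free boundary condition $|\g\psi_\ld|=\ld$: one needs uniform $C^{1,\alpha}$ control of $\psi_n$ up to $\Gamma_n$ in the downstream. This is supplied by the flatness/regularity machinery of Section 3 (in particular Theorem \ref{lc15}), together with the Hausdorff convergence of free boundaries in Lemma \ref{lc2}; once this is in place, the remaining steps reduce to a standard ODE uniqueness argument.
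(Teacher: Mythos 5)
The final inequality $h_\ld\leq a$ is handled correctly and essentially as the paper does: from the continuous fit $\psi_\ld(0,a)=Q$ and the upper barrier \eqref{h32} (the $L\to\infty$ version of \eqref{h19}) one gets $\Psi_\ld(a)\geq Q=\Psi_\ld(h_\ld)$, and strict monotonicity of $\Psi_\ld$ closes the argument.

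The main part of your proof, however, has a genuine gap in the translation-invariance step. You assert that because $\psi_\ld(\cdot+x_n+s,\cdot)$ is ``another translation sequence,'' its limit must agree with $\psi_\infty$, hence $\psi_\infty(x+s,y)=\psi_\infty(x,y)$ for every $s$. But translation limits along different subsequences tending to $+\infty$ are a priori \emph{different} functions; there is no ``uniqueness of locally uniform limits'' to invoke, because the two sequences $\{x_n\}$ and $\{x_n+s\}$ are not the same sequence. If $k_\ld(x)$ were to oscillate between two heights $h^-<h^+$ as $x\to\infty$, one could extract one subsequence whose translation limit has free boundary passing through $(0,h^+)$ and another through $(0,h^-)$; your argument would falsely conclude both limits are constant in $x$ and equal. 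In other words, the translation invariance of $\psi_\infty$ is \emph{equivalent} to the non-oscillation of the free boundary, which is precisely what the lemma is supposed to establish, so the argument is circular. Note also that the paper's Proposition \ref{ld12}, which does carry out the translation-compactness argument you describe, explicitly \emph{uses} Lemma \ref{ld13} as input (via \eqref{bbb1}) to know the free boundary stabilizes before passing to the ODE limit; your proof runs this dependency backwards.

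What the paper actually does is quite different: it assumes $\limsup k_\ld>\liminf k_\ld$, identifies these with asymptotic heights $h_{\ld_1}>h_{\ld_2}$ for two parameters $\ld_1<\ld_2$ via Remark \ref{re4}, and then builds explicit barrier domains bounded by curves $\gamma_\delta$ touching the free boundary. Comparing $\psi_\ld$ with the solutions of auxiliary Dirichlet problems in these domains and invoking Hopf's lemma at the touching point yields the two inequalities $\ld\leq\ld_1$ and $\ld\geq\ld_2$, a contradiction. A second, analogous barrier argument pins the resulting limit height down to exactly $h_\ld$. This barrier construction, rather than any soft compactness, is what rules out oscillation; to salvage your approach you would need to supply a substitute argument (e.g.\ an energy/monotonicity formula, or the non-oscillation Lemma \ref{ld4} pushed to infinity) that actually forces the free boundary to stabilize.
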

\begin{proof} Suppose that the limit $\lim_{x\rightarrow\infty}k_\ld(x)$ does not exist. It follows from the non-degeneracy \ref{lb7} and \eqref{h31} that $$0<c_0\leq k_\ld(x)\leq \bar H\ \ \ \text{for any $x\in[0,\infty)$},$$ which implies that there exist two sequence $\{x_n\}$ and $\{\t x_n\}$, such that
\be\label{h40}\lim_{x_n\rightarrow\infty}k_\ld(x_n)=\limsup_{x\rightarrow\infty}k_\ld(x)=h_{\ld,1}\ \ \text{and}\ \ \lim_{\t x_n\rightarrow\infty}k_\ld(\t x_n)=\liminf_{x\rightarrow\infty}k_\ld(x)=h_{\ld,2} \ee with $h_{\ld,1}>h_{\ld,2}$.

By virtue of Remark \ref{re4}, there exist $\ld_1,\ld_2$ with $\ld_0\leq\ld_1<\ld_2<\infty$, such that
\be\label{h41}0< c_0\leq h_{\ld,2}=h_{\ld_2}<h_{\ld_1}=h_{\ld,1}\leq \bar H.\ee

Set $\psi_n(t,s)=\psi_\ld(x_n+t,s)$ and the free boundary $\Gamma_n:s=k_{\ld}(x_n+t)$.
Define a curve
$$\gamma_\delta: s=\bar H-\delta\eta\left(\f{t}{\sqrt{x_n}}\right)$$ where the function $\eta(x)$ is defined as in \eqref{b024}. Let $\delta_n$ be the largest one, such that the curve $\gamma_{\delta_n}$ touches the free boundary $\Gamma_n$ at some points $(t_n,s_n)$, and thus
\be\label{h42}\text{$s_n=k_\ld(x_n+t_n)$ and $\delta_n\leq \bar H-k_{\ld}(x_n)$.}\ee
Let $\o_n$ be the solution to the Dirichlet problem
\be\label{h43}\left\{\ba{ll} &\Delta\o+f_0(\o)=0  \ \text{in}\ \ \ E_n,\\
&\o=Q\ \text{on}\ \gamma_{\delta_n},\ \ \o=\psi_n\ \ \text{on}\ \ \p E_n\setminus\gamma_{\delta_n}, \ea\right.\ee  where domain $E_n$ is bounded by $t=-\f{x_n}2$, $t=\f{x_n}2$, $s=0$ and $\gamma_{\delta_n}$.

Since $\Delta\psi_n+f_0(\psi_n)\leq0$ in $E_n$ and $\psi_n(t_n,s_n)=\o_n(t_n,s_n)=Q$, the maximum principle implies that $\psi_n\geq\o_n$ in $E_n$ and \be\label{h44}\ld=\f{\p\psi_n}{\p\nu_n}\leq\f{\p\o_n}{\p\nu_n}\ \ \text{at}\ \ (t_n,s_n),\ee where $\nu_n$ is the outer normal vector of $\Gamma_n$.

By virtue of \eqref{h42}, we can choose a subsequence $\{x_n\}$, such that
\be\label{h45}\f{t_n}{\sqrt{x_n}}\rightarrow t_0\in[-1,1]\ \ \text{and}\ \ \bar H-\delta_n\eta\left(\f{t}{\sqrt{x_n}}\right)\rightarrow\bar h_{\ld_1}.\ee

After a translation in the $t$ direction, such that the free boundary point $(t_n,s_n)$ lies on the $s$-axis, then we have
\be\label{h46}E_n\rightarrow E_0=\{(t,s)\mid -\infty<t<\infty, 0<s<h_{\ld_1}\}\ \ \text{and} \ \ (t_n,s_n)\rightarrow (0,h_{\ld_1}),\ee and $\o_n\rightarrow\o_0$ uniformly in any compact subset of $E_0$. Moreover, $\o_0$ satisfies that $0\leq\o_0(t,s)\leq Q$ and
\be\label{h47}\left\{\ba{ll} &\Delta\o_0+f_0(\o_0)=0  \ \text{in}\ \ \ E_0,\\
&\o_0(t,0)=0,\ \ \o_0(t,h_{\ld_1})=Q. \ea\right.\ee By using the similar arguments in \cite{XX3}, we can show that the Dirichlet problem \eqref{h47} has a unique solution
$$\o_0(s)=\int_0^{s}u_1(x)dx,$$ where $u_{1}(t)=\sqrt{u_0^2(\chi^{-1}(t;p_{diff}))+2p_{diff}}$ with $p_{diff}=\f{\ld_1^2-\ld^2_0}2$, and $\chi^{-1}(t;p_{diff})$ is defined as in Subsection 4.3.

It follows from \eqref{h44} and \eqref{h46} that  \be\label{h48}\ld\leq\lim_{n\rightarrow\infty}\f{\p\o_n(t_n,s_n)}{\p\nu_n}=\f{\o_0(0,h_{\ld_1})}{\p s}=\ld_1.\ee

Next, we will show that
\be\label{h49}\ld\geq\ld_2.\ee.

Define $\t\psi_n(t,s)=\psi_\ld(\t x_n+t,s)$, $\t\Gamma_n: s=k_\ld(\t x_n+t)$ as the free boundary of $\t\psi_n$ and a curve
$$\t\gamma_\delta:s=\f{h_2}{2}+\delta\eta\left(\f{t}{\sqrt{\t x_n}}\right).$$ Let $\delta_n$ be the largest one, such that
the curve $\t\gamma_{\delta_n}$ touches the free boundary $\t\Gamma_n$ at a point $(t_n,s_n)$, and thus
$$\text{$s_n=k_\ld(\t x_n+t_n)$ and $\delta_n\leq k_{\ld}(\t x_n)-\f{h_{\ld,2}}2$.}$$ Denote a domain $\t E_n$, which is bounded by $t=-\f{\t x_n}2$, $t=\f{\t x_n}2$, $s=0$ and $\t\gamma_{\delta_n}$.
Let $\t\o_n$ be the solutions to the Dirichlet problem \eqref{h43}, replacing $E_n$ and $\gamma_{\delta_n}$ by $\t E_n$ and $\t\gamma_{\delta_n}$.
 Obviously, $\Delta\psi_n+f_0(\psi_n)=0$ in $\t E_n$, it follows from the maximum principle that $\t\o_n\geq\t\psi_n$ in $\t E_n$ and \be\label{h50}\ld=\f{\p\t\psi_n}{\p\nu_n}\geq\f{\p\t\o_n}{\p\nu_n}\ \ \text{at}\ \ (t_n,s_n),\ee where $\nu_n$ is the outer normal vector of $\Gamma_n$.
By using the previous arguments, we can show that there exists a subsequence $\{\t x_n\}$, such that
$$\t E_n\rightarrow \t E_0=\{(t,s)\mid -\infty<t<\infty, 0<s<h_{\ld_2}\}\ \ \text{and} \ \ (t_n,s_n)\rightarrow (0,h_{\ld_2}),$$ and $\t\o_n\rightarrow\t\o_0$ uniformly in any compact subset of $E_0$ and
$$\lim_{n\rightarrow\infty}\f{\p\t\o_n(t_n,s_n)}{\p\nu_n}=\f{\t\o_0(0,h_{\ld_2})}{\p s}=\ld_2,$$ which together with \eqref{h50} give that \eqref{h49} holds.

Hence, it follows from \eqref{h48} and \eqref{h49} that
$$\ld_2\leq\ld\leq\ld_1,$$ which contradicts to the fact $\ld_1<\ld_2$.

Thus, there exists an asymptotic height $\t h$ of the free boundary $\Gamma_\ld$, such that
$$k_{\ld}(x)\rightarrow \t h.$$ Next, we claim that
$$\text{$\t h=h_\ld$}.$$ If not, without loss of generality, we assume that $\t h> h_\ld$, and there exists a $\t\ld<\ld$, such that $\t h=h_{\t\ld}$. Similar to the proof of \eqref{h48} and \eqref{h49}, we can show that $$\ld\leq \t\ld,$$ which contradicts to the assumption $\t\ld<\ld$.

Finally, it follows from \eqref{h32} that
$$h_\ld\leq a.$$

\end{proof}

Finally, the asymptotic behavior of the jet flow will be obtained in
the following.

\begin{proposition}\label{ld12}
 The rotational jet flow satisfies the following asymptotic behavior in the
far fields,
 $$(u,v,p)\rightarrow(u_0(y),0,p_{in}),\ \nabla
u\rightarrow(0,u_0'(y)),~~\nabla v\rightarrow 0,~~\nabla
p\rightarrow0,$$ uniformly in any compact subset of $(0,H)$, as
$x\rightarrow-\infty$, and
$$(u,v,p)\rightarrow(u_1(y),0,p_{atm}),\ \nabla u\rightarrow(0,u_1'(y)),~~\nabla v\rightarrow 0,~~\nabla
p\rightarrow0,$$ uniformly in any compact subset of $(0,h_\ld)$, as
$x\rightarrow+\infty,$ where $u_1(y)$ and $h_\ld$ are uniquely
determined by $u_0(y)$, $p_{in}$ and $p_{atm}$ as in Remark \ref{re4}.
\end{proposition}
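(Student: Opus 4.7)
The plan is a compactness plus uniqueness argument applied to translates of the stream function in the asymptotic regions; the upstream and downstream cases are parallel, so I describe the upstream in detail.

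Fix any sequence $x_n\to-\infty$ and set $\psi_n(x,y)=\psi_\ld(x+x_n,y)$ on the shifted domains $\O^n=\O-(x_n,0)$. Since $g\in C^{2,\alpha}$ and $g(x)\to H$ as $x\to-\infty$, for each $R>0$ the pieces $\O^n\cap\{-R<x<R,0<y<H+1\}$ converge in $C^{2,\alpha}$ to the strip $(-R,R)\times(0,H)$; moreover $\psi_n<Q$ on these pieces for $n$ large since $\Gamma_\ld$ lies far to the right. By Theorem \ref{lb5} and Schauder interior/boundary estimates for the semilinear equation $-\Delta\psi_n=f_0(\psi_n)$, the family $\{\psi_n\}$ is uniformly bounded in $C^{2,\alpha'}$ on each compact subset of $\mathbb{R}\times[0,H]$. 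Extracting a subsequence, $\psi_n\to\psi_\infty$ in $C^{2,\alpha'}_{loc}$, where $\psi_\infty$ satisfies
\begin{equation*}
-\Delta\psi_\infty=f_0(\psi_\infty)\text{ in }\mathbb{R}\times(0,H),\quad\psi_\infty|_{y=0}=0,\quad\psi_\infty|_{y=H}=Q,\quad 0\le\psi_\infty\le Q.
\end{equation*}

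Next I would identify $\psi_\infty$ with $\Psi_{in}(y):=\int_0^y u_0(s)\,ds$. The one-dimensional profile $\Psi_{in}$ solves the same problem: $\Psi_{in}''(y)=u_0'(y)=u_0'(\kappa(\Psi_{in}))=-f_0(\Psi_{in})$ by the definition of $\kappa$ in \eqref{b02}, with $\Psi_{in}(0)=0,\Psi_{in}(H)=Q$. The difference $w=\psi_\infty-\Psi_{in}$ is bounded on the strip, vanishes on the lateral boundary, and solves $-\Delta w=c(x,y)w$ with $c(x,y)=f_0'(\eta(x,y))\le 0$ by \eqref{b05}. A Phragm\'en--Lindel\"of argument (comparing $w^\pm$ against multiples of the harmonic measure of $\{x=\pm R\}$ in the truncated strip and letting $R\to\infty$) forces $w\equiv 0$. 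Since the limit is independent of the subsequence, the entire family converges and $\psi_\ld(x,y)\to\Psi_{in}(y)$ in $C^{2,\alpha'}_{loc}$ as $x\to-\infty$. This yields the convergences of $(u,v)=(\p_y\psi_\ld,-\p_x\psi_\ld)$ and their first derivatives uniformly on compact subsets of $(0,H)$. The pressure convergence $p\to p_{in}$ follows from Bernoulli's law: the invariant $p+\f12(u^2+v^2)$ along the streamline through $(x,y)$ equals $p_{in}+\f12 u_0^2(\kappa(\psi_\ld(x,y)))$, and letting $x\to-\infty$ gives $\kappa(\psi_\ld(x,y))\to\kappa(\Psi_{in}(y))=y$, whence $p\to p_{in}$.

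For the downstream asymptotics, the structure is identical after translating by $x_n\to+\infty$, now on the strip $\mathbb{R}\times(0,h_\ld)$; the limit additionally carries the free boundary Bernoulli condition $|\g\psi_\infty|=\ld$ at $y=h_\ld$ inherited from Proposition \ref{ld1}. Lemma \ref{ld13} supplies $k_\ld(x+x_n)\to h_\ld$, which, combined with the uniform $C^{3,\alpha}$ regularity of $\Gamma_\ld$ from Theorem \ref{lc15} and the non-degeneracy Lemma \ref{lb7}, upgrades to $C^{3,\alpha}_{loc}$-convergence of the translated free boundaries, so that $\psi_n$ converges in $C^{2,\alpha'}$ up to the translated free boundary. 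The candidate profile $\Psi_{out}(y)=\int_0^y u_1(s)\,ds$ solves the limit system, with $\Psi_{out}'(h_\ld)=u_1(h_\ld)=\ld$ by Remark \ref{re4}; uniqueness follows from the same strip maximum principle argument. The principal technical obstacle is securing the uniform regularity up to the free boundary for the downstream translates, so that both the Bernoulli condition and the free boundary location pass to the limit classically; once this is in place, compactness and uniqueness reduce to routine maximum principle arguments on the limiting strip.
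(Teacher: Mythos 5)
Your proposal follows essentially the same route as the paper: in both the upstream and downstream cases one considers translates $\psi_\ld(x+x_n,y)$, extracts a $C^{2,\alpha'}_{loc}$-limit via uniform elliptic estimates (using the non-degeneracy and uniform free-boundary regularity of Lemma \ref{lb7} and Theorem \ref{lc15} in the downstream case to control the free boundary), recognizes that the limit solves the one-dimensional Dirichlet problem on the asymptotic strip, and invokes uniqueness of that strip problem to identify the limit with the known profile $\int_0^y u_0$ (resp.\ $\int_0^y u_1$). The place where you do noticeably more work than the paper is the uniqueness step. The paper simply asserts that the strip problem \eqref{b032} has a unique solution (and in Lemma \ref{ld13} it outsources the analogous claim to ``similar arguments in \cite{XX3}''). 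You instead give a self-contained Phragm\'en--Lindel\"of argument: the difference $w=\psi_\infty-\Psi_{in}$ is bounded, vanishes on $\{y=0\}\cup\{y=H\}$, and satisfies $\Delta w + c w = 0$ with $c=f_0'(\eta)\le 0$ by \eqref{b05}, so comparison with the harmonic measure of $\{x=\pm R\}$ in the truncated strip forces $w\equiv 0$. That is a clean and more elementary way to close the argument. One minor caution in the downstream case: the ``uniform $C^{3,\alpha}$ regularity of $\Gamma_\ld$'' is not free; as in the paper, you first need $k_\ld(x+x_n)\to h_\ld$ (Lemma \ref{ld13}) to place the translated free boundaries in the flatness class on a fixed scale before Theorem \ref{lc15} yields uniform estimates that pass to the limit --- you gesture at this but it deserves an explicit mention.
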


\begin{proof}
For any sequence $\{\psi_{\ld}(x-n,y)\}$, by using the uniform elliptic estimate, there exists a subsequence
$\{\psi_{\ld}(x-n,y)\}$, such that
$$\psi_{\ld}(x-n,y)\rightarrow\psi_0(x,y)\ \ \text{in $C^{2,\alpha}(S)$},$$ where $S$ is any compact subsect of $(0,H)$, $0\leq\psi_0\leq Q$ and
\be\label{b032}\left\{\ba{ll} &\Delta\psi_0+f_0(\psi_0)=0  \ \text{in}~~\{-\infty<x<+\infty\}\times\{0<y<H\},\\
&\psi_0(x,H)=Q\ \text{and}\ \psi_0(x,0)=0\ \ \text{for
$-\infty<x<+\infty$}. \ea\right. \ee Then boundary value problem
\eqref{b032} possesses a unique solution
$$\psi_0(y)=\int_0^{y}u_0(s)ds,$$ which implies that
$$(u,v)(x,y)\rightarrow \left(u_0(y),0\right),\ \nabla u(x,y)\rightarrow\left(0,u_0'(y))\right)\ \text{and}\
\nabla v(x,y)\rightarrow(0,0)$$ uniformly in any compact subset of
$(0,H)$, as $x\rightarrow-\infty$.  The Bernoulli's law gives that
$$p(x,y)\rightarrow p_{in}=\f{\ld^2}{2}-\f{u_0^2(H)}2+p_{atm}\ \text{and}\
\nabla p(x,y)\rightarrow 0,$$ uniformly in any compact subset of
$(0,H)$, as $x\rightarrow-\infty$.

For any sequence $\{\psi_{n}\}$ with $\psi_n(x,y)=\psi_{\ld}(x+n,y)$ and $R>0$, by virtue of Lemma \ref{ld13}, one has
\be\label{bbb1}\lim_{n\rightarrow \infty}k_\ld(x+n)=h_\ld.\ee For any large $R$ and small $\e>0$, it follows from \eqref{bbb1} that there exists a $N=N(R,\e)$, such that the free boundary of $\psi_n(X)$ lies in a $\e$-neighborhood of the line $\{y=h_\ld\}$ in $B_R(0)$, and thus the free boundary of $\psi_n$ satisfies the flatness condition in $B_R(0)$, provided that $n$ is sufficiently large. It follows from Theorem \ref{lc15} that the free boundary of $\psi_n$ convergence to the line $\{y=h_\ld\}$ in $C^1$ norm, namely, $$k'_\ld(x+n)\rightarrow 0\ \ \text{as}\
n\rightarrow\infty,\
 \ \text{for any $|x|<R$}.$$ Applying Theorem \ref{lc15} again, one has
$$\left|k^{(j)}_\ld(x)\right|\leq C\ \ \text{for sufficiently large $x>0$},\ \ j=2,3.$$

By using the uniform elliptic estimate, there exists a subsequence
$\{\psi_n\}$, such that
$$\psi_{n}(x,y)\rightarrow\psi_1(x,y),$$ and
\be\label{b033}\left\{\ba{ll} &\Delta\psi_1+f_0(\psi_1)=0  \ \text{in}~~ E,\\
&\psi_1(x,h_\ld)=Q\ \ \text{and}\ \ \psi_1(x,0)=0\ \
\text{for $-\infty<x<+\infty$},  \ea\right. \ee  where $E=\{-\infty<x<+\infty\}\times\{0<y<h_\ld\}$.

Recalling Remark \ref{re4},
\be\label{b034}
\psi_1(y)=\int_0^yu_1(y)ds\ \ \ \text{in}\ \ E,\ee solves uniquely Dirichlet problem \eqref{b033} in $E$, where
$u_1(t)=\sqrt{u_0^2(\chi^{-1}(y;p_{diff}))+2p_{diff}}$ with $p_{diff}=\f{\ld^2-u_0^2(H)}2$.

In view of \eqref{b034}, we conclude that
$$(u,v,p)(x,y)\rightarrow \left(u_1(y),0,p_{atm}\right),\ \nabla
u(x,y)\rightarrow\left(0,u_1'(y)\right),\ \nabla
v(x,y)\rightarrow(0,0)$$ and $\nabla p(x,y)\rightarrow(0,0)$
uniformly in any compact subset of $(0,h_\ld)$, as
$x\rightarrow+\infty$.

\end{proof}







\subsection{The uniqueness of the incompressible jet flow}
In this subsection, we will obtain the uniqueness of $\ld$ and the
solution to the jet flow problem.

\begin{proposition}\label{le1}$\ld$ and the solution $(u,v,p,\Gamma)$ to the jet flow problem are unique.

\end{proposition}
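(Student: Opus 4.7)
The plan is to reduce the uniqueness of the jet solution $(u,v,p,\Gamma)$ to the uniqueness of the stream function $\psi_\ld$, which in turn follows from showing that the parameter $\ld$ is uniquely determined by the continuous fit condition $k_\ld(0)=a$. Once $\psi_\ld$ is determined uniquely, the velocity is $(u,v)=(\p_y\psi_\ld,-\p_x\psi_\ld)$, the pressure is recovered from Bernoulli's law as $p=p_{atm}+\f{1}{2}(\ld^2-u^2-v^2)$, and the free boundary is $\Gamma=\O\cap\{x>0\}\cap\p\{\psi_\ld<Q\}$. So I will argue the uniqueness of $\ld$ first by contradiction, and then reuse the same comparison machinery to fix $\psi_\ld$ itself.

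Suppose two jet solutions exist with parameters $\ld_1<\ld_2$, and let $\psi_1,\psi_2$ be the corresponding stream functions with flow domains $D_i=\O\cap\{\psi_i<Q\}$. By Remark \ref{re4}, the asymptotic heights obey $h_{\ld_2}<h_{\ld_1}$, and the downstream profiles $\Psi_{\ld_i}(y)=\int_0^y u_{1,\ld_i}(s)\,ds$ satisfy $\Psi_{\ld_2}(y)>\Psi_{\ld_1}(y)$ for $y\in(0,h_{\ld_2})$, while upstream both tend uniformly to the common profile $\int_0^y u_0(s)\,ds$ by Proposition \ref{ld12}. I set $w=\psi_2-\psi_1$ on $D_2$ and verify $w\geq 0$ on $\p D_2$: $w=0$ on the symmetry axis $T$ and on the nozzle wall $N$; $w=Q-\psi_1\geq 0$ on $\Gamma_2$; $w\to 0$ as $x\to-\infty$; and $w\to\Psi_{\ld_2}-\Psi_{\ld_1}\geq 0$ as $x\to+\infty$, with strict inequality in $(0,h_{\ld_2})$. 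The mean value theorem rewrites the difference of the two PDEs as $\Delta w+f_0'(\xi)w=0$ with $f_0'(\xi)\leq 0$, so the weak maximum principle applied to $-w$ yields $w\geq 0$ throughout $D_2$, and the strong version then gives $w>0$ in the interior of $D_2$.

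The decisive step is to apply Hopf's lemma at the detachment point $A=(0,a)\in\p D_2$, where $w(A)=0$. By the smooth fit $k'_{\ld_i}(0)=g'(0)$ established in Proposition \ref{ld10}, the boundary $N\cup\Gamma_i$ is $C^1$ at $A$, so the interior ball condition holds there; combined with the free-boundary regularity in Theorem \ref{lc15}, each $\psi_i$ is $C^{1,\alpha}$ up to a neighborhood of $A$ with $|\nabla\psi_i(A)|=\ld_i$ and $\nabla\psi_i(A)$ aligned with the outward normal $\nu_{\mathrm{out}}$ at $A$. Consequently $\p_{\nu_{\mathrm{in}}}\psi_i(A)=-\ld_i$, which gives $\p_{\nu_{\mathrm{in}}}w(A)=\ld_1-\ld_2<0$; this directly contradicts Hopf's assertion $\p_{\nu_{\mathrm{in}}}w(A)>0$ at the interior-minimum boundary point. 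Hence $\ld_1=\ld_2$. For the uniqueness of $\psi_\ld$ with this now-unique $\ld$, I would run the symmetric comparison between two candidates $\psi$ and $\t\psi$ by choosing the smallest $\e\geq 0$ for which $\psi(x,y+\e)\geq\t\psi(x,y)$ in the overlap, as in Lemma \ref{ld3}; either $\e=0$ (whence $\psi=\t\psi$ by the symmetric inequality) or a touching point produces either a strong-maximum-principle violation in the interior or a Hopf-type contradiction on the common free boundary via $|\nabla\psi|=|\nabla\t\psi|=\ld$.

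The main obstacle will be propagating the comparison $w\geq 0$ rigorously to the asymptotic ends $x\to\pm\infty$ so that the weak maximum principle legitimately applies on the unbounded domain $D_2$; I would handle this by first carrying out the comparison on the truncations $\O_L$ of Section 4.4 and passing to the limit $L\to\infty$ using the uniform convergence in Proposition \ref{ld12}. A secondary subtlety is that $A$ is a common corner where $N$ meets both $\Gamma_1$ and $\Gamma_2$: the smooth fit in Proposition \ref{ld10} is precisely what guarantees that the inward-normal direction at $A$ is well defined and common to both configurations, and, together with the $C^{1,\alpha}$-regularity up to $A$ from Theorem \ref{lc15}, makes the identification $\p_{\nu_{\mathrm{in}}}\psi_i(A)=-\ld_i$ and hence the Hopf contradiction unambiguous.
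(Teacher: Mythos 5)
Your Step~1 has a genuine gap in the maximum-principle step. You set $w=\psi_2-\psi_1$ on $D_2=\O\cap\{\psi_2<Q\}$ and write $\Delta w+f_0'(\xi)w=0$ there. That identity requires \emph{both} $\psi_1$ and $\psi_2$ to satisfy $\Delta\psi+f_0(\psi)=0$ on $D_2$, but $\psi_1$ satisfies it only on $D_1=\O\cap\{\psi_1<Q\}$, and you have not established $D_2\subset D_1$. On $D_2\setminus D_1$ one has $\psi_1\equiv Q$ with only the one-sided inequality $\Delta\psi_1+f_0(\psi_1)\leq 0$ from Proposition~\ref{ld1}(2); subtracting from the equation for $\psi_2$ therefore gives only $\Delta w+f_0'(\xi)w\geq 0$, i.e. $w$ is a \emph{subsolution} of $\Delta+c$ with $c\leq0$, for which the weak maximum principle delivers an upper bound on $w$ in terms of the boundary data, not the lower bound $w\geq 0$ that your argument needs. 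Restricting instead to $D_1\cap D_2$ (where the equality does hold) does not help: the portion $\Gamma_1\cap D_2$ of its boundary carries $w=\psi_2-Q<0$, the wrong sign. Note that $w\geq 0$ on $D_2$ is equivalent to $D_2\subset D_1$, so the very inclusion you would need in order to have a two-sided PDE for $w$ is the conclusion you are trying to extract from the maximum principle; as written the reasoning is circular.

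The paper sidesteps this precisely with the sliding device you propose only in your Step~2. It translates $\psi_{\ld_1}^\e(x,y)=\psi_{\ld_1}(x,y-\e)$, observes $\psi_{\ld_1}^\e\leq\t\psi_{\ld_2}$ for $\e$ large, and takes the smallest $\e_0\geq 0$ for which this ordering holds in $\O$, so that the comparison is true \emph{by construction} rather than deduced from a maximum principle. The strong maximum principle then forces the touching point $X_0$ onto the boundary, and the asymptotic separation of the free boundaries (your observation $h_{\ld_2}<h_{\ld_1}$, formalized as \eqref{e1}) keeps $X_0$ finite. If $\e_0=0$ one takes $X_0=A$ and Hopf gives $\ld_1\geq\ld_2$ — this is exactly the branch your Hopf computation at $A$ implements; if $\e_0>0$, $X_0$ is a smooth interior free-boundary point and Hopf gives $\ld_1>\ld_2$. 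Both contradict $\ld_1<\ld_2$. Your Step~1 is therefore the $\e_0=0$ branch carried out without the construction of $\e_0$ and without the $\e_0>0$ branch; since you already invoke the sliding/translation machinery for the uniqueness of $\psi_\ld$ at fixed $\ld$, the repair is to use it in Step~1 as well, which is what the paper does.
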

\begin{proof}
Suppose that $(\psi_{\ld_1},\ld_1,\Gamma_{\ld_1})$ and
$(\t\psi_{\ld_2},\ld_2,\t\Gamma_{\ld_2})$ are two solutions to the
jet flow problem, which satisfy the conditions in Definition
\ref{def2}.

{\bf Step 1.} In this step, we will show that $\ld_1=\ld_2$. Suppose
not, without loss of generality, we assume that $\ld_1<\ld_2$. In
view of the asymptotic behaviors of $\psi_{\ld_1}$ and
$\t\psi_{\ld_2}$ in Proposition \ref{ld12}, one has
\be\label{e1}k_{\ld_1}(x)>\t k_{\ld_2}(x)\ \ \text{for sufficiently
large $x>0$.}\ee

Consider a function $\psi_{\ld_1}^{\e}(x,y)=\psi_{\ld_1}(x,y-\e)$
for $\e\geq0$ and $\Gamma_{\ld_1}^{\e}$ is the free boundary of
$\psi_{\ld_1}^{\e}$, choosing the smallest $\e_0\geq0$ such that
\be\label{e2}\psi_{\ld_1}^{\e_0}(X)\leq\t\psi_{\ld_2}(X)\ \text{in
$\O$, and}\ \psi_{\ld_1}^{\e_0}(X_0)=\t\psi_{\ld_2}(X_0)\ \text{for
some $X_0\in\overline{\O\cap\{\t\psi_{\ld_2}<Q\}}$}.\ee
 Similar to the proof of \eqref{b017}, the strong maximum principle implies that
$X_0\notin \O\cap\{\t\psi_{\ld_2}<Q\}$. We next consider the
following two cases.

{\bf Case 1.} $\e_0=0$, we can choose $X_0=A$. Then one has
$$\f{\p\psi_{\ld_1}}{\p\nu}=|\g\psi_{\ld_1}|=\ld_1\ \ \text{and}\ \ \f{\p\t\psi_{\ld_2}}{\p\nu}=|\g\t\psi_{\ld_2}|=\ld_2\ \ \
\text{at $A$,}$$where $\nu$ is the outer normal vector. Since
$\psi_{\ld_1}\leq\t\psi_{\ld_2}$ in $\O$, which yields that
$$\ld_1=\f{\p\psi_{\ld_1}}{\p\nu}\geq\f{\p\t\psi_{\ld_2}}{\p\nu}=\ld_2\ \ \
\text{at $A$,}$$ which leads a contradiction to the assumption
$\ld_1<\ld_2$.

{\bf Case 2.} $\e_0>0$, it follows from \eqref{e1} that
$|X_0|<+\infty$. Then we can take $X_0$ be the free boundary point of
$\psi^{\e_0}_{\ld_1}$ and $\t\psi_{\ld_2}$, and the strong maximum
principle gives that
$$\psi_{\ld_1}^{\e_0}(X)<\t\psi_{\ld_2}(X)\ \ \text{in $\O\cap\{\t\psi_{\t\ld_2}<Q\}$}.$$
Since the free boundaries $\Gamma^{\e_0}_{\ld_1}$ and
$\t\Gamma_{\ld_2}$ are $C^{3,\alpha}$ at $X_0$, it follows from
Hopf's lemma that
$$\ld_1=\f{\p\psi_{\ld_1}^{\e_0}}{\p\nu}>\f{\p\t\psi_{\ld_2}}{\p\nu}=\ld_2\
\ \ \text{at $X_0$,}$$ where $\nu$ is the outer normal vector to
$\Gamma_{\ld_1}^{\e_0}\cap\t\Gamma_{\ld_2}$ at $X_0$. This leads a
contradiction to the assumption $\ld_1<\ld_2$.

Hence, we obtain the uniqueness of $\ld$, and denote
$\ld=\ld_1=\ld_2$.

{\bf Step 2.} In this step, we will show that $\psi_\ld=\t\psi_\ld$.
By virtue of the asymptotic behaviors of $\psi_{\ld_1}$ and
$\t\psi_{\ld_2}$ in Proposition \ref{ld12}, we have
$$\lim_{x\rightarrow+\infty}k_{\ld}(x)=\lim_{x\rightarrow+\infty}\t k_{\ld}(x)=h_\ld.$$
Without loss of generality, we assume that there exists some
$x_0\in(0,+\infty)$, such that \be\label{e3}k_{\ld}(x_0)<\t
k_{\ld}(x_0).\ee

Denote $\psi_{\ld}^{\e}(x,y)=\psi_{\ld}(x,y-\e)$ for $\e\geq0$, and
let $\e_0\geq0$ to be the smallest one, such that
\be\label{e4}\psi_{\ld}^{\e_0}(X)\leq\t\psi_{\ld}(X)\ \text{in $\O$,
and}\ \psi_{\ld}^{\e_0}(X_0)=\t\psi_{\ld}(X_0)\ \text{for some
$X_0\in\overline{\O\cap\{\t\psi_{\ld}<Q\}}$}.\ee It follows from assumption
\eqref{e3} that $\e_0>0$ and $|X_0|<+\infty$.

Similar to the proof of Case 1 in Step 1, we can take $X_0$ be the free
boundary point of $\psi_\ld^{\e_0}$ and $\t\psi_{\ld}$, applying Hopf's
lemma yields that
$$\ld=\f{\p\psi_{\ld}^{\e_0}}{\p\nu}>\f{\p\t\psi_{\ld}}{\p\nu}=\ld\
\ \ \text{at $X_0$,}$$ where $\nu$ is the outer normal vector to
$\Gamma_{\ld}^{\e_0}\cap\t\Gamma_{\ld}$ at $X_0$. This leads a
contradiction.
\end{proof}


\bibliographystyle{plain}

\end{document}